\theoremstyle{plain}
\newtheorem*{theorem*}{Theorem}
\newtheorem{theorem}{Theorem}[section]
\theoremstyle{definition}
\newtheorem{D}[theorem]{Definition}
\newtheorem{lemma}[theorem]{Lemma}
\newtheorem{cor}[theorem]{Corollary}
\newtheorem{prop}[theorem]{Proposition}
\theoremstyle{definition}
\newtheorem{rem}[theorem]{Remark}
\newcommand{\R}{\ensuremath{\mathbb R}}
\newcommand{\N}{\ensuremath{\mathbb N}}
\newcommand{\oG}{\overline{G}}
\newcommand{\eps}{\ensuremath{\varepsilon}}
\newcommand{\Ric}{\ensuremath{\mbox{Ric}}}
\newcommand{\Ricm}{\ensuremath{\mbox{Ric}_{\mbox{\tiny{-}}}}}
\newcommand{\measrestr}{%
  \,\raisebox{-.127ex}{\reflectbox{\rotatebox[origin=br]{-90}{$\lnot$}}}\,%
}
\newcommand{\setN}{\mathbb{N}}
\newcommand{\setR}{\mathbb{R}}
\newcommand{\cD}{\mathcal{D}}
\newcommand{\mE}{\mathrm{E}}
\newcommand{\setB}{\mathbb{B}}
\newcommand{\weakto}{\rightharpoonup}
\newcommand{\Id}{\mathrm{Id}}
\newcommand{\di}{\mathop{}\!\mathrm{d}}
\DeclareMathOperator{\supp}{supp}
\newcommand{\Ch}{{\sf Ch}}
\newcommand{\kato}{\mathrm{k}}
\newcommand{\katot}{\ensuremath{\mbox{k}_t(M^n,g)}}
\DeclareMathOperator{\Lip}{Lip}
\newcommand{\leb}{\mathcal{H}}
\newcommand{\dist}{\mathsf{d}}
\newcommand{\meas}{\mathfrak{m}}
\DeclareMathOperator{\RCD}{RCD}
\newcommand{\vol}{\nu}
\DeclareMathOperator{\Hess}{Hess}
\newfont{\tmpf}{cmsy10 scaled 2500}
\def\R{\mathbb R}\def\N{\mathbb N}\def\bB{\mathbb B}
\def\cB{\mathcal B}
\def\cC{\mathcal C}
\def\cD{\mathcal D}
\def\cH{\mathcal H}
\def\cM{\mathcal M}
\def\cR{\mathcal R}
\def\cS{\mathcal S}\def\cK{\mathcal K}
\def\cU{\mathcal U}
\def\un{\mathbf{1}}
\newcommand{\katolimits}{\overline{\cK_\meas(n,f,c)}}
\newcommand{\strongkatolimits}{\overline{\cK(n,f,v)}}
\DeclareMathOperator{\Tan}{Tan}
\DeclareMathOperator{\dGH}{\dist_{GH}}
\DeclareMathOperator{\pGH}{pGH}
\DeclareMathOperator{\dmGH}{\dist_{mGH}}
\title[Rectifiability and $C^{1,1}$-structure]{Limits of manifolds with a Kato bound on the Ricci curvature. II.}
\author{Gilles Carron}
\address{G. Carron, Nantes Université, CNRS, Laboratoire de Mathématiques Jean Leray, LMJL, UMR 6629, F-44000 Nantes, France.} 
\email{Gilles.Carron@univ-nantes.fr}
\author{Ilaria Mondello}
\address{I. Mondello, Université Paris Est Cr\'eteil, Laboratoire d'Analyse et Math\'ematiques appliqu\'es, UMR CNRS 8050, F-94010 Creteil, France.}
\email{ilaria.mondello@u-pec.fr}
\author{David Tewodrose}
\address{D. Tewodrose, Nantes Université, CNRS, Laboratoire de Mathématiques Jean Leray, LMJL, UMR 6629, F-44000 Nantes, France.}
\email{David.Tewodrose@univ-nantes.fr}
\date{}
\begin{document}
%\nocite{*} 

\maketitle

\begin{abstract}
We prove that metric measure spaces obtained as limits of closed Riemannian manifolds with Ricci curvature satisfying a uniform Kato bound are rectifiable.  In the case of a non-collapsing assumption and a strong Kato bound,  we additionally show that for any $\upalpha \in (0,1)$ the regular part of the space lies in an open set with the structure of a $\cC^\upalpha$-manifold.
\end{abstract}

\tableofcontents

\section{Introduction}

In this paper, we establish new geometric and analytic properties of Kato limit spaces, i.e.~measured Gromov-Hausdorff limits of closed Riemannian manifolds with Ricci curvature satisfying a uniform Kato bound.  Our work continues the study began in \cite{CMT} where we introduced these spaces.

For a closed Riemannian manifold $(M^n,g)$ of dimension $n \ge 2$, define
\begin{equation*}
 \katot := \sup_{x \in M}\int_0^t\int_M H(s,x,y)\Ricm(y) \di \nu_g(y) \di s
\end{equation*}
for any $t>0$, where $H$ is the heat kernel of $M$,  $\nu_g$ is the Riemannian volume measure and $\Ricm : M \to \setR_+$ is the lowest non-negative function such that for any $x \in M$,
\[
\Ric_x  \ge - \Ricm(x)g_x.
\]
Equivalently, $\Ricm$ is the negative part of the smallest eigenvalue of the Ricci tensor.

For the whole article,  we keep a positive number $T$ and a function \mbox{$f: (0,T] \to \R_+$} fixed, so that $f$ is non-decreasing and
\begin{equation}\label{eq:f}\lim_{t \to 0} f(t)=0 \quad \text{and} \quad f(T) \leq \frac{1}{16n} \cdot\end{equation}
We let $\mathcal{K}(n,f)$ be the set of isometry classes of $n$-dimensional closed Riemannian manifolds $(M^n,g)$ satisfying the Kato bound
\begin{equation}\label{eq:Kato}\tag{K}
\kato_t(M^n,g)\leq f(t), \qquad \forall t \in (0,T].
\end{equation}
This bound is implied, for instance, by a lower bound on the Ricci curvature, or by a suitable uniform $L^p$ estimate on $\Ricm$ \cite{RoseStollmann}.

For $c>0$ fixed throughout the article, let $\mathcal{K}_\meas(n,f,c)$ be the set of quadruples $(M^n,\dist_g,\mu,o)$ where $(M^n,g) \in \mathcal{K}(n,f)$, $o \in M$, $\dist_g$ is the Riemannian distance associated with $g$ and $\mu$ is a multiple of $\nu_g$ satisfying
\[
c \le \mu(B_{\sqrt{T}}(o)) \le c^{-1}.
\]
As proved in \cite{C16,CMT}, elements in $\mathcal{K}(n,f)$ satisfy a uniform doubling condition. As a consequence,  Gromov's precompactness theorem ensures that the set $\mathcal{K}_\meas(n,f,c)$ is precompact in the pointed measured Gromov-Hausdorff topology.  We call Kato limit space any element in the closure $\katolimits$ with respect to this topology. Observe that Ricci limit spaces, that is limits of manifolds with a uniform Ricci lower bound \cite{ChCo97,CheegerColdingII,CheegerColdingIII,CheegerPisa}, are Kato limit spaces.

Our first result is the rectifiability of Kato limit spaces. This was shown for Ricci limit spaces in \cite[Theorem 5.7]{CheegerColdingIII}. 

\begin{theorem}\label{th:main1}
Let $(X,\dist,\mu,o)$ be a Kato limit space.  Then $(X,\dist,\mu)$ is rectifiable as a metric measure space, in the sense that there exists a countable collection $\{(k_i,V_i,\phi_i)\}_i$ where $\{V_i\}$ are Borel subsets covering $X$ up to a $\mu$-negligible set, $\{k_i\}$ are positive integers, and $\phi_i : V_i \to \setR^{k_i}$ is a bi-Lipschitz map such that $(\phi_i)_\#(\mu \measrestr V_i) \ll \cH^{k_i}$ for any $i$, where $\cH^{k_i}$ is the $k_i$-dimensional Hausdorff measure.
\end{theorem}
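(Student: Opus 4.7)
The natural plan is to transpose the Cheeger-Colding proof of rectifiability of Ricci limits \cite{CheegerColdingIII} to the Kato setting, replacing pointwise Ricci comparison by the heat-kernel / integral tools developed in \cite{CMT}. The argument splits into three steps: (i) identify a \emph{regular set} $\cR \subset X$ of full $\mu$-measure which stratifies as $\cR = \bigsqcup_k \cR_k$ according to the Euclidean dimension of the tangent cones, (ii) construct an approximate splitting map $\phi : B_r(x) \to \setR^k$ near each $x \in \cR_k$, and (iii) upgrade $\phi$ to a bi-Lipschitz chart satisfying the required absolute-continuity statement on a set of positive $\mu$-measure, then exhaust.

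For (i), I would define $\cR_k$ as the set of points at which every tangent cone is isometric (with measure) to $(\setR^k, \dist_{\mathrm{eucl}}, 0, c \cH^k)$ for some $c>0$. The uniform doubling and heat-kernel estimates for $\katolimits$ from \cite{CMT} guarantee the existence of tangent cones everywhere; a volume-cone-implies-metric-cone statement in its almost form (via the Kato almost-splitting theorem of \cite{CMT}) lets one iterate blow-ups at $\mu$-a.e.\ point to produce a Euclidean factor, and a Bishop-Gromov-type monotonicity together with continuity of tangent cones shows this procedure stabilises, so that $\mu(X \setminus \cR) = 0$.

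For (ii)-(iii), fix $x \in \cR_k$ and $\delta \in (0,1)$. Choose $r>0$ so small that $(B_r(x), r^{-1}\dist, x)$ is $\delta$-pGH close to the Euclidean ball $B_1(0) \subset \setR^k$ with matching measure convergence, and approximate $X$ in pmGH by some $(M^n,g) \in \cK(n,f)$. On a ball of $M$ solve the Dirichlet problem with boundary data given by the $k$ Euclidean coordinate functions pulled back through the approximation. The heat-kernel Bochner inequality and the $L^2$-Hessian estimate produced by the Kato bound in \cite{CMT} give integral control of $\sum_i \|\Hess u_i\|_{L^2}^2$ and of $|\langle \nabla u_i, \nabla u_j\rangle - \delta_{ij}|$; passing to the limit yields functions $u_1,\dots,u_k$ on $B_r(x)$ whose collection $\phi = (u_1,\dots,u_k)$ is $(1+\omega(\delta))$-Lipschitz. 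A segment-type inequality, available through the PI structure of Kato limits established in \cite{CMT}, then gives a two-sided Lipschitz estimate for $\phi$ on a subset $V \subset \cR_k \cap B_{r/2}(x)$ of near-full measure, together with $\phi_\#(\mu \measrestr V) \ll \cH^k$. A countable exhaustion across the scales $r$ and the strata $\cR_k$ produces the collection $\{(k_i,V_i,\phi_i)\}_i$ required by the theorem.

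The main obstacle is squarely in step (ii)-(iii): the Kato assumption gives only \emph{integral} heat-kernel control of the negative part of Ricci, so the pointwise Laplacian comparison and sharp segment inequalities powering \cite{CheegerColdingIII} are unavailable. Converting the $L^2$ Hessian and almost-orthogonality estimates on $\phi$ into a quantitative bi-Lipschitz bound for both $\phi$ and $\phi^{-1}$, while keeping the exceptional set $B_{r/2}(x) \setminus V$ small enough for the countable decomposition to work, is the delicate part; this is the point at which the analytic machinery of Part~I (heat-kernel estimates, Poincaré inequality, almost-splitting) must be pushed to its sharpest form.
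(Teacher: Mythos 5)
Your step~(i) is close in spirit but rests on the wrong machinery: in the (possibly collapsed) Kato setting there is no Bishop--Gromov monotonicity, so the stabilisation of blow-ups cannot be obtained that way. The paper instead proves $\mu\bigl(X\setminus\bigcup_k\cR_k\bigr)=0$ by combining the iterated-tangent property (a consequence of doubling alone) with the fact that every metric-measure tangent cone is an $\RCD(0,n)$ space: Gigli's splitting theorem \cite{GigliSplitting} lets one keep splitting off Euclidean factors along iterated tangents, and the propagation property of reinforced splittings (Proposition~\ref{prop:propagation}) then forces the maximal splitting dimension $d(x)$ and the minimal one $i(x)$ to agree at $\mu$-a.e.~point.

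The serious gap is in your steps~(ii)--(iii). You propose to pass from integral almost-orthogonality of the $\nabla u_i$ to a two-sided Lipschitz estimate via ``a segment-type inequality, available through the PI structure of Kato limits.'' This is precisely the route the authors identify in the introduction as unavailable: a Kato bound gives no directionally-restricted relative volume comparison, so the Cheeger--Colding pseudo-cube/segment-inequality argument does not transport. The paper's mechanism is different. First one propagates a reinforced $(k,\eps)$-splitting to a $(k,\sqrt\eps)$-splitting of \emph{all} smaller balls centred on a large maximal-function-type good set $\Omega_\eps$ (Proposition~\ref{prop:propagation}); then one combines this with the fact that, on a ball GH-close to Euclidean, an almost-splitting \emph{is} a quantitative GH-isometry (Proposition~\ref{prop:GHisometry}, proved by elliptic estimates on the Euclidean model and Theorem~\ref{MetaThm}); finally one reads off the bi-Lipschitz inequality at any two good points $y,z$ by applying the GH-isometry property at scale $\dist(y,z)$. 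No segment inequality appears anywhere. Moreover, your proof of the absolute-continuity clause $\phi_\#(\mu\measrestr V)\ll\cH^k$ is merely asserted; the paper establishes it by a genuinely different tool, namely by building $k$ normal one-dimensional currents $T_i^\ell$ on $\setR^k$ out of the pushforwards $u_\#\bigl(\chi_\ell\,\Gamma(u_i,u_j)\bigr)$ and invoking the De Philippis--Rindler structure theorem (Theorem~\ref{th:DePhiRin}). That step has no analogue in the Cheeger--Colding blueprint and cannot be recovered from $L^2$-Hessian bounds alone, so as written your proposal does not deliver the absolute-continuity half of the theorem.
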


Consider now the non-collapsing case, that is there exists $v>0$ such that for some $o \in M$
\begin{equation}\label{eq:NC}\tag{NC}
\vol_g(B_{\sqrt{T}}(o)) \geq v T^{\frac{n}{2}}.
\end{equation}
Assume that $f$ additionally satisfies
\begin{equation}\label{eq:SK}\tag{SK}
\int_{0}^T \frac{\sqrt{f(t)}}{t}\di t < +\infty. 
\end{equation}
In this case,  we say that $(M^n,g) \in \mathcal{K}(n,f)$ satisfies a strong Kato bound. Let $\mathcal{K}(n,f,v)$ be the set of isometry classes of pointed  closed $n$-dimensional manifolds $(M^n,g,o)$ satisfying a strong Kato bound and the non-collapsing assumption. We call non-collapsed strong Kato limit space any element in the closure $\strongkatolimits$ with respect to the pointed Gromov-Hausdorff topology. Notice that we do not need to consider measured Gromov-Hausdorff topology, because, thanks to the volume continuity proved in \cite[Theorem 7.1]{CMT}, Riemannian volumes converge to the $n$-dimensional Hausdorff measure.

Our second main result is the bi-Hölder regularity of the regular set of non-collapsed strong Kato limit spaces. This was proved for non-collapsed Ricci limit spaces in \cite[Theorem 5.14]{ChCo97}. 

\begin{theorem}\label{th:Holderreg}
Let $(X,\dist,o)$ be a non-collapsed strong Kato limit space.  Then for any $\upalpha\in (0,1)$ the regular set
\[
\cR:=\{ x \in X : (\R^n,\dist_e,0) \in \Tan(X,x)\}
\]
is contained in an open $\cC^\upalpha$ manifold $\cU_\alpha \subset X$. Here $\dist_e$ is the Euclidean distance and $\Tan(X,x)$ is the set of metric tangent cones of $X$ at $x$, see Definition \ref{D:tangent}.
\end{theorem}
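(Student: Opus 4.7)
The plan is to adapt the Cheeger-Colding strategy from \cite{ChCo97} to the non-collapsed strong Kato setting. The architecture has two stages: first, show that $\cR$ is contained in an open set on which the space is uniformly close, at every small scale, to $\R^n$ in the Gromov-Hausdorff sense; second, convert such uniform closeness into $\cC^\upalpha$ charts via a Reifenberg-type theorem.

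For the first stage, for $x \in X$ and $r>0$ set
\[
\delta(x,r) := \frac{1}{r}\,\dGH\bigl(B_r(x),B_r^{\R^n}(0)\bigr).
\]
For fixed $r>0$ the map $x \mapsto \delta(x,r)$ is upper semicontinuous. For $x \in \cR$, the definition of tangent cone only yields $\delta(x,r_j) \to 0$ along a sequence $r_j \to 0$, and this has to be upgraded to the genuine limit $\lim_{r \to 0}\delta(x,r)=0$. In the non-collapsed strong Kato setting, volume continuity \cite[Theorem 7.1]{CMT} identifies the limit measure with $\cH^n$, and I would couple it with an ``almost-maximal-volume implies GH-closeness to Euclidean balls'' almost-rigidity principle, built from the heat-kernel machinery of \cite{CMT}, to conclude. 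Together with upper semicontinuity in $x$, this makes
\[
\cU_{\varepsilon,r_0} := \Bigl\{ x \in X : \sup_{r \in (0,r_0]}\delta(x,r) < \varepsilon \Bigr\}
\]
an open subset of $X$ containing $\cR$ once $r_0 = r_0(x,\varepsilon)$ is chosen small enough, and a covering/refinement argument provides a uniform $r_0>0$ so that the whole regular set is contained in a single $\cU_{\varepsilon,r_0}$.

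The second stage is a quantitative Reifenberg-type theorem under the strong Kato bound: for every $\upalpha \in (0,1)$ there exist $\varepsilon(\upalpha), r_0(\upalpha)>0$ such that $\cU_\upalpha := \cU_{\varepsilon(\upalpha),r_0(\upalpha)}$ is a topological manifold admitting a $\cC^\upalpha$ atlas whose charts are bi-Hölder of exponent $\upalpha$. Following Cheeger-Colding, the charts are built from harmonic $\varepsilon$-splitting maps $u \colon B_r(x) \to \R^n$ that are quantitative near-isometries, and iterated over dyadic scales to transfer scale-wise GH smallness into a Hölder modulus of continuity. The key analytic input is the summability condition (SK): the per-scale perturbations are controlled by $\sqrt{f(2^{-j}T)}$, and (SK) exactly ensures that these form a summable series, so that the composition of scale-wise charts converges to a map whose Hölder exponent can be tuned to be any prescribed $\upalpha<1$.

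The main obstacle is carrying out this Reifenberg step in the Kato framework. One has to establish an almost-splitting / $\varepsilon$-regularity theorem producing harmonic near-isometries on small balls under the mere Kato assumption, extending the arguments of \cite{ChCo97,CheegerColdingII} whose Ricci-curvature proofs rest on pointwise Bochner inequalities and the segment inequality: both must be replaced by integrated heat-kernel analogues whose errors are governed by $\katot$ and the companion quantities introduced in \cite{CMT}. Once the almost-splitting and the dyadic iteration are in place, the conclusion follows by setting $\cU_\upalpha := \cU_{\varepsilon(\upalpha),r_0(\upalpha)}$ and transporting the bi-Hölder structure through the Gromov-Hausdorff limit, which is licit in the non-collapsed regime thanks to volume continuity \cite[Theorem 7.1]{CMT}.
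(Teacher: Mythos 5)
Your proposal follows essentially the same route as the paper: first show, via volume continuity and an almost-rigidity built from the heat-kernel density $\uptheta(t,x)=(4\pi t)^{n/2}H(t,x,x)$, that balls around points of $\cR$ satisfy a scale-uniform Reifenberg condition, then construct explicit bi-H\"older charts from harmonic almost-splitting maps via a dyadic-scale Transformation Theorem rather than appealing to the abstract Cheeger--Colding intrinsic Reifenberg theorem. One correction on attribution: the strong Kato condition \eqref{eq:SK} enters the paper to make $\vartheta(x)=\lim_{t\to 0}\uptheta(t,x)$ well-defined and upper semicontinuous (almost-monotonicity of $\uptheta$, Proposition~\ref{monotonetheta}) and to power the heat-kernel rigidity behind Theorem~\ref{almostrigidity}, while the H\"older exponent in the Transformation Theorem~\ref{thm:Transformation} is governed by the per-scale Gram-matrix distortion $(1+C_n\eps)$ tuned through the splitting quality $\eps$, not by summing $\sqrt{f(2^{-j}T)}$ over scales; also, no single uniform $r_0$ over all of $\cR$ is needed or available --- $\cU_\upalpha$ is simply a union of $x$-dependent good balls.
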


In \cite[Theorem 6.2]{CMT} we also showed that non-collapsed strong Kato limit spaces admit a stratification. By combining this with volume continuity and arguments from \cite[Theorem 6.1]{ChCo97} (see also \cite[Theorem 10.22]{CheegerPisa}), we then prove that the singular set $\cS := X \setminus \cR$ of any $(X,\dist, o) \in \overline{\cK(n,f,v)}$ has codimension two. For the sake of completeness, we provide a proof in the Appendix. 

Our proofs of Theorem \ref{th:main1} and Theorem \ref{th:Holderreg} strongly rely on the existence of splitting maps on Kato limit spaces. These are harmonic maps with a suitable \mbox{$W^{2,2}$-estimate} which realize a Gromov-Hausdorff approximation between a small ball around $x$ and a Euclidean ball of same radius.  In Section 3, we give conditions for the existence of such maps, and establish some of their properties,  relying on the analysis performed in \cite{CMT}.

In order to prove Theorem \ref{th:main1}, we start by observing that almost splitting maps exist around any point $x$ of a Kato limit space admitting a Euclidean tangent cone.  After that, by means of a suitable propagation property of these maps, we adapt arguments from \cite{BPS} which built upon \cite{GigliPas} to provide a proof of the rectifiability of $\RCD(K,N)$ spaces \cite{MondinoNaber} via almost splitting maps.  Let us point out that, unlike the uniform lower Ricci bound considered in \cite{ChCo97}, the Kato bound \eqref{eq:Kato} does not provide a directionally restricted relative volume comparison on the limit space,  so that the proof of rectifiability by Cheeger and Colding, based on a suitable control on the volume deformation of pseudo-cubes through pseudo-translations, do not carry out.

We do not know whether the dimensions $k_i$ in Theorem \ref{th:main1} are all equal to a constant. This has been proved true by Colding and Naber for Ricci limit spaces \cite{ColdingNaber}, but the arguments used there seem hardly reproducible in the context of a Kato bound; a more conceivable approach may be the one used by Brué and Semola in the $\RCD(K,N)$ framework \cite{BrueSemola}.

To prove Theorem \ref{th:Holderreg}, a key tool is the following almost monotone quantity, which we introduced in \cite{CMT} to get information on the infinitesimal geometry of non-collapsed strong Kato limits. For $X \in \strongkatolimits$, $x \in X$, $t>0$,  consider
\[
\uptheta(t,x) := (4\pi t)^{n/2} H(t,x,x)
\]
where $H$ is the heat kernel of $X$.  In case $(M^n,g)$ is a Riemannian manifold with non-negative Ricci curvature, the Li-Yau inequality implies that the function \mbox{$t \mapsto \uptheta(t,x) \in [1, +\infty[$} is non-decreasing for all $x \in M$.  When $(M^n,g)$ satisfies a strong Kato bound, we showed in \cite{CMT} that this function is almost non-decreasing everywhere.  In particular, its limit as $t$ goes to zero is well-defined, not less than one, and coincides with the inverse of the volume density at $x$.  In the present paper, we prove that under \eqref{eq:SK} the regular set of $X$ is given by points where the limit of $\uptheta$ equals one:
\[
\cR=\{ x \in X : \lim_{t\to 0}\uptheta(t,x) = 1\}.
\]

We also establish that if $\uptheta(t,x)$ is close enough to 1 for some $t>0$ and $x \in X$, then any ball centered around $x$ with small radius is Gromov-Hausdorff close to a Euclidean ball with same radius.  More precisely, we prove the following Reifenberg regularity statement.

\begin{theorem}\label{thm:ReifregX}
Assume that \eqref{eq:SK} holds.  Then for any $\eps>0$ there exists $\delta>0$ depending on $n$, $f$ and $\eps$ such that for any $(X,\dist,o)\in \strongkatolimits$,  if $x \in X$ and $t \in (0,\delta T)$ satisfy
\begin{equation}
\label{eq:assum}
\uptheta(t,x) \le 1+\delta
\end{equation}
then for any $y\in B_{\sqrt{t}}(x)$ and $s\in (0,\sqrt{t}]$,
$$\dGH\left( B_{s}(y),\bB^n_{s}\right)\le \eps s,$$ 
where $\setB_s^n$ is the Euclidean ball of radius $s$ centered at $0\in \R^n$.
\end{theorem}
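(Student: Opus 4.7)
The plan is a compactness-and-contradiction argument that leverages the heat kernel convergence and the almost monotonicity of $\uptheta$ from \cite{CMT}. Suppose the theorem fails: there exist $\eps_0>0$ and sequences $(X_j,\dist_j,o_j)\in\strongkatolimits$, $x_j\in X_j$, $t_j\in(0,\delta_j T)$ with $\delta_j\to 0$, points $y_j\in B_{\sqrt{t_j}}(x_j)$, and scales $s_j\in(0,\sqrt{t_j}]$ such that $\uptheta_j(t_j,x_j)\le 1+\delta_j$ yet $\dGH(B_{s_j}(y_j),\setB_{s_j}^n)>\eps_0 s_j$. I rescale each $X_j$ by $1/\sqrt{t_j}$ with base point $x_j$. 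Since the heat kernel density transforms as $\widetilde\uptheta_j(s,\cdot)=\uptheta_j(t_j s,\cdot)$ under rescaling, we obtain $\widetilde\uptheta_j(1,x_j)\le 1+\delta_j$, the point $y_j$ lies in $\widetilde B_1(x_j)$, and the rescaled scale $\tilde s_j:=s_j/\sqrt{t_j}\in(0,1]$ still realizes a bad $\dGH$-approximation by $\setB_{\tilde s_j}^n$.

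I then extract a pmGH-limit. The rescaled approximating manifolds carry the Kato bound $\widetilde f_j(s):=f(t_j s)$; since $t_j\to 0$ and $f(0^+)=0$, one has $\widetilde f_j(s)\to 0$ and $\int_0^S \sqrt{\widetilde f_j(u)}/u\,\di u\to 0$ for every $S>0$. A diagonal argument together with uniform doubling, the non-collapsing assumption \eqref{eq:NC}, and the volume continuity of \cite{CMT} produces a subsequential limit $(Y,\dist_Y,\mu_Y,x_\infty)$, which is a non-collapsed limit of manifolds whose Kato bound tends to zero, hence a non-collapsed $\RCD(0,n)$ space. The heat kernel convergence from \cite{CMT} gives $\uptheta_Y(1,x_\infty)\le 1$, and the Li--Yau monotonicity on $\RCD(0,n)$ spaces ($\uptheta_Y\ge 1$, non-decreasing in $t$) forces $\uptheta_Y(s,x_\infty)=1$ for every $s\in(0,1]$.

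The crucial rigidity step upgrades this pointwise equality to a global Euclidean identification: on a non-collapsed $\RCD(0,n)$ space, saturation $\uptheta_Y(t_0,x_\infty)=1$ for some positive $t_0$ forces $Y$ to be isometric to $\setR^n$, via the relation between the on-diagonal heat kernel and the Bishop--Gromov volume ratio together with Colding's volume rigidity. Granting this, $Y\cong\setR^n$ and any limit point $y_\infty\in Y$ of $y_j$ is regular. If $\tilde s_\infty:=\lim\tilde s_j>0$, then $\widetilde B_{\tilde s_j}(y_j)$ converges in pGH to the Euclidean ball $B_{\tilde s_\infty}(y_\infty)$, contradicting the bad $\dGH$-approximation. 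If $\tilde s_\infty=0$, a further rescaling at $y_j$ by $\tilde s_j^{-1}$ produces a tangent cone at $y_\infty$, again $\setR^n$, yielding the same contradiction. The main obstacle in this plan is the rigidity step, where one must convert the pointwise heat kernel equality into a global Euclidean structure of $Y$; this relies on a precise comparison between $\uptheta$ and the volume density in the limit $\RCD(0,n)$ setting, together with a clean passage of the strong Kato bound \eqref{eq:SK} to a vanishing limit through the rescaling.
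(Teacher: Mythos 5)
Your broad plan—contradiction, rescaling by $1/\sqrt{t_j}$, extracting a limit whose Kato constant vanishes, then concluding $\uptheta_Y(s,x_\infty)=1$ on $(0,1]$ and invoking a rigidity theorem—is indeed how the paper proves its single-scale almost-rigidity statement, Theorem~\ref{almostrigidity}. But the paper does \emph{not} derive the full Reifenberg conclusion from a single compactness argument: it first proves Theorem~\ref{almostrigidity} (closeness of one ball centred at $x$ at one scale $\sqrt{t}$), and then promotes this to all nearby points $y\in B_{\sqrt{t}}(x)$ and all smaller scales $s\le\sqrt{t}$ via two separate propagation devices, Corollary~\ref{cor:propuptheta} (if $\uptheta(t,x)\le 1+\nu$ then $\uptheta(t,y)\le 1+\delta$ for $y\in B_{\sqrt{t}}(x)$) and Lemma~\ref{lem:thetaSmallKato} (if $\kato_t$ is small then $\uptheta(s,y)\le(1+\delta)\uptheta(t,y)$ for $s\le t$). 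Only then does the manifold Reifenberg statement, Corollary~\ref{thm:ReifregM}, follow, and Theorem~\ref{thm:ReifregX} is a direct consequence by heat kernel convergence.

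The place where your single-shot argument breaks is exactly where the paper's propagation lemmas do their work, namely your $\tilde s_\infty=0$ case. You assert that ``a further rescaling at $y_j$ by $\tilde s_j^{-1}$ produces a tangent cone at $y_\infty$''. That is false in general: a diagonal rescaling $(X_j,\tilde s_j^{-1}\tilde\dist_j,y_j)$ with $y_j\to y_\infty$ and $\tilde s_j\to 0$ need not converge to a tangent cone of the fixed limit $Y$ at $y_\infty$; there is no a priori relation between the rate at which $X_j\to Y$ and the rate $\tilde s_j\to 0$. To conclude the rescaled sequence converges to $\setR^n$, you would have to show that $\widetilde\uptheta_j(\tilde s_j^2,y_j)\to 1$, i.e.\ transport the near-saturation of $\uptheta$ from $(x_j,1)$ to $(y_j,\tilde s_j^2)$. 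That is precisely the content of Corollary~\ref{cor:propuptheta} together with Lemma~\ref{lem:thetaSmallKato}, and your sketch omits it.

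Your rigidity step is also thinner than you acknowledge. Knowing $\uptheta_Y(s,x_\infty)=1$ for $s\in(0,1]$ and that $\uptheta_Y$ is non-decreasing and $\ge 1$ gives the volume density at $x_\infty$ equal to $1$, hence a Euclidean tangent cone at $x_\infty$, but it does \emph{not} by itself control $\cH^n(B_R(x_\infty))/\omega_n R^n$ for $R>0$ of definite size, so Colding's volume rigidity does not apply directly. The paper instead shows, via the limit Li--Yau inequality (Remark~\ref{rem:mieux}), the maximum principle, Varadhan's formula, and analyticity in $t$, that $H\equiv\mathbb{P}$ on all of $(0,\infty)\times X\times X$, then applies the heat kernel rigidity theorem of~\cite{CT}. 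The paper does remark that an alternative route through non-collapsed $\RCD(0,n)$ Bishop--Gromov rigidity (\cite[Corollary 1.7]{DPG16}) is available—this is the spirit of your suggestion—but even that requires more than the single pointwise equality $\uptheta_Y(1,x_\infty)=1$; one must show the relevant volume ratio is constant on an interval, which again requires an argument you do not give.
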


In addition to the almost monotonicity of $\uptheta$ and the appropriate Li-Yau inequality for Kato limit spaces (see Proposition \ref{lem:LY}), a salient ingredient in our proof of Theorem \ref{thm:ReifregX} is the heat kernel rigidity result obtained in \cite{CT}, which allows for a suitable contradiction argument.

From Theorem \ref{thm:ReifregX} we could immediately appeal on the intrinsic Reifenberg theorem of Cheeger and Colding \cite[Theorem A.1.1]{ChCo97} and get the conclusion of Theorem \ref{th:Holderreg}.  We prefer to provide an explicit construction of a bi-Hölder homeomorphism obtained from almost splitting maps through a Transformation Theorem, in the spirit of \cite{CJN}.  One key new point in our approach is an almost-rigidity statement implying that for sufficiently small $\delta$, if a point $x$ in a non-collapsed strong Kato limit space satisfies $$\uptheta(t,x)<1+\delta,$$ then an almost splitting map realizing a GH-isometry exists from $B_{\sqrt{t}}(x)$ to an Euclidean ball of radius $\sqrt{t}$.  We next prove a Transformation Theorem that eventually provides a better regularity on such harmonic maps : these are bi-Hölder homeomorphisms. The proof of the Transformation Theorem is a direct one and uses some results of \cite{CMT} about convergence of harmonic functions together with the refinements that we develop in Section 3.

\medskip

\noindent \textbf{Acknowledgments:}
The authors are partially supported by the ANR grant ANR-17-CE40-0034: CCEM. The first and third authors thank the Centre Henri Lebesgue ANR-11-LABX-0020-01 for creating an attractive mathematical environment. The first author is also partially supported by the ANR grant ANR-18-CE40-0012: RAGE.

\section{Preliminaries}

In a metric space $(X, \dist)$ we denote by $B_r(x)$ the open ball of radius $r$ centered at $x \in X$. Letting $B=B_r(x)$, for any $\lambda>0$ we denote by $\lambda B$ the re-scaled ball centered at $x$ of radius $\lambda r$. We call metric measure space any triple $(X,\dist,\mu)$ where $(X,\dist)$ is a geodesic and proper metric space and $\mu$ is a fully supported Borel measure such that $\mu(B_r(x))$ is strictly positive and finite for any $x \in X$ and $r>0$. 

The Cheeger energy of $(X,\dist,\mu)$ is the map $\Ch\colon \Lip_c(X)\rightarrow \R_+$ defined by
$$\Ch(f)=\int_X \text{lip}^2(f)\di\mu,$$
where $\text{lip}(f)$ denotes the local Lipschitz constant of $f$. Following \cite{GigliMAMS,Gigli} we say that $(X,\dist,\mu)$ is infinitesimally Hilbertian if $\Ch$ is quadratic, in which case the closure of $\Ch$, still denoted by $\Ch$,  is a Dirichlet form with domain denoted by $H^{1,2}(X,\dist,\mu)$.  We write $L$ for the associated non-positive, self-adjoint operator and $\{e^{-tL}\}_{t>0}$ for the Markov semi-group generated by $L$. For any $f\in H^{1,2}(X,\dist,\mu)$ there exists a unique $| df| \in L^2(X,\mu)$ called minimal relaxed slope of $f$ such that
$$\Ch(f)=\int_X |df|^2\di\mu.$$ Moreover, $\Ch$ is strongly local and regular,  and its carré du champ is given by
\begin{align*}
\di \Gamma(u,v) = \frac{1}{4} (|d(u+v)|^2 - |d(u-v)|^2) \di \mu =: \langle d u ,d v \rangle \di \mu 
\end{align*}
for any $u,v \in H^{1,2}(X,\dist,\mu)$. For any open set $\Omega \subset X$ we also set 
\[
H^{1,2}_{loc}(\Omega,\dist,\mu):=\{f \in L^2_{loc}(\Omega,\mu) \, : \,  \phi f \in H^{1,2}(X,\dist,\mu)  \,\, \text{for any $\phi \in \Lip_c(\Omega)$} \}.
\]
We say that $f \in H^{1,2}_{loc}(\Omega,\dist,\mu)$ is harmonic in $\Omega$ if for any $\phi \in \Lip_c(\Omega)$,
\[
\int_{\Omega} \langle d f, d \phi \rangle \di \mu  = 0.
\]

If $(M^n,g)$ is a smooth and connected Riemannian manifold, the Cheeger energy of $(M,\dist_g,\nu_g)$ coincides with its usual Dirichlet energy.  We often implicitly identify a Riemannian manifold $(M^n,g)$ with its isometry class or with the metric measure space $(M,\dist_g,\nu_g)$.  

For any positive integer $k$, we denote by $\setB_r^k$ the Euclidean ball of radius $r$ centered at the origin of $\setR^k$, and we write $\setB_r^k(p)=p+\setB_r^k$ for any $p \in \setR^k$. 

\subsection{Notions of convergence}  We assume the reader to be familiar with the various notions of Gromov-Hausdorff convergence; we refer to \cite[Section 11]{HKST}, for instance, if this is not the case.  We simply recall that a map \mbox{$\phi : (X,\dist_X) \to (Y,\dist_Y)$} is called an $\eps$-GH isometry if $|\dist_X(x,x') - \dist_Y(\phi(x),\phi(x'))|\le \eps$ for any $x,x' \in X$ and for any $y \in Y$ there exists $x \in X$ such that $\dist_Y(\phi(x),y)\le \eps$. If $\{(X_\alpha,\dist_\alpha,o_\alpha)\}, (X,\dist,o)$ are pointed metric spaces such that ${(X_\alpha,\dist_\alpha,o_\alpha)\to (X,\dist,o)}$ in the pointed Gromov-Hausdorff topology, we denote by $x_\alpha \in X_\alpha \to x \in X$ a convergent sequence of points, following \cite[Characterization 1]{CMT} and the definition soon after. 

\subsubsection{Tangent cones} Let us recall the classical definitions of tangent cones.

\begin{D}\label{D:tangent}
\begin{enumerate}
\item Let $(X, \dist)$ be a metric space.  For any $x \in X$,  we call metric tangent cone of $(X,\dist)$ at $x$ any pointed metric space $(Y,\dist_Y,x)$ obtained as a limit point in the pointed Gromov-Hausdorff topology of the family of rescalings $\{(X,r^{-1}\dist,x)\}_{r>0}$ as $r\downarrow 0$. We denote by $\Tan(X,x)$ the set of metric tangent cones of $(X,\dist)$ at $x$.
\item  Let $(X, \dist,\mu)$ be a metric measure space.  For any $x \in \supp \mu$,  we call metric measured tangent cone of $(X,\dist,\mu)$ at $x$ any pointed metric measure space $(Y,\dist_Y,\mu_Y,x)$ obtained as a limit point in the pointed measured Gromov-Hausdorff topology of the family of rescalings $\{(X,r^{-1}\dist,\mu(B_r(x))^{-1}\mu, x)\}_{r>0}$ as $r\downarrow 0$. We denote by $\Tan_\meas(X,x)$ the set of metric measured tangent cones of $(X,\dist, \mu)$ at $x$.
\end{enumerate}
\end{D}

We are especially interested in tangent cones which split off a Euclidean factor. Let us recall the definition. 

\begin{D} Let $k$ be a positive integer.
\begin{enumerate}
\item We say that a pointed metric space $(X, \dist,x)$ splits off an $\setR^k$ factor if there exists a pointed metric space $(Z,\dist_Z,z)$ and an isometry $\phi : X \to \setR^k \times Z$ such that $\phi(x)=(0,z)$.

\item  We say that a pointed metric measure space $(X, \dist,\mu,o)$ splits off an $\setR^k$ factor if there exists a pointed metric measure space $(Z,\dist_Z,\mu_Z,z)$ and an isometry $\phi : X \to \setR^k \times Z$ such that $\phi(x)=(0,z)$ and $\phi_\# \mu =  \mathcal{H}^k \otimes \mu_Z$.
\end{enumerate}
Here and in the sequel the space $\setR^k \times Z$ is implicitly equipped with the classical Pythagorean product distance.
\end{D}

\subsubsection{Convergence of functions} 
 Let us recall now some notions of convergence for functions defined on varying spaces.  We refer to \cite[Section 1.4]{CMT} and the references therein for a more exhaustive presentation.
\begin{D}
Let $\{(X_\alpha, \dist_\alpha, \mu_\alpha, o_\alpha)\}_\alpha, (X,\dist,\mu,o)$ be infinitesimally Hilbertian metric measure spaces such that $(X_\alpha, \dist_\alpha, \mu_\alpha, o_\alpha) \to (X,\dist,\mu,o)$ in the pointed measured Gromov-Hausdorff topology. 

\begin{enumerate}
\item Let $\varphi_\alpha  \in \cC_c(X_\alpha)$ for any $\alpha$ and $ \varphi\in \cC_c(X)$ be given.  We say that $\left\{\varphi_\alpha\right\}$ converges uniformly on compact sets to $\varphi$ if $\varphi_\alpha(x_\alpha) \to \varphi(x)$ whenever $x_\alpha \in X_\alpha \to x \in X$; we write $\varphi_\alpha\stackrel{\cC_c}{\longrightarrow} \varphi $ if this convergence holds.

\hfill

\item Let $f_\alpha \in L^2(X_\alpha,\mu_\alpha)$ for any $\alpha$ and $f \in L^2(X,\mu)$ be given.
\begin{itemize}
\item We say that $\{f_\alpha\}$ converges to $f$ weakly in $L^2$ if $\sup_\alpha \|f_\alpha\|_{L^2}<+\infty$ and
$$\int_{X_\alpha}\varphi_\alpha f_\alpha\di\mu_\alpha\to \int_{X}\varphi f\di\mu$$
whenever $\varphi_\alpha\stackrel{\cC_c}{\longrightarrow} \varphi $; we write $f_\alpha \stackrel{L^2}{\weakto}f$ if this convergence holds.
\item We say that $\{f_\alpha\}$ converges to $f$ strongly in $L^2$ if $f_\alpha \stackrel{L^2}{\weakto}f$ and  $\lim_\alpha  \|f_\alpha\|_{L^2}= \|f\|_{L^2}$; we write $f_\alpha \stackrel{L^2}{\to}f$ if this convergence holds.
\end{itemize}
\hfill

\item Let $f_\alpha \in H^{1,2}(X_\alpha,\dist_\alpha,\mu_\alpha)$ for any $\alpha$ and $f \in H^{1,2}(X,\dist,\mu)$ be given.
\begin{itemize}
\item We say that $\{f_\alpha\}$ converges to $f$ weakly in energy if 
$f_\alpha \stackrel{L^2}{\weakto}f$ and $\sup_\alpha \Ch_\alpha(f_\alpha)<+\infty$; we write $f_\alpha \stackrel{\mE}{\weakto}f$ if this convergence holds.
\item We say that $\{f_\alpha\}$ converges to $f$ strongly in energy if $f_\alpha \stackrel{\mE}{\weakto}f$ and $\lim_\alpha \Ch_\alpha(f_\alpha)=\Ch(f)$; we write $f_\alpha \stackrel{\mE}{\to}f$ if this convergence holds.
\end{itemize}

\hfill

\end{enumerate}
\end{D}
\subsection{Kato bound and Kato limits} Recall that $T$, $f$ are fixed and satisfy \eqref{eq:f}.  
The following has been proved in \cite[Proposition 2.3]{CMT}.

\begin{prop}\label{eq:PI_manifolds} 
There exists $\upkappa\ge 1$ and $\uplambda>0$ depending only on $n$ such that any $(M^n,g) \in \mathcal{K}(n,f)$ satisfies
\begin{itemize}
\item[1.] a uniform volume estimate: for any $x\in M$ and $0<s<r\le \sqrt{T}$, 
\begin{equation}\label{eq:volestim}\frac{\nu_g(B_r(x))}{\nu_g(B_s(x))}\le \upkappa \left(\frac{r}{s}\right)^{e^2n},\end{equation}
\item[2.] a uniform local Poincaré inequality: for any ball $B\subset M$ with radius $r\le \sqrt{T}$  and any $\varphi\in \cC^1(B)$,
\begin{equation}\label{eq:Poincaré}\int_B \left(\varphi-\fint_B \varphi\di \nu_g\right)^2 \di \nu_g\le \uplambda r^2 \int_B |d\varphi|^2 \di \nu_g.\end{equation}
\end{itemize}
\end{prop}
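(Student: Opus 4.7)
The plan is to follow the heat-kernel approach of Carron \cite{C16}, whose proof was adapted to the precompactness/limit setting in the authors' previous paper \cite{CMT}. The overall strategy is: establish an on-diagonal heat kernel bound from the Kato condition, deduce volume control from it, then derive the Poincar\'e inequality from doubling plus heat-kernel bounds.

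For the on-diagonal heat kernel estimate $H(t,x,x) \le C_n t^{-n/2}$, uniform over $(M^n,g) \in \mathcal{K}(n,f)$ and $t \in (0,T]$, I would use the domination of the $1$-form heat semigroup by the Schr\"odinger semigroup $e^{-t(L+2\Ricm)}$, which is a direct consequence of the Bochner formula together with the Feynman-Kac formula. The hypothesis $f(T) \le 1/(16n)$ is calibrated so that the corresponding multiplicative Feynman-Kac weight at time $T$ does not exceed a universal constant (essentially $e$, which is what produces the factor $e^2$ in the exponent). Combining this gradient-semigroup bound with an ultracontractivity argument \`a la Varopoulos then yields the desired diagonal estimate.

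Next, the on-diagonal bound together with the Gaussian off-diagonal decay of $H$ (also available in the Kato setting) produces a volume lower bound $\nu_g(B_{\sqrt{t}}(x)) \ge c_n t^{n/2}$ for $t \le T$; iterating at dyadic scales and using a reverse doubling coming from the gradient estimate yields \eqref{eq:volestim} with the stated exponent $e^2 n$. For \eqref{eq:Poincaré}, I would invoke Saloff-Coste's theorem: under uniform doubling, two-sided Gaussian heat kernel bounds are equivalent to the scale-invariant $L^2$ Poincar\'e inequality on balls of radius $\le \sqrt{T}$, and these bounds follow from the diagonal upper bound plus a Li-Yau type argument available in the Kato framework \cite{C16}. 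The main obstacle is quantitative: tracking constants so that $\upkappa$ and $\uplambda$ depend only on $n$ and not on the profile of $f$, which is precisely why the explicit threshold $f(T) \le 1/(16n)$ is imposed.
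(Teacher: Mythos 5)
The paper does not give a new proof of this proposition: the text immediately preceding the statement says explicitly that it was proved in \cite[Proposition~2.3]{CMT}, and the authors simply quote it. So the question is whether your sketch is a valid independent route. It is not, because it contains a genuine error at the first step.

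You claim a uniform on-diagonal bound $H(t,x,x)\le C_n t^{-n/2}$ for all $(M^n,g)\in\mathcal{K}(n,f)$, $x\in M$ and $t\in(0,T]$, and then deduce a volume lower bound $\nu_g(B_{\sqrt t}(x))\ge c_n t^{n/2}$. Neither of these can hold in the Kato framework, because $\mathcal{K}(n,f)$ contains collapsing sequences. For instance, flat tori $\mathbb{R}^n/\eps\mathbb{Z}^n$ with $\eps\downarrow 0$ have $\Ricm\equiv 0$, hence $\kato_t\equiv 0\le f(t)$, yet $\nu_g(B_{\sqrt T}(x))\to 0$ and $H(T,x,x)\to\infty$. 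More structurally, $\mathcal{K}(n,f)$ is stable under the rescalings $g\mapsto\lambda^2 g$ with $\lambda\ge 1$, so any uniform estimate of the form $\nu_g(B_{\sqrt t}(x))\ge c_n t^{n/2}$ over the class would propagate to all scales and become the non-collapsing assumption \eqref{eq:NC}; but \eqref{eq:NC} is an \emph{extra} hypothesis introduced only later in the paper, so it cannot be a consequence of the Kato bound alone. The diagonal bound you want is instead the scale-adapted $H(t,x,x)\le C_n/\nu_g(B_{\sqrt t}(x))$ (see the two-sided estimate in Proposition \ref{Prop:heatKe}.i), which carries no information about a uniform volume lower bound.

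Because of this, your derivation of \eqref{eq:volestim} does not go through as written. The actual mechanism in \cite{C16,CMT} runs in a different order: the Bochner/Feynman--Kac domination you invoke is used to prove a Li--Yau type differential inequality (this is where the calibration $f(T)\le 1/(16n)$ enters, producing the $e^2$ in the exponent, as in Proposition \ref{lem:LY}); integrating the time-derivative part of that inequality gives the comparison $H(s,x,x)\le (t/s)^{e^2 n/2}H(t,x,x)$ for $s\le t$; combining this with the two-sided, volume-normalized on-diagonal bound (not a $t^{-n/2}$ bound) yields the doubling estimate \eqref{eq:volestim}. Your last step — invoking the Grigor'yan/Saloff-Coste equivalence between volume doubling together with two-sided Gaussian bounds and the scale-invariant $L^2$ Poincaré inequality — is correct and is indeed how \eqref{eq:Poincaré} is obtained once doubling and the Gaussian bounds are in hand, but you need to fix the first part of the chain before you can invoke it.
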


\begin{rem}
Note that \eqref{eq:volestim} implies a so-called doubling condition:
\begin{equation}\label{eq:doubling}
\nu_g(B_{2r}(x)) \le A(n) \nu_g(B_r(x))
\end{equation}
for any $x \in X$ and $r \in(0, \sqrt{T}/2]$, where $A(n):=\upkappa 2^{e^2n}$.  We shall often use the following consequence of the doubling condition: for any $\lambda\in(0,1)$ there exists a constant $C(n,\lambda)\ge 1$ such that for any ball $B\subset M$ and any locally integrable ${\phi : B \to \setR}$, 
\begin{equation}\label{eq:cor_doublement}
\fint_{\lambda B}|\phi| \di \nu_g \leq C(n,\lambda) \fint_{B}|\phi| \di \nu_g.
\end{equation}
\end{rem}

The next proposition collects estimates on the heat kernel of $(M^n,g) \in \mathcal{K}(n,f)$. 

\begin{prop}\label{Prop:heatKe} There exists a constant $\upgamma\ge 1$ depending only on $n$ such that for any $(M^n,g) \in \mathcal{K}(n,f)$, for all $x,y\in M$ and $t\in (0,T)$,
\begin{enumerate}[i)]
\item 
$\displaystyle \frac{\upgamma^{-1}}{\nu_g(B_{\sqrt{t}}(x))}e^{-\upgamma\frac{\dist^2(x,y)}{t}}\le H(t,x,y)\le \frac{\upgamma}{\nu_g(B_{\sqrt{t}}(x))}e^{-\frac{\dist^2(x,y)}{5t}},$
\item $\displaystyle \left|\frac{\partial}{\partial t} H(t,x,y)\right|\le  \frac{\upgamma}{t \nu_g(B_{\sqrt{t}}(x))}e^{- \frac{\dist^2(x,y)}{5t}},$
\item $\displaystyle  \left|d_x H(t,x,y)\right|\le  \frac{\upgamma}{\sqrt{t}\nu_g(B_{\sqrt{t}}(x))}e^{-\frac{\dist^2(x,y)}{5t}}.$
\end{enumerate}
\end{prop}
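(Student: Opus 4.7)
The plan is to obtain all three bounds from the structure already established: the volume doubling \eqref{eq:volestim} and $L^2$-Poincaré inequality \eqref{eq:Poincaré} of Proposition~\ref{eq:PI_manifolds}, together with the integral control on $\Ricm$ provided by the Kato bound \eqref{eq:Kato}. These estimates are essentially present, at least in spirit, in \cite{C16,CMT}, so the natural approach is to invoke that analysis directly.

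For the upper Gaussian bound in (i), I would use a Davies-type perturbation scheme. Starting from a reference heat kernel $H_0$ on a model non-negatively curved space, the Bochner inequality lets one write $H$ as $H_0$ plus a Duhamel-type correction involving $\Ricm$; iterating this correction and using the smallness $f(T)\le 1/(16n)$ to ensure convergence of the resulting Neumann series, $H(t,x,y)$ gets dominated by a constant multiple of a Gaussian, and the doubling condition converts the Euclidean volume normalization into $\nu_g(B_{\sqrt{t}}(x))^{-1}$. For the lower bound, I would invoke Saloff-Coste's theorem: doubling combined with an $L^2$-Poincaré inequality on scales up to $\sqrt{T}$ yields a parabolic Harnack inequality on those same scales, and combining this with the upper bound together with the standard chaining argument along a near-minimizing geodesic from $x$ to $y$ gives the lower Gaussian bound with constant depending only on $n$.

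The time-derivative bound (ii) would then follow from the semigroup reproducing identity $\partial_t H(t,x,y) = \int_M \partial_t H(t/2,x,z)\, H(t/2,z,y)\, \di\nu_g(z)$, combined with the $L^2$-analyticity of $e^{-tL}$ (which provides $\|t\, \partial_t e^{-tL}\|_{L^2\to L^2}\le C$) and the upper bound from (i) applied to the second factor. Estimate (iii) would be derived by applying a Caccioppoli / mean-value inequality to $z \mapsto H(t/2,x,z)$ on a ball of radius $\sim \sqrt{t}$, then Moser-iterating to upgrade from an $L^2$ gradient bound to an $L^\infty$ one, and finally differentiating the reproducing identity in the spatial variable and applying Cauchy--Schwarz with the Gaussian bound. \textbf{The main obstacle} is the upper Gaussian bound: without pointwise control of $\Ricm$, the usual maximum-principle or Feynman--Kac argument is unavailable, and the Kato smallness \eqref{eq:Kato} is precisely what is needed to make the iterated Duhamel series converge uniformly --- this is the point where the specific form of the Kato bound enters essentially, and where one must be careful to obtain constants $\upgamma$ depending only on $n$ and not on $f$.
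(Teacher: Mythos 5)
Your sketch of (i) and (ii) is reasonable and roughly matches what the cited references do: the upper Gaussian bound under a Kato condition is established in \cite{C16} (see also \cite[Proposition 3.3]{CMT}) by a perturbation argument in the spirit you describe; the lower bound there also comes from doubling plus Poincaré via a Saloff-Coste/Grigor'yan argument; and (ii) is obtained from (i) by exactly the analyticity argument you outline (the paper simply cites \cite[Corollary 3.1]{GriderHeat} for this). The paper, however, does not reprove (i): it cites it as an external input, so the ``main obstacle'' you flag is not actually part of the argument to be supplied here.

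The real gap is in your derivation of (iii). Moser iteration upgrades $L^2$ control of a \emph{solution} to $L^\infty$ control, not $L^2$ control of a \emph{gradient} to $L^\infty$ control of that gradient; to make your scheme work you would need to observe, via the Bochner formula, that $|dH|^2$ satisfies
\[(\partial_t-\Delta)\,|d_xH(t,x,\cdot)|^2\le 2\,\Ricm\,|d_xH(t,x,\cdot)|^2,\]
and then run a parabolic $L^2\!\to L^\infty$ estimate for the Schr\"odinger operator $\Delta+2\Ricm$ with the Kato smallness controlling the potential --- none of which you articulate. Moreover, your final step (``differentiating the reproducing identity and applying Cauchy--Schwarz'') does not produce a pointwise bound on $|d_xH|$ either, since the $z$-integral then involves $|d_xH(t/2,x,z)|$, which is precisely what one is trying to bound. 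The paper avoids all of this by invoking the Li-Yau type gradient inequality already established in \cite[Proposition 3.3]{C16},
\[e^{-2}\left|d_x H(t,x,y)\right|^2\le \frac{e^2n}{2t} H^2(t,x,y)+H(t,x,y)\left|\frac{\partial}{\partial t} H(t,x,y)\right|,\]
into which (i) and (ii) can be plugged directly to give (iii). This Li-Yau inequality is the essential ingredient your proposal misses, and without it or an equivalent gradient estimate your route to (iii) does not close.
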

\begin{proof} 
The first estimate i) was established in \cite{C16}, see also \cite[Proposition 3.3]{CMT}.  The second estimate ii) is a consequence of i), see e.g.~\cite[Corollary 3.1]{GriderHeat}. The third estimate iii) is a consequence of the Li-Yau inequality \cite[Proposition 3.3]{C16}:
$$e^{-2}\left|d_x H(t,x,y)\right|^2\le \frac{e^2n}{2t} H^2(t,x,y)+H(t,x,y)\left|\frac{\partial}{\partial t} H(t,x,y)\right|,$$
together with i) and ii).
\end{proof}

Let us now recall a couple of results from \cite{CMT} about Kato limit spaces.

\begin{prop}
Any $(X,\dist,\mu,o) \in \katolimits$ is an infinitesimally Hilbertian space satisfying the doubling condition \eqref{eq:doubling} and the local Poincaré inequality \eqref{eq:Poincaré}. Moreover, for any $x \in X$, any $(Y,\dist_Y,\mu_Y,x) \in \Tan_\meas(X,x)$ is a pointed $\RCD(0,n)$ space.
\end{prop}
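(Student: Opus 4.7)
Let $(X,\dist,\mu,o)\in\katolimits$ and fix an approximating sequence $(M_\alpha^n,g_\alpha,o_\alpha)\in \cK(n,f)$ with $(M_\alpha,\dist_{g_\alpha},\mu_\alpha,o_\alpha)\to(X,\dist,\mu,o)$ in the pmGH topology, where $\mu_\alpha$ is a suitable multiple of $\nu_{g_\alpha}$. The plan is to deduce the three assertions in the statement by passing to the limit.

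\smallskip

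\emph{Doubling and Poincaré.} By Proposition \ref{eq:PI_manifolds} applied uniformly in $\alpha$, every $M_\alpha$ satisfies \eqref{eq:doubling} and \eqref{eq:Poincaré} with constants $A(n)$, $\uplambda$ depending only on $n$ on balls up to radius $\sqrt{T}$. Both conditions are well-known to be stable under pmGH convergence (stability of doubling is immediate from the definition; stability of the weak Poincaré inequality under pmGH on doubling spaces is the content of standard results, e.g.\ the stability theorems based on Keith's characterization, as recalled in \cite{HKST}). Hence the same estimates, with the same constants, pass to $(X,\dist,\mu)$.

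\smallskip

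\emph{Infinitesimal Hilbertianity.} Each $(M_\alpha,\dist_{g_\alpha},\nu_{g_\alpha})$ is infinitesimally Hilbertian since its Cheeger energy coincides with the Dirichlet energy, which is quadratic. The uniform doubling and Poincaré inequalities give compactness and Mosco-type convergence of the Cheeger energies along a pmGH-convergent sequence of such spaces (Ambrosio--Honda, Gigli--Mondino--Savaré); in particular, the limiting Cheeger energy $\Ch$ is quadratic and yields a Dirichlet form $H^{1,2}(X,\dist,\mu)$ as described in Section 2.

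\smallskip

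\emph{Tangent cones are $\RCD(0,n)$.} Fix $x\in X$ and let $(Y,\dist_Y,\mu_Y,y)\in\Tan_\meas(X,x)$ be a limit of rescalings $(X,r_\beta^{-1}\dist,\mu(B_{r_\beta}(x))^{-1}\mu,x)$ with $r_\beta\downarrow 0$. Choosing points $x_\alpha\in M_\alpha\to x$ and applying a standard diagonal argument, I would realize $(Y,\dist_Y,\mu_Y,y)$ as the pmGH limit of rescaled Riemannian manifolds
\[
\bigl(M_{\alpha_\beta},r_\beta^{-1}\dist_{g_{\alpha_\beta}},\nu_{g_{\alpha_\beta}}(B_{r_\beta}(x_{\alpha_\beta}))^{-1}\nu_{g_{\alpha_\beta}},x_{\alpha_\beta}\bigr).
\]
The central computation is the scaling of the Kato constant: if $g'=r^{-2}g$ then $H_{g'}(t,\cdot,\cdot)=r^{-n}H_g(t/r^2,\cdot,\cdot)$, $\Ricm_{g'}=r^2\Ricm_g$, $\nu_{g'}=r^n\nu_g$, which after changing variables gives
\[
\kato_t(M^n,r^{-2}g)=\kato_{r^2 t}(M^n,g)\leq f(r^2 t).
\]
Hence for any fixed $t>0$ the rescaled manifolds have Kato constants tending to $0$ as $\beta\to\infty$. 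Thus $(Y,\dist_Y,\mu_Y,y)$ is a pmGH limit of $n$-dimensional closed Riemannian manifolds whose Kato bound degenerates to the identically zero function, which is the limit regime of a nonnegative Ricci lower bound. Invoking the results of \cite{CMT} (the vanishing-Kato limits are $\RCD(0,n)$ spaces, obtained by passing to the limit in the heat-kernel-based Bochner estimate that underlies the Kato theory and combining with the stability of the $\RCD$ condition under pmGH convergence), one concludes that $(Y,\dist_Y,\mu_Y)$ is $\RCD(0,n)$.

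\smallskip

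\emph{Main obstacle.} The first two assertions are essentially stability of properties for which the machinery is classical once the uniform doubling and Poincaré bounds of Proposition \ref{eq:PI_manifolds} are in hand. The delicate step is the $\RCD(0,n)$ conclusion for tangent cones: it relies on identifying the appropriate integrated Bochner/Bakry--Émery inequality encoded by the Kato bound, showing it is sharp enough to degenerate into $\BE(0,n)$ as $f\to 0$, and then using the $\RCD$ stability; concretely, I would quote the corresponding statement established in \cite{CMT}.
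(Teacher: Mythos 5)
The paper does not re-prove this Proposition; it is stated without proof under the heading ``Let us now recall a couple of results from \cite{CMT} about Kato limit spaces,'' so there is no in-paper argument to compare against. Your proposal is nonetheless a reasonable reconstruction of the ingredients that must be behind the \cite{CMT} result, and the overall structure (stability of doubling and PI under pmGH, quadraticity of $\Ch$ via convergence of Cheeger energies, and a scaling argument reducing tangent cones to the vanishing-Kato regime) is sound.

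Two points to tighten. First, your intermediate rescaling formulas are off in the exponents: with $g'=r^{-2}g$ (so $d_{g'}=r^{-1}d_g$), one has $\nu_{g'}=r^{-n}\nu_g$ (not $r^{n}\nu_g$) and $H_{g'}(t,\cdot,\cdot)=r^{n}H_g(r^2t,\cdot,\cdot)$ (not $r^{-n}H_g(t/r^2,\cdot,\cdot)$); as written, plugging your formulas into the definition of $\kato_t$ would give $r^4\kato_{t/r^2}(M,g)$, which contradicts your own stated conclusion. The conclusion $\kato_t(M^n,r^{-2}g)=\kato_{r^2t}(M^n,g)$ is nevertheless correct, and since $f(r_\beta^2 t)\to 0$ this is exactly what is needed. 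Second, the parenthetical explanation for why vanishing-Kato limits are $\RCD(0,n)$ invokes ``stability of the $\RCD$ condition under pmGH convergence,'' which is not quite the right mechanism: the approximating manifolds with small Kato constant are \emph{not} themselves $\RCD(0,n)$, so one cannot simply use $\RCD$ stability. What is actually needed is a quantitative Bochner/Bakry--\'Emery estimate on the manifolds whose error is controlled by the Kato constant, together with lower semicontinuity under Mosco/energy convergence, so that the estimate degenerates to $\BE(0,n)$ in the limit. You flag this correctly as the delicate step, but the citation of $\RCD$ stability should be replaced by a citation of the quantitative (almost-)Bochner machinery in \cite{CMT}. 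With that adjustment, the proposal matches the expected proof strategy.
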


Metric measure spaces satisfying an $\RCD(0,n)$ bound have, in a synthetic sense, non-negative Ricci curvature and dimension less than $n$. We refer to  \cite{GigliLectureNotes} for a survey about their properties.  

From \cite{CMT}, we also know that the following hold.

\begin{prop}
\label{prop:HKconv}
Let $\{(M^n_\alpha, \dist_\alpha, \mu_\alpha,o_\alpha)\} \subset \cK_\meas(n,f,c)$ be converging to $(X,\dist, \mu,o)\in \overline{\cK_\meas(n,f,c)}$ in the pointed measured Gromov-Hausdorff topology. Let $H_\alpha$ be the heat kernel of $(M_\alpha^n,g_\alpha)$ for any $\alpha$. Then $X$ admits a locally Lipschitz heat kernel, that is to say a map $H:(0,+\infty)\times  X \times X \to (0,+\infty)$ such that 
$$\left(e^{-tL}f\right)(x)=\int_X H(t,x,y)f(y)\di\mu(y)$$for any $f\in L^2(X,\mu)$, any $t>0$ and $\mu$-a.e.~$x\in X$. Moreover,  $H$ satisfies the three estimates in Proposition \ref{Prop:heatKe}. Furthermore, the following convergence results hold.
\begin{itemize}
\item For any $t>0$ and $x_\alpha\in M_\alpha\to x\in X$, $y_\alpha\in M_\alpha\to y\in X$,
\begin{equation}\label{CvHeat}
H_\alpha(t,x_\alpha,y_\alpha) \to H(t,x,y) \quad \text{and} \quad \frac{\partial}{\partial t}H_\alpha(t,x_\alpha,y_\alpha) \to \frac{\partial}{\partial t}H(t,x,y).
\end{equation}
\item For any  $t>0$ and $x_\alpha \in M_\alpha \to x \in X$,
\begin{equation}\label{L2CvHeat}
H_\alpha(t,x_\alpha,\cdot) \stackrel{L^2}{\rightarrow} H(t,x,\cdot).
\end{equation}
\end{itemize}
\end{prop}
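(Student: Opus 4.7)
The plan is to extract a candidate limit kernel from the uniform estimates of Proposition \ref{Prop:heatKe}, identify it as the integral kernel of the limit semigroup via Mosco-type convergence of the Cheeger energies, and deduce \eqref{L2CvHeat} from the semigroup identity $\|H(t,x,\cdot)\|_{L^2}^2=H(2t,x,x)$. The first step is to turn the kernel estimates into equicontinuity. The volume estimate \eqref{eq:volestim} together with the normalisation $c\le \mu_\alpha(B_{\sqrt{T}}(o_\alpha))\le c^{-1}$ yields, for every compact $K\subset X$ and every $[\tau,T-\tau]\subset(0,T)$, two-sided bounds $0<v_-\le \nu_{g_\alpha}(B_{\sqrt{t}}(x_\alpha))\le v_+<\infty$ uniformly in $\alpha$ along any $x_\alpha\to x\in K$. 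Feeding these into Proposition \ref{Prop:heatKe} converts (i)--(iii) into uniform pointwise, time-derivative and gradient bounds on $H_\alpha$ over compact regions of $(0,T)\times M_\alpha\times M_\alpha$. Transporting these bounds through the $\eps$-GH isometries shows that, pulled back to $X$, the family $\{H_\alpha\}$ is locally equicontinuous, and an Ascoli-Arzel\`a diagonal argument extracts a locally Lipschitz $H:(0,T)\times X\times X\to(0,\infty)$ along which \eqref{CvHeat} holds on a subsequence; the bounds of Proposition \ref{Prop:heatKe} pass directly to $H$, using that $\nu_{g_\alpha}(B_{\sqrt{t}}(x_\alpha))\to\mu(B_{\sqrt{t}}(x))$ by pmGH convergence.

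Next, I would identify $H$ with the heat kernel of $(X,\dist,\mu)$. Proposition \ref{eq:PI_manifolds} equips the approximating sequence with uniform doubling and Poincaré inequalities, which is the standard framework in which Cheeger energies Mosco-converge along pmGH limits and the associated $L^2$-semigroups converge strongly: $e^{-tL_\alpha}\varphi_\alpha\stackrel{L^2}{\to}e^{-tL}\varphi$ whenever $\varphi_\alpha\stackrel{L^2}{\to}\varphi$. Representing these semigroups by their integral kernels, testing against compactly supported $\varphi_\alpha\stackrel{\cC_c}{\longrightarrow}\varphi$, and using the Gaussian upper bound in (i) as a uniform dominating function, one passes to the limit under the integral sign and obtains
\[
\int_X H(t,x,y)\,\varphi(y)\,\di\mu(y)=\left(e^{-tL}\varphi\right)(x)\quad\text{for $\mu$-a.e.~}x,
\]
so that $H$ is indeed the heat kernel of the limit. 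Uniqueness of the limit then upgrades subsequential pointwise convergence to the full statement \eqref{CvHeat}; the corresponding statement for $\partial_t H_\alpha$ is obtained in parallel fashion, using estimate (ii) and the Ascoli-Arzel\`a argument applied to difference quotients in $t$, combined with parabolic smoothing.

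Finally, the weak $L^2$ convergence $H_\alpha(t,x_\alpha,\cdot)\stackrel{L^2}{\weakto}H(t,x,\cdot)$ follows at once from the pointwise convergence of the previous step and the Gaussian dominating function, while strong $L^2$ convergence reduces to the norm identity $\|H_\alpha(t,x_\alpha,\cdot)\|_{L^2}^2=H_\alpha(2t,x_\alpha,x_\alpha)\to H(2t,x,x)=\|H(t,x,\cdot)\|_{L^2}^2$, which is a diagonal instance of \eqref{CvHeat} already proved. The main obstacle is the identification step: everything hinges on the Mosco, equivalently semigroup, convergence of Cheeger energies along the pmGH limit. In the $\RCD$ framework this is classical; here one combines the uniform doubling and Poincaré inequalities furnished by Proposition \ref{eq:PI_manifolds} with the infinitesimal Hilbertianity of Kato limits established in \cite{CMT}, after which the remaining arguments are extraction plus bookkeeping of the uniform estimates.
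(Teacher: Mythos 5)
The paper itself gives no proof of this proposition; it is stated as a recall from the companion paper \cite{CMT}, where the argument is carried out in detail (see in particular \cite[Appendix E]{CMT}, which the present paper cites elsewhere). Your outline matches that approach in broad strokes: uniform Gaussian, time-derivative, and gradient bounds from Proposition \ref{Prop:heatKe} feed an Ascoli--Arzel\`a extraction; the extracted kernel is identified with that of the limit via convergence of the associated Dirichlet structures; and strong $L^2$ convergence is closed by the semigroup identity $\|H(t,x,\cdot)\|_{L^2}^2 = H(2t,x,x)$ together with \eqref{CvHeat}. Two points deserve sharpening. First, the Mosco (equivalently strong resolvent) convergence of Cheeger energies is genuinely the crux and is not merely ``classical bookkeeping'' in this setting: the spaces here are not a priori $\RCD$, and one needs a real argument for the $\Gamma$-limsup inequality (construction of recovery sequences through the GH-approximation maps, using the infinitesimal Hilbertianity of the approximating manifolds and the limit established in \cite{CMT}); your proposal correctly names this as the main obstacle but is too quick in asserting that uniform doubling plus Poincar\'e settles it. Second, since $\mu_\alpha$ is only a \emph{multiple} of $\nu_{g_\alpha}$, the semigroup identity and the bounds of Proposition \ref{Prop:heatKe} require one to be consistent about whether $H_\alpha$ is normalized against $\mu_\alpha$ or $\nu_{g_\alpha}$; the bounds are scale-invariant so this is harmless, but it should be said explicitly, as otherwise the chain $\|H_\alpha(t,x_\alpha,\cdot)\|_{L^2(\mu_\alpha)}^2 = H_\alpha(2t,x_\alpha,x_\alpha)$ does not close. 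With these caveats acknowledged, the proposal is sound and consistent with the source.
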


As an important consequence, we derive in the next statement a Li-Yau inequality for Kato limit spaces. 

\begin{prop}\label{lem:LY} Consider $(X,\dist, \mu,o)\in \overline{\cK_\meas(n,f,c)}$.  Set $\gamma(t)=\exp\left(8\sqrt{nf(t)}\right)$ for any $t\in(0,T]$. Then for any $x\in X$ and $t\in(0,T]$,  the Li-Yau inequality
\begin{equation}\label{LYX} 
\gamma^{-1}(t)\left| d H(t,x,\cdot )\right|^2-H(t,x,\cdot)\frac{\partial}{\partial t}H(t,x,\cdot)\le \frac{n\gamma(t)}{2t} H^2(t,x,\cdot)\end{equation}
holds $\mu$-a.e.~on $X$.\end{prop}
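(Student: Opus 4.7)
The plan is an approximation argument, passing a pre-limit Li-Yau inequality on manifolds through the pmGH-convergence, by means of the heat-kernel convergence recorded in Proposition \ref{prop:HKconv}.

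First, I would establish the quantitative refinement of Li-Yau directly on every $(M^n_\alpha,g_\alpha) \in \cK(n,f)$: for all $x \in M_\alpha$ and $t\in(0,T]$, the pointwise inequality
$$\gamma^{-1}(t)\,|d_y H_\alpha(t,x,y)|^2 - H_\alpha(t,x,y)\,\partial_t H_\alpha(t,x,y) \le \frac{n\gamma(t)}{2t}\,H_\alpha^2(t,x,y)$$
holds for every $y \in M_\alpha$. This is the Kato-sharp version of the Li-Yau estimate already cited during the proof of Proposition \ref{Prop:heatKe}: the ordinary Li-Yau with a fixed constant $e^2$ is insensitive to the improvement $f(t)\downarrow 0$, but a standard Bochner/Bakry-Émery computation applied to $\log H_\alpha$ and integrated in time yields a correction factor of the form $\exp(c\sqrt{n\,\kato_t(M_\alpha^n,g_\alpha)})$; using $\kato_t\le f(t)$ produces precisely the factor $\gamma(t)=\exp(8\sqrt{nf(t)})$, which degenerates to $1$ as $t\downarrow 0$. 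This is essentially \cite[Proposition 3.3]{C16} applied under the sharp control provided by the Kato bound.

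Next, given $(X,\dist,\mu,o)\in\overline{\cK_\meas(n,f,c)}$, a choice of approximating manifolds $(M^n_\alpha,\dist_\alpha,\mu_\alpha,o_\alpha)\to (X,\dist,\mu,o)$ and points $x_\alpha\in M_\alpha\to x\in X$, I would test the pre-limit inequality against a non-negative $\varphi_\alpha\in C_c(M_\alpha)$ with $\varphi_\alpha\stackrel{C_c}{\to}\varphi$ for an arbitrary non-negative $\varphi\in C_c(X)$, and try to pass to the limit $\alpha\to\infty$ in the integrated version
$$\int_{M_\alpha}\!\!\Bigl[\gamma^{-1}(t)|dH_\alpha(t,x_\alpha,\cdot)|^2 - H_\alpha(t,x_\alpha,\cdot)\partial_t H_\alpha(t,x_\alpha,\cdot)\Bigr]\varphi_\alpha\di\mu_\alpha \le \frac{n\gamma(t)}{2t}\!\int_{M_\alpha}\!\! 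H_\alpha^2(t,x_\alpha,\cdot)\varphi_\alpha\di\mu_\alpha.$$
The right-hand side converges to $\frac{n\gamma(t)}{2t}\int_X H^2(t,x,\cdot)\varphi\di\mu$ by the strong $L^2$-convergence \eqref{L2CvHeat} together with the uniform Gaussian upper bound from Proposition \ref{Prop:heatKe}\,i). The term involving $\partial_t H_\alpha$ converges to $\int_X H(t,x,\cdot)\partial_t H(t,x,\cdot)\varphi\di\mu$ by dominated convergence, using the pointwise convergences \eqref{CvHeat} and the uniform bounds in Proposition \ref{Prop:heatKe}\,i)-ii).

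The main obstacle is the gradient term $\int|dH_\alpha(t,x_\alpha,\cdot)|^2\varphi_\alpha\di\mu_\alpha$, for which neither weak $L^2$-convergence nor lower semicontinuity of energy suffices — we need the exact limit. The way around this is to upgrade the convergence of $H_\alpha(t,x_\alpha,\cdot)$ to strong convergence in energy. This is obtained from the semigroup identity $\Ch\bigl(H(t/2,x,\cdot)\bigr)=-\tfrac{1}{2}\partial_s(e^{-sL}H(t/2,x,\cdot))(x)|_{s=t/2}$, which reduces convergence of Cheeger energies to pointwise convergence of heat kernels and their time derivatives, already provided by \eqref{CvHeat} (such an argument is standard in the setting of \cite{CMT}). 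With strong energy convergence in hand, a localization through $\varphi_\alpha$ combined with the Mosco-type convergence of Cheeger energies yields
$$\int_{M_\alpha}|dH_\alpha(t,x_\alpha,\cdot)|^2\varphi_\alpha\di\mu_\alpha \longrightarrow \int_X |dH(t,x,\cdot)|^2\varphi\di\mu.$$
Combining the three limits gives the integrated version of \eqref{LYX} against every non-negative $\varphi\in C_c(X)$, whence the claimed pointwise $\mu$-a.e. inequality.
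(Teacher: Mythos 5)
Your plan is structurally the same as the paper's: cite the $\gamma(t)$-corrected Li--Yau inequality for manifolds in $\cK(n,f)$ (this is exactly what \cite[Proposition 3.3]{C16} gives, so the paragraph speculating on the Bochner/Bakry--\'Emery derivation of the factor $\exp(8\sqrt{nf(t)})$ is superfluous), set $u_\alpha = H_\alpha(t,x_\alpha,\cdot)$, pass to the limit along a pmGH-convergent sequence. The convergence of the right-hand side and of the $H_\alpha\,\partial_t H_\alpha$-term from Proposition \ref{prop:HKconv} and the Gaussian estimates is fine, and so is the observation that strong energy convergence of $u_\alpha$ follows from the semigroup identity and \eqref{CvHeat}: this is precisely identity \eqref{eq:semigroup} in the paper's proof.

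The gap is in the gradient term. You claim that strong energy convergence, together with ``Mosco-type convergence of Cheeger energies'' and localization by $\varphi_\alpha$, yields the \emph{exact} limit
\[
\int_{M_\alpha}|dH_\alpha(t,x_\alpha,\cdot)|^2\varphi_\alpha\di\mu_\alpha\longrightarrow\int_X|dH(t,x,\cdot)|^2\varphi\di\mu.
\]
This would require weak-$*$ convergence of the energy measures $|du_\alpha|^2\di\mu_\alpha$ against $C_c$ test functions, which does \emph{not} follow from convergence of total Cheeger energies plus strong $L^2$-convergence of $u_\alpha$ alone; the ``strong energy $\Rightarrow$ strong $L^2$ of gradients'' theorems (Ambrosio--Honda type) that would close this gap are established for $\RCD$ spaces, and a generic element of $\katolimits$ is not known to be $\RCD$. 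The paper uses a cheaper ingredient that you omit: the pointwise gradient bound of Proposition \ref{Prop:heatKe}.iii) shows $\{|du_\alpha|\}$ is locally bounded in $L^\infty$, so that \cite[Proposition E.7]{CMT} (whose hypothesis is precisely this $L^\infty$ bound) gives $|du_\alpha|\Weakto{L^2}|du|$. This is weaker than your claim but already sufficient, because the gradient sits on the \emph{favorable} side of the inequality: one only needs $\liminf_\alpha\int|du_\alpha|^2\varphi_\alpha\ge\int|du|^2\varphi$ for nonnegative $\varphi$, which follows from weak lower semicontinuity of the weighted $L^2$-norm under $|du_\alpha|\Weakto{L^2}|du|$. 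So the concrete fix is to replace the Mosco step by the $L^\infty$ gradient bound and \cite[Proposition E.7]{CMT}, and to note that lower semicontinuity, not exact convergence, is all the gradient term requires.
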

\proof Let $\{(M^n_\alpha, \dist_\alpha, \mu_\alpha,o_\alpha)\} \subset \cK_\meas(n,f,c)$ be converging to $(X,\dist, \mu,o)$ in the pointed measured Gromov-Hausdorff topology.  By \cite[Proposition 3.3]{C16}, for any $x,y\in M_\alpha$ and $t\in(0,T]$,
\begin{equation}\label{eq:LYXa}
\gamma^{-1}(t)\left|d_yH_\alpha(t,x,y)\right|^2-H_\alpha(t,x,y)\frac{\partial}{\partial t}H_\alpha(t,x,y)\le \frac{n\gamma(t)}{2t} H_\alpha^2(t,x,y).
\end{equation}
Take $x_\alpha\in M_\alpha\to x\in X$ and set $u_\alpha(y)=H_\alpha(t,x_\alpha,y)$ for any $y \in M_\alpha$ and any $\alpha$.  The $L^2$ heat kernel convergence \eqref{L2CvHeat} yields $$u_\alpha\stackrel{L^2}{\rightarrow} u:=H(t,x,\cdot).$$
Moreover,  the semi-group property implies
\begin{equation}\label{eq:semigroup}
\int_{M_\alpha} |du_\alpha|^2\di\nu_{g_\alpha}=\int_{M_\alpha} u_\alpha\Delta_{g_\alpha}u_\alpha\di\nu_{g_\alpha}=-\frac{1}{2}\frac{\partial}{\partial t}H_\alpha(2t,x_\alpha,x_\alpha)=-\frac{\partial H_\alpha}{\partial t}(2t,x_\alpha,x_\alpha),
\end{equation}
hence by Proposition \ref{Prop:heatKe}.ii) the sequence $\{u_\alpha\}$ is bounded in energy, hence $u_\alpha\stackrel{E}{\weakto} u$ by definition. Since the semi-group property also implies \eqref{eq:semigroup} with $u$,  $H$ and $x$ in place of $u_\alpha$, $H_\alpha$ and $x_\alpha$ respectively,  the convergence \eqref{CvHeat} yields $\lim_{\alpha} \| du_\alpha\|_{L^2}=\Ch(u),$ hence by definition $u_\alpha\stackrel{E}{\rightarrow} u.$ Proposition \ref{Prop:heatKe}.iii)  implies that the sequence $\{|du_\alpha|\}$ is locally bounded in $L^\infty$ hence
 with \cite[Proposition E.7]{CMT} we can conclude that
$$|du_\alpha|\stackrel{L^2}{\weakto} |du|.$$ 
This convergence,  together with \eqref{CvHeat} and \eqref{eq:LYXa},  implies \eqref{LYX}.
\endproof

\begin{rem}\label{rem:mieux} If there exists $\tau \in (0,T]$ such that
$$\lim_{\alpha} \mbox{k}_\tau(M_\alpha,g_\alpha)=0,$$ 
then for any $x\in X$ and $t\in (0,\tau]$,  the Li-Yau inequality
$$\left|dH(t,x,\cdot)\right|^2-H(t,x,\cdot)\frac{\partial}{\partial t}H(t,x,\cdot)\le \frac{n}{2t} H^2(t,x,\cdot)$$ holds $\mu$-a.e.~on $X$.
\end{rem}

\section{Almost splittings maps and consequences of GH-closeness on functions}

In this section, we define $(k,\eps)$-splitting maps on Kato limits and prove some relevant properties.  Such maps were introduced in \cite{ChCo96,C97,ChCo97} for the study of Ricci limit spaces and extensively used later in the study of limit spaces and $\RCD(K,N)$ spaces, see for instance \cite{CheegerNaber,CJN,Bamler,BPS}. 

From now on, for any positive integer $k$, we let $\cM_k(\setR)$ be the space of $k\times k$ matrices with real entries, $\cS_k(\setR)\subset \cM_k(\setR)$ be the subspace made of symmetric matrices, and we denote by $\|\cdot\|$ the matrix norm induced by the Euclidean norm $|\cdot|$, meaning that $\|A\|^2:=\sup\{ ^t(A \xi)A \xi  \, : \, \text{$\xi \in \setR^k$ such that $^t\xi \xi=1$}\}$ for any $A \in \cM_k(\setR)$.  Then the following holds.

\begin{lemma}\label{lem:GS}
Assume that $A \in \cS_k(\setR)$ is positive definite. Then there exists a unique lower triangular matrix $T \in \cM_k(\R)$ such that 
\begin{equation}\label{eq:prox1}T A {}^t T = \Id_k.\end{equation}
Moreover, if there exists $\eps \in (0,1/2)$ such that $A \in \cS_k(\setR)$ satisfies
\begin{equation}\label{eq:prox}
\|A-\Id_k\|< \eps, 
\end{equation}
then for some $C_k$ depending only on $k$, the matrix $T$ satisfies
\begin{equation}\label{eq:prox3}
\| T-\Id_k\| < C_k \eps.
\end{equation}\end{lemma}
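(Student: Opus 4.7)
The plan is to derive the first claim from the Cholesky decomposition of $A$ and the second from the inverse function theorem applied to the associated map.

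For the existence and uniqueness statement, I would first rewrite $T A \, {}^tT = \Id_k$ as $A = S \, {}^tS$ with $S := T^{-1}$ (which is lower triangular whenever $T$ is). Since $A$ is symmetric positive definite, this is the Cholesky decomposition of $A$, and classical linear algebra, equivalently the Gram--Schmidt orthonormalization of $(e_1, \ldots, e_k)$ with respect to the inner product $(u,v) \mapsto {}^t u A v$, produces a unique lower triangular $S$ with strictly positive diagonal. Its inverse $T$ is then lower triangular as well, with positive diagonal. The uniqueness clause of the lemma is implicitly normalized by the positive-diagonal convention, which is the natural one in view of the regime $T \approx \Id_k$ considered in the second part.

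For the quantitative bound, I would apply the inverse function theorem to
$$\Phi(S) := S \, {}^tS,$$
viewed as a smooth map from the open cone of lower triangular matrices in $\cM_k(\R)$ with positive diagonal into the cone of symmetric positive definite matrices. One has $\Phi(\Id_k) = \Id_k$, and the differential at $\Id_k$ reads $d\Phi_{\Id_k}(H) = H + {}^tH$. Restricted to lower triangular $H$, this linear map onto $\cS_k(\R)$ is a bijection: the diagonal entries of $H + {}^tH$ recover $2 H_{ii}$, and its entries $(i,j)$ with $j < i$ recover $H_{ij}$. Hence $\Phi$ is a local $\cC^\infty$-diffeomorphism near $\Id_k$, and its inverse is Lipschitz on a neighborhood, with a constant $C'_k$ depending only on $k$; for $\eps \in (0, 1/2)$ small enough, condition \eqref{eq:prox} therefore forces $\|S - \Id_k\| \le C'_k \eps$.

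To conclude, I would pass from $S$ to $T = S^{-1}$ via the elementary identity $T - \Id_k = S^{-1}(\Id_k - S)$. Up to shrinking $\eps$ so that $C'_k \eps < 1/2$, the Neumann series gives $\|S^{-1}\| \le 2$, whence $\|T - \Id_k\| \le 2 C'_k \eps$, and $C_k := 2 C'_k$ is the desired constant. The main (and only mildly delicate) step of the argument is verifying that $d\Phi_{\Id_k}$ is a linear isomorphism between lower triangular and symmetric $k \times k$ matrices; this is elementary linear algebra, but it is precisely what yields the linear (rather than merely Hölder) dependence of $T - \Id_k$ on $A - \Id_k$ required by \eqref{eq:prox3}.
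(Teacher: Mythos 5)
The paper itself gives no real proof of this lemma: it only records, in the remark that immediately follows, that ${}^tT$ is obtained from the Gram--Schmidt process. So there is no fixed argument to compare against, and your write-up is essentially a full expansion of what the authors left implicit. Your route is a natural one: reducing $TA\,{}^tT=\Id_k$ to the Cholesky factorization $A=S\,{}^tS$ with $S=T^{-1}$ handles existence and uniqueness, and you are right to flag that uniqueness is only clean under the positive-diagonal normalization (without it, any sign matrix $D$ with $D^2=\Id_k$ gives another solution $DT$); the paper's statement is slightly imprecise on this point, but the normalization is the one forced anyway by $\|T-\Id_k\|<C_k\eps<1/2$. The computation of $d\Phi_{\Id_k}(H)=H+{}^tH$ and the verification that this is a linear isomorphism from lower triangular onto symmetric matrices are both correct and are indeed the heart of why the dependence is linear in $\eps$.

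There is, however, a small but real gap at the end. The inverse function theorem yields the Lipschitz estimate $\|S-\Id_k\|\le C_k'\eps$ only on a possibly tiny neighborhood of $\Id_k$, and your phrase ``up to shrinking $\eps$'' acknowledges this. But the lemma asserts the bound for every $\eps\in(0,1/2)$ with a constant $C_k$ depending only on $k$, so you must also cover the range where $\eps$ is bounded away from $0$. The fix is cheap and does not require Neumann series at all: for any $A$ with $\|A-\Id_k\|<1/2$, the eigenvalues of $A$ lie in $(1/2,3/2)$, hence $\|A^{-1}\|<2$ and therefore $\|T\|=\|S^{-1}\|=\|A^{-1}\|^{1/2}<\sqrt{2}$, so $\|T-\Id_k\|<1+\sqrt{2}$ unconditionally. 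Combining this a priori bound with the IFT estimate on the small neighborhood (or, alternatively, invoking the global smoothness of the Cholesky map on the positive-definite cone together with compactness of $\{\|A-\Id_k\|\le 1/2\}$) yields a constant $C_k$ valid on the whole range, which is what the statement requires. With that one sentence added, the proof is complete.
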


\begin{rem}
The matrix ${}^tT$ is obtained by applying the Gram-Schmidt process. \end{rem}

\subsection{Almost splitting maps} For any infinitesimally Hilbertian metric measure space $(X,\dist,\mu)$, whenever a map $u=(u_1,\ldots,u_k) : B \to \setR^k$ satisfies $u_i \in H^{1,2}(B,\dist,\mu)$ for any $i \in \{1,\ldots,k\}$ we define the Gram matrix map of $u$ as the $\cS_k(\setR)$-valued map
\[
G_u:=[ G_{i,j}] \qquad \text{where $G_{i,j} := \langle d u_i, d u_j\rangle$ for any $1\le i,j \le k$,}
\]
and we set $|d G_u|^2:=\sum_{1\le i,j \le k} | d G_{i,j}|^2$. Note that if $T$ is a lower triangular $k \times k$ matrix and $\tilde{u}:=T\circ u$, then
\begin{equation}\label{eq:gramtilde}
G_{\tilde u} = T G_u {}^tT \qquad \text{$\mu$-a.e.~in $B$.}
\end{equation}

\begin{D}
Let $(X,\dist,\mu,o) \in \overline{\mathcal{K}_\meas(n,f,c)}$. Let $B \subset X$ be a ball of radius $r>0$, $k \in \{1,\ldots,n\}$ and $\eps>0$.
\begin{enumerate}
\item We call $(k,\eps)$-splitting of $B$ any harmonic map $u : B \to \setR^k$ such that $\|du\|_{L^\infty(B)}\le 2$ and
\begin{equation}\label{eq:split}
\fint_B \|G_u - \Id_k\| \di \mu < \eps.
\end{equation}
\item We say that a $(k,\eps)$-splitting $u$ of $B$ is reinforced if 
\[ \fint_B (\|G_u - \Id_k\| + r^2 |d G_u|^2) \di \mu < \eps.\]
\item We say that a (possibly reinforced) $(k,\eps)$-splitting $u$ of $B$ is balanced if
\[
\fint_B G_u \di \mu = \Id_k.
\]
\end{enumerate}
\end{D}
\begin{rem}
Assumption $\|du\|_{L^\infty(B)}\le 2$ implies
\[ \sup_{1 \le i,j \le k} |G_{i,j}(y)|\le 4 \qquad \text{for $\mu$-a.e.~$y \in B$}.\]
\end{rem}

\begin{rem}\label{rem:GSsplitting}
Condition \eqref{eq:split} implies that the symmetric matrix $A_u=\fint_{B}G_u\di \mu$ is \mbox{$\eps$-close} to the identity $\Id_k$. As a consequence of Lemma \ref{lem:GS} applied with $A=A_u$, for any $\eps \in (0,1/2)$  and any \mbox{$(k,\eps)$-splitting} $u: B \to \R^k$ there exists a lower triangular matrix $T$ with $\|T\|\leq 1+C_k\eps$ such that the map \mbox{$\tilde u = T \circ u: B\to \R^k$} satisfies
\begin{equation}\label{eq:GSsplitting}\fint_{B} G_{\tilde u}\di \mu= \Id_k \qquad \text{and} \qquad \fint_{B} \|G_{\tilde u}- \Id_k \|\di \mu < (1+C_k\eps)^2\eps.\end{equation}

\end{rem}

\begin{rem}\label{rem:reinforced}
The definition of reinforced splitting is just a technical convenience. Indeed,  by means of Bochner's formula and of the Hessian bound given in \cite[Proposition 3.5]{CMT}, one can prove that any splitting on a Riemannian manifold with a Kato bound is a reinforced splitting on a ball with smaller radius, and then show that this property for manifolds with a uniform Kato bound is stable under pointed measured Gromov-Hausdorff convergence. This implies, in particular, that if $u$ is a reinforced splitting of a ball $B$ in a Kato limit space, then the coefficients of the Gram matrix map $G_u$ all belong to $H^{1,2}_{loc}(B,\dist,\mu)$.
\end{rem}

The next result provides an improvement of the local Lipschitz constant for splittings.

\begin{prop}
\label{Prop:LipMieux}
Let $(M^n,g)$ be a closed Riemaniann manifold,  $B \subset M$ a ball of radius $r>0$, $k \in \{1,\ldots,n\}$, $\eta \in (0,1)$, $L>1$ and $u:B\to \R^k$ a harmonic map such that $\|du\|_{L^\infty(B)}\le L$.  Let $G_u$ be the Gram matrix map of $u$. Assume that there exists $\delta \in (0,1/16n]$ such that
$$\kato_{r^2}(M^n,g) < \delta, \quad \fint_{B}\| G_u -\Id_k\| \di \vol_g < \delta.$$
Then  there exists $C(n,\eta,L)>0$ such that $\|du\|_{L^\infty(\eta B)}\le 1+C(n,\eta,L)\delta$.
\end{prop}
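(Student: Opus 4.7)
The plan reduces the vector-valued statement to scalar harmonic functions via the identity $\|du\|_{\mathrm{op}}^2(x) = \sup_{\xi \in \bS^{k-1}} v_\xi(x)$, where $v_\xi := |d(\xi \cdot u)|^2$ and $\xi \cdot u := \sum_i \xi_i u_i$ is harmonic with $|d(\xi \cdot u)| \le L$ on $B$. The pointwise identity $v_\xi(y) - 1 = \langle (G_u(y) - \Id_k)\xi, \xi\rangle$ gives $|v_\xi - 1| \le \|G_u - \Id_k\|$, so the hypothesis yields $\fint_B |v_\xi - 1| \di \nu_g < \delta$; and Bochner's formula applied to $\xi \cdot u$ gives
\[
\tfrac{1}{2}\Delta v_\xi \;=\; |\Hess(\xi \cdot u)|^2 + \Ric(\nabla(\xi \cdot u), \nabla(\xi \cdot u)) \;\ge\; -\Ricm\, v_\xi \quad \text{on } B,
\]
so that $v_\xi$ is a weak subsolution of $\Delta + 2\Ricm$ on $B$. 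It suffices to bound $v_\xi$ pointwise on $\eta B$, uniformly in $\xi$.

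The main step combines this subsolution property with Duhamel's formula for the heat semigroup $P_t$ of $(M^n, g)$, after localization. Fix $\sigma \in (\eta, 1)$ and a cutoff $\phi \in \cC^\infty_c(B)$ with $\phi \equiv 1$ on $B' := \sigma B$, and set $\tilde v := \phi v_\xi$ extended by $0$ to $M$. The distributional inequality $\Delta \tilde v \ge -2\Ricm \tilde v - F$, with $F := -(\Delta\phi) v_\xi - 2\langle d\phi, d v_\xi\rangle$ supported on $B \setminus B'$, yields via Duhamel at time $t = \tau r^2$, for any $\tau \in (0, 1]$ and $x_0 \in \eta B$,
\[
v_\xi(x_0) \;\le\; P_t \tilde v(x_0) \;+\; 2\int_0^t P_s(\Ricm \tilde v)(x_0)\di s \;+\; \int_0^t P_s F(x_0)\di s.
\]

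Three estimates close the argument. Since $\tilde v \le L^2 \un_B$ and $t \le r^2$, the Kato assumption bounds the Ricci integral by $2L^2 \kato_t(M^n, g) \le 2 L^2 \delta$. For the main term, mass conservation $\int_M H(t, x_0, \cdot) \di \nu_g = 1$, the Gaussian upper bound of Proposition \ref{Prop:heatKe}, and the doubling condition of Proposition \ref{eq:PI_manifolds} produce $\|H(t, x_0, \cdot)\|_{L^\infty(B)} \le C(n, \eta, \tau) / \nu_g(B)$, whence
\[
P_t \tilde v(x_0) \;\le\; 1 + \|H(t, x_0, \cdot)\|_{L^\infty(B)} \int_B |v_\xi - 1| \di \nu_g \;\le\; 1 + C(n, \eta, \tau) \delta.
\]
For the cutoff term, interior Hessian estimates for harmonic functions under the Kato bound --- obtained from Bochner as in \cite[Prop.~3.5]{CMT} applied on an intermediate ball --- yield $v_\xi + r|dv_\xi| \le C(n, \sigma, L)$ on $B'$, so $|F| \le C(n, \sigma, L)/r^2$; the Gaussian decay of $H(s, x_0, \cdot)$ at distance $\ge (\sigma - \eta) r$ from $x_0$ then makes this third term at most $C(n, \eta, L)\tau \exp(-c(\eta)/\tau)$.

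The principal obstacle is to render the residual cutoff term genuinely of order $\delta$ rather than a fixed small constant, so as to match the sharp leading $1$ on the right-hand side. A fixed choice of $\tau$ produces an error $E_0(n,\eta,L)$ independent of $\delta$, while directly imposing $\tau\,e^{-c/\tau}\le\delta$ introduces logarithmic factors in $\delta$ through $C(n,\eta,\tau)$. I would resolve this by iterating the one-shot estimate on a shrinking sequence of concentric sub-balls: each iterate feeds a smaller bound $L_k$ into the next, sharpening the relevant interior Hessian estimate and hence $E_k$; after finitely many iterations, whose number is controlled by $n$, $\eta$ and $L$, the accumulated residual is absorbed into $C(n,\eta,L)\delta$, yielding $\|du\|_{\mathrm{op}} \le 1 + C(n, \eta, L)\delta$ on $\eta B$.
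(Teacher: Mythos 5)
Your reduction to scalar harmonic functions $u_\xi=\langle\xi,u\rangle$ and the Bochner-plus-Duhamel structure follow the same route as the paper, which adapts \cite[Proposition 7.5]{CMT}; the first two terms of your Duhamel expansion are handled correctly and give $O(\delta)$ errors. The gap is in the cutoff term $\int_0^t P_sF\,\di s$. Bounding $|F|\le C(n,\sigma,L)/r^2$ pointwise and invoking only Gaussian decay at distance $(\sigma-\eta)r$ produces a residual $E_0=C(n,\eta,L)\,\tau e^{-c/\tau}$ which is a fixed positive constant, not $O(\delta)$. The iteration you propose cannot repair this: feeding $L_k\downarrow 1$ into $E_k=C(n,\eta,L_k)\,\tau e^{-c/\tau}$ does not send $E_k$ to zero, because $C(n,\eta,\cdot)$ stays bounded below (it reflects a rough pointwise Hessian bound, not a quantity that vanishes as $L\to1$), so after any finite number of steps the accumulated residual stabilizes near a positive value independent of $\delta$ and you never reach $1+C\delta$.

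The missing mechanism is exactly the one the paper's proof flags: integrate by parts in $y$ to move the derivative of $v_\xi$ onto the heat kernel. Writing
$$\int_M H(s,x_0,y)\langle d\phi, dv_\xi\rangle\,\di\nu_g = -\int_M v_\xi\bigl(\langle d_y H,d\phi\rangle + H\,\Delta\phi\bigr)\di\nu_g$$
and substituting $v_\xi=1+(v_\xi-1)$, the contribution of the constant $1$ cancels (since $\int_M\langle d_yH,d\phi\rangle\,\di\nu_g = -\int_M H\,\Delta\phi\,\di\nu_g$), so the entire cutoff error reduces to integrals of $(v_\xi-1)$ against $\langle d_yH,d\phi\rangle$ and $H\,\Delta\phi$, supported in $B\setminus B'$. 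These are controlled by $\int_B|v_\xi-1|\,\di\nu_g<\delta\,\nu_g(B)$ together with the Gaussian upper bound and, crucially, the heat-kernel gradient estimate iii) of Proposition~\ref{Prop:heatKe} evaluated at distance $\ge(\sigma-\eta)r$ from $x_0$; after integrating in $s\in(0,\tau r^2)$ this yields $C(n,\eta)\,\delta$. This is precisely the improvement the paper signals: in \cite[Proposition 7.5]{CMT} the corresponding term was only bounded by $C\delta^{1/2}$, and estimate iii) upgrades it to $C\delta$.
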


\begin{proof}
In the proof of \cite[Proposition 7.5]{CMT}, use the gradient bound iii) in Proposition \ref{Prop:heatKe} to get $II \le C \delta$ instead of $II \le C \delta^{1/2}$. Apply the resulting statement to any  function $u_\xi := \langle \xi,u\rangle$ with $\xi \in \setR^k$ satisfying $|\xi|=1$, and conclude by taking $\xi=du/|du|$ pointwise.
\end{proof}

\subsection{GH-closeness and harmonic functions} In the setting of uniform lower Ricci bounds, existence of almost splitting maps is closely related to mGH-closeness of a ball to a Euclidean ball. We show below that the same relation actually holds for Kato limit spaces.

Thoughout this subsection, we let $k \in \{1,\ldots,n\}$ be fixed. We denote by $\|\cdot\|_1$ the $L_{1,1}$ matrix norm, namely $\|M\|_1=\sum_{i,j=1}^k |m_{i,j}|$ for any $M \in \cM_k(\setR)$. Note that $\|\cdot\|\leq \|\cdot\|_1$.

\begin{theorem}\label{MetaThm}
For all $\eps, \eta,\lambda \in (0,1)$ such that $\lambda < \eta$ there exists $\nu$ depending only on $\eps, \eta, \lambda,n, f,c$ such that if $(X,\dist, \mu, o), (X', \dist', \mu',o') \in \overline{\cK_\meas(n,f,c)}$, $x \in X$, $x' \in X'$ and $r \in (0, \sqrt{T}],$ are such that 
$$\dmGH(B_r(x),B_r(x'))< \nu r,$$
if $h: B_r(x) \to \R^k$ is a harmonic function satisfying $\|dh\|_{L^\infty(B_r(x))}\le L$ for some $L>1$, then there exists a harmonic function $h': B_{\eta r}(x') \to \R^k$ satisfying $\|dh'\|_{L^\infty(B_{\eta r}(x'))}\le L C(n,\eta)$ for some $C(n,\eta)\geq 1$ and:
\begin{enumerate}
\item $\|h'\circ \Phi - h\|_{L^{\infty}(B_{\eta r}(x))}< \eps r,$ where $\Phi$ is a $(\nu r)$-GH isometry between $B_r(x)$ and $B_r(x')$;
\item for all $s \in [\lambda r,\eta r]$
$$\left\| \fint_{B_s(x)} G_h \di\mu - \fint_{B_s(x')}G_{h'}\di\mu' \right\| < \eps,$$
\item for all $A\in \cM_k(\R^k)$ and $s \in [\lambda r,\eta r]$, 
\[
\left| \fint_{B_{s}(x)}\|G_h-A\|_1 \di \mu - \fint_{B_{s}(x')}\|G_{h'}-A\|_1\di \mu'\right| \le  \eps.
\]

\end{enumerate}
\end{theorem}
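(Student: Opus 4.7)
The plan is a compactness-contradiction argument combined with stability of harmonic functions under pmGH convergence. Suppose the statement fails for some fixed $\eps_0, \eta_0, \lambda_0$ with $\lambda_0 < \eta_0$. Then for each $\alpha \ge 1$ there exist Kato-limit spaces $(X_\alpha,\dist_\alpha,\mu_\alpha,o_\alpha), (X'_\alpha,\dist'_\alpha,\mu'_\alpha,o'_\alpha) \in \overline{\cK_\meas(n,f,c)}$, points $x_\alpha, x'_\alpha$, radii $r_\alpha \in (0,\sqrt{T}]$, and harmonic maps $h_\alpha \colon B_{r_\alpha}(x_\alpha)\to \R^k$ with $\|dh_\alpha\|_{L^\infty}\le L$ such that $\dmGH(B_{r_\alpha}(x_\alpha),B_{r_\alpha}(x'_\alpha))\le r_\alpha/\alpha$, yet no harmonic $h'_\alpha$ on $B_{\eta_0 r_\alpha}(x'_\alpha)$ with Lipschitz bound $L\, C(n,\eta_0)$ satisfies any one of (1), (2), (3) with $\eps=\eps_0$. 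The uniform doubling from Proposition \ref{eq:PI_manifolds} yields pmGH-precompactness of both sequences; a diagonal extraction together with $r_\alpha\to r \in (0,\sqrt{T}]$ gives convergence $(X_\alpha,x_\alpha)\to (Y,y)$ and $(X'_\alpha,x'_\alpha)\to (Y',y')$, with the mGH-closeness hypothesis identifying $B_r(y)$ isometrically with $B_r(y')$.

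Next, after normalizing $h_\alpha(x_\alpha)=0$, the uniform Lipschitz bound and an Arzelà--Ascoli argument adapted to varying spaces produce a subsequential uniform limit $h\colon B_r(y)\to \R^k$ with $\Lip(h)\le L$. The convergence results for harmonic functions in \cite[Section 1.4]{CMT} ensure that $h$ is harmonic and that $h_\alpha \stackrel{\mE}{\to} h$ strongly in energy. To build the $h'_\alpha$, fix $\eta_1\in(\eta_0,1)$, extend $h$ to a globally Lipschitz function on $Y'$ (identifying $B_{\eta_1 r}(y)$ with $B_{\eta_1 r}(y')$), approximate this extension by Lipschitz functions $\psi_\alpha$ on $X'_\alpha$ converging $\cC_c$-uniformly with uniform Lipschitz bound, and set $h'_\alpha$ to be the unique weak solution of the Dirichlet problem $L'_\alpha h'_\alpha = 0$ on $B_{\eta_1 r_\alpha}(x'_\alpha)$ with boundary data $\psi_\alpha$. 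Stability of the Dirichlet problem under pmGH convergence, which follows from the uniform PI structure via Mosco convergence of localized Cheeger energies, yields $h'_\alpha\to h'$ strongly in $H^{1,2}$ and uniformly on compact subsets of $B_{\eta_1 r}(y')$, with $h'$ harmonic and equal to $h$ on $\partial B_{\eta_1 r}(y')$; uniqueness forces $h'=h$. Standard interior gradient estimates for harmonic functions on PI-spaces provide $\|dh'_\alpha\|_{L^\infty(B_{\eta_0 r_\alpha}(x'_\alpha))} \le L\, C(n,\eta_0)$, controlled through $\eta_1$.

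The three conclusions now follow by contradiction. For (1), the two uniform convergences $h_\alpha\to h$ on $B_{\eta_0 r}(y)$ and $h'_\alpha\to h$ on $B_{\eta_0 r}(y')$ combine, via the identification of these balls, to give $\|h'_\alpha\circ \Phi_\alpha - h_\alpha\|_{L^\infty(B_{\eta_0 r_\alpha}(x_\alpha))} \to 0$, contradicting (1) for large $\alpha$. For (2), strong $H^{1,2}$-convergence of both $h_\alpha$ and $h'_\alpha$ to $h$, together with the uniform $L^\infty$-bound on their gradients, forces the Gram matrix entries to converge in $L^1$ on balls, so both $\fint_{B_s(x_\alpha)} G_{h_\alpha}\di\mu_\alpha$ and $\fint_{B_s(x'_\alpha)} G_{h'_\alpha}\di\mu'_\alpha$ tend to $\fint_{B_s(y)} G_h \di\mu_Y$; continuity of $s\mapsto \mu(B_s)$ (a consequence of the doubling condition \eqref{eq:doubling}) upgrades this to uniform convergence for $s\in[\lambda_0 r,\eta_0 r]$, contradicting (2). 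The same reasoning applied to the function $\|G-A\|_1$, with equicontinuity in $A$ on the bounded set $\{A\in\cM_k(\R):\|A\|\le 4\}$ which is the only range relevant in view of the Lipschitz bound, produces (3).

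The main obstacle in this plan is the stability of the Dirichlet problem under pmGH convergence on Kato limits, which requires Mosco convergence of the localized Cheeger energies; this follows from the uniform doubling \eqref{eq:doubling} and Poincaré inequality \eqref{eq:Poincaré}, but the handling of boundary traces demands some care. A secondary technical point is the construction of the boundary data $\psi_\alpha$ with the correct convergence properties, which is done by Lipschitz extension of $h$ slightly beyond $\overline{B_{\eta_1 r}(y)}$ followed by standard approximation along the pmGH-converging sequence.
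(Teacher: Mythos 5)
Your overall strategy---compactness plus contradiction, pass to pmGH limits $Y \cong Y'$ on the common ball, extract a uniform limit $h$ of the $h_\alpha$, then manufacture approximating harmonic functions $h'_\alpha$ on $X'_\alpha$ converging to the same $h$---is in essence the paper's strategy: the paper also reduces Theorem~\ref{MetaThm} to a contradiction argument feeding into a convergence statement (Theorem~\ref{preMetaThm}) for harmonic functions along a pmGH-converging sequence, plus a lemma (Lemma~\ref{lem:prep_to_meta}) handling convergence of Gram-matrix averages. Where you differ is in the construction of the $h'_\alpha$: you solve Dirichlet problems with transplanted boundary data and invoke Mosco convergence of the localized Cheeger energies, whereas the paper simply cites the already-established \cite[Proposition E.11]{CMT}. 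Your route is more self-contained but absorbs the extra burden of boundary traces and Mosco stability under pmGH convergence; the paper's route repackages the same difficulty into a result it has already proved. The Hessian estimate \cite[Proposition 3.5]{CMT} is needed on your side too (to get uniform $W^{2,2}$ control before passing to $L^2$-strong convergence of $|dh_\alpha|$), and you do not name it, but that is a bookkeeping remark rather than a gap.

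There is, however, a genuine gap in how you pass to conclusions (2) and (3). You claim that strong $H^{1,2}$-convergence plus the uniform gradient bound ``forces the Gram matrix entries to converge in $L^1$ on balls'' and that ``continuity of $s\mapsto\mu(B_s)$ (a consequence of the doubling condition) upgrades this to uniform convergence for $s\in[\lambda_0 r,\eta_0 r]$.'' This is exactly the delicate point. First, continuity of $s\mapsto\mu(B_s)$, equivalently $\mu(\partial B_s)=0$ for every $s$, is \emph{not} a consequence of the doubling condition alone: doubling gives only a monotone function with at most countably many jumps, so you do not get convergence of $\fint_{B_s(o_\alpha)}$-averages at every fixed $s$ without extra input. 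Second, even granting $\mu(\partial B_s)=0$, passing strong $L^2$-convergence of $|dh_\alpha|$ through a nonlinear expression of the form $\|G-A\|_1$ and through the indicator $\mathbf 1_{B_s}$ on spaces whose measures are themselves varying requires a uniform argument, precisely because the indicator is not continuous. This is what Lemma~\ref{lem:prep_to_meta} in the paper is for: it mollifies $u_\alpha,v_\alpha$ by ball-averages $u_{\alpha,\gamma}$, uses the $L^2$ pseudo-Poincaré inequality together with the uniform energy bound to control the mollification error uniformly in $\alpha$, obtains equicontinuity of the mollified functions from doubling, and only then passes to the uniform limit on a fixed ball. Without that (or an equivalent) device, your step from $L^2$-convergence of $|dh_\alpha|$ to convergence of $\fint_{B_s}\|G_{h_\alpha}-A\|_1\,d\mu_\alpha$ at fixed $s$ is not justified. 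Your remark on the relevant range of $A$ also needs the observation that for $\|A\|$ exceeding the $L^\infty$-bound on $G_h$ and $G_{h'}$, the difference in (3) reduces to the difference in (2), which makes the restriction to a bounded set of $A$ legitimate; as written the equicontinuity claim alone does not cover arbitrary $A$.
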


The previous is a consequence of the analysis made in \cite[Appendix A]{CMT}.  For the sake of completeness, we provide a proof in Appendix \ref{app:meta}.

Theorem \ref{MetaThm} has the following direct consequence about existence of reinforced almost splittings.

\begin{prop}
\label{prop:ExiSplit}
For any $\eps, \eta \in (0,1)$ there exists $\delta >0$ depending on $n,f,c, \eps$ and $\eta$ such that if $(X,\dist, \mu,o)\in \katolimits$, $x\in X$ and  $r \in (0,\sqrt{T}]$ satisfy
$$\dmGH(B_r(x),\mathbb{B}_r^k)< \delta r,$$
then there exists a reinforced $(k,\eps)$-splitting of $B_{\eta r}(x)$. 
\end{prop}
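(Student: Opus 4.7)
The plan is to use Theorem \ref{MetaThm} to transfer the identity map $\mathbb{B}_r^k \to \R^k$ to a harmonic map on a slightly larger ball $B_{\eta_1 r}(x)$ for some intermediate scale $\eta_1 \in (\eta,1)$, perform a Gram--Schmidt orthogonalization to produce a balanced almost-splitting, and finally apply Proposition \ref{Prop:LipMieux} together with the Bochner argument of Remark \ref{rem:reinforced} on an approximating sequence of Riemannian manifolds to upgrade the output to a reinforced $(k,\eps)$-splitting on $B_{\eta r}(x)$.

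Concretely, the identity map $h = \id : \mathbb{B}_r^k \to \R^k$ is harmonic with $\|dh\|_{L^\infty} = 1$ and $G_h \equiv \Id_k$. Fix $\eps_1 > 0$ and a small auxiliary scale $\lambda_1 \in (0,\eta)$ to be tuned, and set $\eta_1 := (1+\eta)/2 \in (\eta, 1)$. Theorem \ref{MetaThm} (applied with source the flat Euclidean space, which belongs to $\overline{\cK_\meas(n,f,c)}$) produces a threshold $\nu = \nu(\eps_1, \eta_1, \lambda_1, n, f, c) > 0$ such that, whenever $\delta < \nu$ and the hypothesis $\dmGH(B_r(x), \mathbb{B}_r^k) < \delta r$ holds, there is a harmonic map $h' : B_{\eta_1 r}(x) \to \R^k$ with $\|dh'\|_{L^\infty(B_{\eta_1 r}(x))} \le C(n, \eta_1)$ and
\begin{equation*}
\left\| \fint_{B_{\eta_1 r}(x)} G_{h'} \di \mu - \Id_k \right\| < \eps_1.
\end{equation*}
Lemma \ref{lem:GS} then yields a lower triangular matrix $T$ with $\|T - \Id_k\| < C_k \eps_1$ such that the harmonic map $\tilde h' := T \circ h' : B_{\eta_1 r}(x) \to \R^k$ is balanced, $\fint_{B_{\eta_1 r}(x)} G_{\tilde h'} \di \mu = \Id_k$, and satisfies $\fint_{B_{\eta_1 r}(x)} \|G_{\tilde h'} - \Id_k\| \di \mu \le (1+C_k \eps_1)^2 \eps_1$, by Remark \ref{rem:GSsplitting} and identity \eqref{eq:gramtilde}.

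The main obstacle is the improvement of the a priori Lipschitz bound $\|d\tilde h'\|_\infty \lesssim C(n, \eta_1)$ down to $\le 2$ on the smaller ball $B_{\eta r}(x)$, together with the $L^2$-estimate on $dG_{\tilde h'}$ needed for the reinforced definition: both Proposition \ref{Prop:LipMieux} and the Bochner argument recalled in Remark \ref{rem:reinforced} are stated only on closed Riemannian manifolds. The strategy is therefore to realize $(X, \dist, \mu, o)$ as the pointed mGH-limit of $(M_\alpha, \dist_{g_\alpha}, \mu_\alpha, o_\alpha) \in \cK_\meas(n, f, c)$ and run the entire construction above on each $M_\alpha$, producing $\tilde h'_\alpha : B_{\eta_1 r}(x_\alpha) \to \R^k$ with uniform constants. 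On each $M_\alpha$, Proposition \ref{Prop:LipMieux} applied with parameter $\delta_\alpha := \max\bigl(\kato_{(\eta_1 r)^2}(M_\alpha, g_\alpha),\, (1+C_k\eps_1)^2 \eps_1\bigr) \le 1/(16n)$ yields $\|d\tilde h'_\alpha\|_{L^\infty(B_{\eta r}(x_\alpha))} \le 1 + C(n, \eta/\eta_1, C(n,\eta_1))\,\delta_\alpha \le 2$ for $\eps_1$ small enough, while Bochner's identity on $M_\alpha$ delivers the reinforced $L^2$-bound on $dG_{\tilde h'_\alpha}$. These uniform estimates propagate to the limit $\tilde h' : B_{\eta r}(x) \to \R^k$ via the convergence of harmonic maps and their energies recalled in Section 2 (cf.\ \cite[Proposition E.7]{CMT}). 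Finally, $\delta$ is taken to be the $\nu$ of Theorem \ref{MetaThm} corresponding to an $\eps_1$ chosen small enough that $(1+C_k \eps_1)^2 \eps_1$, the Lipschitz-improvement error, and the Bochner reinforcement error are all $< \eps$.
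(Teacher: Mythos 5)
Your plan — transfer the identity map via Theorem \ref{MetaThm}, then improve the Lipschitz constant with Proposition \ref{Prop:LipMieux} and install the reinforcement via the Bochner argument of Remark \ref{rem:reinforced} on an approximating sequence — is the right skeleton. But there is a genuine gap in the middle step.

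You invoke only conclusion (2) of Theorem \ref{MetaThm}, obtaining $\bigl\|\fint_{B_{\eta_1 r}(x)} G_{h'}\di\mu-\Id_k\bigr\|<\eps_1$, and then assert that Gram--Schmidt (Lemma \ref{lem:GS} together with Remark \ref{rem:GSsplitting}) upgrades this to $\fint_{B_{\eta_1 r}(x)}\|G_{\tilde h'}-\Id_k\|\di\mu\le (1+C_k\eps_1)^2\eps_1$. That does not follow. Writing $A_{h'}=\fint G_{h'}\di\mu$, the Gram--Schmidt matrix $T$ satisfies $TA_{h'}{}^tT=\Id_k$, so by \eqref{eq:gramtilde}
\[
G_{\tilde h'}-\Id_k = T\bigl(G_{h'}-A_{h'}\bigr){}^tT,\qquad
\fint\|G_{\tilde h'}-\Id_k\|\di\mu\le \|T\|^2\fint\|G_{h'}-A_{h'}\|\di\mu,
\]
and the right-hand side controls the $L^1$ \emph{oscillation} of $G_{h'}$ about its mean, a quantity that is in no way bounded by $\|A_{h'}-\Id_k\|$. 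Remark \ref{rem:GSsplitting} is only applicable to a map that is already known to be a $(k,\eps)$-splitting — i.e.\ for which $\fint_B\|G_u-\Id_k\|\di\mu<\eps$ is an \emph{input} — and merely re-centres the mean; it does not create that bound. The correct route is to invoke conclusion (3) of Theorem \ref{MetaThm} with $A=\Id_k$: since $G_{\id}\equiv\Id_k$ on $\mathbb B^k_r$, the Euclidean side $\fint_{\mathbb B^k_s}\|G_{\id}-\Id_k\|_1$ vanishes, so (3) yields $\fint_{B_s(x)}\|G_{h'}-\Id_k\|_1\di\mu\le\eps_1$ for $s\in[\lambda_1 r,\eta_1 r]$, and then $\|\cdot\|\le\|\cdot\|_1$ gives the $(k,\eps_1)$-splitting condition directly — no Gram--Schmidt step is needed, and none is required since the statement asks for a reinforced, not a balanced, splitting.

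Two smaller points. First, you assert that the flat $\R^k$ belongs to $\overline{\cK_\meas(n,f,c)}$; this should be justified (e.g.\ by collapsing flat $n$-tori) and you must check that the normalisation of $\cH^k$ implicit in $\dmGH\bigl(B_r(x),\mathbb B^k_r\bigr)$ is compatible with the constraint $c\le\mu(B_{\sqrt T}(o))\le c^{-1}$. Second, your use of Proposition \ref{Prop:LipMieux} with $\delta_\alpha:=\max\bigl(\kato_{(\eta_1 r)^2}(M_\alpha,g_\alpha),(1+C_k\eps_1)^2\eps_1\bigr)$ does \emph{not} become small as $\eps_1\to 0$: the Kato term is only bounded by $f(\eta_1^2 r^2)\le f(T)\le 1/(16n)$ and need not tend to zero (e.g.\ when $r$ stays comparable to $\sqrt T$). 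Hence the claimed bound $1+C(n,\eta/\eta_1,C(n,\eta_1))\delta_\alpha\le 2$ ``for $\eps_1$ small enough'' is not warranted as stated, and the Lipschitz improvement needs a more careful argument than shrinking $\eps_1$.
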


Moreover,  Theorem \ref{MetaThm}  implies that almost splittings are GH-isometries under the appopriate assumptions. 

\begin{prop}
\label{prop:GHisometry}
For any $\eps, \eta \in (0,1)$ there exist $\delta >0$ depending on $n, f, c, \eps$ and $\eta$ and a constant $C(n,\eta)>0$, such that for all $(X,\dist, \mu, o) \in \katolimits$, if $u: B_r(x) \to \R^k$ is a $(k,\eps)$-splitting and 
$$\dmGH(B_r(x), \mathbb{B}^k_r) < \delta r,$$
then $u$ is a $\left(C(n,\eta)\sqrt{\eps}r\right)$-GH isometry between $B_{\eta r}(x)$ and $\mathbb{B}_{\eta r}^k(u(x))$. 
\end{prop}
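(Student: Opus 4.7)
The plan is to transfer the splitting $u$ to a harmonic map on the model Euclidean ball via Theorem \ref{MetaThm}, exploit the better behavior of harmonic maps on $\setR^k$ to show this transferred map is $L^\infty$-close to a linear isometry, and then pull the approximation back through the underlying GH isometry.

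Concretely, after translating $u$ in the target so that $u(x) = 0$, I would fix an auxiliary radius parameter $\eta' \in (\eta, 1)$, say $\eta' = (1+\eta)/2$, and apply Theorem \ref{MetaThm} with source $(X,x)$, target $(\setB^k_r, 0)$ (the Euclidean ball, which up to normalizing its Lebesgue measure belongs to $\overline{\cK_\meas(n,f,c)}$), and $h = u$. For any prescribed $\eps_1 > 0$, this produces, provided $\delta$ is small enough depending on $n, f, c, \eps_1, \eta, \eta'$, a harmonic map $h' : \setB^k_{\eta' r}(0) \to \setR^k$ with $\|dh'\|_{L^\infty} \le 2 C(n, \eta')$ satisfying
\[
\|h' \circ \Phi - u\|_{L^\infty(B_{\eta' r}(x))} \le \eps_1 r,
\]
and with $\fint_{\setB^k_s(0)} \|G_{h'} - \Id_k\|_1 \di L^k$ close up to $\eps_1$ to $\fint_{B_s(x)} \|G_u - \Id_k\|_1 \di \mu$ for all $s \in [\eta r, \eta' r]$, where $\Phi : B_r(x) \to \setB^k_r(0)$ is the ambient $\delta r$-GH isometry sending $x$ to $0$. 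Combining the splitting bound $\fint_{B_r(x)} \|G_u-\Id_k\| \di \mu < \eps$ with the doubling inequality \eqref{eq:cor_doublement} to propagate it to smaller scales, I would obtain
\[
\fint_{\setB^k_s(0)} \|G_{h'} - \Id_k\|_1 \di L^k \le C(n, \eta, k)(\eps + \eps_1).
\]

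The core of the argument is then a purely Euclidean rigidity statement: any harmonic map $h' : \setB^k_{\eta' r}(0) \to \setR^k$ with uniformly bounded gradient and with $\fint_{\setB^k_{\eta' r}(0)} \|G_{h'} - \Id_k\| \di L^k \le \eps_2$ is $L^\infty$-close on $\setB^k_{\eta r}(0)$ to a linear map $y \mapsto A y$ with $A \in O(k)$, quantitatively
\[
\|h'(y) - A y\|_{L^\infty(\setB^k_{\eta r}(0))} \le C(n, \eta) \sqrt{\eps_2}\, r.
\]
To prove this, I would use that the components $h'_j$ are smooth harmonic functions with harmonic partial derivatives, and that $\tr G_{h'} = \sum_j |dh'_j|^2$ is subharmonic by Bochner's formula on $\setR^k$; the mean value property then identifies $A := dh'(0)$ as the correct linear approximation and, combined with the averaged Gram matrix bound and the subharmonic mean value inequality, forces $A {}^t\! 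A$ to be $O(\eps_2)$-close to $\Id_k$, hence $A$ itself to be $O(\sqrt{\eps_2})$-close to an element of $O(k)$. I would then upgrade the $L^2$ energy estimate $\fint_{\setB^k_{\eta'r}(0)} |dh' - A|^2 \di L^k \le C\eps_2$ to the claimed $L^\infty$ estimate on a slightly smaller ball by applying Caccioppoli and Poincar\'e--Wirtinger inequalities to the harmonic map $h' - Ay$, incurring the standard $\sqrt{\eps_2}$ loss in the process.

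Combining the two ingredients yields $|u(y) - A \Phi(y)| \le C(n,\eta) \sqrt{\eps}\, r$ for every $y \in B_{\eta r}(x)$, once $\delta$ is chosen small enough that $\eps_1 \le \sqrt{\eps}$. Since $A$ is a linear Euclidean isometry and $\Phi$ is a $\delta r$-GH isometry between $B_{\eta r}(x)$ and $\setB^k_{\eta r}(0) = \setB^k_{\eta r}(u(x))$, it follows that $u$ itself is a $C(n,\eta) \sqrt{\eps}\, r$-GH isometry, as required. The main obstacle in this plan is the Euclidean rigidity step: one needs to upgrade the integrated control on $G_{h'} - \Id_k$ to uniform closeness of $h'$ to a Euclidean isometry, carefully managing the conversion of energy estimates to $L^\infty$ estimates (which is where the $\sqrt{\eps}$ loss originates) and verifying that the identified linear part lies in $O(k)$ up to errors absorbable into $C(n,\eta)\sqrt{\eps}$.
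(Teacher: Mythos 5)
Your proposal is correct and follows essentially the same route as the paper: both transfer the splitting $u$ to a harmonic map on the Euclidean model via Theorem \ref{MetaThm} and then invoke a Euclidean rigidity statement (the paper's Lemma \ref{lem:harmGH}), in which Bochner's formula supplies the Hessian bound of order $\sqrt{\eps}$ and a nearby orthogonal matrix is identified via polar decomposition. One small imprecision in your sketch: the subharmonic mean value inequality combined with the averaged Gram bound yields only the upper bound $AA^t \le \Id_k + O(\eps_2)$ at the center; the matching lower bound, and hence the conclusion that $A = dh'(0)$ is $O(\eps_2)$-close to $O(k)$, requires first establishing the energy estimate $\fint |dh'-A|^2 \lesssim \eps_2$ (from Bochner plus Poincar\'e for the mean-zero matrix $dh'-A$) and then writing $\fint G_{h'} - AA^t = \fint (dh'-A)(dh'-A)^t \ge 0$ with trace bounded by $\eps_2$; since you do invoke Bochner and the energy estimate, the ingredients are present and only the logical order needs rearranging. (Also, the $L^2\to L^\infty$ upgrade for the harmonic difference $h'-Ay$ is most directly obtained from the mean value / local boundedness estimate for harmonic functions rather than Caccioppoli--Poincar\'e--Wirtinger, but this is routine.)
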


The proof of this proposition relies on the following Euclidean result. 
\begin{lemma} 
\label{lem:harmGH}
If $v\colon \mathbb{B}^k \to \R^k$ is a harmonic map such that 
$$\fint_{\mathbb{B}^k} \| G_v-\Id_k\|\le \eps,$$
then $v\colon \mathbb{B}^k_{\eta} \to \R^k$ is a $\left(C(n,\eta)\sqrt{\eps}\right)$-GH isometry  between $\mathbb{B}_{\eta }$ and $\mathbb{B}_{\eta }^k(v(0))$.
\end{lemma}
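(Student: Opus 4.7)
The plan is to show that $v$ is uniformly $L^\infty$-close on $\mathbb{B}_\eta^k$ to an affine isometry $\ell$, with error of order $C(n,\eta)\sqrt{\eps}$; once this is established, both the distance preservation and the approximate surjectivity required for the GH-isometry statement follow immediately, since $\ell$ maps $\mathbb{B}_\eta^k$ isometrically onto $\mathbb{B}_\eta^k(v(0))$.

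The main step, which I view as the crux, is to derive a Hessian bound of order $\sqrt{\eps}$ on $v$. Since each component $v_i$ is harmonic, Bochner's identity reads $\Delta|\nabla v_i|^2 = 2|D^2 v_i|^2$. Writing $h_i := |\nabla v_i|^2 - 1$, the hypothesis $\fint_{\mathbb{B}^k}\|G_v - \Id_k\| \leq \eps$ gives $\fint_{\mathbb{B}^k}|h_i| \leq C\eps$. Testing $2|D^2 v_i|^2 = \Delta h_i$ against a cutoff $\phi \in C_c^\infty(\mathbb{B}^k)$ with $\phi \equiv 1$ on $\mathbb{B}_{\eta'}^k$, $\eta' := (1+\eta)/2$, and integrating by parts twice would yield
\[
\int_{\mathbb{B}_{\eta'}^k}|D^2 v_i|^2 \leq \tfrac{1}{2}\int |h_i|\,|\Delta\phi| \leq C(\eta)\,\eps.
\]
Since each entry of $D^2 v$ is itself harmonic, classical interior $L^\infty$--$L^2$ estimates would then upgrade this to the pointwise bound $\|D^2 v\|_{L^\infty(\mathbb{B}_\eta^k)} \leq C(n,\eta)\sqrt{\eps}$.

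Combined with the standard gradient bound $\|Dv\|_{L^\infty(\mathbb{B}_\eta^k)} \leq C(n,\eta)$, coming from $\fint_{\mathbb{B}^k}|Dv|^2 \leq k(1+\eps)$ (trace of $\fint G_v$) and interior harmonic estimates, this Hessian bound makes each entry of $G_v$ Lipschitz on $\mathbb{B}_\eta^k$ with constant $C(n,\eta)\sqrt{\eps}$. A routine Campanato-type covering argument against the $L^1$-smallness $\fint_{\mathbb{B}^k}\|G_v - \Id_k\| \leq \eps$ then yields the pointwise control $\|G_v - \Id_k\|_{L^\infty(\mathbb{B}_\eta^k)} \leq C(n,\eta)\sqrt{\eps}$. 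Specialising at the origin, $Dv(0)^T Dv(0)$ is $C\sqrt{\eps}$-close to $\Id_k$ in operator norm, and writing the polar decomposition $Dv(0) = QP$ with $Q \in O(k)$ and $P$ symmetric positive definite (so $P^2 = Dv(0)^T Dv(0)$), one gets $\|P - \Id_k\| \leq C\sqrt{\eps}$ and therefore $\|Dv(0) - Q\| \leq C(n,\eta)\sqrt{\eps}$.

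Finally, setting $\ell(z) := v(0) + Qz$, a first-order Taylor expansion of $v$ at the origin with integral remainder, together with the two bounds above, would give
\[
\|v - \ell\|_{L^\infty(\mathbb{B}_\eta^k)} \leq \eta\|Dv(0) - Q\| + \tfrac{1}{2}\eta^2\,\|D^2 v\|_{L^\infty(\mathbb{B}_\eta^k)} \leq C(n,\eta)\sqrt{\eps}.
\]
Since $\ell$ is a genuine isometry from $\mathbb{B}_\eta^k$ onto $\mathbb{B}_\eta^k(v(0))$, this closeness immediately yields $\bigl|\,|v(y) - v(x)| - |y - x|\,\bigr| \leq 2\|v - \ell\|_\infty$ for all $x, y \in \mathbb{B}_\eta^k$, and for every $w \in \mathbb{B}_\eta^k(v(0))$ the preimage $\ell^{-1}(w) \in \mathbb{B}_\eta^k$ satisfies $|v(\ell^{-1}(w)) - w| \leq \|v - \ell\|_\infty$, which is exactly the desired approximate surjectivity. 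The hardest part is genuinely the Hessian estimate, as it is the only step where the $L^1$-smallness of $G_v - \Id_k$ is quantitatively exploited; everything downstream is elementary linear algebra and Taylor expansion.
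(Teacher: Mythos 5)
Your proof is correct and follows essentially the same route as the paper's: Bochner gives an $L^2$ Hessian bound of order $\eps$, interior elliptic estimates upgrade it to an $L^\infty$ bound of order $\sqrt{\eps}$, one locates a point where $G_v$ is close to the identity, applies the polar decomposition, and concludes by Taylor expansion. The only cosmetic difference is that you apply the polar decomposition at the origin after first propagating the Gram-matrix control pointwise (which the Lipschitz bound on $G_v$ indeed allows), whereas the paper applies it directly at the special point $x_0$ where $\|G_v(x_0)-\Id_k\|\le C\eps$.
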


\begin{proof} We will assume that $\eta\ge 1/2$. Consider a cut-off function $\chi$ equal to $1$ on $\frac{1+\eta}{2}\setB^k$ and vanishing outside $\frac{3+\eta}{4}\setB^k$, with 
$$\|\Delta \chi\|_{L^\infty}\le C(k,\eta).$$
By the Bochner formula we have:
$$|\Hess v|^2+\frac12 \Delta( G_v-\Id_k)=0,$$
then multiplying by $\chi$ and integrating by parts lead to the estimate:
$$\int_{\frac{1+\eta}{2}\setB^k} |\Hess v|^2\le C(k,\eta) \int_{\setB^n} \left\| G_v-\Id_k\right\|\le C(k,\eta) \eps $$

Using classical elliptic estimate, we obtain a $\cC^2$ estimate on $v$:
$$\|\Hess v\|_{L^{\infty}(\eta\setB^k)}\le C(k,\eta)\sqrt{\eps}.$$
With Taylor formula, we get that for any $x\in \eta\setB^k$,
$$|v(x)-v(0)-dv(0)(x)|\le C(k,\eta)\sqrt{\eps} \, \, \, \, \text{ and } \, \, \, \, |dv(0)-dv(x)|\le C(k,\eta)\sqrt{\eps}.$$
But we also have
$$\fint_{\eta\mathbb{B}^k} \| G_v-\Id_k\|\le \eta^{-k}\fint_{\mathbb{B}^k} \| G_v-\Id_k\|\le 2^k \eps.$$
Hence we find a point $x_o\in\eta \setB^k$ such that 
$$ \| G_v(x_o)-\Id_k\|\le 2^k \eps.$$
Using the polar decomposition of $dv(x_o)$ we obtain a linear isometry $g\in \text{O}(k)$ such 
$$|dv(x_o)-g|\le C(k) \eps.$$
Introducing the affine isometry $\iota:=v(0)+g$ we get that for any $x\in \eta\setB^k$,
$$|v(x)-\iota(x)|\le C(k,\eta)\sqrt{\eps}.$$
Setting $C'(n,\eta)= \max_{1\le k \le n} C(k,\eta)$ eventually leads to the desired result.
\end{proof}
\begin{proof}[Proof of Proposition \ref{prop:GHisometry}] We let $\eps, \eta \in (0,1)$. We will assume that $\eta\ge 1/2$.  With Theorem \ref{MetaThm}, we find some $\delta(n,\eps,\eta,f,c)>0$ such that  if $(X,\dist, \mu, o) \in \katolimits$, if $u: B_r(x) \to \R^k$ is a $(k,\eps)$-splitting and $$\dmGH(B_r(x), \mathbb{B}^k_r) < \delta r,$$ then there is some harmonic map
$$v\colon \setB^k_{(1+\eta)\frac{r}{2}}\to \R^k$$ and some $\delta r$-GH isometry
$\Phi\colon B_r(x)\to\setB^k_{r}$ such that 
\begin{equation}\label{approxGH}
\|v\circ\Phi-u\|_{L^\infty(B_{(1+\eta)\frac{r}{2}}(x))}\le \eps r\end{equation} and
$$\left|\fint_{B_{(1+\eta)\frac{r}{2}}(x)} \|G_u-\Id_k\|_1\di\mu-\fint_{\setB^k_{(1+\eta)\frac{r}{2}}} \|G_v-\Id_k\|_1\right|\le \eps.$$
Observe that the doubling condition and the equivalence of the norms $\|\cdot \|$ and $\|\cdot\|_1$ yield
$$\fint_{B_{(1+\eta)\frac{r}{2}}(x)} \|G_u-\Id_k\|_1\di\mu\le  A(n) \fint_{B_{r}(x)} \|G_u-\Id_k\|_1\di\mu\le C(n)\eps$$
for some $C(n)$ only depending on $n$. Since $\|\cdot \|\leq \|\cdot \|_1$, we get 
$$\fint_{\setB^k_{(1+\eta)\frac{r}{2}}} \|G_v-\Id_k\|\le (1+C(n))\eps.$$ Hence according to the previous lemma, we know that $v$ is a $\left(C(n,\eta)\sqrt{\eps}r\right)$-GH isometry between $\setB^k_{\eta r}$ and itself. Using \eqref{approxGH}, we obtain the desired conclusion about the restriction of $u$ to $B_{\eta r}(x)$.
 \end{proof}

\subsection{Propagation of reinforced almost splittings} The next result is an important propagation property of reinforced splittings.  

\begin{prop}[Propagation of reinforced splittings]\label{prop:propagation} Consider $(X,\dist,\mu,o) \in \overline{\mathcal{K}_\meas(n,f,c)}$.  There exists $C>0$ depending only on $n$ such that for any $k \in \{1,\ldots,n\}$ and $\eps\in(0,1)$, if $u$ is a reinforced $(k,\eps)$-splitting of a ball $B_r(x) \subset X$ with $r \in (0,\sqrt{T})$, then there exists a Borel set $\Omega_\eps \subset B_{r/2}(x)$ such that:
 \begin{enumerate}
 \item[(A)]\label{A} $\mu(B_{r/2}(x) \backslash \Omega_\eps) \le C\sqrt{\eps}\mu(B_{r/2}(x))$,
 \item[(B)]\label{B} the restriction of $u$ to $B_s(y)$ is a reinforced $(k,\sqrt{\eps})$-splitting for any $y \in \Omega_\eps$ and $s \in (0,r/2)$,
 \item[(C)]\label{C}   for $\mu$-a.e.~$y \in \Omega_\eps$,  for any $\xi \in \R^k$,
\begin{equation}\label{eq:bilipGram}
(1-\sqrt{\eps})|\xi|^2\le {}^t \xi G_u(y)\xi\le (1+\sqrt{\eps})|\xi|^2,
\end{equation}

 \item[(D)]\label{D} any $y \in \Omega_\eps$ is such that any $(Y,\dist_Y,\mu_Y,y) \in \mathrm{Tan}_\meas(X,y)$ splits off a factor $\setR^k$.
 \end{enumerate}
 \end{prop}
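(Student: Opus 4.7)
The plan is to identify $\Omega_\eps$ as a sublevel set of a localized Hardy--Littlewood maximal function and then verify the four properties in turn. Set
\[
\varphi := \|G_u - \Id_k\| + r^2 |dG_u|^2 \in L^1(B_r(x),\mu),
\]
so that $\fint_{B_r(x)} \varphi \, d\mu < \eps$ by the reinforced splitting hypothesis, and consider the truncated maximal function
\[
M\varphi(y) := \sup_{0 < s \le r/2} \fint_{B_s(y)} \varphi \, d\mu, \qquad y \in B_{r/2}(x).
\]
I would take $\Omega_\eps$ to be the set of $y \in B_{r/2}(x)$ with $M\varphi(y) \le \sqrt{\eps}$ that are simultaneously Lebesgue points of every coefficient of $G_u$; this last condition removes only a $\mu$-null set.

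A standard Vitali-type covering argument in the doubling space $(X,\dist,\mu)$, combined with \eqref{eq:doubling}, yields the weak $(1,1)$ estimate $\mu(B_{r/2}(x) \setminus \Omega_\eps) \le C(n) \sqrt{\eps}\, \mu(B_{r/2}(x))$, which is (A). Property (B) is then automatic: for $y \in \Omega_\eps$ and $s < r/2$, since $s \le r$ one has
\[
\fint_{B_s(y)}(\|G_u - \Id_k\| + s^2 |dG_u|^2) \, d\mu \le M\varphi(y) \le \sqrt{\eps},
\]
while harmonicity and the Lipschitz bound on $du$ are inherited from $B_r(x)$. Property (C) then follows from Lebesgue differentiation: at $\mu$-a.e. $y \in \Omega_\eps$, one has $\|G_u(y) - \Id_k\| \le \lim_{s \to 0} \fint_{B_s(y)} \|G_u - \Id_k\| \, d\mu \le \sqrt{\eps}$, which is equivalent to \eqref{eq:bilipGram}.

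The core of the argument is (D), which I would prove via blow-up and the Euclidean-factor splitting theorem for $\RCD(0,N)$ spaces. Fix $y \in \Omega_\eps$ satisfying (C), a sequence $r_j \downarrow 0$, and a tangent cone $(Y, \dist_Y, \mu_Y, y_*)$ realized along $\{r_j\}$. Consider the rescaled harmonic maps $v_j(z) := r_j^{-1}(u(z) - u(y))$ on $B_{r/(2r_j)}(y)$ with respect to $r_j^{-1}\dist$, each $2$-Lipschitz with $v_j(y) = 0$. By pointed Arzel\`a--Ascoli together with the pmGH stability of harmonic functions on doubling infinitesimally Hilbertian spaces as developed in \cite[Section 1]{CMT}, up to extraction the $v_j$ converge locally uniformly and strongly in energy, via \cite[Proposition E.7]{CMT}, to a $2$-Lipschitz harmonic map $v : Y \to \R^k$ with $v(y_*) = 0$. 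Using the pointwise identity $G_{v_j} \equiv G_u$ together with the Lebesgue condition $\fint_{B_{Rr_j}(y)} \|G_u - G_u(y)\| \, d\mu \to 0$ for each fixed $R > 0$, I would conclude $G_v \equiv G_u(y)$ $\mu_Y$-a.e. on $Y$. Since $G_u(y)$ is positive definite by (C), Lemma \ref{lem:GS} produces a lower triangular $T$ with $T G_u(y)\, {}^tT = \Id_k$, and then $w := T \circ v$ is a harmonic map $Y \to \R^k$ with $G_w = \Id_k$ $\mu_Y$-a.e. Since $Y$ is $\RCD(0,n)$, the Euclidean-factor splitting theorem applied to the components of $w$ delivers the desired isometric splitting $Y \cong \R^k \times Z$.

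The main obstacle is the identification $G_v \equiv G_u(y)$ inside the tangent cone. Energy convergence alone yields only weak $L^2$ convergence of the individual slopes $|dv_j|^2$; promoting this to $L^1_{\loc}$ convergence of the full Gram matrix requires strong convergence of gradients of uniformly Lipschitz harmonic functions on pmGH-converging doubling Dirichlet spaces, which is precisely the content of \cite[Proposition E.7]{CMT} once combined with the uniform $L^\infty$ bound $\|dv_j\|_\infty \le 2$. All remaining steps reduce to standard maximal-function and Lebesgue-differentiation material once this convergence tool is available.
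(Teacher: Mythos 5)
Your proposal is correct and follows essentially the same blueprint as the paper: define $\Omega_\eps$ as a sublevel set of the truncated maximal function of $\varphi = \|G_u - \Id_k\| + r^2|dG_u|^2$, deduce (A) from a Vitali covering argument, read off (B) directly, get (C) from Lebesgue differentiation, and obtain (D) by blow-up, strong energy convergence of the rescaled harmonic maps, and the Euclidean-factor splitting theorem for $\RCD(0,n)$ tangent cones.

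The one genuine divergence is in how you identify the Gram matrix of the blow-up limit. You shrink $\Omega_\eps$ by a null set to include only Lebesgue points of $G_u$ and then argue that $\fint_{B_{Rr_j}(y)}\|G_u - G_u(y)\|\di\mu \to 0$, which combined with the $L^1$-convergence of Gram matrices along the blow-up yields $G_v \equiv G_u(y)$. The paper instead exploits the quantity $r^2|dG_u|^2$ directly: the maximal-function bound controls $\fint_{B_s(y)}|dG_u|^2\di\mu$, and the Poincar\'e inequality converts this into a quantitative $O(s\eps^{1/4}/r)$ decay of $\fint_{B_s(y)}\|G_{\tilde u}-\Id_k\|\di\mu$ after a Gram--Schmidt normalization. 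This buys two things: first, (D) holds at \emph{every} $y$ in the original sublevel set, with no a.e.\ restriction; second, it gives an explicit decay rate. Your version is a hair simpler since it does not require the telescoping Cauchy argument for $\overline G_s$, but note that the paper's approach never needs $G_u$ to have a well-defined pointwise value at $y$ (the limit $\overline G$ of the averages plays that role instead), which is conceptually cleaner on a metric measure space.

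Two small remarks. First, the strong convergence of Gram entries along the tangent-cone blow-up (i.e.\ that $\fint_{B^{d_j}_R(y)}\|G_{v_j}-A\|\di\mu_j$ converges) is not automatic from \cite[Proposition E.7]{CMT} alone; the paper relies on \cite[Proposition E.10]{CMT} for local strong energy convergence and (implicitly) on Lemma \ref{lem:prep_to_meta}/Remark \ref{rem:prep_to_meta}, whose proof also needs a uniform bound on the energy of the slopes $|dv_{j,i}|$ in the rescaled metric. That bound comes precisely from the $r^2|dG_u|^2$ part of the maximal-function control, so the reinforced-splitting hypothesis is silently doing work in your step (D) even though your write-up makes it look optional there. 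Second, when applying Lemma \ref{lem:GS} you should note that the positive-definiteness of $G_u(y)$ requires $\eps<1$ (so that $1-\sqrt{\eps}>0$), which is in the hypotheses, so no issue, but it is worth flagging.
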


\begin{proof}Let $x \in X$ and $r \in (0,\sqrt{T})$. Assume that $u:B_r(x)\to \setR^k$ is a $(k,\eps)$-splitting.  Set \[\Omega_\eps:=\{y \in B_{r/2}(x)\, : \, M_{r/2}v(y) \le \sqrt{\eps}\}\]
 where \[v:=\|G_u - \Id_k\| + r^2|d G_u|^2\]
 and
 \[
 M_{r/2}v(y):= \sup_{s \in (0,r/2)} \fint_{B_s(y)} v \di \mu.
 \]
The definition of $\Omega_\eps$ is made so that (B) is satisfied. Let us prove (A). For any $y \in B_{r/2}(x)\backslash \Omega_{\eps}$ there exists $s_y \in (0,r/2)$ such that $\mu(B_{s_{y}}(y)) < (\sqrt{\eps})^{-1} \int_{B_s(y)} v \di \mu$.  By the Vitali covering lemma, there exists a countable family of points $\{y_i\} \subset B_{r/2}(x)\backslash \Omega_\eps$ such that the balls $\{B_{s_{y_i}}(y_i)\}$ are pairwise disjoint and $B_{r/2}(x)\backslash \Omega_\eps \subset \bigcup_i B_{5s_{y_i}}(y_i)$. Then, with a constant $C$ depending only on $n$ which may change from line to line,
 \begin{align*}
 \mu(B_{r/2}(x)\backslash \Omega_\eps) & \le \sum_i \mu(B_{5s_{y_i}}(y_i)) \le C  \sum_i \mu(B_{s_{y_i}}(y_i)) < C \frac{1}{\sqrt{\eps}} \sum_i \int_{B_{s_{y_i}}(y_i)} v \di \mu\\
 &\le  C  \frac{1}{\sqrt{\eps}} \int_{B_r(x)} v \di \mu \le C  \sqrt{\eps} \mu(B_r(x)) \le C \sqrt{\eps} \mu(B_{r/2}(x))
 \end{align*}
where we have used the doubling condition to get the second and the last inequalities, and the fact that $u$ is a reinforced $(k,\eps)$-splitting of $B_r(x)$ to get the fifth one. This shows (A).

Let us prove (C). It follows from the Lebesgue differentiation theorem for doubling metric measure spaces (see e.g. \cite{Heinonen}) that the set of Lebesgue points of $v$ has full measure in $\Omega_\eps$. At any Lebesgue point $y \in \Omega_\eps$ of $v$ we know that
\[
\|G_u(y) - \Id_k\| \le v(y) = \lim\limits_{s \downarrow 0} \fint_{B_s(y)} v \di \mu \le  M_{r/2}v(y) \le \sqrt{\eps},
\]
which yields \eqref{eq:bilipGram}.

We are left with proving (D) namely that for any $y \in \Omega_\eps$,  any $(Y,\dist_Y,\mu_Y,y) \in \mathrm{Tan}_\meas(X,y)$ splits off a factor $\setR^k$.  To this aim, we are going to build a harmonic map $\tilde{u}_\infty : Y \to \setR^k$ such that $G_{\tilde{u}_\infty}(z)=\Id_k$ for $\mu_Y$-a.e.~$z \in Y$. 

For any $s \in (0,r/2)$,  set
\[
\overline{G}_s := \fint_{B_s(y)} G_u \di \mu.
\]
Following a classical argument (see \cite[(4.21)]{Cheeger}, for instance) involving Hölder's inequality, the doubling condition, and the $\uplambda$-Poincaré inequality, 
\begin{align*}
\| \overline{G}_s - \overline{G}_{s/2}\| & \le \fint_{B_{s/2}(y)} \|G_u - \overline{G}_s\| \di \mu\\
& \le A(n) \fint_{B_{s}(y)} \|G_u - \overline{G}_s\| \di \mu\\
& \le A(n) \left( \fint_{B_{s}(y)} \|G_u - \overline{G}_s\|^2 \di \mu \right)^{1/2}\\
& \le A(n)  \uplambda s \left( \fint_{B_{s}(y)} |d G_u|^2 \di \mu \right)^{1/2} \le A(n)  \uplambda s \frac{\eps^{1/4}}{r} \cdot
\end{align*}
This shows that $\{\oG_s\}_{0<s<r/2}$ is a Cauchy sequence, hence it admits a limit $\oG$ as $s \downarrow 0$.  Since
\[
\|\oG - \Id_k\| = \lim\limits_{s \to 0} \|\oG_s - \Id_k\| \le  \lim\limits_{s \to 0} \fint_{B_s(y)}\|G_u - \Id_k\| \le \sqrt{\eps},
\]
we know from Remark \ref{rem:GSsplitting} that there exists a lower triangular $k\times k$ matrix $T$ such that $T\oG {}^t T = \Id_k$ and $\|T\|\le C(n)$ for some generic constant $C(n)$ only depending on $n$. Moreover,  for any $s \in (0,r/2)$,  the previous computation yields
\[
\fint_{B_s(y)} \|G_u - \oG_s\| \di \mu \le A(n)\uplambda s \frac{\eps^{1/4}}{r},
\]
and a telescopic argument gives
\[
\|\oG_s - \oG\| \le C(n) s\frac{\eps^{1/4}}{r},
\]
hence $\tilde{u}:=T\circ u$ satisfies
\begin{equation}\label{eq:control}
\fint_{B_s(y)} \|G_{\tilde{u}} - \Id_k\| \le C(n) s\frac{\eps^{1/4}}{r}\, \cdot
\end{equation}

Now we let $\{s_\alpha\} \subset (0,+\infty)$ be such that $s_\alpha \downarrow 0$ and $\{(X,\dist_\alpha := s_\alpha^{-1} \dist, \mu_\alpha := \mu(B_{s_\alpha}(y))^{-1} \mu, y)\}$ converges to $(Y,\dist_Y,\mu_Y,y)$ in the pointed measured Gromov-Hausdorff topology. Then the maps
\[
u_\alpha :=\frac{1}{s_\alpha} (u-u(y)) : B_{r/2s_\alpha}^{\dist_\alpha}(y) \to \setR^k
\]
are all harmonic and locally $2$-Lipschitz.  By \cite[Proposition E.10]{CMT}, up to extracting a subsequence we may assume that $\{u_\alpha\}$ converges uniformly on compact sets and locally strongly in energy to some harmonic map
\[
\tilde{u}_\infty : Y \to \setR^k.
\]
Then the local strong convergence in energy and \eqref{eq:control} imply that for any $R>0$,
\begin{align*}
\fint_{B_R^{\dist_{Y}}(y)} \|G_{\tilde{u}_\infty}-\Id_k\|  \di \mu_Y & = \lim\limits_\alpha \fint_{B_{R}^{\dist_\alpha}(y)} \|G_{u_\alpha} - \Id_k\| \\
& =  \lim\limits_\alpha \fint_{B_{Rs_\alpha}(y)} \|G_u - \Id_k\| = 0.
\end{align*}
Since $(Y,\dist_Y,\mu_Y)$ is an $\RCD(0,n)$ space, the Functional Splitting Lemma \cite[Lemma 1.21]{AntoBrueSemola}  then yields the conclusion.

\end{proof}

\begin{rem}\label{rk:prop}
The choice of $r/2$ in the previous proof is arbitrary: we can replace it with $\sigma r$ for $\sigma \in (0,1)$ and get the same result.
\end{rem}

\section{Rectifiability of Kato limits}

Let us begin this section with recalling the definitions of bi-Lipschitz map and bi-Lipschitz chart.

\begin{D}
Let $(X,\dist)$ be a metric space, $k$ a positive integer, and $\eps \in (0,1)$.   We say that a map $\phi:X\to\setR^k$ is:
\begin{enumerate}
\item  bi-Lipschitz onto its image if there exists $C\ge 1$ such that $C^{-1} \dist(x,y) \le |\phi(x)-\phi(y)|\le C\dist(x,y)$ for any $x,y \in X$,
\item $(1+\eps)$-bi-Lipschitz onto its image if $(1+\eps)^{-1} \dist(x,y) \le |\phi(x)-\phi(y)|\le (1+\eps) \dist(x,y)$ for any $x,y \in X$.  
\end{enumerate}
Moreover, we call $(1+\eps)$-bi-Lipschitz chart from $X$ to $\setR^k$ any couple $(V,\phi)$ where $V$ is a Borel set of $X$ and $\phi : V \to \setR^k$ is a $(1+\eps)$-bi-Lipschitz map onto its image. 
\end{D}

We now provide a definition of rectifiability for metric measure spaces which is a natural variant of the one introduced in \cite[Definition 5.3]{CheegerColdingIII} and which has notably been used in the setting of $\RCD(K,N)$ spaces \cite{DePhilippisMarcheseRindler,KellMondino,GigliPas}.

\begin{D}
We say that a metric measure space $(X,\dist,\mu)$ is rectifiable if there exists a countable collection $\{(k_i,V_i,\phi_i)\}_i$ where $\{V_i\}$ are Borel subsets covering $X$ up to a $\mu$-negligible set, $\{k_i\}$ are positive integers, and $\phi_i : V_i \to \setR^{k_i}$ is a bi-Lipschitz map such that $(\phi_i)_\#(\mu \measrestr V_i) \ll \leb^{k_i}$ for any $i$. 
\end{D}

According to this definition, our goal in this section is to prove that Kato limit spaces are rectifiable.  Actually, we prove a more precise result which involves the so-called $k$-regular sets.

\begin{D}
For any $k \in \{1,\ldots,n\}$, we define the $k$-regular set of a space $(X,\dist,\mu,o) \in \overline{\mathcal{K}_\meas(n,f,c)}$ as
\[
\cR_k := \{ x \in X : \Tan_\meas(X,x)=\{(\setR^k,\dist_e,\leb^k,0\}\}.
\]
\end{D}

Our main result in this section is the following.

\begin{theorem}\label{th:mainrect}
Let $(X,\dist,\mu,o) \in \overline{\mathcal{K}_\meas(n,f,c)}$. Then the following hold. 
\begin{enumerate}
\item[\normalfont(A)]\label{A} Up to a negligible set, the space $X$ coincides with the union of its $k$-regular sets: 
\begin{equation}\label{eq:essentialdecompo}\mu\left( X \backslash \bigcup_{k = 1}^n \mathcal{R}_k\right) = 0.\end{equation}
\item[\normalfont(B)]\label{B} For any $k \in \{1,\ldots,n\}$ and $\eps \in (0,1)$, there exists a countable family of $(1+\eps)$-bi-Lipschitz charts $\{(V_i^\eps,\phi_i^\eps)\}$ from $X$ to $\setR^k$ such that \[\mu\left(\cR_k \backslash \bigcup_{i} V_i^\eps\right)=0\] and $(\phi_i^\eps)_\#(\mu \measrestr V_i^\eps) \ll \leb^k$ for any $i$.
\end{enumerate}
\end{theorem}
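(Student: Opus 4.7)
The plan is to adapt the strategy of Brué-Pasqualetto-Semola \cite{BPS} and Gigli-Pasqualetto \cite{GigliPas} from the $\RCD(K,N)$ setting to Kato limits, making essential use of the tools developed in Section~3: existence of reinforced splittings (Proposition \ref{prop:ExiSplit}), their propagation (Proposition \ref{prop:propagation}), and the fact that they realize GH-isometries (Proposition \ref{prop:GHisometry}). The structural input specific to Kato limits is that every tangent $(Y,\dist_Y,\mu_Y,y) \in \Tan_\meas(X,x)$ is an $\RCD(0,n)$ space, so the Mondino-Naber rectifiability theorem is directly available inside every tangent cone.

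For part (A), I would introduce the maximal splitting dimension
\[
k^*(x) := \sup\{k \in \{0,\ldots,n\}\, :\, \text{some } Y \in \Tan_\meas(X,x) \text{ splits off an } \setR^k \text{ factor}\}
\]
and prove two facts. First, $k^*(x) \geq 1$ for $\mu$-almost every $x$: since each tangent $Y$ is $\RCD(0,n)$, Mondino-Naber yields, for $\mu_Y$-a.e.~$y' \in Y$, a Euclidean tangent $\setR^{k(y')}$ with $k(y') \geq 1$, and a standard iterated-tangents argument (tangents of tangents of $X$ are tangents of $X$) lifts this Euclidean factor back to $\Tan_\meas(X,x)$ for $\mu$-a.e.~$x$. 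Second, at $\mu$-a.e.~$x$ with $k^*(x) = k$ every tangent cone at $x$ is isometric to $(\setR^k,\dist_e,\leb^k,0)$: any tangent $Y = \setR^{k_Y} \times Z_Y$ at such $x$ must satisfy $k_Y = k$ and $Z_Y = \{\mathrm{pt}\}$, for otherwise applying Mondino-Naber inside the non-trivial $\RCD(0,n-k_Y)$ factor $Z_Y$ and using iterated tangents would produce a new tangent of $X$ splitting off $\setR^{k_Y+k'}$ with $k' \geq 1$, contradicting maximality of $k^*(x)$. Hence $x \in \cR_{k^*(x)}$ for $\mu$-a.e.~$x$, which gives \eqref{eq:essentialdecompo}.

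For part (B), fix $k \in \{1,\ldots,n\}$ and $\eps > 0$, and choose $\eps_1 = \eps_1(\eps,n,f,c)>0$ small enough. At each $x \in \cR_k$, the assumption $\Tan_\meas(X,x) = \{\setR^k\}$ ensures that for arbitrarily small $r$, $\dmGH(B_r(x),\setB_r^k) < \delta r$ for any prescribed $\delta$. Proposition \ref{prop:ExiSplit} combined with Remark \ref{rem:GSsplitting} then produces a balanced reinforced $(k,\eps_1)$-splitting $u : B_{r/2}(x) \to \setR^k$. Applying Proposition \ref{prop:propagation} yields a Borel set $\Omega \subset B_{r/4}(x)$ with $\mu(B_{r/4}(x)\setminus \Omega) \leq C\sqrt{\eps_1}\mu(B_{r/4}(x))$ on which $u$ restricts to a reinforced $(k,\sqrt{\eps_1})$-splitting at every sub-scale and satisfies $\|G_u - \Id_k\| \leq \sqrt{\eps_1}$ pointwise $\mu$-a.e. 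Setting $V := \Omega \cap \cR_k$, I would show that $u|_V$ is $(1+\eps)$-bi-Lipschitz onto its image. The lower Lipschitz bound comes from applying Proposition \ref{prop:GHisometry} at scale $s \simeq \dist(y,z)$ to the restriction of $u$ to $B_s(y)$ (a $(k,\sqrt{\eps_1})$-splitting because $y \in V$, on a ball GH-close to $\setB_s^k$ because $y \in \cR_k$), yielding $|u(y)-u(z)| \geq (1-C\eps_1^{1/4})\dist(y,z)$. For the upper Lipschitz bound, I would lift $u$ to reinforced splittings $u_\alpha$ on approximating manifolds $(M_\alpha,g_\alpha) \in \cK(n,f)$ via the harmonic-function convergence theory of \cite{CMT}, apply Proposition \ref{Prop:LipMieux} on $M_\alpha$ to sharpen the Lipschitz bound from $2$ to $1+C\sqrt{\eps_1}$, and pass to the limit. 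Choosing $\eps_1$ sufficiently small yields the $(1+\eps)$-bi-Lipschitz property. Absolute continuity $(u|_V)_\#(\mu \measrestr V) \ll \leb^k$ follows from the bi-Lipschitz regularity combined with the fact that $\mu$ admits a $k$-dimensional density at $\mu$-a.e.~$y \in V$ (since $\Tan_\meas(X,y)=\{\setR^k\}$). A Vitali covering argument over shrinking $r$ and all $x \in \cR_k$ completes the proof.

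The main obstacle is the upper Lipschitz improvement in part (B): downgrading the a priori bound $\|du\|_{L^\infty}\leq 2$ to $1+\eps$ requires combining the pointwise Gram bound from Proposition \ref{prop:propagation}, the all-scale GH-isometry property on $V$, and the manifold-level gradient improvement of Proposition \ref{Prop:LipMieux}; transferring the last ingredient back to the limit space requires a careful matching of the splitting on $X$ with approximating splittings on $M_\alpha$ through the convergence theory of \cite{CMT}. A secondary technical point is the iterated-tangents step in part (A), which must be carried out in the Kato limit framework where basic structural properties such as constancy of the dimension of tangent cones are not known to hold globally, in contrast with the $\RCD(K,N)$ case.
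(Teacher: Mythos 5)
Your overall plan is close to the paper's in spirit, but there are two genuine gaps, one in each part.

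\textbf{Part (A).} Your iterated-tangents argument establishes that at $\mu$-a.e.~$x$, \emph{some} tangent cone is Euclidean of dimension $k^*(x)$, and that any tangent $Y=\R^{k_Y}\times Z_Y$ with $Z_Y$ non-trivial has $k_Y<k^*(x)$. But this does not rule out tangents with $k_Y<k^*(x)$: your maximality argument only gives a contradiction when $k_Y=k^*(x)$. So you have not shown $x\in\cR_{k^*(x)}$, only that $\cR_{k^*(x)}$-type tangents occur. The paper closes this gap with the propagation Proposition \ref{prop:propagation}: if $\R^k\in\Tan_\meas(X,x)$ and $x$ is a density-one point of the ``bad'' set $\mathfrak{J}_k=\{d(\cdot)=k,\ i(\cdot)<k\}$, one uses $(k,\eps_i)$-splittings on shrinking balls $B_{2r_i}(x)$, propagates them to a high-density subset $\Omega_i\subset B_{r_i}(x)$ on which \emph{every} tangent splits off $\R^k$, hence $\Omega_i\cap\mathfrak{J}_k=\emptyset$, contradicting density one. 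Without the propagation step the contradiction cannot be reached, and you should not expect to avoid it: constancy of the splitting dimension of tangent cones is exactly the kind of statement the paper says is not known for Kato limits. You do use propagation in part (B) but never invoke it in part (A).

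\textbf{Part (B).} Your bi-Lipschitz construction is essentially correct (and in fact the upper Lipschitz bound is immediate from the all-scale GH-isometry, so the detour through Proposition \ref{Prop:LipMieux} and manifold lifting is unnecessary). The gap is the absolute continuity claim. You assert that $\Tan_\meas(X,y)=\{(\R^k,\dist_e,\cH^k,0)\}$ gives a finite $k$-dimensional density of $\mu$ at $y$, and deduce $(u|_V)_\#(\mu\res V)\ll\cH^k$. But the metric measured tangent cone is defined using the renormalized measure $\mu(B_r(y))^{-1}\mu$, so the condition only controls ratios $\mu(B_{\lambda r}(y))/\mu(B_r(y))$; the quantity $\mu(B_r(y))/r^k$ can still oscillate or diverge as $r\downarrow0$, and neither $\Theta^{*k}(\mu,y)<\infty$ nor $\mu\res V\ll\cH^k_X$ follows. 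This is precisely the difficulty that leads the paper (following Gigli--Pasqualetto and Brué--Pasqualetto--Semola) to invoke the De Philippis--Rindler theorem on normal one-dimensional currents (Theorem \ref{th:DePhiRin}): the currents $T_i^\ell$ built from the pushforwards $u_\#(\chi_\ell\,\Gamma(u_i,u_j))$ are normal because the $u_i$ are harmonic, their carriers dominate $\nu^\infty$, and the pointwise Gram bound from Proposition \ref{prop:propagation} guarantees the $\vec T_i^\ell$ are linearly independent a.e., yielding $\nu^\infty\ll\cH^k$. You would need to reproduce this current-theoretic step (or find an alternative route to the density bound) to complete part (B).
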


We call \eqref{eq:essentialdecompo} the essential decomposition of $X$. Rectifiability of Kato limit spaces as stated in Theorem \ref{th:main1} is then an obvious corollary of Theorem \ref{th:mainrect}.

The rest of this section is devoted to proving Theorem \ref{th:mainrect}.  Our proof in inspired by \cite{GigliPas,BPS} but contains some simplifications over the arguments presented there.  To keep the notations short,  we write $Y \in \Tan_\meas(X,x)$ instead of $(Y,\dist_Y,\mu_Y,x) \in \Tan_{\meas}(X,x)$.

\subsection{Essential decomposition}

In this subsection, we prove (A) in Theorem \ref{th:mainrect}.  

\begin{proof}[Proof of $\mathrm{(A)}$ in Theorem \ref{th:mainrect}]
First observe that the doubling condition implies the iterated tangent property \cite{LeDonne,GigliMondinoRajala}, meaning that there exists a Borel set $E$ such that $\mu(X\backslash E)=0$ and for any $x\in E$,  any $Y \in \Tan_\meas(X,x)$ and any $y \in Y$, it holds
\begin{equation}\label{eq:iterated}
\Tan_\meas(Y,y) \subset \Tan_\meas(X,x).
\end{equation}
Take $x \in E$ and assume that for some $l \in \{0,\ldots,n\}$ there exists a pointed $\RCD(0,n-l)$ space $Z$ such that $\setR^l \times Z \in  \Tan_\meas(X,x)$.  If $Z$ is not reduced to a singleton,  Gigli's splitting theorem \cite{GigliSplitting} ensures that there exists $z \in Z$ such that any $Z_z \in \Tan_\meas(Z,z)$ splits off an $\setR$ factor, so that \eqref{eq:iterated} implies that there exists a pointed $\RCD(0,n-l-1)$ space $Z'$ such that $\setR^{l+1} \times Z' \in \Tan_\meas(X,x)$.  Then
\[
\setR^{d(x)} \in \Tan_\meas(X,x)
\]
where 
\begin{align*}
d(x) & :=\max\{ 1 \le l \le n \, : \,  \, \text{there exists a pointed $\RCD(0,n)$ space $Z$}\\
& \qquad \qquad \qquad \qquad \quad  \qquad \qquad \qquad\text{such that $\setR^l \times Z \in \Tan_\meas(X,x)$}\}.
\end{align*}
Setting
\begin{align*}
i(x) & :=\min\{ 1 \le l \le n \, : \, \text{there exists a pointed $\RCD(0,n)$ space $Z$}\\
& \qquad \qquad \qquad \qquad \quad  \text{which splits off no $\setR$ such that $\setR^l \times Z \in \Tan_\meas(X,x)$}\},
\end{align*}
we obtain (A) in Theorem \ref{th:mainrect} as a consequence of
\begin{equation}\label{eq:1'}
i(x) = d(x) \qquad \text{for $\mu$-a.e.~$x \in E$.}
\end{equation}
Let us prove \eqref{eq:1'} by contradiction, assuming
\[
\mu(\{x \in E : i(x) < d(x)\}) >0.
\]
Set
\[
\mathfrak{J}_k :=\{x \in E \, : \, d(x)=k\, \,\text{and} \,\,i(x)<k\}
\]
for any $1\le k \le n$,  and note that these sets are measurable as can be proved following the arguments of \cite[Lemma 6.1]{MondinoNaber}. Since
\[
\{x \in E : i(x) < d(x)\} = \bigcup_{1\le k \le n} \mathfrak{J}_k
\]
there exists $k \in \{1,\ldots,n\}$ such that
\[
\mu(\mathfrak{J}_k)>0.
\]
Then $\mathfrak{J}_k$ admits a point with density $1$, that is to say a point $x \in \mathfrak{J}_k$ such that
\begin{equation}\label{eq:1}
\lim\limits_{r \downarrow 0} \frac{\mu(B_r(x) \cap \mathfrak{J}_k)}{\mu(B_r(x))} = 1.
\end{equation}
Since $\setR^{k} \in \Tan_\meas(X,x)$, there exist two infinitesimal sequences $\{\eps_i\}$ and $\{r_i\}$ such that for any $i$ there exists a $(k,\eps_i)$-splitting $u_i$ of $B_{2r_i}(x)$. By propagation of splittings given in Proposition \ref{prop:propagation}, for any $i$ there exists a Borel set $\Omega_i \subset B_{r_i}(x)$ such that 
\begin{equation}\label{eq:density}
 \frac{\mu(B_{r_i}(x)\backslash \Omega_i)}{\mu(B_{r_i}(x))} \le C \sqrt{\eps_i}
\end{equation}
and for any $y \in \Omega_i$ any $Y \in \Tan_\meas(X,y)$ splits off a factor $\setR^{k}$. As a consequence $i(y) \ge k$. This yields $\Omega_i \cap \mathfrak{J}_k = \emptyset$ and \eqref{eq:density} implies
\begin{equation*}
\lim\limits_{i \to \infty}  \frac{\mu(\Omega_i)}{\mu(B_{r_i}(x))} = 1,
\end{equation*}
hence we get a contradiction with \eqref{eq:1}.
\end{proof}

\subsection{Rectifiability of the regular sets: our key result}

In this subsection,  with a view to proving (B) in Theorem \ref{th:mainrect}, we establish the next key technical proposition, where we make use of the almost $k$-regular sets $(\cR_k)_{\delta,r}\subset X$, defined as
\[
(\cR_k)_{\delta,r}:=\left\{x \in X \, :\, \dmGH(B_s(x), \mathbb{B}^k_s)\le \delta s \, \, \, \, \text{for any $s\in (0,r]$}\right\}
\]
for any $\delta,r>0$. Note that each $(\cR_k)_{\delta,r}$ is a closed set. We also define $$(\cR_k)_{\delta}:=\bigcup_{r>0}(\cR_k)_{\delta,r} \subset \left\{x \in X \, : \, 
 \dmGH(B^Y_1(x), \mathbb{B}^k_1)\le \delta\, \,  \, \, \text{for any $Y \in \mathrm{Tan}_\meas(X,x)$} \right\}$$
for any $\delta >0$, and we point out that for any $0<\delta'<\delta$,
\[
(\cR_k)_{\delta} \supset  \left\{x \in X \, : \, 
 \dmGH(B^Y_1(x), \mathbb{B}^k_1)\le \delta'\, \,  \, \, \text{for any $Y \in \mathrm{Tan}_\meas(X,x)$} \right\}.
 \]
Moreover, we have
 $$\cR_k=\bigcap_{\delta}(\cR_k)_{\delta}.$$

 \begin{prop}\label{prop:bilip}
Let $(X,\dist,\mu,o) \in \overline{\mathcal{K}_\meas(n,f,c)}$, $k \in \{1,\ldots,n\}$ and $\eps \in (0,1/2)$ be given. Then there exists $\delta>0$ such that for any $x \in (\cR_k)_{\delta,16r}$ with $r\le \sqrt{T}/16$ and any $s\in (0,r]$ there exist a function $u\colon B_{2s}(x)\rightarrow \R^k$ and  a Borel set $V \subset B_{s}(x)$ such that:
\begin{enumerate}[i)]
\item  $u$ is a  $(k,\eps)$-splitting of $B_{2s}(x)$;
\item  $\mu(B_{s}(x) \backslash V) \le \eps \mu(B_{s}(x))$;
\item  $u$ is an $(\eps\,\sigma)$-GH isometry between $B_{\sigma}(y)$ and $u(y)+\mathbb{B}_{\sigma}^k$ for any $y\in V\cap(\cR_k)_{\delta,16r}$ and any $\sigma\le s/2$;
\item $u$ is $(1+\eps)$-bi-Lipschitz on $V\cap(\cR_k)_{\delta,16r}$;
\item $u_\#\left(\un_{V\cap(\cR_k)_{\delta,16r}}\di\mu\right)\ll \cH^k$.
\end{enumerate}
\end{prop}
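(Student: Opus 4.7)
The plan is to produce a single harmonic map $u\colon B_{2s}(x) \to \R^k$ that is both a reinforced almost-splitting on $B_{2s}(x)$ and, after propagation, a sharper almost-splitting at every scale on balls centered at points of a large-measure subset $V \subset B_s(x)$. All the geometric conclusions (iii)--(v) will then follow by combining Proposition \ref{prop:GHisometry} with a volume upper bound coming from the almost $k$-regularity. Concretely, fix an auxiliary parameter $\eps' = \eps'(n,\eps) \ll \eps$ to be determined. Since $x \in (\cR_k)_{\delta,16r}$ and $16s \le 16r$, one has $\dmGH(B_{16s}(x), \mathbb{B}^k_{16s}) \le 16 \delta s$. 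Taking $\delta$ small enough (as a function of $n,f,c,\eps'$) and applying Proposition \ref{prop:ExiSplit} with $\eta = 1/8$ produces a reinforced $(k,\eps')$-splitting $u\colon B_{2s}(x)\to\R^k$, which gives (i) for $\eps' \le \eps$. Applying Proposition \ref{prop:propagation} together with Remark \ref{rk:prop} to $u$ on $B_{2s}(x)$ with scaling factor $1/2$ yields a Borel set $V \subset B_s(x)$ satisfying $\mu(B_s(x) \setminus V) \le C\sqrt{\eps'}\,\mu(B_s(x))$, which gives (ii) for $\eps' \le (\eps/C)^2$; moreover, $u|_{B_\tau(y)}$ is a reinforced $(k,\sqrt{\eps'})$-splitting for every $y \in V$ and every $\tau \in (0,s)$.

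For (iii), fix $y \in V\cap (\cR_k)_{\delta,16r}$ and $\sigma \in (0, s/2]$. The restriction $u|_{B_{2\sigma}(y)}$ is a $(k,\sqrt{\eps'})$-splitting and $\dmGH(B_{2\sigma}(y), \mathbb{B}^k_{2\sigma}) \le 2\delta\sigma$, so Proposition \ref{prop:GHisometry} with $\eta = 1/2$ gives that $u$ is a $2C(n)\eps'^{1/4}\sigma$-GH isometry between $B_\sigma(y)$ and $u(y)+\mathbb{B}^k_\sigma$; picking $\eps'$ so that $2C(n)\eps'^{1/4}\le \eps$ ensures (iii). For (iv), consider $y_1, y_2\in V\cap (\cR_k)_{\delta,16r}$. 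If $\dist(y_1,y_2) < s/2$, choose $\sigma$ slightly larger than $\dist(y_1,y_2)$ in $(0,s/2]$ and apply (iii) at $y_1$: since $y_2 \in B_\sigma(y_1)$ and the GH-isometry error is $\eps\sigma$, letting $\sigma \to \dist(y_1,y_2)$ produces $|u(y_1)-u(y_2)| \in [(1-\eps)\dist(y_1,y_2),\,(1+\eps)\dist(y_1,y_2)]$. If $\dist(y_1,y_2) \ge s/2$, apply Proposition \ref{prop:GHisometry} directly to the initial $(k,\eps')$-splitting $u$ on $B_{2s}(x)$ with $\eta = 1/2$: this yields that $u$ is a $2C(n)\sqrt{\eps'}\,s$-GH isometry on $B_s(x)$, so the relative distortion is at most $4C(n)\sqrt{\eps'} \le \eps$ once $\eps'$ is small enough.

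For (v), the mGH closeness of $B_\sigma(y)$ to $(\mathbb{B}^k_\sigma, \leb^k)$ furnishes, for $\delta$ sufficiently small, a uniform volume estimate $\mu(B_\sigma(y)) \le 2\omega_k \sigma^k$ for every $y \in (\cR_k)_{\delta,16r}$ and every $\sigma \in (0,16r]$. Given any Borel set $A \subset \R^k$ with $\leb^k(A) = 0$ and any $\vartheta > 0$, cover $A \cap u(V\cap (\cR_k)_{\delta,16r})$ by Euclidean balls $\mathbb{B}^k_{\sigma_i}(u(y_i))$ with $y_i \in V\cap (\cR_k)_{\delta,16r}$, $\sigma_i \le 16r$, and $\sum \omega_k \sigma_i^k < \vartheta$. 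By the bi-Lipschitz bound (iv), each preimage $u^{-1}(\mathbb{B}^k_{\sigma_i}(u(y_i))) \cap V \cap (\cR_k)_{\delta,16r}$ is contained in $B_{(1+\eps)\sigma_i}(y_i)$, so summing the volume estimate gives $\mu(u^{-1}(A) \cap V\cap (\cR_k)_{\delta,16r}) \le 2(1+\eps)^k \vartheta$, whence (v) follows by letting $\vartheta \to 0$. The main technical obstacle is twofold: coordinating the parameters $\eps'$ and $\delta$ as polynomially small quantities in $\eps$ (and in $n,f,c$) so that (i)--(iv) all hold with the prescribed $\eps$, and extracting the uniform volume bound $\mu(B_\sigma(y)) \le 2\omega_k\sigma^k$ from mGH closeness to the Euclidean ball equipped with $\leb^k$ — purely metric GH closeness would not suffice here.
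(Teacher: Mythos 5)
Parts (i)--(iv) of your proposal follow essentially the paper's route: Proposition \ref{prop:ExiSplit} to produce a reinforced splitting, Proposition \ref{prop:propagation} (with Remark \ref{rk:prop}) to get the large-measure good set $V$, Proposition \ref{prop:GHisometry} to upgrade splittings to GH-isometries at all scales below $s$, and the bi-Lipschitz bound (iv) by reading off (iii) at scale comparable to $\dist(y_1,y_2)$. Your bookkeeping of the intermediate parameter $\eps'$ and the threshold $\delta$ is slightly different but morally equivalent to the paper's choice of $\tau(n)\eps$; both work.

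The proof of (v) is where you diverge from the paper, and this is where there is a genuine gap. Your argument hinges on a uniform upper-regularity estimate of the form $\mu(B_\sigma(y)) \le 2\omega_k\sigma^k$, extracted from $\dmGH$-closeness of $B_\sigma(y)$ to $(\mathbb{B}_\sigma^k,\leb^k)$. This estimate is false in the generality of $\overline{\mathcal{K}_\meas(n,f,c)}$. The measure $\mu$ is a general element of the closure: it arises as a limit of \emph{renormalized} Riemannian volumes (only pinned down by $c\le\mu(B_{\sqrt T}(o))\le c^{-1}$), and the $\dmGH$-closeness in the definition of $(\cR_k)_{\delta,r}$ compares \emph{normalized} measures, so it only controls ratios $\mu(B_{\lambda\sigma}(y))/\mu(B_\sigma(y))$, not the absolute size of $\mu(B_\sigma(y))$ against $\sigma^k$. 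In particular, on a collapsed limit where $k<n$, there is no a priori relation between $\mu$ and $\cH^k$ on $\cR_k$; establishing precisely such a relation is the whole content of (v). Your covering argument therefore presupposes a weak form of what needs to be proved. The paper sidesteps this by invoking the De~Philippis--Rindler theorem (Theorem \ref{th:DePhiRin}): it constructs one-dimensional normal currents $T_i^\ell$ from the columns of the Gram matrix of the splitting map (normality coming from harmonicity and the chain rule), uses the pointwise non-degeneracy of $G_u$ from part (C) of Proposition \ref{prop:propagation} to show the $\vec T_i^\ell$ are independent away from a bad set of small measure, and then concludes $\nu^\infty\ll\cH^k$. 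This is a structurally different and substantially deeper mechanism, specifically designed to produce absolute continuity without any Ahlfors-type bound, and I do not see how to replace it with the volume comparison you propose. If you want to salvage a direct covering argument you would first need to prove that $\mu$ restricted to $\cR_k$ is $\cH^k$-upper-regular, which is not weaker than (v) itself.
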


In the proof of the last point of this proposition, we use a fundamental result of De Philippis and Rindler \cite[Corollary 1.12]{DePhiRind} which requires the terminology of currents. For the interested reader, we refer to \cite{Federer} or \cite{Simon}.
 
Roughly speaking a current in $\setR^k$ is a differential form whose coefficients are distributions.  To be more precise, let $d$ be a positive integer. A $d$-dimensional current $T$ on $\setR^k$ is a continuous linear map
$$T\colon \cC_0^\infty\left(\setR^k,\Lambda^d(\setR^k)^*\right)\rightarrow \setR.$$
The differential of a $d$-dimensional current $T$ is the $(d-1)$-dimensional current $dT$ defined by
$$dT(\omega):=T(d\omega)$$
for any $\omega \in \cC_0^\infty\left(\setR^k,\Lambda^{d-1}(\setR^k)^*\right)$.  Here we consider only currents with finite mass, that is to say differential forms whose coefficents are finite Radon measures.  Any current with finite mass admits a canonical decomposition
\begin{equation} \label{decomposition}
T(\cdot)= \int_{\setR^k} \langle \cdot, \vec T \rangle \di \|T\|
\end{equation}
where $\|T\|$ is a Radon measure and $\vec T$ is a $\|T\|$-integrable unitary vector field.  In this regard, we shall make use of the following easy lemma, whose proof is omitted for brevity.

\begin{lemma}\label{lem:courants}
Let $\nu$ be a Radon measure on $\setR^d$ and $\vec V$ a square $\nu$-integrable vector field such that $|\vec V (x)|>0$ for $\nu$-a.e.~$x \in \setR^k$. Let $T$ be the one-dimensional current on $\setR^k$ defined by
\[
T(\omega) = \int_{\setR^k} \langle  \vec\omega, \vec V \rangle \di \nu
\]
for any $\omega \in \cC_0^\infty\left(\setR^k,\Lambda^1(\setR^k)^*\right)$. Then $\|T\|$ is absolutely continuous with respect to $\nu$ with density $|\vec V|$ and $\vec T(x) = \vec V(x)/|\vec V(x)|$ for $\nu$-a.e.~$x \in \setR^k$.
\end{lemma}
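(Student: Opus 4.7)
The plan is to exhibit explicitly a candidate decomposition of $T$ of the form \eqref{decomposition} whose measure is $|\vec V|\,\nu$ and whose unit vector field is $\vec V/|\vec V|$, and then invoke uniqueness of such canonical decompositions in order to match it with the pair $(\|T\|,\vec T)$.

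First I would set $\mu_0:=|\vec V|\,\nu$ and check that this is a Radon measure: by the Cauchy-Schwarz inequality on any compact $K\subset\setR^k$,
\[
\int_K |\vec V|\,\di\nu \leq \nu(K)^{1/2}\Bigl(\int_K |\vec V|^2\,\di\nu\Bigr)^{1/2}<+\infty,
\]
so $\mu_0$ is locally finite. On the $\nu$-full-measure set $\{|\vec V|>0\}$, I would define $\vec T_0:=\vec V/|\vec V|$, extending it arbitrarily to a unit vector on the complement; then $|\vec T_0|\equiv 1$ holds $\mu_0$-a.e., since $\{|\vec V|=0\}$ is $\mu_0$-null. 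A direct computation then gives, for any $\omega\in\cC_0^\infty(\setR^k,\Lambda^1(\setR^k)^*)$,
\[
\int_{\setR^k}\langle\vec\omega,\vec T_0\rangle\,\di\mu_0
=\int_{\setR^k}\langle\vec\omega,\vec V/|\vec V|\rangle\,|\vec V|\,\di\nu
=\int_{\setR^k}\langle\vec\omega,\vec V\rangle\,\di\nu
=T(\omega),
\]
the middle equality being justified by the simultaneous vanishing of $|\vec V|$ and $\vec V$ on $\{|\vec V|=0\}$. Hence $(\mu_0,\vec T_0)$ is a decomposition of $T$ of the form \eqref{decomposition} with a $\mu_0$-a.e.\ unit vector field.

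To conclude, I would invoke the uniqueness of the polar decomposition of a current with finite mass (classical, see e.g.\ Federer or Simon as cited just before the lemma): any representation of $T$ as in \eqref{decomposition} by a Radon measure $\sigma$ and a $\sigma$-a.e.\ unit vector field $\vec S$ forces $\sigma=\|T\|$ and $\vec S=\vec T$ $\|T\|$-a.e. Applied to the pair $(\mu_0,\vec T_0)$ constructed above, this yields both $\|T\|=|\vec V|\,\nu$ and $\vec T=\vec V/|\vec V|$ on $\{|\vec V|>0\}$, which is of full $\nu$-measure. I do not anticipate any real obstacle: the only nontrivial ingredient is the uniqueness of the polar decomposition, which is a standard fact from geometric measure theory.
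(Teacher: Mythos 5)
The paper provides no proof of this lemma---it explicitly says the proof ``is omitted for brevity''---so there is nothing to compare against; I can only assess your argument on its own terms. Your argument is correct and is the natural one: you realize $T$ as the vector-valued measure $\vec V\,\nu$, verify that $\bigl(|\vec V|\,\nu,\ \vec V/|\vec V|\bigr)$ is a legitimate polar decomposition (local Cauchy--Schwarz gives that $|\vec V|\,\nu$ is Radon, and the unit-norm condition holds off the $\nu$-null set $\{|\vec V|=0\}$), and then invoke the uniqueness of the polar decomposition of a current of locally finite mass, which is standard (Federer, Simon) and amounts to the Riesz representation and Radon--Nikodym theorems for vector-valued measures. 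One small point worth making explicit to close the argument exactly as the lemma is phrased: the uniqueness theorem gives $\vec T=\vec V/|\vec V|$ only $\|T\|$-a.e., but since $|\vec V|>0$ holds $\nu$-a.e., the measures $\|T\|=|\vec V|\,\nu$ and $\nu$ are mutually absolutely continuous, so $\|T\|$-a.e.\ and $\nu$-a.e.\ coincide, yielding the stated $\nu$-a.e.\ identity.
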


A current $T$ with finite mass such that $dT$ has finite mass too is called a normal current.  We recall the result by De Philippis and Rindler that we shall use \cite[Corollary 1.12]{DePhiRind}.

\begin{theorem}\label{th:DePhiRin}
Let $\nu$ be a Radon measure on $\setR^k$, and let $\{T_i\}_{1\le i \le k}$ be normal one-dimensional currents on $\setR^k$ such that $\nu \ll \|T_i\|$ for any $i$, and the vectors $\{\vec T_i (x)\}_{1\le i \le k}$ are independent for $\nu$-a.e.~$x \in \setR^k$. Then $\nu \ll \cH^k$.
\end{theorem}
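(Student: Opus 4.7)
The strategy is to use GH-closeness to produce a splitting map $u$ on $B_{2s}(x)$, to extract from it a Borel ``good set'' $V$ via the propagation Proposition \ref{prop:propagation}, and then to verify i)--v) in turn. The first four points rest on a combination of Propositions \ref{prop:ExiSplit}, \ref{prop:GHisometry}, \ref{Prop:LipMieux} and \ref{prop:propagation}; the last (and most delicate) relies on the De Philippis--Rindler Theorem \ref{th:DePhiRin}. Throughout, $\eps'$ is an auxiliary small parameter depending only on $n$ and $\eps$, fixed at the end, and $\delta$ is then taken small enough for all the applications below to go through.

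\emph{Constructing $u$ and the good set: i), ii), iii).} Since $x\in(\cR_k)_{\delta,16r}$, for every $s\in(0,r]$ the ball $B_{16s}(x)$ is $16\delta s$-close to $\setB^k_{16s}$, so Proposition \ref{prop:ExiSplit} (applied with $\eta=1/8$) produces a reinforced $(k,\eps')$-splitting $u:B_{2s}(x)\to\R^k$; this gives i) once $\eps'\le\eps$. The propagation Proposition \ref{prop:propagation} (together with Remark \ref{rk:prop} to replace $r/2$ by $s$) then yields a Borel set $V\subset B_s(x)$ with
\[\mu(B_s(x)\setminus V)\le C(n)\sqrt{\eps'}\,\mu(B_s(x)),\]
such that the restriction of $u$ to any sub-ball centered at a point of $V$ is itself a reinforced $(k,\sqrt{\eps'})$-splitting, and $\|G_u(y)-\Id_k\|\le\sqrt{\eps'}$ for $\mu$-a.e.~$y\in V$. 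Choosing $\eps'$ with $C(n)\sqrt{\eps'}\le\eps$ gives ii). For iii), fix $y\in V\cap(\cR_k)_{\delta,16r}$ and $\sigma\in(0,s/2]$: the map $u|_{B_{2\sigma}(y)}$ is a $(k,\sqrt{\eps'})$-splitting and $B_{2\sigma}(y)$ is $2\delta\sigma$-close to $\setB^k_{2\sigma}$, so Proposition \ref{prop:GHisometry} (with $\eta=1/2$ and both $\eps'$ and $\delta$ small enough) forces $u$ to be an $(\eps\sigma)$-GH isometry from $B_\sigma(y)$ onto $u(y)+\setB^k_\sigma$.

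\emph{Bi-Lipschitz property: iv).} Set $V^*:=V\cap(\cR_k)_{\delta,16r}$. The upper Lipschitz bound $|u(y_1)-u(y_2)|\le(1+\eps)\dist(y_1,y_2)$ for $y_1,y_2\in V^*$ will follow either directly from iii) at the appropriate scale or, more robustly, from the improved Lipschitz estimate of Proposition \ref{Prop:LipMieux} passed to Kato limits. For the lower bound, when $d:=\dist(y_1,y_2)\le s/4$, applying iii) at $y_1$ with $\sigma=2d\le s/2$ gives $\bigl||u(y_1)-u(y_2)|-d\bigr|\le 2\eps d$; for pairs with $d\in(s/4,s/2)$, the additive error $\eps\sigma$ furnished by iii) at scale $\sigma=s/2$ is already $\le 2\eps d$, which yields the same multiplicative control. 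For the remaining range $d\in[s/2,2s)$, a chaining argument along a geodesic from $y_1$ to $y_2$, using the high density of $V^*$ in $B_s(x)$ provided by ii) to approximate intermediate points of the geodesic by points of $V^*$, together with the near-isometry iii) on each chain element, reduces to the previous case. Collecting all estimates and replacing $\eps$ by $\eps/2$ at the outset gives iv).

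\emph{Absolute continuity: v).} Set $\nu:=u_\#(\un_{V^*}\di\mu)$. The goal is to apply Theorem \ref{th:DePhiRin} by constructing $k$ normal one-dimensional currents $T_1,\ldots,T_k$ on $\R^k$ with $\nu\ll\|T_i\|$ for each $i$ and $\{\vec T_i(z)\}_{i=1}^k$ linearly independent for $\nu$-a.e.~$z$. A natural choice is to let $T_i$ be the pushforward by $u$ of the one-dimensional current on $X$ defined by the vector field $\un_{V^*}\nabla u_i$ integrated against $\mu$. Using Lemma \ref{lem:courants}, one identifies $\|T_i\|$ and $\vec T_i$: the estimate $\|G_u-\Id_k\|\le\sqrt{\eps'}$ on $V^*$ from Proposition \ref{prop:propagation}.(C) shows that $\|T_i\|$ is comparable to $\nu$ and that $\{\vec T_i\}_{i=1}^k$ is a nearly orthonormal frame at $\nu$-a.e.~point of $u(V^*)$, in particular linearly independent. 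Normality (finiteness of $\|dT_i\|$) is the heart of the matter: formally, the harmonicity of $u_i$ turns $dT_i$ into a boundary term concentrated on $\partial V^*$, and the bi-Lipschitz control iv) together with the measure estimate ii) allow one to dominate this boundary term by a smoothing of $\un_{V^*}$ via cutoff or BV-type arguments. Once the three hypotheses of Theorem \ref{th:DePhiRin} are verified, one concludes $\nu\ll\cH^k$, which is v). The main obstacle of the whole proof lies in this last step: handling the non-smoothness of $\un_{V^*}$ carefully enough to keep the mass of $dT_i$ uniformly bounded, which is precisely where both the bi-Lipschitz estimate iv) and the Gram-matrix estimate from the propagation of reinforced splittings play a decisive role.
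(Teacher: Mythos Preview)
Your proposal does not address the stated theorem at all. Theorem \ref{th:DePhiRin} is the De Philippis--Rindler result, a statement about Radon measures and normal currents on $\R^k$; the paper does not prove it but simply quotes it from \cite[Corollary 1.12]{DePhiRind}. What you have written is instead a proof sketch for Proposition \ref{prop:bilip}, which \emph{uses} Theorem \ref{th:DePhiRin} as a black box in its final step. So as a proof of the statement in question, this is a complete mismatch.

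If your intent was to prove Proposition \ref{prop:bilip}, a few remarks on how your sketch compares to the paper's argument. For iv), you work much harder than necessary: the paper observes that for any $y,z\in V\cap(\cR_k)_{\delta,16r}$, setting $2\sigma:=\dist(y,z)\le 2s$ and applying iii) directly at $y$ with this $\sigma$ gives $\bigl||u(y)-u(z)|-\dist(y,z)\bigr|\le \eps\sigma=\tfrac{\eps}{2}\dist(y,z)$, with no case distinction or chaining. For v), you correctly identify the crux (normality of the pushforward currents) but your plan to bound $\|dT_i\|$ via ``BV-type arguments'' on $\un_{V^*}$ is where the real difficulty lies, and you do not resolve it. The paper sidesteps this entirely: it fixes a compact $K\subset V^*$, approximates $\un_K$ from above by Lipschitz cutoffs $\chi_\ell$, defines currents $T_i^\ell:=\sum_j u_\#(\chi_\ell\Gamma(u_i,u_j))\,dx_j$, and computes $dT_i^\ell=-u_\#(\Gamma(\chi_\ell,u_i))$ directly from harmonicity of $u_i$ and the chain rule---this is manifestly a finite measure because $\chi_\ell$ is Lipschitz. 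One then applies Theorem \ref{th:DePhiRin} for each finite $\ell$ to get $\un_{\R^k\setminus\cB_\ell}\nu^\infty\ll\cH^k$ on a good set $\cB_\ell^c$, and passes to the limit using $L^1$-convergence of the Gram densities. Your attempt to work directly with the indicator would require showing $\un_{V^*}\in BV$ or similar, which is neither proved nor obviously true here.
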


We are now in a position to prove Proposition \ref{prop:bilip}.

\proof  We first prove the first three assertions which are direct consequences of the propagation property of splittings we established in Section 3. Let us set
\[
\tau(n):=\min\left\{ 1, \frac14 C^{-1}(n,1/2), (A(n)\sqrt{C'(n)})^{-1}\right\}
\]
where $A(n)$ is given by the doubling condition \eqref{eq:doubling}, $C(n,1/2)$ is given by Proposition \ref{prop:GHisometry}, and $C'(n)$ is given by Proposition \ref{prop:propagation}.
According to Proposition \ref{prop:GHisometry}, there is some $\delta_1$ such that when
$y\in (\cR_k)_{\delta_1,16r}$, $\sigma\in (0,4r]$ and $v\colon B_{4\sigma}(y)\rightarrow \R^k$ is a $(k,[\tau(n)\eps]^2)$-splitting of $B_{4\sigma}(x)$ then $v$ is an $(\eps\,\sigma)$-GH isometry between $B_{2\sigma}(y)$ and $u(y)+\mathbb{B}_{2\sigma}^k$.

According to Proposition \ref{prop:ExiSplit}, there is a $\delta\le \delta_1$ such that if $x\in (\cR_k)_{\delta,16r}$  and $s\le r$ then there is  $u\colon B_{8s}(x)\rightarrow \R^k$ a reinforced $(k,[\tau(n)\eps]^4)$-splitting of $B_{8s}(x)$.

Now let $x\in  (\cR_k)_{\delta,16r}$ and let $s\in (0,r]$ and $u\colon B_{8s}(x)\rightarrow \R^k$ be a reinforced $(k,[\tau(n)\eps]^4)$-splitting of $B_{8s}(x)$.
With Proposition \ref{prop:propagation}, we find $\Omega\subset B_{4s}(x)$ such that 
$$\mu(B_{4s}(x) \backslash \Omega) \le C'(n)\tau^2(n)\eps^2\,\mu(B_{4s}(x))$$ such that for any
$y\in  \Omega$ and any $\sigma\le s$ then $u$ is a $(k,[\tau(n)\eps]^2)$-splitting of $B_{4\sigma}(y)$. If furthermore $y\in (\cR_k)_{\delta,16r}$ then 
$u$ is an $(\eps\,\sigma)$-GH isometry between $B_{2\sigma}(y)$ and $u(y)+\mathbb{B}_{2\sigma}^k$.

We set $V:=\Omega\cap B_s(x)$. Then
\begin{align*}\mu(B_{s}(x) \backslash V)& \le \mu(B_{4s}(x) \backslash \Omega)\\
&\le C'(n)\tau^2(n)\eps^2\,\mu(B_{4s}(x))\\
&\le A^2(n)C'(n)\tau^2(n)\eps^2\,\mu(B_{s}(x))\\
&\le \eps\mu(B_{s}(x)).\end{align*}

The fourth assertion is a consequence of the third one. Indeed, if $y,z\in V\cap(\cR_k)_{\delta,16r}$, define $2\sigma:=\dist(y,z)\le 2s$. Then, since $u$ is an $(\eps\,\sigma)$-GH isometry between $B_{2\sigma}(y)$ and $u(y)+\mathbb{B}_{2\sigma}^k$, we get
$$\left| |u(y)-u(z)|-\dist(y,z)\right|\le \eps \sigma=\eps \frac{\dist(y,z)}{2}$$
from which follows the desired result.

In order to prove the last point we only need to show that if $K$ is a compact subset of $V\cap(\cR_k)_{\delta,16r} \subset B_s(x)$ with $\mu(K)>0$ then
$$u_\#\left(\un_{K}\di\mu\right)\ll \cH^k.$$

\textbf{Step 1.} To prepare the application of Theorem \ref{th:DePhiRin},  let us introduce a series of Radon measures and discuss some properties of these measures. Set $B:=B_{2s}(x)$. Choose $\{\chi_\ell\} \subset \Lip_c(B,[0,1])$ such that $\chi_\ell \downarrow \un_K$ : for instance for any $\ell$ we may choose $
$$\chi_\ell(\cdot):=\big(1-\ell \dist(K,\cdot)\big)_+$ which has support $K_\ell=\left\{ \dist(K,\cdot)\le \frac1\ell\right\}$. For convenience we also set $\chi_\infty:=\un_K$.
We define the following Radon measures on $\R^k$:
$$\nu_{i,j}^\ell:=u_\#\left(\chi_\ell \Gamma(u_i,u_j)\right)\qquad \text{and}\qquad  \nu^\ell:=u_\#\left(\chi_\ell \mu\right)$$
for $i,j\in \{1,\dots,k\}$ and $\ell\in \N\cup\{\infty\}$ and we also set $$\nu:=u_\#\left(\un_B\mu\right).$$
The coefficients
\[
\frac{\di\Gamma(u_i,u_j)}{\di \mu} = \langle d u_i, d u_j \rangle, \qquad i,j\in \{1,\dots,k\},
\]
of  the Gram matrix map of $u=(u_1,\ldots,u_k)$ are bounded Borel functions, hence there exist bounded Borel functions such that for any $i,j\in \{1,\dots,k\}$ and $\ell\in \N\cup\{\infty\}$,
$$\di\nu_{i,j}^\ell=\rho_{i,j}^\ell \di\nu^\ell.$$
There are also bounded Borel functions $J^\ell$ on $B$ such that $$\di\nu^\ell=J^\ell \di\nu$$
and $J^{\ell+1}\le J^{\ell}\le 1$ for any $\ell \in \setN \cup \{\infty\}$. Moreover,
$$\|J^\ell-J^\infty\|_{L^1(\di\nu)}=\int_{\setR^k} (J^{\ell}- J^\infty)\di\nu=\int_B (\chi_\ell-\chi_\infty)\di\mu\le \mu(K_\ell\setminus K) \to 0$$
so that 
\begin{equation}\label{eq:yieldsalso}
\lim_{\ell\to+\infty} \|J^\ell-J^\infty\|_{L^1(\di\nu)}=0.\end{equation}

\textbf{Step 2.} For any $\ell \in \setN \cup \{\infty\}$,  let $\lambda^\ell$ be the lowest eigenvalue of the symmetric matrix $\left(\rho_{i,j}^\ell\right)$. Our goal is now to establish
  \begin{equation}\label{convergence2}\lim_{\ell\to+\infty}
\|\lambda^\ell- \lambda^\infty\|_{L^1(\di\nu^\infty)}=0\end{equation}
and for $\nu^\infty$-a.e. $p\in \setR^k$,
 \begin{equation}\label{ineq} \lambda^\infty(p)\ge 1-\epsilon. \end{equation}

For any $\xi=(\xi_1,\dots,\xi_k)\in \setR^k$ such that $^t\xi \xi=1$ and any $\ell \in \setN \cup \{\infty\}$, we introduce
 $$\rho_{\xi}^\ell:=\sum_{i,j}\xi_i\xi_j\rho_{i,j}^\ell.$$ Setting $$u_\xi:=\langle \xi,u\rangle$$ we have
 $$ \rho_{\xi}^\ell = \frac{\di u_\#\left(\chi_\ell \Gamma(u_\xi,u_\xi)\right)}{\di \nu^\ell}.$$
In particular, $\{ \rho_{\xi}^\ell(p) \}_{\ell}$ is a non negative non increasing sequence for $\nu^\infty$-a.e.~$p \in \setR^k$. Arguing as we did to get \eqref{eq:yieldsalso} yields
$$\lim_{\ell\to+\infty} \|J^\ell\rho_{\xi}^\ell-J^\infty\rho_{\xi}^\infty \|_{L^1(\di\nu)}=0.$$
Since
\begin{align*}
\|\rho_{\xi}^\ell- \rho_{\xi}^\infty\|_{L^1(\di\nu^\infty)}&=\int_{\setR^k} (\rho_{\xi}^\ell- \rho_{\xi}^\infty) J^\infty\di\nu\\
&=\int_{\setR^k} (J^\ell\rho_{\xi}^\ell- J^\infty\rho_{\xi}^\infty)\di\nu-\int_{\setR^k} (J^\ell- J^\infty)\rho_{\xi}^\ell\di\nu
\end{align*} we also get
$$\lim_{\ell\to+\infty}\|\rho_{\xi}^\ell- \rho_{\xi}^\infty\|_{L^1(\di\nu^\infty)}=0.$$
Using that $\xi\mapsto \rho_{\xi}^\ell$ is quadratic,  by polarization we deduce that for any $i,j$,
$$\lim_{\ell\to+\infty}\|\rho_{i,j}^\ell- \rho_{i,j}^\infty\|_{L^1(\di\nu^\infty)}=0.$$
Up to extraction of a subsequence we can assume that there exists a set $C$ of full $\nu^\infty$ measure such that for any $i,j\in \{1,\dots,k\}$ and $p\in C$, $$ \lim_{\ell\to+\infty}\rho_{i,j}^\ell(p)= \rho_{i,j}^\infty(p).$$
Then for $\nu^\infty$-a.e. $p\in \setR^k$,
\begin{equation}\label{convergence1} \lim_{\ell\to+\infty}\lambda^\ell(p)= \lambda^\infty(p)\end{equation}
  and thus we get \eqref{convergence2}.

For $\nu^\infty$-a.e. $p\in u(K)$, we have 
$$
\rho_\xi^\infty(p)=\lim_{\sigma\to 0} \frac{\int_{K\cap u^{-1}\left(\bB^k_\sigma(p)\right)}  \di\Gamma(u_\xi,u_\xi)}{\mu\left(K\cap u^{-1}\left(\bB^k_\sigma(p)\right)\right)}\, \cdot$$ 
Since $\mu$-a.e.~on $B$ we have
\[
\frac{\di \Gamma(u_\xi,u_\xi)}{\di \mu}={}^t \xi G_u\xi,
\]
from \eqref{eq:bilipGram} in Proposition \ref{prop:propagation} we get $\mu$-a.e.~on $K$:
\[
1-\eps \le \frac{\di \Gamma(u_\xi,u_\xi)}{\di \mu} \le 1+\eps.
\]
Thus for $\nu^\infty$-a.e.~$p\in u(K),$
  \begin{equation}\label{ine}  1-\epsilon \le \rho_\xi^\infty(p)\le 1+\epsilon,\end{equation}
from which follows \eqref{ineq}.

\textbf{Step 3.} Recall that our final goal is to prove that $\nu^\infty \ll \cH^k$. To this aim, we will apply Theorem \ref{th:DePhiRin} for any finite $\ell $ to the  currents
$$T_i^\ell=\sum_{j=1}^k  \nu_{i,j}^\ell dx_j=\sum_{j=1}^k \rho_{i,j}^\ell\nu^\ell dx_j.$$
These are indeed normal currents as for $\psi\in \cC^\infty_0(\setR^k)$,
\begin{align*}
dT_i^\ell(\psi)&=\sum_{j=1}^k\int_B \chi_\ell\, \frac{\partial \psi}{\partial x_j}\circ u\,\di\Gamma(u_i,u_j)\\
&=\int_B \chi_\ell \,\di\Gamma(\psi\circ u,u_i)\text{ using the chain rule}\\
&=-\int_B\psi\circ u \,\di\Gamma(\chi_\ell,u_i) \text{ by the fact that }u_i \text{ is harmonic,}
\end{align*}
hence 
$$dT_i^\ell=-u_\#\left(\Gamma(\chi_\ell,u_i)\right)$$ is a finite Radon measure. Moreover, by Lemma \ref{lem:courants}, the decomposition (\ref{decomposition}) of $T_i^\ell$ is given
by
$$\vec T_i^\ell=\left(\rho_i^\ell \right)^{-1}\left(\rho_{i,1}^\ell,\dots, \rho_{i,k}^\ell\right)$$ with
$$\rho_i^\ell=\left(\sum_{j=1}^k \left(\rho_{i,j}^\ell\right)^2\, \right)^{\frac12} \qquad \text{and} \qquad \|T_i^\ell\|=\rho_i^\ell \nu^\ell.$$
Notice that $\rho_i^\ell\ge \rho_{i,i}^\ell$ hence
$$\rho_{i,i}^\infty  \nu^\infty= \nu_{i,i}^\infty \ll\nu_{i,i}^\ell=\rho_{i,i}^\ell \nu^\ell\ll  \|T_i^\ell\|,$$
and inequality (\ref{ine}) implies that $\nu^\infty$-a.e.~$\rho_{i,i}^\infty \ge 1-\sqrt{\eps}$ so that 
$$\nu^\infty\ll  \|T_i^\ell\|.$$
We remark that for any $\xi=(\xi_1,\dots,\xi_k)\in \setR^k$ unitary it holds
$$\left\langle\, \left(\sum_{i=1}^k \xi_i  \rho_i^\ell\vec T_i^\ell\right),\xi\right\rangle=\rho_\xi^\ell.$$
We set
$$\cB_\ell :=\{p\in \setR^k : \lambda^\ell(p)\le (1-\eps)/2\}.$$
Since $\rho_i^\ell$ are bounded functions, we deduce that 
if  $p\in \setR^k\setminus \cB_\ell$ then  $  \vec T_1^\ell(p),\dots,  \vec T_k^\ell(p)$ is a basis of $\setR^k$.
Applying Theorem \ref{th:DePhiRin} we get
$$\un_{ \setR^k\setminus \cB_\ell}\nu^\infty\ll \cH^k.$$
But the convergence \eqref{convergence2} and the lower bound \eqref{ineq} yield
$$\lim_{\ell\to \infty} \nu^\infty\left(\cB_\ell\right)=0,$$
hence we get $\nu^\infty\ll \cH^k.$
\endproof

\subsection{Rectifiability of the regular sets: end of the proof}

To get (B) in Theorem \ref{th:mainrect} from Proposition \ref{prop:bilip},
we use the following definition, introduced in \cite{BPS}. 
\begin{D}
Let $(X,\dist,\mu)$ be a metric measure space,  $k$ a positive integer and $\eps \in (0,1)$. We call $(\mu,k,\eps)$-rectifiable any Borel set $\Omega \subset X$ for which there exists a countable family of $(1+\eps)$-bi-Lipschitz charts $\{(V_i^\eps,\phi_i^\eps)\}$ from $X$ to $\setR^k$ such that $\mu(\Omega \backslash \bigcup_{i} V_i^\eps)=0$.
\end{D}

According to the previous definition,  we are left with establishing the following.

\begin{prop}\label{prop:rect}
Let $(X,\dist,\mu,o) \in \overline{\mathcal{K}_\meas(n,f,c)}$, $k \in \{1,\ldots,n\}$ and $\eps\in(0,1)$.  Then $\cR_k$ is $(\mu,k,\eps)$-rectifiable.
\end{prop}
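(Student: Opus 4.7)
The plan is to deduce Proposition \ref{prop:rect} from Proposition \ref{prop:bilip} via a Vitali covering iteration. Fix $\eps \in (0,1)$ and set $\eps' := \min(\eps, 1/4)$, so that $2\eps' \leq 1/2 < 1$; let $\delta = \delta(\eps')$ be the constant furnished by Proposition \ref{prop:bilip}. Since $\cR_k \subset (\cR_k)_\delta = \bigcup_{m \geq 1} (\cR_k)_{\delta, 16 r_m}$ for any sequence $r_m \downarrow 0$ with $r_m \leq \sqrt{T}/16$, and since $(X,\dist,\mu)$ is $\sigma$-finite, it suffices to show that for each $m$ and each integer $R \geq 1$ the set $A := (\cR_k)_{\delta, 16 r_m} \cap B_R(o)$, which has finite $\mu$-measure, can be covered up to a $\mu$-null set by countably many $(1+\eps)$-bi-Lipschitz charts whose pushforwards are absolutely continuous with respect to $\cH^k$.

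Fix $m$ and $R$ and set $A^{(0)} := A$. The charts are built inductively. Assuming $A^{(N)}$ has been defined with $\mu(A^{(N)}) \leq (2\eps')^N \mu(A)$, I would pick by outer regularity an open set $G \supset A^{(N)}$ with $\mu(G) \leq 2\mu(A^{(N)})$, and apply the Vitali covering theorem (valid in our doubling setting) to the fine cover of $A^{(N)}$ consisting of all balls $B_s(x)$ with $x \in A^{(N)}$, $s \leq r_m$ and $B_s(x) \subset G$. This produces a countable disjoint subfamily $\{B_{s_j}(x_j)\}_j$ with $\mu(A^{(N)} \setminus \bigcup_j B_{s_j}(x_j)) = 0$. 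For each $j$, Proposition \ref{prop:bilip} applied at $x_j \in A^{(N)} \subset (\cR_k)_{\delta, 16 r_m}$ yields a $(k, \eps')$-splitting $u_j$ of $B_{2 s_j}(x_j)$ and a Borel set $W_j \subset B_{s_j}(x_j)$ such that $\mu(B_{s_j}(x_j) \setminus W_j) \leq \eps' \mu(B_{s_j}(x_j))$ and such that $\bigl(W_j \cap (\cR_k)_{\delta, 16 r_m}, u_j\bigr)$ is a $(1+\eps')$-bi-Lipschitz chart with $(u_j)_\# \bigl(\mathbf{1}_{W_j \cap (\cR_k)_{\delta, 16 r_m}} \mu\bigr) \ll \cH^k$. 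The new leftover $A^{(N+1)} := A^{(N)} \setminus \bigcup_j \bigl(W_j \cap (\cR_k)_{\delta, 16 r_m}\bigr)$ then satisfies
\[
\mu(A^{(N+1)}) \leq \sum_j \mu(B_{s_j}(x_j) \setminus W_j) \leq \eps' \sum_j \mu(B_{s_j}(x_j)) \leq \eps' \mu(G) \leq 2\eps' \mu(A^{(N)}),
\]
where disjointness and the inclusion $\bigcup_j B_{s_j}(x_j) \subset G$ give the third inequality. This closes the induction, and the exponential decay $\mu(A^{(N)}) \to 0$ implies that the countable union of all charts so produced covers $A$ up to a $\mu$-null set. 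Since $\eps' \leq \eps$, each chart is $(1+\eps)$-bi-Lipschitz. Taking the countable union over $m$ and $R$ concludes the proof.

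I expect the main obstacle to be the control of the Vitali sum $\sum_j \mu(B_{s_j}(x_j))$ by a multiple of $\mu(A^{(N)})$: the disjoint balls produced by a naive Vitali extraction could extend well beyond $A^{(N)}$, so that Proposition \ref{prop:bilip}.ii) alone yields $\mu(A^{(N+1)}) \leq \eps' \sum_j \mu(B_{s_j}(x_j))$ with an uncontrolled right-hand side. Restricting the Vitali family to balls contained in the open neighborhood $G$, which remains a fine cover because $G$ is open and contains $A^{(N)}$, is the key device that makes the induction close with geometric rate $2\eps' < 1$. The other ingredients (the absolute continuity statement, the $(1+\eps)$-bi-Lipschitz constant, the Borelness of the chart domains) are inherited essentially verbatim from Proposition \ref{prop:bilip}.
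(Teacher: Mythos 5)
Your proof is correct, and it uses the same ingredients as the paper (Proposition~\ref{prop:bilip} plus the Vitali covering theorem), but the iteration structure is genuinely different. The paper first shows (Lemma~\ref{prop:realrect}) that for fixed $R$ and $\eps$ a single Vitali extraction yields charts covering $\cR_k\cap B_R(p)$ up to an $\eps$-small leftover, and then sums over $R=i$ with precision $\eps_i=2^{-i}\eps$ to kill the total leftover. You instead decompose $\cR_k$ once and for all into countably many Borel pieces of finite measure via the closed sets $(\cR_k)_{\delta,16r_m}\cap B_R(o)$, and within each piece you iterate the Vitali step with a \emph{fixed} $\eps'$, letting the leftover decay geometrically. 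A modest payoff of your organization is that the constant $\delta$ from Proposition~\ref{prop:bilip} is chosen once, whereas the paper implicitly needs a fresh $\delta_i$ at each precision level $\eps_i$; conversely, the paper's organization isolates a clean reusable intermediate statement. You are also more explicit than the paper about the one delicate point, namely bounding the Vitali sum $\sum_j\mu(B_{s_j}(x_j))$ by a controlled multiple of $\mu(A^{(N)})$: the open-set confinement $B_{s_j}(x_j)\subset G$ with $\mu(G)\le 2\mu(A^{(N)})$, which still gives a fine cover because $G$ is open, is precisely the standard refinement that makes the induction close with rate $2\eps'<1$. The corresponding inequality $\sum_i\mu(B_{r_{x_i}}(x_i))\le \mu(\cR_k\cap B_R(p))$ in the proof of Lemma~\ref{prop:realrect} tacitly relies on the same device, since disjointness of the Vitali balls does not by itself confine them to $\cR_k\cap B_R(p)$; your write-up makes this step explicit.
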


To this aim, we prove a lemma which is a consequence of our key Proposition \ref{prop:bilip}.

\begin{lemma}\label{prop:realrect}
Let $(X,\dist,\mu,o) \in \overline{\mathcal{K}_\meas(n,f,c)}$ and $k \in \{1,\ldots,n\}$. Then for any $p \in X$, $R>0$ and $\eps\in (0,1)$, there exists a $(\mu,k,\eps)$-rectifiable set $\Omega_\eps \subset \mathcal{R}_k \cap B_R(p)$ such that $\mu([\mathcal{R}_k \cap B_R(p)] \backslash \Omega_\eps) \le \eps$.
\end{lemma}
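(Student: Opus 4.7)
\emph{Plan.} The plan is to combine Proposition \ref{prop:bilip} with a Vitali packing on a set of uniformly almost-regular points. I first choose an auxiliary tolerance $\eps' := \min\{\eps/2,\, \eps/(2\mu(B_{2R}(p)))\}$ so that $\eps'$ is small enough to absorb the total packing error, and let $\delta$ be the constant furnished by Proposition \ref{prop:bilip} for $\eps'$. Every chart produced below will then be $(1+\eps')$-bi-Lipschitz, hence a fortiori $(1+\eps)$-bi-Lipschitz.

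The first step is to trade $\cR_k$ for a single uniform scale. For every $x\in\cR_k$, uniqueness of the Euclidean measured tangent forces $\dmGH(B_s(x),\mathbb{B}^k_s)\leq\delta s$ for all $s$ smaller than some $r_x>0$, so $x\in(\cR_k)_{\delta,r_x}$ and $\cR_k \subset \bigcup_{j\geq 1}(\cR_k)_{\delta,1/j}$. Since these sets are increasing in $j$, monotone convergence lets me pick $J$ with $1/J\leq\sqrt{T}$ such that
$$\mu\big([\cR_k\cap B_R(p)]\setminus E\big)\leq \eps/2, \qquad \text{where } E := \cR_k\cap B_R(p)\cap(\cR_k)_{\delta,1/J}.$$
Setting $r := 1/(16J)$, one has $E\subset(\cR_k)_{\delta,16r}$, so Proposition \ref{prop:bilip} applies at every $x\in E$ for every $s\in(0,r]$.

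The second step is Vitali. The family $\{B_s(x) : x\in E,\, s\in(0,r]\}$ is a fine cover of $E$; combined with the doubling property of $(X,\dist,\mu)$ established in Proposition \ref{eq:PI_manifolds}, Vitali's covering theorem yields a countable pairwise disjoint subfamily $\{B_i = B_{s_i}(x_i)\}_i$ with $x_i\in E$, $s_i\leq r$, and $\mu(E\setminus \bigcup_i B_i)=0$. Applying Proposition \ref{prop:bilip} with parameter $\eps'$ at each $x_i$ on scale $s_i$ produces a harmonic map $\phi_i: B_{2s_i}(x_i) \to \setR^k$ and a Borel set $V_i\subset B_i$ such that $\mu(B_i\setminus V_i)\leq \eps'\mu(B_i)$, $\phi_i$ is $(1+\eps')$-bi-Lipschitz on $V_i\cap(\cR_k)_{\delta,16r}$, and $(\phi_i)_\#\big(\un_{V_i\cap(\cR_k)_{\delta,16r}}\di\mu\big) \ll \cH^k$.

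I finally set $V_i^\eps := V_i\cap E$ and $\Omega_\eps := \bigcup_i V_i^\eps \subset \cR_k\cap B_R(p)$. Each pair $(V_i^\eps, \phi_i|_{V_i^\eps})$ is a $(1+\eps)$-bi-Lipschitz chart with pushforward absolutely continuous with respect to $\cH^k$, so $\Omega_\eps$ is $(\mu,k,\eps)$-rectifiable. The complement estimate splits as
$$\mu(E\setminus\Omega_\eps) \leq \mu\Big(E\setminus \bigcup_i B_i\Big) + \sum_i\mu(B_i\setminus V_i) \leq 0 + \eps'\,\mu(B_{R+r}(p)) \leq \eps/2,$$
using disjointness of the $B_i$ and the choice of $\eps'$; combining with $\mu([\cR_k\cap B_R(p)]\setminus E)\leq\eps/2$ gives the desired inequality. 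Beyond Proposition \ref{prop:bilip} itself, the only delicate point is calibrating the auxiliary $\eps'$ so that the cumulative packing loss stays below $\eps/2$ over the whole ball $B_R(p)$.
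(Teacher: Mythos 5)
Your proof is correct and follows essentially the same strategy as the paper's (a Vitali covering argument applied to Proposition \ref{prop:bilip}), with one small but useful organizational difference: you first pass to the uniform-scale set $E := \cR_k\cap B_R(p)\cap(\cR_k)_{\delta,1/J}$ before applying Vitali, whereas the paper applies Vitali directly to $\cR_k \cap B_R(p)$ using the variable-scale fine cover $\{B_r(x)\}_{x,\,r\le r(x)}$. Your choice is actually slightly tighter: because every $x_i\in E$ shares the same scale parameter $r=1/(16J)$, the set on which Proposition \ref{prop:bilip} guarantees bi-Lipschitz behaviour is the fixed set $(\cR_k)_{\delta,16r}$ (which contains all of $E$), so $V_i\cap E\subset V_i\cap(\cR_k)_{\delta,16r}$ is immediate; in the paper's version the analogous set $(\cR_k)_{\delta,16r(x_i)}$ varies with $i$, and the written argument is a bit loose about why $\cup_i V_i$ is a.e.\ contained in the domains of the charts. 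Your total-measure bound $\sum_i\mu(B_i)\le\mu(B_{R+r}(p))$ via disjointness is also the correct estimate. One minor loose end: your choice $\eps'\le\eps/(2\mu(B_{2R}(p)))$ only gives $\eps'\mu(B_{R+r}(p))\le\eps/2$ when $r\le R$; you should simply enlarge $J$ so that $r=1/(16J)\le R$ (this costs nothing since $J$ is already chosen large). Also, the doubling property on the limit space $X$ comes from the unnamed proposition following Proposition \ref{Prop:heatKe}, not Proposition \ref{eq:PI_manifolds}, which is stated for manifolds.
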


\begin{proof}
Let $(X,\dist,\mu,o)\in \overline{\mathcal{K}_\meas(n,f,c)}$, $k \in \{1,\ldots,n\}$, $p \in X$, $R>0$ and $\eps>0$ be given.  Set $\eps':=\eps/\mu(\mathcal{R}_k \cap B_R(p))$.  Let $\delta>0$ be given by Proposition \ref{prop:bilip} applied to $\eps'$. For any $x \in \cR_k$ there exists $r(x)>0$ such that $x \in (\mathcal{R}_k)_{\delta,16r(x)}$.  Apply the Vitali covering lemma for doubling metric measure spaces \cite[Theorem 1.6]{Heinonen} to the set $\mathcal{R}_k \cap B_R(p)$ and the collection of balls $A:=\{B_{r}(x)\}_{x \in \mathcal{R}_k \cap B_R(p), 0<r\le r(x)}$. Then there exists countably many pairwise disjoint balls $\{B_{r_{x_i}}(x_i)\} \subset A$ such that $\mu([\mathcal{R}_k \cap B_R(p)]\backslash \cup_i B_{r_{x_i}}(x_i)) =0$. By Proposition \ref{prop:bilip} for any $i$ there exists a Borel set $V_i \subset B_{r_{x_i}}(x_i)$ which is the domain of a bi-Lipschitz chart and such that $\mu(B_{r_{x_i}}(x_i) \backslash V_i) \le \eps' \mu(B_{r_{x_i}}(x_i) )$.  Set $\Omega_\eps=\cup_i V_i$. Then $\Omega_\eps$ is the union of domains of bi-Lipschitz charts, so it is obviously $(\mu,k,\eps)$-rectifiable. Moreover,
\begin{align*}
\mu([\mathcal{R}_k \cap B_R(p)]\backslash \Omega_\eps) & \le \mu(\cup_i B_{r_{x_i}}(x_i) \backslash V_i)  = \sum_i \mu( B_{r_{x_i}}(x_i) \backslash V_i)\\
&  \le \eps' \sum_i \mu(B_{r_{x_i}}(x_i)) \le \eps' \mu (\mathcal{R}_k \cap B_R(p)) = \eps.
\end{align*}
\end{proof}

We are now in a position to prove Proposition \ref{prop:rect}, and conclude that (B) in Theorem \ref{th:mainrect} is established.

\begin{proof}[Proof of Proposition \ref{prop:rect}]
From the previous lemma, for any $i \in \setN\backslash \{0\}$ there exists a Borel set $\Omega_{\eps,i} \subset \mathcal{R}_k \cap B_i(p)$ which is $(\mu,k,2^{-i}\eps)$-rectifiable and such that $\mu([\mathcal{R}_k \cap B_R(p_i)] \backslash \Omega_{\eps,i}) \le 2^{-i}\eps$.  We set $\Omega_{\eps}:=\bigcup_i \Omega_{\eps, i}$.  Then
\[
\mu(\mathcal{R}_k \backslash \Omega_{\eps}) \le \lim\limits_{i \to +\infty} \mu([\mathcal{R}_k \cap B_i(p)] \backslash \Omega_{\eps}) \le \lim\limits_{i \to +\infty} \mu([\mathcal{R}_k \cap B_i(p)] \backslash \Omega_{\eps,i}) = 0.
\]
Since for any $i$ there exist countably many $(1+\eps)$-bi-Lipschitz charts $\{(V_{i,j}^\eps,\phi_{i,j}^\eps)\}_j$ such that $\mu(\Omega_{\eps,i} \backslash \bigcup_{j} V_{i,j}^\eps)=0$, we get that $\Omega_\eps$ (and then $\mathcal{R}_k$) is $(\mu,k,\eps)$-rectifiable.
\end{proof}

\section{Regularity of non-collapsed strong Kato limits}
\label{sec:Reifenberg}

This section is devoted to the structure and regularity of non-collapsed strong Kato limits. We start by recalling some properties of such spaces,  then show an almost rigidity result that leads to the Reifenberg regularity stated in Theorem \ref{thm:ReifregX}. In the second part of this section, we prove a Transformation Theorem which, together with Theorem \ref{thm:ReifregX} and the results of Section 3, implies Theorem \ref{th:Holderreg}.

\subsection{Non-collapsed strong Kato limits and almost monotone quantity} 
Recall that a manifold $(M^n,g) \in \cK(n,f)$ satisfies a strong Kato bound if the function $f$ is such that 
\begin{equation}
\tag{SK}
\Lambda := \int_0^T \frac{\sqrt{f(s)}}{s} \di s< \infty. 
\end{equation}
Under assumption \eqref{eq:SK}, the volume bound \eqref{eq:volestim} given by Proposition \ref{eq:PI_manifolds} upgrades into the following, as proved in \cite{CMT}. 

\begin{prop}
\label{prop:VB}
Let $(M^n,g) \in \cK(n,f)$ with $f$ satisfying \eqref{eq:SK}. Then there exists $C=C(n, \Lambda)>0$ such that for any $0< s \leq r \leq \sqrt{T}$ we have
$$\frac{\vol_g(B_r(x))}{\vol_g(B_s(x))}\leq C\left( \frac rs\right)^n.$$
\end{prop}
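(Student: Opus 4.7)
The strategy is to use the almost monotonicity of $\uptheta(t,x) = (4\pi t)^{n/2} H(t,x,x)$ established in \cite{CMT} under the strong Kato bound, and to translate it into a volume ratio estimate via the two-sided Gaussian bounds of Proposition \ref{Prop:heatKe}.

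\textbf{Step 1: rewrite the ratio in terms of $\uptheta$.} Given $0<s\le r\le \sqrt{T}$, set $t=r^2$ and $\tau=s^2$, so $0<\tau\le t\le T$. By Proposition \ref{Prop:heatKe}.i) applied on the diagonal,
\[
\frac{\upgamma^{-1}}{\vol_g(B_{\sqrt{t}}(x))}\le H(t,x,x)\le \frac{\upgamma}{\vol_g(B_{\sqrt{t}}(x))},
\]
and the analogous bound holds at scale $\tau$. Consequently,
\[
\frac{\vol_g(B_r(x))}{\vol_g(B_s(x))}=\frac{\vol_g(B_{\sqrt{t}}(x))}{\vol_g(B_{\sqrt{\tau}}(x))}\le \upgamma^{2}\left(\frac{r}{s}\right)^{n}\,\frac{\uptheta(t,x)}{\uptheta(\tau,x)}.
\]

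\textbf{Step 2: invoke the almost monotonicity of $\uptheta$.} Under the strong Kato bound \eqref{eq:SK}, the results of \cite{CMT} yield a constant $C_0=C_0(n,\Lambda)$ such that
\[
\uptheta(\tau,x)\le C_0\,\uptheta(t,x)\qquad \text{for all }0<\tau\le t\le T \text{ and all }x\in M.
\]
(Morally, along the proof of \cite{CMT}, one obtains an estimate of the form $\uptheta(\tau,x)\le \exp\bigl(C\int_\tau^{2t}\sqrt{f(u)}/u\,du\bigr)\uptheta(t,x)$, so $C_0$ depends only on $n$ and on $\Lambda=\int_0^T\sqrt{f}/s\,ds$.) Injecting this in the bound of Step 1 yields
\[
\frac{\vol_g(B_r(x))}{\vol_g(B_s(x))}\le C_0\,\upgamma^{2}\,\left(\frac{r}{s}\right)^{n},
\]
which is the claimed inequality with $C=C_0\,\upgamma^2$ depending only on $n$ and $\Lambda$.

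\textbf{Main obstacle.} The substantive ingredient is the almost monotonicity of $\uptheta$ with a multiplicative constant that depends only on $n$ and $\Lambda$. Once this is quoted from \cite{CMT}, the rest of the argument is the elementary dictionary between on-diagonal heat kernel values and ball volumes provided by Proposition \ref{Prop:heatKe}.i). By contrast, the pointwise bound \eqref{eq:volestim} of Proposition \ref{eq:PI_manifolds} only yields the weaker exponent $e^2 n$ because it is obtained from the plain Kato bound without exploiting the Dini-type integrability \eqref{eq:SK}; the improvement here genuinely requires (SK).
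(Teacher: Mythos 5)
Your proof strategy is the natural one given the machinery of \cite{CMT}, and it is essentially correct; the paper itself gives no proof here but simply cites \cite{CMT}, whose argument is indeed of this type (Gaussian diagonal bounds plus almost monotonicity of $\uptheta$ from the Li--Yau estimate). One small but real error: in Step~1 the ratio of $\uptheta$'s is inverted. Since $\vol_g(B_{\sqrt{t}}(x))\le \upgamma/H(t,x,x)=\upgamma(4\pi t)^{n/2}/\uptheta(t,x)$ and $\vol_g(B_{\sqrt{\tau}}(x))\ge \upgamma^{-1}(4\pi\tau)^{n/2}/\uptheta(\tau,x)$, the bound reads
\[
\frac{\vol_g(B_r(x))}{\vol_g(B_s(x))}\le \upgamma^{2}\left(\frac{r}{s}\right)^{n}\,\frac{\uptheta(\tau,x)}{\uptheta(t,x)},
\]
not $\uptheta(t,x)/\uptheta(\tau,x)$ as you wrote. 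With this correction, Step~2 plugs in as intended: the almost monotonicity $e^{c_n\upphi(\tau)}\uptheta(\tau,x)\le e^{c_n\upphi(t)}\uptheta(t,x)$ gives $\uptheta(\tau,x)/\uptheta(t,x)\le e^{c_n\upphi(T)}=e^{c_n\Lambda}$, so $C=\upgamma^2 e^{c_n\Lambda}$ depends only on $n$ and $\Lambda$. Two secondary remarks: the almost-monotonicity input should be quoted at the manifold level (it follows from the Li--Yau inequality of \cite{C16} for closed manifolds with a Kato bound, with no non-collapsing hypothesis), rather than from Proposition~\ref{monotonetheta}, which in this paper is phrased for elements of $\overline{\cK(n,f,v)}$; and the upper limit $2t$ in your ``morally'' display is an artifact that does not affect the conclusion since the full integral $\upphi(T)=\Lambda$ is what one uses. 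Once the ratio typo is fixed, your argument is sound and matches what one expects the proof in \cite{CMT} to be.
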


For $v>0$, $(M^n,g,o)$ belongs to $\cK(n,f,v)$ if $f$ satisfies \eqref{eq:SK} and moreover $\vol_g(B_{\sqrt{T}}(o))\geq vT^{\frac{n}{2}}$. Non-collapsed strong Kato limits are elements of the closure $\overline{\cK(n,f,v)}$ with respect to Gromov-Hausdorff topology. As proved in \cite[Theorem 7.1]{CMT}, volume continuity holds for non-collapsed strong Kato limits.

\begin{theorem}
\label{thm:volCont}
Let $\{(M_\alpha, g_\alpha, o_\alpha)\} \subset \cK(n,f,v)$ be a sequence converging in the pointed Gromov-Hausdorff topology to $(X,\dist,o) \in \overline{\cK(n,f,v)}$. Then $(M_\alpha, g_\alpha, \vol_{g_\alpha}, o_\alpha)$ converges to $(X,\dist, \cH^n,o)$ in the pointed measured Gromov-Hausdorff topology. 
\end{theorem}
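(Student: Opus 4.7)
The plan is to extract a subsequential pmGH limit, identify the limit measure with $\cH^n$ using the almost-monotone heat kernel quantity $\uptheta$, and conclude by a standard subsequence argument. First I would use Gromov precompactness in the pointed measured Gromov-Hausdorff topology: the uniform doubling condition from Proposition \ref{eq:PI_manifolds} together with the non-collapsing bound $\vol_{g_\alpha}(B_{\sqrt{T}}(o_\alpha)) \geq v T^{n/2}$ provides uniform two-sided volume bounds on every ball of bounded radius centered near $o_\alpha$, so up to a subsequence $(M_\alpha, g_\alpha, \vol_{g_\alpha}, o_\alpha) \to (X, \dist, \mu, o)$ in pmGH for some Radon measure $\mu$ on $X$ with the same underlying metric limit. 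It then suffices to show that any such limit measure $\mu$ equals $\cH^n$.

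The key tool is $\uptheta(t,x) = (4\pi t)^{n/2} H(t,x,x)$. On each smooth $(M_\alpha, g_\alpha)$, the classical short-time heat kernel asymptotic gives $\lim_{t \to 0} \uptheta_\alpha(t,\cdot) = 1$, and the almost-monotonicity established in \cite{CMT} (which crucially uses the strong Kato bound \eqref{eq:SK}) yields $\uptheta_\alpha(t,\cdot) \geq 1 - \omega(t)$ for some function $\omega$ vanishing as $t\to 0$, uniformly in $\alpha$. The heat kernel convergence \eqref{CvHeat} from Proposition \ref{prop:HKconv} propagates these properties to the limit: $t \mapsto \uptheta(t,x)$ is almost nondecreasing on $X$ and its limit $\uptheta(0,x):=\lim_{t\to 0}\uptheta(t,x)$ exists and is $\geq 1$. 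Combining the two-sided Gaussian heat kernel estimates of Proposition \ref{Prop:heatKe} with the strong Kato volume comparison of Proposition \ref{prop:VB}, and passing to the limit (again via Proposition \ref{prop:HKconv}), one shows that $\uptheta(t,x)$ is comparable to $\omega_n t^{n/2}/\mu(B_{\sqrt{t}}(x))$ with multiplicative error vanishing as $t \to 0$. Hence $\uptheta(0,x)$ coincides with the inverse volume density $\omega_n \lim_{r\to 0} r^n/\mu(B_r(x))$ at $\mu$-almost every $x$.

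To finish, the rectifiability given by Theorem \ref{th:main1} together with the characterization $\cR = \{x : \uptheta(0,x) = 1\}$ announced in the introduction imply that $\mu$-a.e.\ $x \in X$ satisfies $\mu(B_r(x)) = \omega_n r^n (1 + o(1))$ as $r \to 0$, which by standard differentiation theory in doubling spaces forces $\mu = \cH^n$. A separate check on the essential decomposition ensures that the lower-dimensional strata of $X$ are $\mu$-negligible, which is where the non-collapsing hypothesis enters most forcefully. The main obstacle I foresee is the quantitative identification of $\uptheta(0,x)$ with the inverse volume density of $\mu$: while for smooth manifolds this follows from pointwise heat kernel asymptotics, in the limit space one must carefully exchange limits between heat kernel convergence and volume averaging, a task for which the $L^2$-convergence \eqref{L2CvHeat} and the Ahlfors regularity inherited from Proposition \ref{prop:VB} are the essential inputs. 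Since every pmGH subsequential limit of $\{(M_\alpha, g_\alpha, \vol_{g_\alpha}, o_\alpha)\}$ must then be $(X,\dist,\cH^n,o)$, the whole sequence converges to $(X,\dist,\cH^n,o)$ in pmGH by a standard subsequence argument.
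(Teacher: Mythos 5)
The statement you are proving is not re-proved in this paper: the authors cite it directly from \cite[Theorem 7.1]{CMT}, so there is no in-text argument to compare against. Judged on its own terms, your sketch has a genuine gap at its center and, as written, runs into circularity.

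The central claim --- that combining the two-sided Gaussian heat kernel bounds of Proposition~\ref{Prop:heatKe} with the volume comparison of Proposition~\ref{prop:VB} shows $\uptheta(t,x)$ is comparable to $\omega_n t^{n/2}/\mu(B_{\sqrt t}(x))$ \emph{with multiplicative error vanishing as} $t\to 0$ --- does not follow from those two ingredients. Proposition~\ref{Prop:heatKe} gives $\upgamma^{-1}\,\mu(B_{\sqrt t}(x))^{-1}\le H(t,x,x)\le \upgamma\,\mu(B_{\sqrt t}(x))^{-1}$ with a fixed constant $\upgamma\ge 1$ that in no way tends to $1$, and Proposition~\ref{prop:VB} is a Bishop--Gromov-type ratio bound, again with a fixed constant. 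Nothing in these estimates produces an error that collapses as $t\to 0$; the almost monotonicity of $\uptheta$ controls the $t$-direction but does not by itself yield the sharp constant $\omega_n$ relative to an \emph{a priori unknown} limit measure $\mu$. The identification of $\vartheta$ with the inverse volume density of the \emph{limit} measure is exactly one of the hard points of \cite[Theorem 7.1]{CMT}, not a soft consequence of the two-sided bounds.

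Separately, the step where you invoke the characterization $\cR = \{x : \vartheta(x) = 1\}$ ``announced in the introduction'' is circular in the context of this paper: that characterization is Proposition~\ref{prop:cR}, whose proof explicitly appeals to Theorem~\ref{thm:volCont} (``The first equality is a direct consequence of \cite[Theorem 6.2(iii)]{CMT} and of volume continuity as recalled in Theorem~\ref{thm:volCont}''). You cannot use Proposition~\ref{prop:cR}, or the announcement of it, to establish Theorem~\ref{thm:volCont} unless you first re-derive the regular-set characterization independently of the limit measure --- and your sketch does not do that. Finally, even granting the a.e.\ density-one statement for $\mu$, passing from ``$\mu$-a.e.\ density $1$ with respect to balls'' to ``$\mu = \cH^n$'' requires a comparison of $\mu$ with $\cH^n$ (e.g.\ $\mu \ll \cH^n \ll \mu$ on the rectifiable charts and a covering argument showing the lower strata are $\cH^n$-null); you acknowledge this but do not supply the argument, and the non-collapsing hypothesis has to be used there in a concrete way (to show $\cR_k$ is empty or negligible for $k<n$), which is nontrivial.
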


As a consequence, in this setting the results of Section 3.2 can be revisited. More precisely, if in Theorem \ref{MetaThm}, Propositions \ref{prop:ExiSplit} and \ref{prop:GHisometry}, we replace Kato limits by non-collapsed strong Kato limits, we can assume closeness of balls in the Gromov-Hausdorff topology instead of the measured Gromov-Hausdorff topology. Note that in this case the quantities $\nu$ and $\delta$ also depend on the volume bound $v>0$. 
\medskip

Now let $(X,\dist,o)\in \overline{\cK(n,f,v)}$ and let $H:\R_+\times  X\times X \to \R_+$ be its heat kernel. For any $t>0$ and $x \in X$ we consider 
$$\uptheta(t,x)=(4\pi t)^{\frac n2}H(t,x,x).$$
As we recalled in the introduction, in \cite{CMT} we showed that the map $t\mapsto \uptheta(t,x)$ is almost non-decreasing for all $x \in X$. More precisely, define for any $t \in (0,T]$
$$\upphi(t):=\int_0^t \frac{\sqrt{f(s)}}{s} \di s< \infty.$$
Thanks to the Li-Yau inequality given by Proposition \ref{lem:LY}, we get the following (see also \cite[Corollaries 5.12 and 5.13]{CMT}). 

\begin{prop}\label{monotonetheta} Let $(X,\dist,o)\in \overline{\cK(n,f,v)}$ with $f$ satisfying \eqref{eq:SK}. There is a constant $c_n>0$ depending only on $n$ such that for any $x \in X$ the function
$$t\in (0,T)\mapsto e^{c_n \upphi(t)}\uptheta(t,x)$$ 
is non-decreasing and such that for any $t\in (0,T),$
$$e^{c_n \upphi(t)}\uptheta(t,x)\ge 1.$$
In particular, the limit
$\vartheta(x)=\lim_{t\to 0}\uptheta(t,x)$ is well defined and satisfies $\vartheta(x)\ge 1$. 
\end{prop}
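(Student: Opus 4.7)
The plan is to derive a differential inequality for $\log\uptheta(t,x)$ of the form $\frac{d}{dt}\log\uptheta(t,x) \geq -c_n \sqrt{f(t)}/t$, then integrate to get the almost monotonicity. The key identity linking the heat kernel to $\uptheta$ is the semigroup property $H(2t,x,x) = \int_X H(t,x,y)^2\,\di\mu(y)$, which yields
\begin{equation*}
\tfrac{d}{dt} H(2t,x,x) = 2\int_X H(t,x,\cdot)\,\partial_t H(t,x,\cdot)\,\di\mu = -2\int_X |dH(t,x,\cdot)|^2\,\di\mu
\end{equation*}
by the self-adjointness of $L$. Integrating the Li-Yau inequality \eqref{LYX} over $X$ against $\di\mu$ and combining with the previous identity gives
\begin{equation*}
-\tfrac{\gamma^{-1}(t)+1}{2}\tfrac{d}{dt}H(2t,x,x) \leq \tfrac{n\gamma(t)}{2t}H(2t,x,x),
\end{equation*}
hence, after substituting $s=2t$ and adding $n/(2s)$ to obtain the derivative of $\log\uptheta$,
\begin{equation*}
\tfrac{d}{ds}\log\uptheta(s,x) \geq -\tfrac{n(2\gamma+1)(\gamma-1)}{2s(1+\gamma)}, \qquad \gamma=\gamma(s/2).
\end{equation*}

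Since $f(T)\leq 1/(16n)$ forces $\gamma(t)\leq e^2$ and a Taylor expansion gives $\gamma(t)-1 = e^{8\sqrt{nf(t)}}-1 \leq C_n\sqrt{f(t)}$, and since $f$ is non-decreasing so $f(s/2)\leq f(s)$, the right-hand side above is bounded below by $-c_n\sqrt{f(s)}/s = -c_n \upphi'(s)$ for some $c_n$ depending only on $n$. Integrating $\tfrac{d}{ds}\bigl[\log\uptheta(s,x) + c_n\upphi(s)\bigr]\geq 0$ yields the non-decreasing property of $t\mapsto e^{c_n\upphi(t)}\uptheta(t,x)$.

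For the lower bound by $1$, I would first run the same computation on the approximating smooth manifolds $(M_\alpha^n,g_\alpha)\in\cK(n,f,v)$, where the classical short-time heat-kernel asymptotics give $\uptheta_\alpha(t,x_\alpha)\to 1$ as $t\to 0$, so the manifold monotonicity immediately delivers $e^{c_n\upphi(t)}\uptheta_\alpha(t,x_\alpha)\geq 1$ for every $t\in(0,T)$. Passing to the limit via the pointwise heat kernel convergence $H_\alpha(t,x_\alpha,x_\alpha)\to H(t,x,x)$ from Proposition \ref{prop:HKconv} preserves this inequality on $X$. Finally, the monotonicity together with $\upphi(t)\to 0$ as $t\to 0$ shows that $\vartheta(x)=\lim_{t\to 0}\uptheta(t,x)$ exists and equals $\inf_{t>0} e^{c_n\upphi(t)}\uptheta(t,x)\geq 1$.

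The main technical obstacle is controlling the error term in the differential inequality with the correct power $\sqrt{f(t)}$ rather than $f(t)$: one must carefully track the contribution of $\gamma(t)-1$ in the Li-Yau coefficient and its interaction with the integration-by-parts identity. A secondary subtlety is justifying the pointwise differentiation of $t\mapsto\log H(t,x,x)$ directly on the limit space $X$; however, since the monotonicity is only needed in integrated form, it suffices to establish it on the smooth approximants and transfer it via heat-kernel convergence, which circumvents any delicate regularity issues at the level of $X$.
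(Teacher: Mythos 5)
Your proof is correct and follows essentially the same route the paper takes (the paper cites [CMT, Corollaries 5.12 and 5.13] but makes explicit that the argument rests on the Li-Yau inequality of Proposition \ref{lem:LY}). You integrate the Li-Yau inequality against $\di\mu$, combine it with the semigroup identity $H(2t,x,x)=\int_X H(t,x,\cdot)^2\di\mu$ and its time-derivative, derive the differential inequality $\frac{d}{ds}\log\uptheta(s,x)\geq -c_n\upphi'(s)$ using $\gamma(t)-1\leq C_n\sqrt{f(t)}$ and the monotonicity of $f$, and obtain the lower bound $\geq 1$ from the classical short-time asymptotic $\uptheta_\alpha(t,\cdot)\to 1$ on the smooth approximants together with heat-kernel convergence; this is exactly the CMT argument.

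Two small remarks on your stated ``subtleties.'' The pointwise differentiability of $t\mapsto H(t,x,x)$ on the limit space $X$ is actually not an issue: Proposition \ref{prop:HKconv} asserts that $H$ is locally Lipschitz and that the estimates of Proposition \ref{Prop:heatKe}, including the one on $\partial_t H$, pass to the limit, so the differential inequality makes sense directly on $X$ once integrability of $|dH(t,x,\cdot)|^2$ over $X$ is justified via the Gaussian bounds; your alternative of running the computation on the manifolds $(M_\alpha,g_\alpha)$ and transferring the integrated inequality is equally valid and arguably cleaner. Also note that since the Proposition is stated in $\strongkatolimits$ (pGH closure), one should implicitly invoke the volume-continuity Theorem \ref{thm:volCont} to upgrade the pGH convergence to pmGH convergence with limit measure $\cH^n$, so that the heat-kernel convergence of Proposition \ref{prop:HKconv} applies; this is consistent with how the paper handles the non-collapsed setting.
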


\begin{rem}
In \cite{CMT} we also showed that for all $x \in X$, $\vartheta(x)$ is the inverse of the volume density: $\vartheta(x)^{-1}=\displaystyle\lim_{r \to 0}(\cH^n(B_r(x))/\omega_n r^n),$ where $\omega_n$ is the volume of the Euclidean unit ball. 
\end{rem}

One consequence of \cite{CMT} is that the regular set coincides with the set of points where $\vartheta$ is equal to 1, as we show below.

\begin{prop}\label{prop:cR} Let $(X,\dist,o)\in \overline{\cK(n,f,v)}$ with $f$ satisfying \eqref{eq:SK}. Then
\begin{equation*}
\cR=\{x \in X : \Tan(X,x)=\{(\R^n,\dist_e,0)\}\}=\{x \in X :  \vartheta(x)=1\}.
\end{equation*}
\end{prop}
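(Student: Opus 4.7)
The plan is to prove the two inclusions separately. The first inclusion, $\cR \subset \{\vartheta = 1\}$, is a bookkeeping consequence of the identification of $\vartheta^{-1}$ with the volume density recalled in the remark after Proposition \ref{monotonetheta}: if $x \in \cR$ then the unique metric tangent cone at $x$ is $(\R^n,\dist_e,0)$, so the volume density at $x$ equals $1$, which yields $\vartheta(x) = 1$.

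For the reverse inclusion $\{\vartheta = 1\} \subset \cR$, fix $x$ with $\vartheta(x) = 1$ and pick an arbitrary $(Y,\dist_Y,y) \in \Tan(X,x)$, realized as a pGH limit of $(X, r_\alpha^{-1}\dist, x)$ along a sequence $r_\alpha \downarrow 0$. Thanks to Theorem \ref{thm:volCont} applied after rescaling, I would first upgrade this to a pmGH limit $(X, r_\alpha^{-1}\dist, r_\alpha^{-n}\cH^n, x) \to (Y,\dist_Y,\cH^n,y)$, which is a pointed $\RCD(0,n)$ space by the structural proposition in Section 2. The key computation is the scaling identity for the heat kernel $\widetilde H_\alpha(t,\cdot,\cdot) := r_\alpha^{n}\,H(r_\alpha^2 t,\cdot,\cdot)$ of the rescaled space, namely
\[
(4\pi t)^{n/2}\,\widetilde H_\alpha(t,x,x) \;=\; \uptheta(r_\alpha^2 t,x).
\]
Fixing $t > 0$ and sending $\alpha \to \infty$, the right-hand side tends to $\vartheta(x) = 1$ by assumption, while Proposition \ref{prop:HKconv} gives $\widetilde H_\alpha(t,x,x) \to H_Y(t,y,y)$ on the left. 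Hence $\uptheta_Y(\cdot,y) \equiv 1$ on $(0,+\infty)$.

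The last and most substantial step is to conclude from the identity $\uptheta_Y(\cdot,y) \equiv 1$ that $(Y,\dist_Y,y)$ is isometric to $(\R^n,\dist_e,0)$. Here I would invoke the heat kernel rigidity result of \cite{CT}, the same black box that powers Theorem \ref{thm:ReifregX}: on an $\RCD(0,n)$ space, the on-diagonal heat kernel at a point being exactly the Euclidean one for every positive time forces the space to be Euclidean. Since this conclusion holds for every element of $\Tan(X,x)$, we obtain $x \in \cR$. This rigidity step is the main obstacle in the sense that it is the only non-soft ingredient; without it one would need to reproduce a Colding-type rigidity by hand, using the equality case of the Li–Yau inequality (Proposition \ref{lem:LY}, whose correction factor is trivial on the $\RCD(0,n)$ tangent) to extract from the heat kernel an orthonormal frame of harmonic functions of constant gradient and reconstruct Euclidean coordinates on $Y$.
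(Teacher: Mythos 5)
Your first inclusion $\cR \subset \{\vartheta = 1\}$ is fine, and in fact does not require knowing that the tangent cone is unique: if $(\R^n,\dist_e,0)\in\Tan(X,x)$ then the (well-defined, by Proposition \ref{monotonetheta}) volume density along the corresponding subsequence of scales equals $1$, so $\vartheta(x)=1$.

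For the reverse inclusion, however, there is a genuine gap in the rigidity step. The scaling identity $(4\pi t)^{n/2}\widetilde H_\alpha(t,x,x)=\uptheta(r_\alpha^2 t,x)$ is correct, and passing to the limit (modulo a short justification that the heat kernel convergence of Proposition \ref{prop:HKconv} is stable under rescaling, which is implicit in \cite{CMT}) does give $\uptheta_Y(t,y)=1$ for all $t>0$ on the $\RCD(0,n)$ tangent cone $Y$. But the result you invoke, \cite[Theorem 1.1]{CT}, is \emph{not} an on-diagonal rigidity statement at a single point: it requires $H_Y(t,z,w)=\mathbb{P}(t,z,w)$ for \emph{all} $t>0$ and \emph{all} $z,w\in Y$. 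The step from $H_Y(t,y,y)=(4\pi t)^{-n/2}$ (for the fixed base point $y$, all $t$) to the full identity $H_Y\equiv\mathbb{P}$ is precisely the content of Steps 1--3 of the proof of Theorem \ref{almostrigidity}: one uses the Li--Yau inequality to show the quantity $\Phi(t,\cdot)=(4\pi t)^{n/2}H^2(t/2,y,\cdot)$ is a supersolution of a heat-type equation, couples this with the normalization $\int\Phi\,\di\cH^n=\uptheta(t,y)=1$ and the maximum principle to force $\Phi(t,\cdot)=H(t/4,y,\cdot)$, invokes Varadhan's formula to deduce $H(t,y,\cdot)=\mathbb{P}(t,y,\cdot)$, and finally propagates to other base points via a Lipschitz estimate on $\uptheta$. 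None of this is absorbed by \cite{CT}, which only closes the argument once $H_Y\equiv\mathbb{P}$ is established. As written, your proposal therefore asserts a rigidity principle that the cited reference does not provide. A cleaner shortcut for this step, if you prefer to stay on the tangent cone, is to use the volume-density interpretation of $\vartheta$ together with rigidity in the Bishop--Gromov inequality for non-collapsed $\RCD(0,n)$ spaces, i.e.~\cite[Corollary 1.7]{DPG16}, which the paper itself points to in Remark \ref{rem:mieux}'s vicinity (Remark 5.10).

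It is also worth noting that the paper's own proof avoids heat kernel rigidity on the tangent cone entirely: it exploits upper semi-continuity of $\vartheta$ (from \cite[Proposition 6.3]{CMT}), combined with the uniform lower bound $\vartheta\ge 1$, to conclude $\vartheta$ is continuous at $x$ whenever $\vartheta(x)=1$, and then appeals to the proof of \cite[Theorem 6.2(iii)]{CMT} which already shows that continuity of $\vartheta$ at a point with value $1$ forces all tangent cones there to be Euclidean. This route is softer, and in particular handles the first equality $\cR=\{x:\Tan(X,x)=\{(\R^n,\dist_e,0)\}\}$ (uniqueness of the tangent cone) in the same stroke, which your proposal does not address explicitly although your reverse-inclusion argument would deliver it once the rigidity step is repaired.
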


\begin{proof}
The first equality is a direct consequence of \cite[Theorem 6.2(iii)]{CMT} and of volume continuity as recalled in Theorem \ref{thm:volCont}. As for the second,  \cite[Theorem 7.2]{CMT} ensures that if $(\R^n,\dist_e,0)$ is a tangent cone at $x \in X$, then $\vartheta(x)=1$, so that 
$$\cR \subset \{ x \in X : \vartheta(x)=1\}.$$
To prove the converse inclusion, consider $x \in X$ such that $\vartheta(x)=1$. The proof of \cite[Proposition 6.3]{CMT} ensures that $\vartheta$ is upper semi-continuous. We have then $$1 \leq \liminf_{y \to x} \vartheta(y) \leq \limsup_{y \to x} \vartheta(y) \leq \vartheta(x)=1,$$
so that $\vartheta$ is continuous at $x$. The proof of \cite[Theorem 6.2(iii)]{CMT} then implies that all tangent cones at $x$ are Euclidean, thus $x \in \cR$. 
\end{proof}

For a manifold $(M^n,g)$ satisfying a strong Kato bound, an upper bound on $\uptheta$ at some point  $x$ implies a lower bound on the volume of  $B_{\sqrt{T}}(x)$. 

\begin{lemma}\label{prop:thetanC} Assume that $(M^n,g)$ is a closed manifold in $\cK(n,f)$ with $f$ satisfying \eqref{eq:SK}. There is a constant $v(n)>0$ such that if at some $x\in X$ and $t\le T$ we have
$$\uptheta(t,x)\le 2,$$
then $\nu_g\left(B_{\sqrt{t}}(x)\right)\ge v(n) t^{\frac{n}{2}}.$
\end{lemma}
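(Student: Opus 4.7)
The statement is essentially a direct consequence of the Gaussian lower bound for the heat kernel, so the plan is very short.

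First I would recall that by Proposition \ref{Prop:heatKe}.i) applied at $y=x$ (so that $\dist(x,y)=0$ and the exponential factor is equal to $1$), one has for every $t \in (0,T)$ and every $x \in M$ the pointwise bound
\[
H(t,x,x) \;\ge\; \frac{\upgamma^{-1}}{\nu_g(B_{\sqrt{t}}(x))},
\]
where $\upgamma\ge 1$ depends only on $n$. Multiplying by $(4\pi t)^{n/2}$, this rewrites as
\[
\uptheta(t,x) \;=\; (4\pi t)^{n/2} H(t,x,x) \;\ge\; \frac{(4\pi t)^{n/2}}{\upgamma\,\nu_g(B_{\sqrt{t}}(x))}.
\]

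Then, invoking the hypothesis $\uptheta(t,x) \le 2$, I would simply rearrange the previous inequality to obtain
\[
\nu_g(B_{\sqrt{t}}(x)) \;\ge\; \frac{(4\pi t)^{n/2}}{2\upgamma} \;=\; \frac{(4\pi)^{n/2}}{2\upgamma}\, t^{n/2},
\]
so that the conclusion holds with the explicit dimensional constant $v(n) := (4\pi)^{n/2}/(2\upgamma)$, which only depends on $n$ since $\upgamma$ does.

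There is no real obstacle: the statement is purely an unpacking of the Gaussian lower bound. Note that the strong Kato assumption \eqref{eq:SK} does not enter the proof directly (the bound in Proposition \ref{Prop:heatKe} only requires $(M^n,g)\in\cK(n,f)$); it is presumably stated in the hypothesis only because the lemma will be used in the sequel in contexts where \eqref{eq:SK} is already in force.
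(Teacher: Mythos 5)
Your proof is correct and coincides with the paper's own argument: both apply the Gaussian lower bound of Proposition \ref{Prop:heatKe}.i) on the diagonal and rearrange using $\uptheta(t,x)\le 2$. You additionally make the constant $v(n)=(4\pi)^{n/2}/(2\upgamma)$ explicit and correctly observe that the strong Kato hypothesis \eqref{eq:SK} is not actually used in the argument.
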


\proof
Thanks to the heat kernel estimates given by Proposition \ref{Prop:heatKe}, we get
$$ \frac{t^{\frac n2}}{C_n\nu_g(B_{\sqrt{t}}(x))} \le \uptheta(x,t)\le 2,$$
which immediately gives the desired lower bound. \endproof

We are also going to use the following lemma. 

\begin{lemma}
\label{lem:thetaSmallKato}
Let $(M^n,g) \in \cK(n,f)$ for $f$ satisfying \eqref{eq:SK}. For any $\delta \in (0,1)$ there exists $\nu>0$ depending on $\delta, f$ such that if for some $t \in (0, T]$ we have $\kato_t(M^n,g)<\nu,$
then for all $x \in M$ and $s \in (0,t]$ we have $\uptheta(s,x)\leq \uptheta(t,x)(1+\delta)$. 
\end{lemma}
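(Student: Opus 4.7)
The plan is to combine the almost monotonicity of $\uptheta$ from Proposition~\ref{monotonetheta} with a judicious replacement of the controlling function $f$ that exploits the hypothesis $\kato_t(M^n,g)<\nu$. The key observation is that $u\mapsto\kato_u(M^n,g)$ is itself non-decreasing (the integrand in its defining formula is non-negative), so the bound $\kato_t<\nu$ automatically propagates to $\kato_u<\nu$ for every $u\in(0,t]$.

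Concretely, I would introduce $\tilde f:(0,T]\to\R_+$ defined by $\tilde f(s):=\min(f(s),\nu)$ for $s\le t$ and $\tilde f(s):=f(s)$ for $s>t$. A direct check shows $\tilde f$ is non-decreasing, tends to $0$ at $0$, satisfies $\tilde f(T)\le 1/(16n)$, and $\kato_s(M^n,g)\le\tilde f(s)$ for every $s\in(0,T]$; hence $(M^n,g)\in\cK(n,\tilde f)$. Moreover $\tilde f\le f$ still satisfies \eqref{eq:SK}, and since $M$ is closed one has $(M^n,g,o)\in\cK(n,\tilde f,v)$ for some $v>0$. Applying Proposition~\ref{monotonetheta} with $\tilde f$ in place of $f$ will then yield, for all $x\in M$ and $s\in(0,t]$,
$$\uptheta(s,x)\le \uptheta(t,x)\exp\!\left(c_n\int_s^t\frac{\sqrt{\tilde f(u)}}{u}\,\di u\right)\le \uptheta(t,x)\exp\!\left(c_n\int_s^t\frac{\min(\sqrt{f(u)},\sqrt{\nu})}{u}\,\di u\right),$$
with $c_n$ depending only on $n$ and, crucially, not on the choice of controlling function.

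The heart of the argument is to bound the last integral by $\log(1+\delta)/c_n$ via \eqref{eq:SK}. Given any $\eta>0$, the finiteness of $\int_0^T\sqrt{f(u)}/u\,\di u$ provides $t_0=t_0(\eta,f)\in(0,T]$ with $\int_0^{t_0}\sqrt{f(u)}/u\,\di u<\eta$ and $f(t_0)>0$ (the degenerate case $f\equiv 0$ gives non-negative Ricci and hence the full monotonicity of $\uptheta$ via the classical Li--Yau inequality, settling the lemma trivially). For $\nu\le f(t_0)$ the monotonicity of $f$ yields $\sqrt{\nu}\le\sqrt{f(u)}$ for $u\ge t_0$, so splitting at $t_0$ will produce
$$\int_s^t\frac{\min(\sqrt{f(u)},\sqrt{\nu})}{u}\,\di u\le \int_0^{t_0}\frac{\sqrt{f(u)}}{u}\,\di u+\int_{t_0}^{T}\frac{\sqrt{\nu}}{u}\,\di u\le \eta+\sqrt{\nu}\log(T/t_0),$$
with empty subintervals contributing nothing in the cases $t\le t_0$ or $s\ge t_0$.

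To conclude, for the given $\delta\in(0,1)$ I would set $\eta:=\log(1+\delta)/(2c_n)$ and then pick $\nu=\nu(\delta,f)>0$ small enough that both $\nu\le f(t_0(\eta,f))$ and $\sqrt{\nu}\log(T/t_0(\eta,f))\le\eta$; the exponent is then at most $\log(1+\delta)$, and $\uptheta(s,x)\le(1+\delta)\uptheta(t,x)$ follows. The main difficulty to overcome is the non-integrability of $\sqrt{\nu}/u$ near $0$: the strong Kato hypothesis \eqref{eq:SK} is precisely what lets us trade the uniform bound $\sqrt{\nu}$ on the ``bad'' part $u\ge t_0$ against a small near-zero tail $\int_0^{t_0}\sqrt{f(u)}/u\,\di u$, and thereby upgrade the universal almost-monotonicity of Proposition~\ref{monotonetheta} into the quantitative smallness asserted in the lemma.
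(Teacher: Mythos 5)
Your proposal is correct and matches the paper's argument in all essentials. The paper applies Proposition~\ref{monotonetheta} implicitly with $\kato_\cdot(M^n,g)$ itself as the controlling function, whereas you formalize the same idea by introducing the auxiliary function $\tilde f = \min(f,\nu)$ on $(0,t]$; in both cases the key points are that $\kato_\cdot(M^n,g)$ is non-decreasing, that $c_n$ in Proposition~\ref{monotonetheta} depends only on $n$ (not on the controlling function), and that the integral $\int_s^t$ is controlled by splitting at a threshold $t_0$ where the small near-zero tail from \eqref{eq:SK} handles one piece and the uniform $\sqrt{\nu}$ bound the other.
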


\proof
Assume $\kato_t(M,g)<\nu$ and let $c_n$ be the constant appearing in Proposition \ref{monotonetheta}. Observe that for any $a \in (0,t)$ we can write
$$\int_0^t \frac{\sqrt{\kato_\tau(M^n,g)}}{\tau}\di \tau \leq \int_0^a \frac{\sqrt{f(\tau)}}{\tau}\di \tau + \sqrt{\nu}\log\left(\frac{T}{a}\right).$$
We can choose $a$ depending on $f$ and $\delta$ such that the first addend in the previous inequality is smaller than $\log(1+\delta)/2c_n$. Then we can choose $\nu$ depending on $a$ and $\delta$, thus on $f$ and $\delta$, such that the second addend is also smaller than $\log(1+\delta)/2c_n$. 
By Proposition \ref{monotonetheta}, then we know that for all $x\in M$ and $s \in (0,t]$ 
$$\uptheta(s,x)\leq \uptheta(t,x)\exp\left(c_n\int_s^t \frac{\sqrt{\kato_\tau(M^n,g)}}{\tau}\di \tau\right)\leq \uptheta(t,x)(1+\delta).$$\endproof

\begin{rem} 
\label{rem:thetaSmallKato}
The same argument as in the previous proof implies that for a sequence $\{(M_\ell, g_\ell, o_\ell)\}\subset \cK(n,f,v)$ converging to $(X,\dist, o)\in \overline{\cK(n,f,v)}$ such that $\lim_\ell \kato_t(M_\ell,g_\ell)=0$ for some $t \in (0,T]$, we have that for all $x \in X$ the map $s \mapsto \uptheta(s,x)$ is monotone non-decreasing and satisfies $\uptheta(s,x)\geq 1$ for all $s\in (0,t]$.
\end{rem}

\subsection{Almost rigidity} This subsection is devoted to proving the following almost rigidity for $\uptheta$, which will be the key result to obtain our Reifenberg regularity statement, namely~Theorem \ref{thm:ReifregX}.

\begin{theorem}\label{almostrigidity} For any $\eps>0$ and $A>0$ there exists $\delta>0$ depending only on $f,n,\varepsilon$ and $A$ such that if $(M^n,g)\in\cK(n,f)$, $x\in M$ and  $t\le  T$ satisfy 
$$\kato_t(M,g)\le \delta \quad \text{ and } \quad \uptheta(t,x)\le 1+\delta,$$ 
then 
$$\dGH\left( B_{A\sqrt{t}}(x),\bB^n_{A\sqrt{t}}\right)< \eps A\sqrt{t}.$$
\end{theorem}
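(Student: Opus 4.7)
The plan is to argue by contradiction via compactness: after rescaling to unit time, the hypotheses $\uptheta \leq 1+\delta$ and $\kato \leq \delta$ will force the limit space to satisfy $\uptheta(s,o) \equiv 1$ on an entire interval, and the heat kernel rigidity of [CT] will then produce a Euclidean metric ball.

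Suppose for contradiction that the statement fails; this yields $\eps_0, A_0 > 0$ and a sequence $(M_\ell^n, g_\ell) \in \cK(n,f)$, with $x_\ell \in M_\ell$, $t_\ell \in (0, T]$, such that $\kato_{t_\ell}(M_\ell, g_\ell) \to 0$, $\uptheta(t_\ell, x_\ell) \to 1$, yet $\dGH(B_{A_0 \sqrt{t_\ell}}(x_\ell), \bB^n_{A_0 \sqrt{t_\ell}}) \geq \eps_0 A_0 \sqrt{t_\ell}$. I would rescale to $\bar g_\ell := t_\ell^{-1} g_\ell$. Using the standard identities $\kato_s(M_\ell, \bar g_\ell) = \kato_{s t_\ell}(M_\ell, g_\ell)$ and $\uptheta_{\bar g_\ell}(s, \cdot) = \uptheta_{g_\ell}(s t_\ell, \cdot)$, the rescaled data satisfy $\kato_1(M_\ell, \bar g_\ell) \to 0$, $\uptheta_{\bar g_\ell}(1, x_\ell) \to 1$, and $\dGH(B_{A_0}(x_\ell), \bB^n_{A_0}) \geq \eps_0 A_0$. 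The monotonicity of $f$ and $\kato$ in time yields $\kato_s(M_\ell, \bar g_\ell) \leq f(s t_\ell) \leq f(T)$ for all $s \in (0,1]$, so Proposition \ref{eq:PI_manifolds} supplies uniform doubling and Poincaré inequalities on that scale. Lemma \ref{prop:thetanC} then gives $\vol_{\bar g_\ell}(B_1(x_\ell)) \geq v(n)$ for $\ell$ large, making the sequence non-collapsed, and Gromov's precompactness theorem extracts a pointed measured Gromov-Hausdorff limit $(X, \dist, \mu, o)$ that is a suitably normalized Kato limit space.

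The identification of this limit is the key step. The heat kernel convergence in Proposition \ref{prop:HKconv} yields $\uptheta_X(1, o) = 1$; since $\kato_1(M_\ell, \bar g_\ell) \to 0$, Remark \ref{rem:mieux} grants the sharp Li-Yau inequality on $X$ for $t \in (0, 1]$, and Remark \ref{rem:thetaSmallKato} passed to the limit makes $s \mapsto \uptheta_X(s, o)$ non-decreasing on $(0,1]$ with values in $[1, +\infty)$. Combined with $\uptheta_X(1, o) = 1$, this forces $\uptheta_X(s, o) \equiv 1$ on $(0,1]$. Invoking the heat kernel rigidity of [CT] then concludes that a metric ball of size comparable to $1$ around $o$ in $X$ is isometric to a Euclidean ball; in particular $B_{A_0}(o)$ is isometric to $\bB^n_{A_0}$, contradicting the stability of the Gromov-Hausdorff distance under our convergence. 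The main obstacle I expect is to translate the heat-kernel rigidity of [CT] into the sharp quantitative form needed here, namely that $\uptheta(\cdot, o) \equiv 1$ on $(0,1]$ forces a Euclidean metric ball of explicit size, and to ensure that the rescaled sequence admits a genuine pointed measured GH limit to which the heat kernel convergence of Proposition \ref{prop:HKconv} applies despite the rescaling (in particular identifying the limit measure).
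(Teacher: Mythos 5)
Your framework matches the paper's proof exactly: contradiction argument, rescaling to unit time, non-collapse via Lemma \ref{prop:thetanC}, extraction of a limit, heat kernel convergence, and the observation that $\uptheta(\cdot,o)\equiv 1$ on $(0,1]$. The concern you flag about extracting a genuine pmGH limit after rescaling is routine and handled as in the paper by replacing $f$ with $\tilde f(s):=f(sT)$; the non-collapsing gives the measure and Proposition \ref{prop:HKconv} applies.

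The genuine gap is exactly the one you yourself identify as ``the main obstacle'': passing from $\uptheta_X(s,o)\equiv 1$ on $(0,1]$ to the hypothesis required by the heat-kernel rigidity of \cite{CT}. That theorem needs $H(t,y,z)=\bP(t,y,z)$ for \emph{all} $t>0$ and \emph{all} $y,z\in X$, and this is not a formal consequence of knowing $\uptheta$ at the single point $o$ on a bounded time interval. The paper fills this in with a three-step argument that constitutes the analytic core of the proof. First, introducing $\Phi(t,y)=(4\pi t)^{n/2}H^2(t/2,o,y)$ and using the sharp Li-Yau inequality of Remark \ref{rem:mieux}, one shows that $\Phi$ is a weak supersolution of $(4\partial_t+L)\Phi\ge 0$. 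Second, combining this with the initial condition $\Phi(t,\cdot)\to \delta_o$ (forced by $\uptheta\equiv 1$), the maximum principle, the normalization $\int \Phi\,\di\cH^n = \uptheta(\cdot,o)=1$, and Varadhan's formula yields $H(t,o,\cdot)=\bP(t,o,\cdot)$ for $t\le 1/2$, and then for all $t$ by analyticity. Third, one propagates the conclusion from $o$ to every point of $X$ using a Lipschitz bound on $z\mapsto\uptheta(t,z)$ (from the heat kernel gradient estimate), the monotonicity of $\uptheta$ in $t$, and a second application of the same maximum-principle argument, which finally gives $H=\bP$ everywhere. Without these steps the invocation of \cite{CT} is not justified, so your proposal as written stops short of a proof; it correctly sets the stage but leaves the central technical step open.
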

In order to prove Theorem \ref{almostrigidity}, we are going to use a contradiction argument, that we sketch here before giving the detailed proof. We will construct a contradicting sequence for which a ball of radius $1$ stays uniformly far from the unit Euclidean ball. Thanks to Lemma \ref{prop:thetanC} such sequence is non-collapsing. Then up to extracting a sub-sequence, we obtain a limit $(X,\dist,x) \in \overline{\cK(n,f,v)}$ such that $B_1(x)$ is at a positive distance from the unit Euclidean ball. We then aim to show that the limit space $(X,\dist)$ is isometric to the Euclidean space. For that, we use the heat kernel rigidity shown in \cite{CT}. More precisely, for a non-collapsed strong Kato limit $(X,\dist, x)\in \overline{\cK(n,f,v)}$ we define
$$\mathbb{P}(t,x,y)=\frac{e^{-\frac{\dist^2(x,y)}{4t}}}{(4\pi t)^{\frac{n}{2}}}.$$
If for all $x,y \in X$ and $t >0$ we have $H(t,x,y)=\mathbb{P}(t,x,y)$, then \cite[Theorem 1.1]{CT} ensures that $(X,\dist)$ is isometric to the Euclidean space. In order to show that $H$ coincides with $\mathbb{P}$, we will rely on the Li-Yau inequality proven in Proposition \ref{lem:LY} and on the fact that, thanks to Remark \ref{rem:thetaSmallKato}, $\uptheta$ is monotone non-decreasing. 

\begin{proof}
We assume by contradiction that the statement is false. Then there exists $\eps, A>0$ such that if we consider the sequence $\delta_\ell=\ell^{-1},$ $\ell \in \N$, we find $t_\ell \leq T$, $(M_\ell, g_\ell)\in \cK(n,f)$ and $x_\ell \in M_\ell$ such that $$\kato_{t_\ell}(M,g)\leq \delta_\ell \mbox{ and } \uptheta(t_\ell,x_\ell) \leq 1+ \delta_\ell,$$
but 
\begin{equation}
\label{eq:pfAR1}
\dGH(B_{A\sqrt{t_\ell}}(x_\ell), \setB^n_{A\sqrt{t_\ell}})> \eps\sqrt{A} t_\ell.
\end{equation}
Observe that if we define $\tilde{ f}(s)= f(sT)$ for all $s\in [0,1]$ and $\tilde g_\ell=t_\ell^{-1}g_\ell$ for any $\ell$, then the rescaling properties of $\kato_t$ and of the heat kernel imply that each $(M_\ell, \tilde g_\ell)$ belongs to $\cK(n,\tilde{ f})$ and
$$\kato_1(M_\ell, \tilde g_\ell)=\kato_{t_\ell}(M_\ell,g_\ell)\leq \delta_\ell, \quad \tilde \uptheta(1,x_\ell)=\uptheta(t_\ell, x_\ell)\leq 1+\delta_\ell.$$
Then up to rescaling we can assume that $t_\ell=1$ for all $\ell \in \N$. 

By Lemma \ref{prop:thetanC}, we also know that there exists $v=v(n)>0$ such that for any $\ell$,
$$\vol_{g_\ell}(B_1(x_\ell))\geq v, $$
so that each $(M_\ell, g_\ell, x_\ell)$ belongs to $\cK(n, f, v)$. Up to extracting a subsequence, $\{(M_\ell, g_\ell, x_\ell)\}$ converges in the pointed Gromov-Hausdorff topology to $(X,\dist,x) \in \overline{\cK(n,f,v)}$. Moreover, convergence of the heat kernel given in Proposition \ref{prop:HKconv} ensures that 
$$\uptheta(1,x)=\lim_\ell \uptheta(1,x_\ell)\leq 1.$$
Thanks to Remark \ref{rem:thetaSmallKato}, we also know that $t \mapsto \uptheta(t,x)$ is monotone non-decreasing and larger than one. We then get for all $s \in (0,1]$, 
\begin{equation}\label{eq:=1}
\uptheta(s,x)=\uptheta(1,x)=1.
\end{equation}

\noindent Because of \eqref{eq:pfAR1}, we also have
\begin{equation}
\label{eq:pfAR2}
\dGH(B_A(x),\setB^n_A)>\eps A.
\end{equation}

Our setting constructed, we aim to prove that the heat kernel of $X$ satisfies 
\begin{equation}
\label{euclHK}
H=\mathbb{P}
\end{equation}
on $\setR_+ \times X \times X$. In order do so, we fix $x \in X$ and we introduce the function 
$$\Phi : \setR_+ \times X \ni (t,y) \mapsto (4\pi t)^{\frac n2} H^2(t/2,x,y).$$

\noindent \underline{\textbf{Step 1.}} We show that $\Phi$ satisfies
\begin{equation}
\label{weak_upper}
4\frac{\partial}{\partial t}\left(\int_X \varphi \Phi \di\cH^n\right)+\int_X \langle d \phi, d \Phi \rangle \di \cH^n\ge 0.
\end{equation}
for any non-negative $\varphi \in \cC_c(X)\cap \cD(\Ch)$.  To this aim, we first observe that 
\begin{align*}
& \phantom{=} \phantom{=}    4\frac{\partial}{\partial t}\left(\int_X \varphi \Phi \di\cH^n\right)\\
& = \int_X \varphi(y)\left( \frac{2n}{t} \Phi(t,y) + 4(4\pi t)^{\frac n2} H(t/2,x,y)\frac{\partial H}{\partial t}(t/2,x,y) \right) \di\cH^n(y).
\end{align*}
Then we use the  definitions of $L$, $H$ and $\Phi$ to get
\begin{align*}
& \phantom{=} \phantom{=}   \int_X \langle d \phi, d \Phi \rangle \di \cH^n = \int_X  \phi  L\Phi  \di \cH^n\\
&=  2(4\pi t)^{\frac n2} \int_X \phi(y) \left( H(t/2,x,y)L_yH(t/2,x,y) - |d_yH(t/2,x,y)|^2  \right) \di \cH^n(y)\\
&=  - 2(4\pi t)^{\frac n2} \int_X \phi(y) \left( H(t/2,x,y)\frac{\partial H}{\partial t}(t/2,x,y) + |d_yH(t/2,x,y)|^2  \right) \di \cH^n(y).
\end{align*}
Adding these two identities yields
$$4\frac{\partial}{\partial t}\left(\int_X \varphi \Phi \di\cH^n\right)+\int_X \langle d \phi, d \Phi \rangle \di \cH^n=
2(4\pi t)^{\frac n2} \int_X\varphi Z \di\cH^n,$$ 
where $Z$ is defined by $$Z(t,y)=\frac{n}{t}H^2(t/2,x,y)+H(t/2,x,y)\frac{\partial H}{\partial t}(t/2,x,y)-|d_yH(t/2,x,y)|^2$$
for any $t\in \setR_+$ and $y \in X$. 
Since $(X, \dist,o)$ is the limit of manifolds $\{(M_\ell,g_\ell)\}$ such that $\kato_1(M_\ell,g_\ell) \to 0$ as $\ell$ goes to infinity, the Li-Yau inequality given by Remark \ref{rem:mieux} holds. Then $Z\geq 0$, this concluding the proof of \eqref{weak_upper}. 
\medskip

\noindent \underline{\textbf{Step 2.}}  We show that for any $t>0$ and $y \in X$,
\begin{equation}
\label{eq:pfAR4}
H(t,x,y)=\mathbb{P}(t,x,y).
\end{equation}
First observe that by definition of $L$,  inequality \eqref{weak_upper} is equivalent to $\Phi$ satisfying
\begin{equation}
\label{upper}
\left( 4\frac{\partial}{\partial t}+L\right) \Phi \geq 0
\end{equation}
in a weak sense. The Gaussian estimate given in Proposition \ref{Prop:heatKe} implies that for any $t>0$,
$$\lim_{\dist(x,y)\to \infty}\Phi(t,y)=0.$$
Moreover,  the fact that $\cH^n(B_{1}(x))\geq v$ and the volume bound given in Proposition \ref{prop:VB} imply that for any $y \in X \backslash \{x\}$,
$$\lim_{t \to 0}\Phi(t,y)=0.$$
By the semi-group law and \eqref{eq:=1}, we know that for any $s\in (0,1]$, 
$$\int_X \Phi(s,y)\di\cH^n(y)=\uptheta(s,x)= 1.$$
As a consequence we get 
$$\lim_{t\to 0}\Phi(t,y)=\delta_x(y).$$
By  inequality \eqref{upper} and the maximum principle, we get that for all  $t\in (0,1]$ and $y \in X$,
$$\Phi(t,y)\geq H(t/4, x,y).$$
But we also have, for all $t \in (0,1]$,
$$1=\int_X \Phi(t,y) \di\mathcal{H}^n(y)=\int_X H(t/4, x,y)\di\mathcal{H}^n(y),$$
 then we obtain, for all $t \in (0,1]$ and $y \in X$,
\begin{equation}
\label{eq:PhiH}
\Phi(t,y)=H(t/4, x,y).
\end{equation}
We now introduce 
$$U(t,x,y)=-4t\log((4\pi t)^{\frac n2}H(t,x,y)).$$
By Varadhan's formula, we know
$$\lim_{\sigma \to 0} U(\sigma, x,y)=-\dist^2(x,y).$$
Because of \eqref{eq:PhiH}, a simple computation shows that for any $s \in (0,1]$ we have
\begin{equation*}
U(s/4,x,y) =
U(s/2, x,y). 
\end{equation*}
As a consequence, for all $s\in (0,1]$,
$$U(s/2,x,y)=\lim_{\sigma \to 0} U(\sigma,x,y)=-\dist^2(x,y).$$
This shows that for all $t \in (0,1/2]$ and $y \in X$
$$H(t,x,y)=\mathbb{P}(t,x,y).$$
Both expressions in this equality are analytic in $t$, hence we get \eqref{eq:pfAR4} for any $t>0$.
\medskip

\noindent \underline{\textbf{Step 3.}} We obtain \eqref{euclHK} and conclude.  Equality \eqref{eq:pfAR4} implies in particular that $\uptheta(t,x)=1$ for all $t>0$ and not only for $t\in (0,1]$. By using the estimate on the derivatives of the heat kernel given in the last point of Proposition \ref{Prop:heatKe}, non-collapsing and the volume bound of Proposition \ref{prop:VB}, we get that there exists a constant $C>0$ such that for any $t>0$ and $z \in X$,
$$|\uptheta(t,x)-\uptheta(t,z)|\leq \frac{C}{\sqrt{t}}\dist(x,z).$$
Then for any $z \in X$,
$$\lim_{t\to +\infty}\uptheta(t,z)=1.$$
Since by Remark \ref{rem:thetaSmallKato} the map $t \mapsto \uptheta(t,z)$ is monotone non-decreasing and larger than one, it must be constantly equal to one. Arguing as in the previous step, the fact that $\uptheta(t,z)=1$ for any $z \in X$ and $t>0$ leads to \eqref{euclHK}.  Then by \cite[Theorem 1.1]{CT}, the strong Kato limit $(X,\dist)$ is isometric to the Euclidean space $(\R^n, \dist_e)$, this contradicting inequality \eqref{eq:pfAR2}. 
\end{proof}

\begin{rem} Theorem \ref{almostrigidity} can be also proven by using \cite[Corollary 1.7]{DPG16}, that is rigidity in Bishop-Gromov inequality for non-collapsed $\RCD(0,n)$ spaces. We chose to provide a self-contained proof independent of $\RCD$ theory. 
\end{rem}

\subsection{Consequences of almost rigidity} As an immediate consequence of Theorem \ref{almostrigidity} and of the convergence of heat kernels given by Proposition \ref{prop:HKconv} we obtain the following. 

\begin{cor}\label{cor:propuptheta} Assume that $f$ satisfies \eqref{eq:SK}. For any $\delta>0$, there is some $\nu>0$ depending only on $f,n$ and $\delta$ such that if $(M^n,g)\in\cK(n,f)$, $x\in M$ and  $t\le  T$ satisfy
$$\kato_t(M,g)\le \nu \quad \text{ and } \quad\uptheta(t,x)\le 1+\nu,$$ 
then for any $y\in B_{\sqrt{t}}(x)$ we have $\uptheta(t,y)\le 1+\delta.$
\end{cor}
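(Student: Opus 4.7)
The plan is a contradiction argument that combines the almost rigidity of Theorem \ref{almostrigidity} with the heat kernel convergence of Proposition \ref{prop:HKconv} along a suitably rescaled sequence. Suppose the conclusion fails: then for some $\delta>0$ there are $(M_\ell^n, g_\ell) \in \cK(n,f)$, $x_\ell \in M_\ell$, $t_\ell \in (0,T]$ and $y_\ell \in B_{\sqrt{t_\ell}}(x_\ell)$ with $\kato_{t_\ell}(M_\ell, g_\ell) \le 1/\ell$ and $\uptheta(t_\ell, x_\ell) \le 1 + 1/\ell$, yet $\uptheta(t_\ell, y_\ell) > 1 + \delta$ for every $\ell$. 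Following the rescaling used in the proof of Theorem \ref{almostrigidity}, set $\tilde g_\ell := t_\ell^{-1} g_\ell$ so that, after replacing $f$ by $\tilde f(s) := f(sT)$ (which still satisfies \eqref{eq:SK} and the bound $\tilde f(1) \leq 1/(16n)$), the rescaled manifolds lie in $\cK(n, \tilde f)$ with $\kato_1(M_\ell, \tilde g_\ell) \le 1/\ell$ and $\tilde\uptheta_\ell(1, x_\ell) \le 1 + 1/\ell$. Lemma \ref{prop:thetanC} then supplies a uniform non-collapsing bound $\vol_{\tilde g_\ell}(B_1(x_\ell)) \ge v(n)$, so $\{(M_\ell, \tilde g_\ell, x_\ell)\} \subset \cK(n, \tilde f, v(n))$.

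Next I would produce the Euclidean limit. For every fixed $A>0$ and $\eps>0$, Theorem \ref{almostrigidity} forces $\dGH(B_A(x_\ell), \setB^n_A) < \eps A$ for all sufficiently large $\ell$, since both hypotheses in that theorem hold eventually. A standard diagonal extraction with $\eps_\ell \downarrow 0$ and $A_\ell \uparrow \infty$, combined with Gromov precompactness inside $\cK(n,\tilde f, v(n))$ and the volume continuity of Theorem \ref{thm:volCont}, yields a subsequence along which $(M_\ell, \tilde g_\ell, \vol_{\tilde g_\ell}, x_\ell)$ converges in the pointed measured Gromov-Hausdorff topology to $(\setR^n, \dist_e, \cH^n, 0)$. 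Up to a further extraction the points $y_\ell \in \overline{B_1(x_\ell)}$ converge to some $y \in \overline{\setB^n_1}$.

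To close the argument I would apply Proposition \ref{prop:HKconv} to obtain $H_\ell(1, y_\ell, y_\ell) \to H_{\setR^n}(1, y, y) = (4\pi)^{-n/2}$, so $\tilde\uptheta_\ell(1, y_\ell) \to 1$, contradicting $\tilde\uptheta_\ell(1, y_\ell) > 1 + \delta$ for all $\ell$. The main technical point is not the final contradiction but the rescaling bookkeeping: one must verify that the rescaled family really sits in a uniform Kato class and that the almost-rigidity constant produced by Theorem \ref{almostrigidity} depends only on $n$, $f$ and the chosen $\eps, A$, so that the combined diagonal argument yields a limit that is genuinely $\setR^n$. Everything else is a direct consequence of results established earlier in the paper.
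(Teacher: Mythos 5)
Your proposal is correct and follows essentially the same route the paper intends: the paper states Corollary \ref{cor:propuptheta} as ``an immediate consequence of Theorem \ref{almostrigidity} and of the convergence of heat kernels given by Proposition \ref{prop:HKconv},'' and you have filled in exactly the contradiction-plus-rescaling argument that makes this precise, mirroring the rescaling bookkeeping already used in the proof of Theorem \ref{almostrigidity}. The only step worth flagging as slightly elided is the passage from pointed GH convergence to pointed measured GH convergence with measures uniformly in $\mathcal{K}_\meas(n,\tilde f,c)$ (needed to invoke Proposition \ref{prop:HKconv}): the lower volume bound comes from Lemma \ref{prop:thetanC}, but one should also note the matching upper bound $\vol_{\tilde g_\ell}(B_1(x_\ell))\le \upgamma(4\pi)^{n/2}/\tilde\uptheta_\ell(1,x_\ell)$ from Proposition \ref{Prop:heatKe}.i) together with $\tilde\uptheta_\ell(1,x_\ell)$ being bounded below away from $0$ for large $\ell$ (via Lemma \ref{lem:thetaSmallKato}), so Theorem \ref{thm:volCont} and Proposition \ref{prop:HKconv} both apply; this is routine and does not affect the validity of your argument.
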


By combining Corollary \ref{cor:propuptheta}, the almost monotonicity of $\uptheta$ (Lemma \ref{lem:thetaSmallKato}) with Theorem \ref{almostrigidity}, we get a Reifenberg regularity result for manifolds satisfying a strong Kato bound. 

\begin{cor}\label{thm:ReifregM} Assume that $f$ satisfies \eqref{eq:SK}. For any $\eps>0$, there exists $\nu>0$ depending only on $f,n, \eps$ such that
if $(M^n,g)\in\cK(n,f)$, $x\in M$ and $t\le T$ satisfy
$$\kato_t(M,g)\le \nu \quad \text{ and } \quad\uptheta(t,x)\le 1+\nu$$ then  for any $y\in B_{\sqrt{t}}(x)$ and $s\in (0,\sqrt{t})\colon$
$$\dGH\left( B_{s}(y),\bB^n_{s}\right)\le \eps s.$$ 
\end{cor}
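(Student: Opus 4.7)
The plan is to chain three ingredients: the propagation of the $\uptheta$-bound to neighboring points (Corollary \ref{cor:propuptheta}), the almost-monotonicity of $\uptheta$ in the time variable (Lemma \ref{lem:thetaSmallKato}), and finally the almost rigidity Theorem \ref{almostrigidity} applied at scale $s$ centered at $y$.

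First I would fix $\eps>0$, apply Theorem \ref{almostrigidity} with this $\eps$ and with $A=1$ to obtain a threshold $\delta=\delta(n,f,\eps)>0$ such that any closed manifold in $\cK(n,f)$ with Kato bound at most $\delta$ and heat kernel quantity $\uptheta$ at most $1+\delta$ at some pair $(x,t)$ satisfies the required GH estimate at scale $\sqrt{t}$. I would then use this $\delta$ as the target in both Corollary \ref{cor:propuptheta} and Lemma \ref{lem:thetaSmallKato}: more precisely, choose $\delta'\in(0,\delta/3)$ so that $(1+\delta')^2\le 1+\delta$, and let $\nu_1$ be provided by Corollary \ref{cor:propuptheta} for this $\delta'$, and $\nu_2$ be provided by Lemma \ref{lem:thetaSmallKato} for this $\delta'$. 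Setting $\nu:=\min(\delta,\nu_1,\nu_2)$ then gives the parameter required in the statement.

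Now assume $(M^n,g)\in\cK(n,f)$, $x\in M$, $t\le T$ satisfy $\kato_t(M,g)\le \nu$ and $\uptheta(t,x)\le 1+\nu$. Fix $y\in B_{\sqrt t}(x)$ and $s\in(0,\sqrt t)$, and set $\tau:=s^2\le t$. Since $\kato_\tau(M,g)\le \kato_t(M,g)\le \nu$ by monotonicity of $\kato_\cdot$ in the time variable, the Kato smallness is inherited at scale $\tau$. Corollary \ref{cor:propuptheta} applied with parameter $\delta'$ at the original scale gives $\uptheta(t,y)\le 1+\delta'$. Lemma \ref{lem:thetaSmallKato} applied at the same scale (with parameter $\delta'$) gives $\uptheta(\tau,y)\le \uptheta(t,y)(1+\delta')\le (1+\delta')^2\le 1+\delta$. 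Hence the pair $(y,\tau)$ satisfies both hypotheses of Theorem \ref{almostrigidity} with constant $\delta$, and applying it with $A=1$ yields
\[
\dGH\bigl(B_s(y),\bB^n_s\bigr)=\dGH\bigl(B_{\sqrt\tau}(y),\bB^n_{\sqrt\tau}\bigr)\le \eps\sqrt\tau=\eps s,
\]
which is the desired conclusion.

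There is no real obstacle: the proof is essentially a bookkeeping exercise, and the delicate content (the Li--Yau estimate, the almost monotonicity, heat kernel rigidity) is already built into the three statements being combined. The only subtle point to be careful about is the order of quantifier dependence, namely choosing $\delta'$ in terms of $\delta$ before invoking Corollary \ref{cor:propuptheta} and Lemma \ref{lem:thetaSmallKato}, so that the composition of their conclusions still lands inside the range $1+\delta$ where Theorem \ref{almostrigidity} applies.
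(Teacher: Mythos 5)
Your proposal is correct and follows exactly the route the paper indicates (the paper merely says "by combining Corollary~\ref{cor:propuptheta}, Lemma~\ref{lem:thetaSmallKato} and Theorem~\ref{almostrigidity}" without spelling out the bookkeeping): propagate the $\uptheta$-bound to $y$ at time $t$, push it down to time $\tau=s^2\le t$ via almost monotonicity, then invoke the almost rigidity theorem at $(y,\tau)$ with $A=1$. The only cosmetic points are that one should note one may assume $\delta\le1$ so that $\delta'<\delta/3$ indeed forces $(1+\delta')^2\le 1+\delta$, and that Lemma~\ref{lem:thetaSmallKato} is stated with strict inequality $\kato_t<\nu$, so take $\nu$ slightly smaller than $\min(\delta,\nu_1,\nu_2)$; neither affects the substance.
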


The Reifenberg regularity for non-collapsed strong Kato limits given in Theorem \ref{thm:ReifregX} is then a direct consequence of Corollary \ref{thm:ReifregM}.

We point out a corollary of the almost rigidity statement Theorem \ref{almostrigidity} and of Proposition \ref{prop:ExiSplit} that we use later to obtain Hölder regularity of the regular set of a non-collapsed strong Kato limit.

\begin{cor} 
\label{cor:exiSplGH}
Let $v>0$ and $f$ be a function satisfying \eqref{eq:SK}. For any $\eps>0$ there exists $\delta>0$ depending only $f,n,\eps$ such that if $(M^n,g) \in \cK(n,f,v)$, $x \in M$ and $t \leq T$ satisfy
$$\kato_t(M^n,g)\leq \delta \mbox{ and } \uptheta(t,x)\leq 1+\delta,$$
then there exists an $(n,\eps)$-splitting $u: B_{\sqrt{t}}(x)\to \R^n$.  
\end{cor}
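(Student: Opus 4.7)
The plan is to combine the almost rigidity of Theorem \ref{almostrigidity} with the splitting-from-closeness statement of Proposition \ref{prop:ExiSplit}. The bound $\uptheta(t,x) \le 1+\delta$ will first be turned into Gromov-Hausdorff closeness of a ball of radius $2\sqrt{t}$ around $x$ to a Euclidean ball of the same radius, and that closeness will then be fed into Proposition \ref{prop:ExiSplit} to produce the desired $(n,\eps)$-splitting on $B_{\sqrt{t}}(x)$.

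Concretely, fix $\eps > 0$. I would first apply the non-collapsed strong Kato limit version of Proposition \ref{prop:ExiSplit} (as provided by the remark following Theorem \ref{thm:volCont}) with $k = n$, $\eta = 1/2$, and the given $\eps$. Working in the rescaled metric $t^{-1}g$ so that the relevant scale around $x$ becomes $1$, Lemma \ref{prop:thetanC} ensures that, provided $\delta \le 1$, the effective volume lower bound at $x$ at this unit scale is the dimensional constant $v(n)$; the constant $\nu_1$ produced by the proposition therefore depends only on $n$, $f$ and $\eps$. Its conclusion reads: whenever $\dGH(B_{2\sqrt{t}}(x), \bB^n_{2\sqrt{t}}) < \nu_1 \cdot 2\sqrt{t}$, a reinforced (in particular an $(n,\eps)$-)splitting of $B_{\sqrt{t}}(x)$ exists. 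I would then invoke Theorem \ref{almostrigidity} with $A = 2$ and parameter $\nu_1$ to obtain $\delta_0 > 0$, depending only on $f$, $n$ and $\eps$, such that the hypotheses $\kato_t(M,g) \le \delta_0$ and $\uptheta(t,x) \le 1+\delta_0$ imply precisely this GH-closeness. Setting $\delta := \min(\delta_0, 1)$ then yields the claim.

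The main subtlety lies in the bookkeeping of dependencies. The non-collapsed version of Proposition \ref{prop:ExiSplit} a priori carries a dependence on the volume lower bound $v$, whereas in the statement of the corollary $\delta$ must depend only on $n$, $f$, $\eps$ and not on the original $v$ of the class $\cK(n,f,v)$ in which $(M^n,g)$ sits. This is resolved by working at the unit scale in the rescaled metric, where Lemma \ref{prop:thetanC} automatically supplies the non-collapsing constant $v(n)$ as a purely dimensional quantity, converting the $v$-dependence into a dimensional one.
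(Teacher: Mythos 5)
Your approach is exactly the one the paper intends: the authors state the corollary is obtained by combining Theorem \ref{almostrigidity} (which converts the smallness of $\kato_t$ and $\uptheta(t,x)-1$ into $\dGH$-closeness of a ball around $x$ to a Euclidean ball) with the non-collapsed version of Proposition \ref{prop:ExiSplit} (which converts this closeness into a reinforced splitting). Your key observation — that the $v$-dependence in the non-collapsed version of Proposition \ref{prop:ExiSplit} must be traded for the dimensional non-collapsing constant $v(n)$ furnished by Lemma \ref{prop:thetanC}, by promoting $x$ to the basepoint at the rescaled unit scale — is precisely the right bookkeeping and is stated correctly.

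One technical point deserves to be flagged. Proposition \ref{prop:ExiSplit} (and the underlying Theorem \ref{MetaThm}) is stated for $r \in (0,\sqrt{T}]$, and after rescaling by $t^{-1}$ the threshold becomes $\tilde T = 1$. Applying it with $r = 2$ and $\eta = 1/2$ — or, without rescaling, with $r = 2\sqrt{t}$ — therefore requires $2\sqrt{t} \le \sqrt{T}$, that is $t \le T/4$, which the corollary's hypothesis $t \le T$ does not guarantee. For $t$ close to $T$ the argument as written would only produce a splitting on $B_{\sqrt{t}/2}(x)$ (taking $r = \sqrt{t}$ and $\eta = 1/2$), or one would need to pass to a smaller time $t' = t/4$ via the almost-monotonicity of $\uptheta$ (Lemma \ref{lem:thetaSmallKato}), which again only recovers the ball of half the radius. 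This is a minor imprecision inherited from the statement itself rather than from your argument — the corollary is invoked in the paper only with $t \downarrow 0$, where the constraint is vacuous — but your write-up should either restrict to $t \le T/4$ or state the splitting on a proportionally smaller ball $B_{\sigma\sqrt{t}}(x)$ to make the chain of constants airtight.
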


\subsection{Transformation theorem} In order to obtain a quantitative version of Theorem \ref{thm:ReifregX}, we need to prove the following Transformation theorem.

\begin{theorem}[Transformation Theorem]
\label{thm:Transformation}
Let $f$ satisfy \eqref{eq:SK} and $v>0$. There exist a constant $\gamma_n>0$ and $\eps_0 \in (0,1)$ depending on $n,f$ such that for all $\eps \in (0,\eps_0]$ there exists $\delta>0$ depending on $\eps, n, f$ and $v$ such that if $(M^n,g)\in \mathcal{K}(n,f)$, $x\in M$ and $r \in (0,\sqrt{T}]$ satisfy
\begin{enumerate}
\item[\emph{i)}] $\vol_g(B_r(x))\geq vr^n$;
\item[\emph{ii)}] $\kato_{r^2}(M^n,g) \leq \delta;$
\item[\emph{iii)}] for any $s\in (0,r]$, $\dGH(B_{s}(x), \mathbb{B}_{s}^n) \leq \delta s$;
\end{enumerate}
and if $u: B_{r}(x)\to \R^n$ is an $(n,\delta)$-splitting, then for all $s \in (0,r]$ there exists a $n \times n$ lower triangular matrix $T_s$ such that $\|T_s\| \leq (1+\eps)
(r/ s)^{\gamma_n \eps}$ and the map $\tilde u= T_s \circ u$ is an $(n, \eps)$-splitting on $B_s(x)$. 
\end{theorem}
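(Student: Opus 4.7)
The plan is to run a dyadic iteration across the scales $s_k := 2^{-k}r$, constructing inductively a lower triangular matrix $T_k$ such that $\tilde u_k := T_k \circ u$ is a balanced $(n,\eps)$-splitting of $B_{s_k}(x)$ with $\|d \tilde u_k\|_{L^\infty(B_{s_k}(x))}\le 2$. The polynomial growth $(r/s)^{\gamma_n \eps}$ will emerge at the end from a telescoping estimate on the ratios $\|T_{k+1} T_k^{-1}\|$. For the initialization I take $T_0$ to be the Gram--Schmidt factor from Remark \ref{rem:GSsplitting}, so that $\tilde u_0$ is a balanced $(n, (1+C_n\delta)^2\delta)$-splitting on $B_r(x)$, absorbing the extra $(1+C_n\delta)^2$ into the threshold $\delta$.

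For the inductive step I set $A_{k+1}:=\fint_{B_{s_{k+1}}(x)} G_{\tilde u_k}\,\di\vol_g$ and aim to prove the key uniform estimate
\[
\|A_{k+1} - \Id_n\| \le C'_n\,\eps.
\]
This would be obtained by transferring $\tilde u_k$ through Theorem \ref{MetaThm} (applicable thanks to hypothesis (iii) at scale $s_k$ and to the Lipschitz bound $\|d\tilde u_k\|_{L^\infty(B_{s_k}(x))}\le 2$) to a Euclidean harmonic map $v_k\colon \setB^n_{s_k}\to\R^n$ having the same averaged Gram matrix up to small error; the Bochner-type $\cC^2$ estimate performed in the proof of Lemma \ref{lem:harmGH} then forces $\fint_{\setB^n_{s_{k+1}}} G_{v_k}$ to stay near $\Id_n$, and the closeness transfers back to $A_{k+1}$. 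Applying Lemma \ref{lem:GS} to $A_{k+1}$ produces a lower triangular $S_{k+1}$ with $S_{k+1} A_{k+1}\,{}^tS_{k+1} = \Id_n$ and $\|S_{k+1} - \Id_n\|\le C_nC'_n\,\eps$. Setting $T_{k+1} := S_{k+1} T_k$ (still lower triangular), the map $\tilde u_{k+1} = S_{k+1}\circ \tilde u_k$ is balanced at scale $s_{k+1}$ by construction; the $L^1$-closeness $\fint_{B_{s_{k+1}}(x)} \|G_{\tilde u_{k+1}} - \Id_n\| < \eps$ can then be recovered from the reinforced splitting character of $\tilde u_k$ (Remark \ref{rem:reinforced}) combined with the Poincar\'e inequality \eqref{eq:Poincaré}, and Proposition \ref{Prop:LipMieux} reestablishes $\|d\tilde u_{k+1}\|_{L^\infty(B_{s_{k+1}}(x))}\le 2$ thanks to $\kato_{s_{k+1}^2}(M^n,g) \le \kato_{r^2}(M^n,g) \le \delta$.

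Iterating the inequality $\|T_{k+1}\|\le (1+C_nC'_n\eps)\|T_k\|$ yields $\|T_k\|\le (1+C\eps)^k\le (1+\eps)(r/s_k)^{\gamma_n\eps}$ with $\gamma_n := C_nC'_n/\log 2$, since $s_k = 2^{-k}r$. For an arbitrary $s\in(0,r]$, choosing $k$ with $s_{k+1}<s\le s_k$ and setting $T_s := T_k$, the doubling inequality \eqref{eq:cor_doublement} guarantees that both the splitting inequality and the polynomial bound survive the interpolation from $B_{s_k}(x)$ down to $B_s(x)$, at the cost of enlarging $\gamma_n$ by a bounded factor.

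The main obstacle I foresee is the uniform control $\|A_{k+1}-\Id_n\|\le C'_n\eps$ with $C'_n$ independent of $k$. Although the global operator norm $\|T_k\|$ may be large at small scales, the inductive normalization keeps the Lipschitz constant of $\tilde u_k$ itself bounded by $2$ on $B_{s_k}(x)$, which is what allows Theorem \ref{MetaThm} to be applied directly to $\tilde u_k$ at each scale. Ensuring uniformity in $k$ of the thresholds in Theorem \ref{MetaThm} and Proposition \ref{Prop:LipMieux} will also require the hypotheses (i)--(iii) to persist at every dyadic scale: the volume bound (i) propagates via Proposition \ref{prop:VB}, the Kato smallness (ii) is automatic from the monotonicity $\kato_{s_{k+1}^2}\le \kato_{r^2}$, and hypothesis (iii) is already formulated at all scales.
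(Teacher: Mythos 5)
Your overall strategy—Gram--Schmidt normalization at the top scale, a one-scale propagation lemma, iterated composition of lower triangular matrices, and a telescoping product giving the polynomial bound—matches the paper's architecture, and your final bookkeeping for the exponent $\gamma_n\eps$ and the interpolation to non-power scales is essentially correct. However, the heart of the induction is missing, and the substitute you propose does not work.

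The inductive step must show that after one scale reduction, the \emph{$L^1$-oscillation} of $G_{\tilde u_k}$ about its average on the smaller ball is still of size $\eps$ (indeed strictly smaller, after accounting for the $\|S_{k+1}\|^2$ factor), not merely that the average $A_{k+1}$ stays near $\Id_n$. Controlling $\|A_{k+1}-\Id_n\|$ is the easy half—doubling alone gives that—whereas what you actually need is a bound of the form
$$\fint_{B_{s_{k+1}}(x)} \|G_{\tilde u_k} - A_{k+1}\|\di\nu_g \le c_n\,\eps \quad\text{with } c_n \|S_{k+1}\|^2 < 1,$$
which demands a \emph{strict improvement} over the scale-$s_k$ estimate. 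You claim this follows from the reinforced splitting property and the Poincar\'e inequality \eqref{eq:Poincaré}, but that route only produces $C\,s_{k+1}\,(\fint_{B_{s_{k+1}}}|dG_{\tilde u_k}|^2)^{1/2} \lesssim \sqrt{\eps}$, since the reinforced bound controls $s_k^2\fint|dG|^2$ by $\eps$, and the square root is lost through Cauchy--Schwarz. Similarly, the $\cC^2$ estimate from Lemma \ref{lem:harmGH} is a $\sqrt{\eps}$-level bound and cannot upgrade the $L^1$-oscillation to $O(\eps)$. Since $\sqrt{\eps}>\eps$, the splitting quality strictly degrades and the induction cannot close.

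What the paper actually uses to bridge this gap is Proposition \ref{OscRn}: for a Euclidean harmonic map $h$ on $\setB^n$ with $\Lambda := \fint_{\setB^n}\|G_h-\Id_k\|_1$, one has $\fint_{\setB^n_\lambda}\|G_h-\fint_{\setB^n_\lambda}G_h\|_1 \le C_n\Lambda\lambda$, i.e.\ a \emph{linear} gain in the shrinking factor $\lambda$, not a square-root gain. This comes from decomposing $h$ as linear part plus remainder $\beta$ with $\beta(0)=0$, $d\beta(0)=0$, and exploiting that the cross term $\langle d\ell, d\beta\rangle$ vanishes at the origin—a cancellation invisible to a crude Bochner/Poincar\'e argument. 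After transferring $\tilde u_k$ to a Euclidean harmonic map via Theorem \ref{MetaThm} with error $\kappa\eps$, the factor $C_n\lambda$ from Proposition \ref{OscRn} can be made as small as needed by choosing $\lambda$ (and $\kappa$) depending only on $n$. That is also why $\lambda$ cannot be fixed at $1/2$: it must be small enough to beat the dimensional constants, so the iterative scale is $\lambda^k r$ with a small $\lambda(n)$, not dyadic. To repair your proof, you would need to prove and invoke a statement of the type of Proposition \ref{OscRn} in place of the Poincar\'e/Bochner estimates, and adjust the shrinking factor accordingly.
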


\begin{rem}  Thanks to Lemma \ref{prop:thetanC}, we can reformulate the previous theorem replacing   the non-collapsing assumption i) by $\uptheta(r^2,x)\leq 2$. In this case the choice of $\delta$ will not depend on $v$. \end{rem}

We obtain Theorem \ref{thm:Transformation} as a consequence of the following proposition. 

\begin{prop}
\label{prop:transf1}
Let $(M,g)\in \mathcal{K}(n,f)$. Then there exist $C_n>0$ and \mbox{$\eps_0, \lambda \in (0,1)$} depending only on $n$ such that for all $\eps \in (0,\eps_0]$ there exists $\delta>0$  depending on $n,f, \eps$ such that if for some $r \in (0, \sqrt{T}]$ we have
\[
\mbox{k}_{r^2}(M^n,g) \leq \delta,
\]
a ball $B\subset M$ of radius $r$ satisfies 
\[
\dGH(B, \mathbb{B}_{r}^n) \leq \delta r,
\]
and $u$ is a balanced $(n,\eps)$-splitting of $B$, then there exists a lower triangular matrix $T$ of size $n$ such that $\|T-\Id_n\| \leq C_n\eps$ and the map $\tilde u := T \circ u_{| \lambda B}$ is a balanced $(n,\eps)$-splitting of $\lambda B$. 
\end{prop}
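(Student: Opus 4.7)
The plan is to construct $T$ explicitly from the average of the Gram matrix of $u$ over $\lambda B$, then verify that $\tilde u = T \circ u$ is a balanced $(n,\eps)$-splitting of $\lambda B$. First, set $A_{\lambda B} := \fint_{\lambda B} G_u \di\mu$. Since $u$ is balanced on $B$ (so $\fint_B G_u = \Id_n$) with $\fint_B \|G_u - \Id_n\|\di\mu < \eps$, the doubling condition \eqref{eq:cor_doublement} gives
\[
\|A_{\lambda B} - \Id_n\| \leq \fint_{\lambda B}\|G_u - \Id_n\|\di\mu \leq C(n,\lambda)\fint_B \|G_u - \Id_n\|\di\mu < C(n,\lambda)\eps.
\]
For $\eps_0$ small enough, $A_{\lambda B}$ is positive definite, and Lemma \ref{lem:GS} produces a unique lower triangular $T$ with $T A_{\lambda B} \,{}^tT = \Id_n$ and $\|T - \Id_n\| \leq C_k \cdot C(n,\lambda)\,\eps =: C_n\eps$; note that $C_n$ depends only on $n$ once $\lambda \in (0,1)$ is fixed as a dimensional constant. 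The identity $G_{\tilde u} = T G_u\,{}^tT$ immediately yields $\fint_{\lambda B} G_{\tilde u}\di\mu = T A_{\lambda B}\,{}^tT = \Id_n$, so $\tilde u$ is balanced on $\lambda B$.

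For the Lipschitz bound, Proposition \ref{Prop:LipMieux} applied on some intermediate scale $\eta B$ with $\eta \in (\lambda, 1)$ improves $\|du\|_{L^\infty(B)} \leq 2$ to $\|du\|_{L^\infty(\eta B)} \leq 1 + C(n,\eta)\delta$. Combined with $\|T\| \leq 1 + C_n\eps$, we obtain $\|d\tilde u\|_{L^\infty(\lambda B)} \leq (1+C_n\eps)(1+C(n,\eta)\delta) \leq 2$ once $\eps_0$ and $\delta$ are sufficiently small in terms of $n$.

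The heart of the proof is the $L^1$-estimate $\fint_{\lambda B}\|G_{\tilde u} - \Id_n\|\di\mu < \eps$. Writing $G_{\tilde u} - \Id_n = T(G_u - A_{\lambda B})\,{}^tT$ reduces this to controlling the variance $\fint_{\lambda B}\|G_u - A_{\lambda B}\|\di\mu$ up to the small multiplicative factor $\|T\|^2 \leq 1 + 3C_n\eps$. To bound this variance, I would invoke the Bochner-type Hessian estimate of \cite[Proposition 3.5]{CMT}: on an intermediate ball $\eta B$, with $\eta \in (\lambda,1)$,
\[
r^2 \fint_{\eta B} |\Hess u|^2 \di\mu \leq C(n,\eta)\bigl(\fint_B \|G_u - \Id_n\|\di\mu + \kato_{r^2}(M,g)\bigr) \leq C(n,\eta)(\eps + \delta).
\]
Using $|dG_u| \leq 2|du||\Hess u|$ and the improved Lipschitz bound, this upgrades $u$ into a reinforced splitting on $\eta B$ in the sense of Remark \ref{rem:reinforced}, yielding $r^2 \fint_{\eta B}|dG_u|^2\di\mu \leq C'(n,\eta)(\eps+\delta)$. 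The Poincaré inequality \eqref{eq:Poincaré} applied componentwise to $G_u$ on $\lambda B$ then controls $\fint_{\lambda B}\|G_u - A_{\lambda B}\|\di\mu$, and the GH-closeness hypothesis enters through Theorem \ref{MetaThm} to compare this average with its Euclidean counterpart, on which the estimate on harmonic maps close to isometries (as in Lemma \ref{lem:harmGH}) provides a sharper control than Poincaré alone.

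The main obstacle is calibrating the three small parameters $\lambda$ (universal in $n$), $\eps_0$, and $\delta$ (depending on $n,f,\eps$) so that the variance estimate closes with the same $\eps$ on both sides rather than degrading, since a naive Poincaré bound on $G_u$ only yields a $\sqrt{\eps}$-loss. This is where the GH-closeness assumption plays a crucial sharpening role via the Meta Theorem: reducing to the Euclidean model, where harmonic maps with small $\fint\|G_v - \Id_n\|$ admit Hessian $L^\infty$-bounds and are thus quantitatively close to affine isometries on smaller balls, one transfers back to $B$ a bound whose dependence on $\eps$ is linear after the re-balancing by $T$, at the cost of taking $\delta$ sufficiently small in terms of $\eps$.
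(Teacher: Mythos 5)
Your construction of $T$ from $A_{\lambda B}=\fint_{\lambda B}G_u\,\di\mu$ via Lemma \ref{lem:GS}, the verification that $\tilde u$ is balanced, and the Lipschitz bound via Proposition \ref{Prop:LipMieux} all match the paper. You also correctly diagnose the central difficulty: a Poincar\'e bound on $G_u$ via the Hessian estimate of \cite[Proposition 3.5]{CMT} only yields $r\bigl(\fint|dG_u|^2\bigr)^{1/2}\lesssim\sqrt{\eps}$, which cannot give back $\eps$ with a $\lambda$ that is universal in $n$.

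The gap is in how you propose to close it. You invoke Theorem \ref{MetaThm} to pass to a Euclidean harmonic $h$ and then appeal to the Hessian $L^\infty$ bound (as in Lemma \ref{lem:harmGH}) and ``re-balancing by $T$'' to recover linear dependence on $\eps$. But the Hessian $L^\infty$ bound coming out of $\fint\|G_h-\Id_n\|\le\Lambda$ is only $\|\Hess h\|_{L^\infty}\lesssim\sqrt{\Lambda}$, and no re-balancing converts $\sqrt{\Lambda}$ into $\Lambda$. What the paper actually uses is Proposition \ref{OscRn}: for a harmonic $h:\setB^n\to\setR^k$,
\[
\fint_{\setB^n_\lambda}\bigl\|G_h-\textstyle\fint_{\setB^n_\lambda}G_h\bigr\|_1\le C(n)\,\Lambda\,\lambda\qquad\text{with }\Lambda=\fint_{\setB^n}\|G_h-\Id_k\|_1,
\]
which is linear in both $\Lambda$ and $\lambda$. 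Achieving the linear $\Lambda$-dependence is not a consequence of the $L^\infty$ Hessian bound: the proof decomposes $h=\ell+\beta$ with $\ell$ linear and $\beta$ vanishing to second order at $0$, observes that $\langle d\ell,d\beta\rangle$ is harmonic and vanishes at $0$, and then uses the identity $2\langle d\ell,d\beta\rangle=|dh|^2-|d\ell|^2-|d\beta|^2$ together with interior gradient estimates to get $|\langle d\ell,d\beta\rangle|(x)\le C(n)\Lambda|x|$ (not $\sqrt{\Lambda}|x|$). Without this cross-term estimate the combined bound on $\lambda B$ is $C(n)(\kappa\eps+\sqrt{\eps}\,\lambda)$, which forces $\lambda\lesssim\sqrt{\eps}$ and breaks the requirement that $\lambda$ depend only on $n$. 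So your scheme is the right one, but you are missing the specific Euclidean lemma that makes the iteration close; once you insert Proposition \ref{OscRn} (or prove its analogue), the choice $\kappa=\lambda=1/(8C(n))$ and $\eps_0$ small closes the loop exactly as in the paper.
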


We postpone the proof of Proposition \ref{prop:transf1} and first give a proof of Theorem \ref{thm:Transformation}.  

\begin{proof}[Proof of Theorem \ref{thm:Transformation} given Proposition \ref{prop:transf1}] Let $\eps_0,\lambda$ be as in Proposition \ref{prop:transf1},  and let $\eps \in (0, \eps_0]$. Consider $\eta \in (0,1]$ to be chosen later depending on $n$ and let $\delta=\delta(n, f,\eta \eps)$ be the quantity given by Proposition \ref{prop:transf1}. Assume that 
$$\kato_{r^2}(M^n,g)\leq \delta, \mbox{ for all } s \in (0, r] \, \dGH(B_s(x), \setB^n_s)\leq \delta s.$$
Consider a $(n,\eta\eps)$-splitting $u: B_r(x) \to \R^n$ and $s \in (0,r]$. 

First assume $s \in (\lambda r, r]$. Since $\lambda$ only depends on $n$,  then \eqref{eq:cor_doublement} with $\phi=\|G_u-\Id_n\|$ implies
$$\fint_{B_s(x)}\|G_u-\Id_n\|\di\vol_g < C(n)\eta \eps.$$
If $C(n) \eps_0<1/2$,  Remark \ref{rem:GSsplitting} implies the existence of a lower triangular matrix $T_s$ such that $\|T_s\|\leq 1+C(n)\eta \eps$ and $T_s\circ u : B_s(x) \to \R^n$ is a balanced $(n,(1+C(n)\eta \eps)^2 \eta \eps)$-splitting.  We have no restriction in assuming that $\eps_0$ is lower than $1/4C(n)$,  thus we do it.  Assume also that
\[
\eta \le \frac{16}{25}\, \cdot
\]
Then $T_s \circ u_s$ is a balanced $(n,\eps)$-splitting.

Now assume that there exists some positive integer $\ell$ such that $\lambda^{-\ell}s \in (\lambda r, r]$.  Thanks to assumption iii), we can apply Proposition \ref{prop:transf1} iteratively to get existence of lower triangular matrices $T_0,\ldots, T_l$ such that $\tilde{u} := T_l \circ \ldots \circ T_0 \circ u : B_s(x) \to \setR^n$ is a balanced $(n,\eps)$-splitting and
\[
\|T_j\| \le (1+ C(n) \eta \eps)
\]
for any $j \in \{0,\ldots,l\}$. Set $T=:T_l \circ \ldots \circ T_0$. Then
\[
\|T\| \le (1+ C(n) \eta  \eps)^{l+1}.
\]
Since $\lambda^{-l}s\le r$ implies $l \le \frac{\ln(r/s)}{\ln(1/\lambda)}$,  we get
\[
(1+ C(n) \eta  \eps)^l \le  (r/s)^{\frac{\ln(1+ C(n) \eta  \eps)}{\ln(1/\lambda)}} \le (r/s)^{\frac{C(n) \eps}{\ln(1/\lambda)}}.
\]
Then we set
$$\gamma_n:=\frac{C(n)}{\ln(1/\lambda)}\qquad \text{and} \qquad \eta := \min\left\{\frac{16}{25},\frac{1}{C(n)}\right\}$$
to get $\|T\|\leq (1+\eps)(r/s)^{\gamma_n\eps}$. This concludes the proof.
\end{proof}

\begin{rem}
We point out that, unlike the proof of \cite[Proposition 7.7]{CJN}, which relies on a contradiction argument, we provide a direct proof of the Transformation Theorem. 
\end{rem}

We are left to proving Proposition \ref{prop:transf1}. In order to do so, we need the following property of harmonic maps on $\R^n$. 
\begin{prop}
\label{OscRn}
Let $h : \setB^n \to \setR^k$ be a harmonic function and set 
$$\Lambda:= \fint_{\mathbb{B}^n}\|G_h-\Id_k\|_1\di x.$$
Then there exists a constant $C>0$ depending only on $n$ such that for all $r \in (0,1/2)$
\begin{equation}
\label{eq:harmBr}
\fint_{\mathbb{B}^n_r}\|G_h-\mbox{\small{$\fint_{\mathbb{B}^n_r}G_h$}}\|_1\di x \leq C\Lambda r.
\end{equation}
\end{prop}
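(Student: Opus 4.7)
The plan is to reduce the $L^1$ oscillation estimate on $G_h$ to an $L^\infty$ bound on $\nabla G_h$ via the Poincaré--Wirtinger inequality, and then to establish the required gradient bound using Bochner's identity combined with subharmonic mean value estimates. Applying Poincaré--Wirtinger entry-wise on the ball $\mathbb{B}^n_r \subset \mathbb{B}^n_{1/2}$ -- each $G_{\alpha\beta}$ is smooth since $h$ is harmonic, hence real analytic -- and summing over the $k^2$ entries yields
\[
\fint_{\mathbb{B}^n_r}\bigl\|G_h - \fint_{\mathbb{B}^n_r}G_h\bigr\|_1 \di x \leq C_n\, r \fint_{\mathbb{B}^n_r}|\nabla G_h|\,\di x,
\]
so it suffices to establish a bound $|\nabla G_h| \leq C\Lambda$ pointwise on $\mathbb{B}^n_{1/2}$.

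To control $|\nabla G_h|$, I would first use the pointwise identity $\partial_\ell G_{\alpha\beta} = \sum_i(\partial_{i\ell}h_\alpha\,\partial_i h_\beta + \partial_i h_\alpha\,\partial_{i\ell}h_\beta)$ to get $|\nabla G_h| \leq C_n |dh|\,|\mathrm{Hess}\,h|$ pointwise. The first factor is handled by noting that $|dh|^2 = \mathrm{tr}(G_h)$ is subharmonic (as a sum of squares of harmonic $\partial_i h_\alpha$), so the mean value inequality together with $\fint_{\mathbb{B}^n}|dh|^2 \leq k + \Lambda$ yields $\|dh\|_{L^\infty(\mathbb{B}^n_{3/4})}^2 \leq C_n(k+\Lambda)$. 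The second factor is controlled via Bochner's identity $\Delta|dh|^2 = 2|\mathrm{Hess}\,h|^2$: integrating against a smooth cutoff $\chi$ equal to $1$ on $\mathbb{B}^n_{3/4}$ and compactly supported in $\mathbb{B}^n$, and using $\int\Delta\chi = 0$ to replace $|dh|^2$ by $|dh|^2 - k$, I obtain $\int_{\mathbb{B}^n_{3/4}}|\mathrm{Hess}\,h|^2 \leq C_n\Lambda$; since each $\partial_{ij}h_\alpha$ is harmonic, the function $|\mathrm{Hess}\,h|^2$ is itself subharmonic, and the mean value inequality upgrades this to $\|\mathrm{Hess}\,h\|_{L^\infty(\mathbb{B}^n_{1/2})}^2 \leq C_n\Lambda$.

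The main obstacle is a quantitative one. Combined with the Poincaré reduction, the estimates above yield an oscillation bound only of order $\sqrt{(k+\Lambda)\Lambda}\,r$, i.e.\ with a $\sqrt{\Lambda}$-rate, whereas the claim requires the linear rate $\Lambda r$. This gap is visible already in the model example $h(x,y)=(x,\,y+\varepsilon(x^2-y^2))$ on $\mathbb{B}^2$, where $\Lambda \sim \varepsilon$ yet $|\mathrm{Hess}\,h|$ is of order $\varepsilon = \Lambda$ pointwise, not $\sqrt{\Lambda}$; the naive Bochner step above therefore loses a square root. Closing this gap will require a sharper control on $\|\mathrm{Hess}\,h\|_\infty$, presumably via a Caccioppoli-type argument applied to the harmonic tensor $\nabla(dh)$ after subtracting the mean $A := \fint_{\mathbb{B}^n} dh$ and using the identity $\fint|dh - A|^2 = \mathrm{tr}\bigl(\fint G_h - A^T A\bigr)$ -- which encodes a variance of $dh$ and reveals the higher-order cancellation between the linear part of $|dh|^2-k$ and a radial cutoff -- or equivalently via a harmonic polynomial decomposition of $h$ isolating the Taylor expansion of $G_h$ at the origin. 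I expect this refinement of the Hessian estimate to be the technical heart of the proof.
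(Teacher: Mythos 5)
Your outline is honest and mostly accurate about where it breaks down, but the remedy you propose at the end would not work, and the actual mechanism that makes the lemma true is different from what you suggest.

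The Poincar\'e reduction and the chain $|\nabla G_h| \le C|dh|\,|\Hess h|$, together with Bochner giving $\|\Hess h\|_{L^\infty(\mathbb{B}^n_{1/2})}\le C\sqrt{\Lambda}$, are correct, and you are right that this only gives the rate $\sqrt{\Lambda}\,r$. The gap you identify is real. But the fix you propose -- ``a sharper control on $\|\Hess h\|_\infty$'' of order $\Lambda$ -- is not available: no pointwise bound $\|\Hess h\|_{L^\infty(\mathbb{B}^n_{1/2})}\le C\Lambda$ holds. Take $n=3$, $k=1$, $h(x,y,z)=x+\varepsilon\, yz$. Then $dh=(1,\varepsilon z,\varepsilon y)$, so $|dh|^2-1=\varepsilon^2(y^2+z^2)$ and $\Lambda\sim\varepsilon^2$, while $\Hess h$ has constant norm $\sim\varepsilon$, i.e.\ $\|\Hess h\|_\infty\sim\sqrt{\Lambda}$. (Your own $2$-dimensional example happened to give $\|\Hess h\|_\infty\sim\Lambda$, which is a low-dimensional coincidence; the Cauchy--Riemann structure forces $\Hess h$ to be determined by a single component of $dh$. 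In $n\ge 3$ this fails.) Observe that in this $3$-dimensional example the Proposition still holds easily -- $G_h-\fint G_h$ is $O(\varepsilon^2 r^2)=O(\Lambda r^2)$ on $\mathbb{B}^n_r$ -- precisely because $|d h|^2-1=|d\beta|^2$ is quadratic in the distance to the origin; the crude bound $|\nabla G_h|\le|dh|\,|\Hess h|\sim\sqrt{\Lambda}$ refuses to see that extra vanishing.

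The actual mechanism, and what the paper does, is the harmonic-polynomial decomposition you gesture at in your last clause, but carried through differently. Write $h=\ell+\beta$ with $\ell$ the affine part at $0$, so $\beta(0)=0$, $d\beta(0)=0$, $\Hess h=\Hess\beta$. Then $|dh|^2-|d\ell|^2=2\langle d\ell,d\beta\rangle+|d\beta|^2$. The quadratic term is handled exactly as you would: the Bochner $\sqrt{\Lambda}$-Hessian bound gives $|d\beta(x)|\le C\sqrt{\Lambda}|x|$, hence $\fint_{\mathbb{B}^n_r}|d\beta|^2\le C\Lambda r^2$. The crucial point you miss is that the \emph{cross term} $\langle d\ell,d\beta\rangle$ is itself a \emph{harmonic} function (a constant-coefficient combination of the harmonic components of $d\beta$) vanishing at $0$, and its $L^1$ norm on $\mathbb{B}^n_{5/8}$ is controlled by $\Lambda$ via the identity $2\langle d\ell,d\beta\rangle=|dh|^2-|d\ell|^2-|d\beta|^2$. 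Interior elliptic estimates for this harmonic scalar then give $\|d\langle d\ell,d\beta\rangle\|_{L^\infty(\mathbb{B}^n_{1/2})}\le C\Lambda$, hence $|\langle d\ell,d\beta\rangle(x)|\le C\Lambda|x|$. This is an order-$\sqrt{\Lambda}$ improvement over the pointwise factorization $|\nabla\langle d\ell,d\beta\rangle|=|\Hess\beta\cdot d\ell|\lesssim|\Hess\beta|\sim\sqrt{\Lambda}$: the gain comes from bounding the gradient of the harmonic function $\langle d\ell,d\beta\rangle$ by its own small $L^1$ norm, not by estimating the two factors separately. Your approach cannot detect this because it bounds $|\nabla G_h|$ by a product of pointwise norms of $dh$ and $\Hess h$, losing precisely the cancellation encoded in the harmonicity of the linear-in-$\beta$ part of $G_h$.
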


\begin{proof} 
For the sake of brevity, we show an analog statement in the case $k=1$: consider a harmonic function $h: \mathbb{B}^n \to \R$ and set 
$$\Lambda_c=\fint_{\mathbb{B}^n} ||dh|^2-c|\di x.$$
Then we show that there exists $C>0$ only depending on $n$ such that for all $r \in (0,1/2)$ we have
\begin{equation}
\label{eq:Osc1}
\fint_{\mathbb{B}^n_r} \left||dh|^2-\fint_{\mathbb{B}^n_r}|dh|^2\right|\di x\leq C\Lambda_c r.
\end{equation}
By arguing as in Lemma \ref{lem:harmGH}, we obtain the following Hessian bound:
\begin{equation}
\label{eq:harm1}
\|\Hess h\|_{L^\infty(\frac 58\setB^n)}\le C_n\sqrt{\Lambda_c}.
\end{equation}
Now we write 
$$h=\ell+\beta,$$
where $\ell$ is the linear part of $h$, namely $\ell(\cdot)=h(0)+dh(0)(\cdot)$, so that $\beta(0)=0$ and $d\beta(0)=0.$  We also have
$$\Hess h=\Hess \beta,$$ 
then from \eqref{eq:harm1} we get, for any $x \in \setB^n_{\frac 58}$,
\begin{equation}
\label{eq:harm2}
|d\beta|(x)\leq C_n\sqrt{\Lambda_c}\, |x|
\end{equation}
Using that the coefficients of $dh$ are harmonic and $d\beta(0)=0$, we obtain
$$\fint_{\setB^n} dh = d\ell \quad \text{ and } \quad  |d\ell|\le \fint_{\setB^n}|dh|.$$
Moreover, for any $r\in (0,1)$ the mean value of  $\langle d\ell,d\beta\rangle$ over $\setB^n_r$ is equal to its value at $0$, thus it is equal to zero. We then get for any $r\in (0,1)$ 
\begin{equation*}
\fint_{r\setB^n} |dh|^2=|d\ell|^2+\fint_{r\setB^n} |d\beta|^2
\end{equation*}
so that
\begin{equation}
\label{eq:harm3}
\fint_{r\setB^n}\left| |dh|^2-\left(\fint_{r\setB^n} |dh|^2\right)\right|\le 2 \fint_{r\setB^n}|d\beta|^2+2 \fint_{r\setB^n}|\langle d\ell,d\beta\rangle|.
\end{equation}
By \eqref{eq:harm2}, the first term in the right-hand side is smaller than $C_n\Lambda_c r^2$. As for the second term, we use
$$2\langle d\ell,d\beta\rangle=|dh|^2-|d\ell|^2-|d\beta|^2$$
to get
\begin{equation*}
2 \fint_{r\setB^n}|\langle d\ell,d\beta\rangle|\le 2 \fint_{r\setB^n}|d\beta|^2+\fint_{r\setB^n}\left| |dh|^2-\left(\fint_{r\setB^n} |dh|^2\right)\right|
\end{equation*}
for any $r\in (0,1)$. Choosing $r=5/8$ gives
$$\fint_{\frac58\setB^n}|\langle d\ell,d\beta\rangle|\le C_n \Lambda_c.$$
Since $\langle d\ell, d\beta \rangle$ is harmonic, elliptic estimates imply the following gradient estimate
$$\| d\langle d\ell, d\beta \rangle \|_{L^{\infty}(\frac 12\setB^n)} \leq C_n\fint_{\frac 58\setB^n}|\langle d\ell,d\beta\rangle|\le C_n \Lambda_c.$$
Then by using that  $\langle d\ell,d\beta\rangle(0)$ vanishes we get for any $x \in \frac 12\setB^n$
$$ |\langle d\ell,d\beta\rangle|(x)\le C_n \Lambda_c |x|.$$
As a consequence, for any $r\in (0, 1/2)$ the second term in \eqref{eq:harm3} is bounded above by $C_n\Lambda r$. We then get the desired inequality
$$\fint_{\setB^n_r}\left| |dh|^2-\left(\fint_{\setB^n_r} |dh|^2\right)\right| \leq C_n\Lambda_c(r^2+r) \leq C_n\Lambda_c r,$$
for any $r\in (0, 1/2)$. 
\end{proof}

We can now prove Proposition \ref{prop:transf1}. 

\begin{proof}[of Proposition \ref{prop:transf1}]
Up to rescaling the distance by a factor $r^{-1}$, we can assume that  $r$ is equal to 1.
 Let $\eps_0, \kappa  \in (0,1)$ and $\lambda \in (0,1/4)$ to be chosen later and which will depend only on the dimension $n$. In what follows we note $C(n)$ for a generic constant which depends only on the dimension $n$ and whose value may change from line to line.
 
 Take $\eps \in (0,\eps_0]$ and let $u$ be a balanced $(n,\eps)$-splitting of a ball $B\subset M$ with radius $1$. We assume that $(M,g)\in \mathcal{K}(n,f)$ and for some $\delta \in (0,1/16n)$, \[
\mbox{k}_{1}(M^n,g) \leq \delta \quad \text{ and } \quad 
\dGH(B, \mathbb{B}_{1}^n) \leq \delta .
\]

 By Proposition \ref{Prop:LipMieux},  we have 
 \begin{equation}\label{Lipu}
 \sup_{\frac{3}{4}B}|du| \leq \left(1+C(n)\delta\right).
 \end{equation}
 If $\delta\le \nu(n,f,v,\kappa\eps,1/2,\lambda)$ 
then by Theorem \ref{MetaThm}, there exists a  harmonic map \mbox{$h: \frac 12 \mathbb{B}^n \to \R^n$} such that $\|dh\|_{L^\infty(\frac 12 \mathbb{B}^n)}\le 2C(n)$ and
\begin{equation}
\label{eq:pf5.12_1.5}
\left|\fint_{\frac12 B} \|G_u - \Id_n\, \|_1 \di\nu_g - \fint_{\frac12\mathbb{B}^n}\|G_h-\Id_n\, \|_1\di x \right| <\kappa \eps ,
\end{equation}
\begin{equation}\label{eq:pf5.12_2}
\left|\fint_{\lambda B} \|G_u - \overline{G_h}\, \|_1 \di\nu_g - \fint_{\lambda\mathbb{B}^n}\|G_h-\overline{G_h}\, \|_1\di x \right| <\kappa \eps ,
\end{equation}
where we have noted $ \overline{G_h}=\fint_{\lambda\mathbb{B}^n} G_h,$ and we introduce similarly  $ \overline{G_u}= \fint_{\lambda B} G_u \di\nu_g$.

We now have that 
\begin{align*}
\fint_{\lambda B} \|G_u - \overline{G_u}\, \| \di\nu_g&\le 2 \fint_{\lambda B} \|G_u - \overline{G_h}\, \| \di\nu_g\\
&\le 2 \fint_{\lambda B} \|G_u - \overline{G_h}\, \|_1 \di\nu_g\\
&\le 2 \fint_{\lambda\mathbb{B}^n}\|G_h-\overline{G_h}\, \|_1\di x +2\kappa \eps,\end{align*} 
 where we have used \eqref{eq:pf5.12_2} and $\|\cdot\|\le \|\cdot\|_1$.
 But using Proposition \ref{OscRn} and then estimate \eqref{eq:pf5.12_1.5}, one gets that 
 \begin{align*}\fint_{\lambda\mathbb{B}^n}\|G_h-\overline{G_h}\, \|_1\di x&\le C(n)\lambda \fint_{\frac12\mathbb{B}^n}\|G_h-\Id_n\, \|_1\di x\\
 & \le C(n)\lambda\left(\kappa\eps+\fint_{\frac12 B} \|G_u - \Id_n\, \|_1 \di\nu_g\right)\\
 & \le C(n)\lambda\left(\kappa\eps+C(n)\eps\right),
 \end{align*} 
where in the last inequality, we have used \eqref{eq:cor_doublement} and $\|\cdot\|_1\le C(n) \|\cdot\|$.
Gathering all the estimates, we get that 
$$\fint_{\lambda B} \|G_u - \overline{G_u}\, \| \di\nu_g\le C(n)\left(\kappa+\lambda\right)\eps.$$
Again \eqref{eq:cor_doublement} implies that 
$$\|\overline{G_u}-\Id_n\|\le \fint_{\lambda B} \|G_u-\Id_n\| \di\nu_g\le C(n,\lambda) \fint_{ B} \|G_u-\Id_n\| \di\nu_g\le C(n,\lambda)\eps.$$
If   $\eps\le \frac{1}{4C(n,\lambda)}$, then by Lemma \ref{lem:GS} there exists a lower triangular matrix $T$ such that
\begin{equation}\label{eq:balanced_0}
T \fint_{\lambda B} G_u \di \vol_g  {}^t T =\Id_n, \quad \| T\| \leq 1+C(n)C(n,\lambda) \eps.
\end{equation}
Then the map $\tilde u = Tu: \lambda B \to \R^n$ satisfies 
\begin{equation*}
\fint_{\lambda B} G_{\tilde u}\di\vol_g =\Id_n,
\end{equation*}
\begin{equation}
\label{eq:pf5.12_0}
\fint_{\lambda B} \| G_{\tilde u} -\Id_n\|\di\vol_g \leq \|T\|^2 \fint_{\lambda B} \left \| G_u - \fint_{\lambda B} G_u \di\vol_g \right \| \di \vol_g\le \|T\|^2\,C(n)\left(\kappa+\lambda\right)\eps,
\end{equation}
and
\begin{equation}
\label{eq:pf5.12_00}
\sup_{\lambda B} |d\tilde u|\le \|T\|\, (1+C(n)\delta).
\end{equation}

We now make the following choices:

  $$\kappa=\lambda=\frac{1}{8C(n)}\text{ and } \eps_0= \frac{1}{4C(n)C(n,\lambda)}$$ and assume that $$\delta=\min\left\{\frac{1}{3C(n)}\,;\, \nu(n,f,v,\eps,\kappa,\lambda)\right\}$$ 
  so that 
  \begin{itemize}
  \item   $\|T\|\le 1+C_n\eps \leq \frac43\le 2 $ by \eqref{eq:balanced_0} and the fact that $\eps\le \eps_0$,
\item $\sup_{\lambda B} |d\tilde u|\le \frac43 (1+C(n)\delta)\le \frac43\frac32=2$ by \eqref{eq:pf5.12_00},
\item $\tilde u$  is a balanced $(n,\eps)$-splitting of $\lambda B$ by \eqref{eq:pf5.12_0}.
\end{itemize}
This concludes the proof.
\end{proof}

\subsection{Hölder regularity} We conclude this section by observing that, under suitable assumptions, the results of the previous sections lead to the following Hölder regularity of almost splitting maps.
\begin{theorem}
\label{thm:strongReifenberg1}  Assume that $f$ satisfies \eqref{eq:SK}. There exists $\eps_0\in (0,1)$ depending only on $f,n$ such that for all $\eps\in (0,\eps_0]$ and for any $\eta \in (0,1)$, there exists $\delta >0$ depending only on $f,n, \eps, \eta$ such that if $(M^n,g) \in \mathcal{K}(n,f)$,  $x \in M$ and $t \in (0,\sqrt{T}]$ satisfy 
$$\kato_t(M^n,g) \leq \delta, \quad \uptheta(t,x) \leq 1+\delta,$$
then any $(n,\delta)$-splitting $u: B_{\sqrt{t}}(x) \to \R^n$, with $u(x)=0$, 
is a diffeomorphism from $B_{(1-\eta)\sqrt{t}}(x)$ onto its image. Moreover, $u$ satisfies for all $y,z \in B_{(1-\eta)\sqrt{t}}(x)$ 
\begin{equation}
\label{eq:strongReif}
(1-\eps)\frac{\dist_g(y,z)^{1+\eps}}{(\sqrt{t})^{\eps}} \leq |u(y)-u(z)| \leq (1+\eps) \dist_g(y,z),
\end{equation}
and we have $\mathbb{B}_{(1-2\eta)\sqrt{t}}^n\subset u(B_{(1-\eta)\sqrt{t}}(x)) \subset \mathbb{B}_{(1-\eta/2)\sqrt{t}}^n.$
\end{theorem}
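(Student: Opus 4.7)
The plan is to deduce this from Theorem \ref{thm:Transformation} combined with Propositions \ref{prop:GHisometry} and \ref{Prop:LipMieux}. After rescaling I assume $\sqrt{t}=1$. For $\delta$ sufficiently small, Corollary \ref{cor:propuptheta} together with Corollary \ref{thm:ReifregM} ensures $\uptheta(1,y)\leq 1+\delta^{\ast}$ and $\dGH(B_s(y),\setB^n_s)\leq \delta^{\ast} s$ for every $y\in B_1(x)$ and $s\in(0,1]$, with $\delta^{\ast}$ arbitrarily small. Proposition \ref{Prop:LipMieux} then yields the pointwise estimate $\|du\|_{L^\infty(B_{1-\eta/4}(x))}\leq 1+\eps$, giving the upper Lipschitz bound. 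Proposition \ref{prop:GHisometry} shows in turn that $u$ is a small-error GH-isometry between $B_{1-\eta/4}(x)$ and $\setB^n_{1-\eta/4}(u(x))=\setB^n_{1-\eta/4}$: combining with the upper Lipschitz bound (which gives $u(B_{1-\eta}(x))\subset \setB^n_{1-\eta/2}$) and a standard closing-up argument using the lower Hölder bound of the next paragraph (which gives $\setB^n_{1-2\eta}\subset u(B_{1-\eta}(x))$), this establishes the image inclusions.

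The lower Hölder estimate is the main work. Fix $y,z\in B_{1-\eta}(x)$ and set $s:=\dist_g(y,z)$; for $s\geq \eta/8$ the GH-isometry already yields $|u(y)-u(z)|\geq(1-\eps)s\geq(1-\eps)s^{1+\eps}$, so assume $s<\eta/8$. I invoke Theorem \ref{thm:Transformation} centered at $y$ with radius $r:=\eta/4$ and scale $\sigma:=4s\in(0,r]$. Its hypotheses at $y$ are verified as follows: non-collapsing follows from Lemma \ref{prop:thetanC} using $\uptheta(r^2,y)\leq 1+\delta^{\ast}$, the Kato and Reifenberg conditions are inherited from the reduction above, and the required initial $(n,\delta)$-splitting on $B_r(y)$ is obtained by restricting $u$, since doubling gives $\fint_{B_r(y)}\|G_u-\Id_n\|\di\vol_g\leq C(n,\eta)\delta$. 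Choosing the Transformation parameter $\eps_T\leq \min(\eps/\gamma_n,\,(\eps/C_n)^2)$, where $C_n$ is the constant from Proposition \ref{prop:GHisometry}, the theorem delivers a lower triangular matrix $T_\sigma$ with $\|T_\sigma\|\leq(1+\eps_T)(r/\sigma)^{\gamma_n\eps_T}$ such that $T_\sigma\circ u$ is an $(n,\eps_T)$-splitting of $B_\sigma(y)$. Proposition \ref{prop:GHisometry} then shows that $T_\sigma\circ u$ is a $C_n\sqrt{\eps_T}\sigma$-GH isometry between $B_{\sigma/2}(y)\ni z$ and its image, so that $|T_\sigma(u(y)-u(z))|\geq s(1-O(\sqrt{\eps_T}))$. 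Combining with $|u(y)-u(z)|\geq \|T_\sigma\|^{-1}|T_\sigma(u(y)-u(z))|$, the choice of $\eps_T$, and the bounds $\gamma_n\eps_T\leq\eps$, $s\leq 1$, $16/\eta\geq 1$, one obtains $|u(y)-u(z)|\geq(1-\eps)(16s/\eta)^{\gamma_n\eps_T}\,s\geq(1-\eps)\,s^{1+\gamma_n\eps_T}\geq(1-\eps)\,s^{1+\eps}$.

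For the diffeomorphism claim, the bi-Hölder bound gives injectivity and the pointwise invertibility of $du$ follows by applying the Transformation Theorem at each point $p\in B_{1-\eta}(x)$ and small scales $\sigma$: the matrix $T_{\sigma,p}$ normalizes $u$ into a reinforced $(n,\eps_T)$-splitting of $B_\sigma(p)$ (Remark \ref{rem:reinforced}), and combining Bochner's formula with interior elliptic regularity and the Hessian estimate of \cite[Proposition 3.5]{CMT} forces $G_{T_{\sigma,p}u}(p)\approx \Id_n$ pointwise; hence $G_u(p)$ is pointwise close to a positive-definite matrix, and $du(p)$ is invertible. The principal obstacle is the delicate parameter balance for the Hölder bound: the $\sqrt{\eps_T}$ error of Proposition \ref{prop:GHisometry} must be absorbed into the multiplicative factor $(1-\eps)$, forcing $\eps_T$ essentially quadratic in $\eps$, while the Hölder exponent $\gamma_n\eps_T$ must remain at most $\eps$. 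Fortunately the two constraints are compatible because taking $\gamma_n\eps_T<\eps$ only strengthens the lower bound, since $s\leq 1$ makes $s^{1+\gamma_n\eps_T}\geq s^{1+\eps}$.
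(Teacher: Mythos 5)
Your proof follows exactly the route the paper intends: the paper's own proof is a single sentence deferring to \cite{CJN}, naming precisely the three ingredients you combine (Corollary \ref{thm:ReifregM}, Proposition \ref{prop:GHisometry}, the Transformation Theorem \ref{thm:Transformation}). Your treatment of the bi-H\"older estimate \eqref{eq:strongReif} and the image inclusions is sound: the reduction to small scales via doubling, the application of the Transformation Theorem at a center $y$ and scale $\sigma=4s$, the combination $|u(y)-u(z)|\ge\|T_\sigma\|^{-1}|T_\sigma(u(y)-u(z))|$ with Proposition \ref{prop:GHisometry}, and the parameter book-keeping $\gamma_n\eps_T\le\eps$ with $\eps_T$ quadratic in $\eps$ all hold up (one should simply verify the $4C_n\sqrt{\eps_T}$ coming from the GH-error at radius $\sigma$ can be absorbed into $1-\eps$, but that is a benign adjustment of $\eps_T$ by a dimensional factor).

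The one genuine weak spot is the diffeomorphism claim. You assert that Bochner's formula together with interior elliptic regularity and the $L^2$ Hessian estimate of \cite[Proposition 3.5]{CMT} force $G_{T_{\sigma,p}u}(p)\approx\Id_n$ \emph{pointwise}; that inference is not available here. The Hessian estimate controls $\fint|\nabla d u|^2$, and under a Kato bound there is no bootstrap to pointwise $C^1$ control of $G_u$ uniformly in the metric. The correct mechanism — and the one that actually closes the argument in the CJN-style proof — is a comparison between the Transformation Theorem's polynomial growth $\|T_\sigma\|\le(1+\eps)(r/\sigma)^{\gamma_n\eps}$ and the decay rate of $\lambda_{\min}\bigl(\fint_{B_\sigma(p)}G_u\bigr)$: since $u$ is smooth on the closed manifold, $G_u$ is smooth, and if $G_u(p)$ had a null eigenvector then (the odd first-order term averaging out) one would get $\lambda_{\min}\bigl(\fint_{B_\sigma(p)}G_u\bigr)=O(\sigma^2)$, forcing $\|T_\sigma\|\gtrsim\sigma^{-1}$, which exceeds the Transformation bound once $\gamma_n\eps<1$. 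That contradiction is what yields invertibility of $du(p)$; your "Bochner + elliptic regularity" step should be replaced by this scale-comparison argument.
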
 

As in the proof of \cite[Theorem 7.10]{CJN}, Theorem \ref{thm:strongReifenberg1} follows from the Reifenberg regularity given in Corollary \ref{thm:ReifregM}, Proposition \ref{prop:GHisometry} and the Transformation Theorem \ref{thm:Transformation}. We then refer to \cite{CJN} for the details of the proof. 
\medskip

Theorem \ref{thm:strongReifenberg1} clearly passes to the limit to give an analog statement on non-collapsed strong Kato limits. Now recall that Corollary \ref{cor:exiSplGH} states that if $\uptheta(t,x)$ is close enough to $1$, then there exists an $(n,\eps)$-splitting on a ball around $x$. As a consequence, we obtain: 

\begin{cor}
Assume that $f$ satisfies \eqref{eq:SK}. Let $(X,\dist,o)\in \overline{\cK(n,f,v)}$. For any $\upalpha \in (0,1)$ there exists $\delta$ depending on $\alpha, n$ and $f$ such that for any $x \in X$ satisfying $\vartheta(x) < 1+\delta$ there exist $r \in (0,\sqrt{T})$ and a homeomorphism $u: B_r(x) \to u(B_r(x))\subset \R^n$ such that for all $y,z \in B_r(x)$ we have
$$\upalpha r^{1-\frac{1}{\upalpha}} \dist(y,z)^{\frac{1}{\upalpha}} \leq |u(y)-u(z)| \leq \frac{1}{\upalpha}\dist(y,z)^\alpha r^{1-\upalpha}.$$
\end{cor}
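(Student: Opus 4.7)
The plan is to derive this statement as a combination of the limit version of Theorem~\ref{thm:strongReifenberg1} and Corollary~\ref{cor:exiSplGH}, after selecting a scale at which their hypotheses are satisfied thanks to the assumption on $\vartheta(x)$.

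Given $\upalpha \in (0,1)$, the first step is to fix $\eps = \eps(\upalpha) \in (0, \eps_0]$ sufficiently small (of the order $1-\upalpha$) and a constant $c = c(\upalpha) \in (0,1)$, so that whenever a map satisfies the $(1+\eps)$-bi-Hölder estimates
\[
(1-\eps)\,\dist(y,z)^{1+\eps}(\sqrt t)^{-\eps} \le |u(y)-u(z)| \le (1+\eps)\,\dist(y,z)
\]
of Theorem~\ref{thm:strongReifenberg1} on a ball of radius $\sqrt{t}/2$, its restriction to the sub-ball $B_{c\sqrt t}(x)$ satisfies the bounds of the corollary with $r := c\sqrt t$. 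This reduction is an elementary pointwise comparison of power functions in the ratio $\dist(y,z)/r \in (0,2]$, which holds provided that $\eps$ is small enough in terms of $\upalpha$ and $c$ is small enough to absorb the polynomial factors appearing in the extremal case $\dist(y,z) \sim 2r$.

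The second step is to invoke the limit version of Theorem~\ref{thm:strongReifenberg1} paired with Corollary~\ref{cor:exiSplGH} on non-collapsed strong Kato limits, the passage to the limit being justified by the heat kernel convergence of Proposition~\ref{prop:HKconv} and the convergence of harmonic functions with gradient bounds from \cite[Proposition~E.10]{CMT}. This yields a threshold $\delta_1 = \delta_1(n,f,\eps) > 0$ such that whenever $y \in Y \in \overline{\cK(n,f,v)}$ and $t \in (0,T]$ satisfy $\uptheta(t,y) \le 1+\delta_1$ and $f(t) \le \delta_1$, there exists an $(n,\delta_1)$-splitting $u \colon B_{\sqrt t}(y) \to \R^n$ which is a bi-Hölder homeomorphism onto its image on $B_{\sqrt t/2}(y)$ with the constants displayed above.

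The last step is to connect the hypothesis $\vartheta(x) < 1+\delta$ to the hypothesis of the previous step. Since $\vartheta(x) = \lim_{t \to 0}\uptheta(t,x)$ by Proposition~\ref{monotonetheta}, and since $f(t) \to 0$ as $t \to 0$ by~\eqref{eq:f}, there exists $t_0 > 0$ such that $\uptheta(t,x) < \vartheta(x) + \delta_1/2$ and $f(t) \le \delta_1$ for all $t \in (0, t_0]$. Setting $\delta := \delta_1/2$, any $t \in (0, t_0]$ then satisfies both $\uptheta(t,x) < 1+\delta_1$ and $f(t) \le \delta_1$, so that the second step produces the splitting $u$, and the first step converts its bi-Hölder bounds into the desired form on $B_r(x)$ with $r := c(\upalpha)\sqrt t$. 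The main obstacle is the conversion step in the first paragraph: the bi-Hölder exponent $1+\eps$ of Theorem~\ref{thm:strongReifenberg1} is very close to linear, while the target exponent $\upalpha \in (0,1)$ may be significantly smaller, so the extremal case $\dist(y,z) \sim 2r$ must be handled with care, which is absorbed by choosing $\eps$ and $c$ appropriately in terms of $\upalpha$.
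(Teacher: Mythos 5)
Your overall strategy is the one the paper outlines: pass Theorem~\ref{thm:strongReifenberg1} to the limit, invoke Corollary~\ref{cor:exiSplGH} to produce the splitting map, and use $\vartheta(x)=\lim_{t\to 0}\uptheta(t,x)$ together with $f(t)\to 0$ to find a good scale. That part is sound.

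However, the first paragraph of your proposal contains a genuine gap, and moreover a claim that goes in the wrong direction. You assert that shrinking $c$ ``absorbs the polynomial factors appearing in the extremal case $\dist(y,z)\sim 2r$,'' so that the mere restriction of the splitting $u$ to $B_{c\sqrt t}(x)$, with $r:=c\sqrt t$, satisfies the corollary's two-sided bound for all $y,z\in B_r(x)$. But the lower bound you need at $\dist(y,z)=2r$ reads
\[
(1-\eps)\,\frac{(2r)^{1+\eps}}{(\sqrt t)^{\eps}}\;\ge\;\upalpha\, r^{1-\frac{1}{\upalpha}}(2r)^{\frac{1}{\upalpha}},
\]
which after simplification (with $r=c\sqrt t$, so $r/\sqrt t=c$) becomes
\[
c^{\eps}\;\ge\;\frac{\upalpha\, 2^{\frac{1}{\upalpha}-1-\eps}}{1-\eps}.
\]
For $\upalpha$ bounded away from $1$ (already at $\upalpha=1/3$, say), the right-hand side exceeds $1$ for small $\eps$, while $c^\eps\le 1$ for every $c\in(0,1)$, so no admissible choice of $c$ can work; in fact shrinking $c$ makes $c^\eps$ \emph{smaller} and worsens the inequality, contrary to your claim. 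The same tension already appears at the level of compatibility with the Lipschitz upper bound: at $\dist(y,z)=2r$ the corollary's lower bound equals $\upalpha\,2^{1/\upalpha}r$, which for $\upalpha<1/2$ exceeds the value $\approx 2r$ that a $(1+\eps)$-Lipschitz map can achieve, so the restriction of the near-isometric splitting of Theorem~\ref{thm:strongReifenberg1} cannot directly satisfy the displayed inequality on the full range $\dist(y,z)<2r$. You either need to restrict the claim to $\dist(y,z)\le r$ (equivalently work in $B_{r/2}(x)$), or post-compose $u$ with a dilation $\lambda>1$ depending on $\upalpha$; a pure shrinkage of the sub-ball does not repair the estimate. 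Steps~2 and~3 of your argument are fine.
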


Theorem \ref{thm:ReifregX} is then a consequence of this latter result and of a simple covering argument. 

\appendix
\section*{Appendix}
\renewcommand{\thesubsection}{\Alph{subsection}}

\makeatletter
\renewcommand{\thetheorem}{\thesubsection.\arabic{theorem}}% Update counter printing
\@addtoreset{theorem}{subsection}% Reset theorem counter with every new subsection
\makeatother

\subsection{Codimension 2}

In this section we prove the following.

\begin{theorem} 
\label{thm:codim2}
Assume that \eqref{eq:SK} holds.   Let $(X,\dist,o) \in \strongkatolimits$.  Then the singular set $\cS := X \setminus \cR$ has Hausdorff dimension at most $n-2$.
\end{theorem}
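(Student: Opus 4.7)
The plan follows the strategy of Cheeger--Colding \cite[Theorem 6.1]{ChCo97} (see also \cite[Theorem 10.22]{CheegerPisa}), combining the stratification furnished by \cite[Theorem 6.2]{CMT} with volume continuity and a one-dimensional rigidity step.

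First, \cite[Theorem 6.2]{CMT} provides a stratification
$$
\cS^0 \subseteq \cS^1 \subseteq \cdots \subseteq \cS^{n-1}, \qquad \cS^k := \{ x \in X : \text{no } Y \in \Tan_\meas(X,x) \text{ splits off an } \R^{k+1} \text{ factor}\},
$$
together with the Hausdorff dimension estimate $\dim_\cH \cS^k \leq k$ for every $k \in \{0,\ldots,n-1\}$. By volume continuity (Theorem \ref{thm:volCont}), every $Y \in \Tan_\meas(X,x)$ is an $n$-dimensional non-collapsed $\RCD(0,n)$ space equipped with $\cH^n$. Combined with Proposition \ref{prop:cR}, this gives $X \setminus \cS^{n-1} = \cR$, and therefore $\cS = \cS^{n-1}$. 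To conclude it suffices to improve this to the inclusion $\cS^{n-1} \subseteq \cS^{n-2}$.

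To prove this inclusion, fix $x \in \cS^{n-1}$ and assume for contradiction that some $Y \in \Tan_\meas(X,x)$ splits off $\R^{n-1}$, so that $Y = \R^{n-1} \times Z$ as pointed metric measure spaces for some pointed $\RCD(0,1)$ space $(Z,\dist_Z,\cH^1,z_0)$ (non-collapsing of $Y$ forces the measure on $Z$ to be $\cH^1$). By the classification of 1-dimensional non-collapsed $\RCD(0,1)$ spaces as closed connected subsets of $\R$ or circles, either $Z \cong \R$ --- in which case $Y \cong \R^n$, directly contradicting $x \in \cS^{n-1}$ --- or $Z$ contains a point $\zeta$ admitting a neighborhood isometric to an open interval of $\R$. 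In the latter case $(0,\zeta) \in Y$ admits a Euclidean neighborhood, so $(\R^n,\dist_e,\cH^n,0) \in \Tan_\meas(Y,(0,\zeta))$; a standard diagonal extraction then yields $(\R^n,\dist_e,\cH^n,0) \in \Tan_\meas(X,x)$, again contradicting $x \in \cS^{n-1}$. Hence no $Y \in \Tan_\meas(X,x)$ splits off $\R^{n-1}$, i.e.~$x \in \cS^{n-2}$, and the stratification gives $\dim_\cH \cS \leq n-2$.

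The main obstacle is the one-dimensional classification step, namely showing that the cross-section $Z$ of any such splitting is isometric to $\R$, to a half-line, to a bounded interval, or to a circle; this relies on standard structure results for low-dimensional non-collapsed $\RCD$ spaces, which provide precisely what is needed but must be cited with care. The compatibility of iterated tangent cones, namely $\Tan_\meas(Y,y) \subseteq \Tan_\meas(X,x)$ whenever $Y \in \Tan_\meas(X,x)$ and $y \in Y$ is the limit of a sequence in $X$, follows from a routine diagonal argument at the specific point $x$ and does not require the $\mu$-a.e.\ version invoked in Section 4.
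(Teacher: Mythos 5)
The decisive step in your argument --- passing from the Euclidean tangent cone of $Y$ at $(0,\zeta)$ to a Euclidean tangent cone of $X$ at $x$ --- does not go through. The iterated tangent cone inclusion $\Tan_\meas(Y,y)\subseteq \Tan_\meas(X,x)$ is a theorem (Preiss, LeDonne, Gigli--Mondino--Rajala) that holds only for $\mu$-a.e.\ $x$; it is \emph{not} a routine diagonal argument, and it fails at general points $x$ when $y$ differs from the basepoint of the tangent cone. Concretely: if $Y\in\Tan_\meas(X,x)$ arises from scales $r_j\downarrow 0$ and $\dist_Y(y_0,\zeta)=d>0$, then the lifts $z_j\in X$ of $\zeta$ satisfy $\dist(x,z_j)\approx d\,r_j$; if you now blow up $X$ at $x$ by scales $t_k=r_{j(k)}s_{i(k)}$ to try to capture $\Tan_\meas(Y,\zeta)$, the lifted point $\zeta$ moves off to distance $\approx d/s_{i(k)}\to\infty$, so the diagonal limit stays based at the vertex, not at $\zeta$. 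A concrete counterexample to the pointwise claim: for the half-plane $\R_+\times\R$ with basepoint the origin, the tangent cone at the origin is $\R_+\times\R$ itself, but the tangent cone of $\R_+\times\R$ at any interior point is $\R^2$, which is not a tangent cone of $\R_+\times\R$ at the origin. This is precisely the configuration you are trying to exclude, so the argument is circular: ruling out $\R_+\times\R^{n-1}$ as a tangent cone is the whole difficulty, and it cannot be reduced to the iterated-tangent statement at the fixed point $x$.

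There are two further remarks. First, the paper exploits that every tangent cone of a non-collapsed strong Kato limit is a metric measure \emph{cone} (\cite[Theorem A]{CMT}); this forces the cross-section $Z$ of the splitting $\R^{n-1}\times Z$ to itself be a one-dimensional cone with vertex at the basepoint, hence $Z\cong\R$ or $Z\cong\R_+$, eliminating the circle and bounded-interval cases you consider without invoking the classification of $\RCD(0,1)$ spaces. Second, and more importantly, to actually rule out $Z\cong\R_+$ the paper does not use iterated tangent cones at all: it argues by contradiction, producing on the approximating manifolds harmonic almost-splitting maps $h_\alpha:B_1(o_\alpha)\to \bB^n_+$ which are $\eps_\alpha$-GH isometries, and then derives a contradiction from a topological degree/parity count --- because $h_\alpha$ takes values in the closed half-ball, the number of preimages of a regular value must be even, whereas a local degree computation near a point $p$ in the interior forces it to be odd. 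That degree argument is the genuine content that your iterated-tangent shortcut is trying to bypass.
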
 

Consider $(X,\dist,o) \in \strongkatolimits$.  From \cite[Theorem 6.2]{CMT}, we know that the singular set $\cS$ admits a filtration
\[
\cS^0 \subset \ldots \subset \cS^{n-1} = \cS
\]
where
\[
\cS^k:=\{x \in X : \R^\ell\times Z\in \Tan(X,x)\, \Rightarrow \,\ell\le k \}
\]
for any $k \in \{0,\ldots,n-1\}$. Moreover, the Hausdorff dimension of each $\cS^k$ is at most $k$. Thus we are left with proving $\cS^{n-1}=\cS^{n-2}$. 

Let us explain why the latter follows from proving that $\R_+\times \R^{n-1}$ cannot be a tangent cone of $X$ at any $x \in X$.  In \cite[Theorem A]{CMT} we proved that any metric measure tangent cone of $X$ is an $\RCD(0,n)$ metric measure cone. As a consequence,  if $X_x=Z\times \R^{n-1}$  is a tangent cone of $X$ at $x$, since $X$ has Hausdorff dimension at most $n$, then $Z$ is an $\RCD(0,1)$ metric measure cone over some finite set $F$. If $\#F\ge 2$ then $Z$ has at least two ends and as a consequence splits so that necessarily $Z=\R$.  Therefore, we have $\#F=1$ and then $Z=\R_+$, and this is what we aim to prove impossible.

We prove this by contradiction. Assume that there exists $x \in X$ admitting a metric tangent cone isometric to $\setR^+ \times \setR^{n-1}$.  Then there exist pointed closed Riemannian manifolds $\{(M_\alpha,g_\alpha,o_\alpha)\}$ and positive numbers $\{\eps_\alpha\}$ such that $\eps_\alpha\downarrow 0$,
$$ (M_\alpha,\dist_{g_\alpha},o_\alpha) \stackrel{\pGH}{\longrightarrow} (\R_+\times \R^{n-1},\dist_e,0)$$
and 
$$\mbox{k}_t(M_\alpha,g_\alpha)\le f(\eps_\alpha t)$$ for any $\alpha$ and any $t\in (0,1/\eps_\alpha]$.  Set $$\tau \bB^n_+:=\left\{(x_1,\dots,x_n)\in \bB^n_\tau : \ x_1\ge 0\right\}$$ for any $\tau>0$. By arguing as in the proof of \cite[Theorem 7.4]{CMT}, we get harmonic maps
$$\Psi_\alpha=(h_2^\alpha,\dots,h_n^\alpha)\colon B_2(o_\alpha)\rightarrow \R^{n-1}$$ which converge uniformly to $(x_2,\dots,x_n)\colon 2\bB^n_+\rightarrow  \R^{n-1}$ and such that for any $\alpha$,
\begin{enumerate}[i)]
\item $\| d \Psi_\alpha \|_{L^\infty(B_2(o_\alpha))} \le 1+\eps_\alpha$,
\item$\displaystyle \fint_{B_{2}(o_\alpha)}\left\| G_{\Psi_\alpha} - \mbox{Id}_{n-1}\right\| \di \nu_{g_\alpha}\leq \eps_\alpha,$
\item $\displaystyle \fint_{B_{2}(o_\alpha)}| d G_{\Psi_\alpha}|^2 \di \nu_{g_\alpha}\leq \eps_\alpha.$ 
\end{enumerate}
From \cite[Proposition A.1]{CMT}, we get existence of uniformly Lipschitz functions $f_1^\alpha\in \cC^\infty(B_{2}(o_\alpha))$ which converge uniformly to $x_1\colon 2\bB^n_+\rightarrow  \R$.  With no loss of generality, we may assume that
$$\Phi^\alpha : =(f_1^\alpha,h_2^\alpha,\dots,h_n^\alpha)\colon B_2(o_\alpha)\rightarrow 2\bB^n_+$$ is an $\eps_\alpha$-GH isometry.  We are going to modify each $f_1^\alpha$ into a suitable $h_1^\alpha$. To this aim,  we consider a convergent sequence $p_\alpha\in B_{1}(o_\alpha) \to p=(1/2,0,\dots,0)$.  Up to working with $\Phi^\alpha$ modified by an additive constant, we can assume that $$\Phi^\alpha(p_\alpha)=p,$$ and up to considering large enough $\alpha$ only, we can assume that
$$B_{3/8}(p_\alpha)\subset B_1(o_\alpha).$$ For any $\alpha$ let $\tilde f_1^\alpha$ be the harmonic replacement of $f_1^\alpha$ on $B_{3/8}(p_\alpha)$. Then the sequence $\{\tilde f_1^\alpha\}$ is uniformly bounded in energy and in $L^\infty$, and any of its weak sub-limit in energy is equal to $x_1$ on $2\bB^+\setminus B_{3/8}(p)$ and is harmonic on $B_{3/8}(p)$, hence it is equal to $x_1$. Using the energy characterization of harmonic functions and the semicontinuity of the energy, this implies $$\tilde f_1^\alpha\stackrel{E}{\to} x_1.$$
Moreover, the gradient estimate \cite[Lemma 3.6]{CMT} implies that the convergence is uniform on $B_{5/16}(p_\alpha)$. 

For any $\alpha$ let $\chi_\alpha$ be the smooth cut-off function on $M_\alpha$ such that 
$\chi_\alpha=1$ on $B_{9/32}(p_\alpha)$ and  $\chi_\alpha=0$ on $M_\alpha\setminus B_{5/16}(p_\alpha)$ with $\Lip\chi_\alpha\le 64$. Up to extraction of a subsequence, we may assume that $\{\chi_\alpha\}$  converges uniformly to a similar cut-off function on $\R_+\times \R^{n-1}$.  For any $\alpha $ set
$$h_1^\alpha:=\chi_\alpha \tilde f_1^\alpha+(1-\chi_\alpha)f_1^\alpha;$$
then $h_1^\alpha$ is smooth on $B_2(p_\alpha)$ and harmonic on $B_{9/32}(p_\alpha)$. Furthermore, the sequence $\{h_1^\alpha\}$ converges uniformly to $x_1$ on $B_1(p_\alpha)$, and the maps $$h_\alpha:=(h_1^\alpha,h_2^\alpha,\dots,h_n^\alpha)\colon B_1(o_\alpha)\rightarrow \bB_+^n$$ are $\eps_\alpha$-GH isometries which converge uniformly to the identity function.  Moreover,
\begin{enumerate}[i)]
\item $\| d h_\alpha  \|_{L^\infty(B_{17/64}(p_\alpha))} \le 1+\eps_\alpha$,
\item$\displaystyle \fint_{B_{17/64}(p_\alpha)}\left\| G_{h_\alpha} - \mbox{Id}_{n}\right\| \di \nu_{g_\alpha}\leq \eps_\alpha,$
\item $\displaystyle \fint_{B_{17/64}(p_\alpha)}| d G_{h_\alpha}|^2 \di \nu_{g_\alpha}\leq \eps_\alpha.$ 
\end{enumerate}

Let $\{\tau_\alpha\}, \{\rho_\alpha\} \subset (0,1)$ be such that $\tau_\alpha\uparrow 1$,  $\rho_\alpha\uparrow 1/4$, and for any $\alpha$,  $\tau_\alpha^2$ is a regular value of $|h_\alpha|^2$
 and $\rho_\alpha^2$ is a regular value of $|h_1^\alpha-1/2|^2+|\Psi_\alpha|^2$.  For a given $\alpha$, set
$$
\Omega_\alpha:= h_\alpha^{-1}(\mathbb{B}_{\tau_\alpha}^n) \qquad \text{and} \qquad \cU_\alpha:=h_\alpha^{-1}(\mathbb{B}_{\rho_\alpha}^n(p)).
$$ 
Since $h_\alpha(\Omega_\alpha) \subset \bB^n_+$, we know that $h_\alpha\colon \Omega_\alpha\rightarrow \bB^n_{\tau_\alpha}$ is not surjective. Moreover, $h_\alpha( \partial\Omega_\alpha)\subset \partial\bB^n_{\tau_\alpha}$.  Thus for any  regular value  $x\in \tau_\alpha\bB^n_+$ of $h_\alpha$,
 \begin{equation}\label{eq:even}
 \# (h_\alpha^{-1}(\{x\}) \cap  \Omega_{\alpha}) \in 2\N.
 \end{equation}

Let us now consider a sequence $q_\alpha\in \cU_\alpha \to p$ such that each $h_\alpha(q_\alpha)$ is a regular value of $h_\alpha$. 
As each $h_\alpha$ is an $\eps_\alpha$-GH isometry,  for any $q \in \Omega_\alpha$: $$h_\alpha(q)=h_\alpha(q_\alpha)\Longrightarrow \dist_\alpha(q,q_\alpha)\le \eps_\alpha.$$ Hence for large enough $\alpha:$
 $$\left\{q\in \Omega_{\alpha} : h_\alpha(q)=h_\alpha(q_\alpha)\right\}\subset \cU_\alpha.$$
 But the analysis done in the proof of \cite[Theorem 7.4]{CMT} shows that 
 \begin{itemize}
 \item if $\cU_\alpha$ is orientable, then the degree of $h_\alpha\colon \cU_\alpha\rightarrow \bB_{\rho_\alpha}(p)$ is $\pm 1$,
 \item if $\cU_\alpha$ is not orientable and if $\pi_\alpha:\tilde\cU_\alpha\rightarrow \cU_\alpha$ is the 2-fold orientation cover, then the degree of $h_\alpha\circ \pi_\alpha\colon \tilde\cU_\alpha\rightarrow \bB_{\rho_\alpha}(p)$ is $\pm 2$.
 \end{itemize}
 In any case we get
$$\# \left\{q\in \Omega_{\alpha} : h_\alpha(q)=h_\alpha(q_\alpha)\right\} \in 2 \setN +1,$$ which contradicts \eqref{eq:even}.

\subsection{Proof of Theorem \ref{MetaThm}}\label{app:meta} In this section, we obtain Theorem \ref{MetaThm} as a consequence of a contradiction argument and the following result.

\begin{theorem}\label{preMetaThm}
Let $\{(M_\alpha, \dist_{g_\alpha}, \mu_{\alpha}, o_\alpha)\} \subset \mathcal{K}_\meas(n,f,c)$ be converging to $(X,\dist,\mu,o)$ in the pointed measured Gromov-Hausdorff topology.  For some $r \in (0, \sqrt{T}]$, assume that there exists a harmonic function \mbox{$h: B_r(o) \to \R^k$} such that  \mbox{$h(o)=0$} and $\|dh\|_{L^\infty(B_r(o))}\le L$ for some $L >1$.   Let $\eta\in (0,1)$ be given.  Then there exist $C(n,\eta)\ge 1$ and \mbox{$h_\alpha : B_{\eta r}(o_\alpha) \to \R^k$} harmonic with $\|dh_\alpha\|_{L^\infty(B_{\eta r}(o_\alpha))}\le LC(n,\eta)$ and $h_\alpha(o_\alpha)=0$ for any $\alpha$, such that 
\begin{enumerate}
\item for all $s \in (0,\eta r]$
\begin{equation}
\label{eq:average}
\fint_{B_s(o_\alpha)} G_{h_\alpha} \di\mu_{\alpha} \to \fint_{B_s(o)} G_h \di \mu,
\end{equation}
\item for all $s \in (0,\eta r]$ and $A \in \mathcal{M}_k(\R)$
\begin{equation}
\label{eq:matrix}
\fint_{B_s(o_\alpha)}\|G_{h_\alpha}-A\| \di \mu_{\alpha} \to \fint_{B_s(o)}\|G_h-A\|\di \mu.
\end{equation}
\end{enumerate}
\end{theorem}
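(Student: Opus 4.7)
The plan is to construct each $h_\alpha$ as the componentwise harmonic replacement, on $B_{\eta' r}(o_\alpha)$ for some fixed $\eta' \in (\eta,1)$, of a lifting $f_\alpha : M_\alpha \to \R^k$ of $h$ that converges to $h$ uniformly on compact subsets of $B_{\eta' r}(o)$ and strongly in $L^2$. Such a lifting is a standard byproduct of pmGH convergence, built by pulling back Lipschitz approximations of $h$ along the GH-isometries using the toolbox of \cite[Appendix E]{CMT}. After subtracting $\tilde h_\alpha(o_\alpha)$, one obtains harmonic maps vanishing at $o_\alpha$; the uniform Lipschitz bound $\|dh_\alpha\|_{L^\infty(B_{\eta r}(o_\alpha))} \le L\,C(n,\eta)$ then follows from the heat kernel gradient estimate of Proposition \ref{Prop:heatKe}.iii) combined with the a priori $L^\infty$ bound $|h_\alpha(y)| \le Lr + o(1)$ on $B_{\eta' r}(o_\alpha)$, which is inherited from the $L$-Lipschitzness of $h$ with $h(o)=0$ via uniform convergence.

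The core analytic step is to upgrade the convergence $\tilde h_\alpha \to h$ from the uniform and $L^2$ senses to strong convergence in energy. I would achieve this by combining the minimality of the Dirichlet energy of a harmonic replacement with Mosco-type convergence of the Cheeger energies along the sequence, together with strong $L^2$-convergence of the boundary data provided by the lifting step. Once strong convergence in energy is in hand, \cite[Proposition E.7]{CMT}, used exactly as in the proof of Proposition \ref{lem:LY}, yields that $|dh_\alpha|$ converges weakly to $|dh|$ in $L^2$; the energy equality then promotes this weak convergence to strong $L^2$ convergence.

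From here, polarization delivers strong $L^1_{\loc}$ convergence of every entry $\langle dh_\alpha^i, dh_\alpha^j\rangle$ of $G_{h_\alpha}$ to the corresponding entry of $G_h$, and the Lipschitzness of the matrix norm yields strong $L^1_{\loc}$ convergence of $\|G_{h_\alpha} - A\|$ to $\|G_h - A\|$ for any fixed $A \in \cM_k(\R)$. Integrating against the characteristic functions of the balls $B_s(o_\alpha)$ and $B_s(o)$ and using the weak convergence $\mu_\alpha \to \mu$ then yields both \eqref{eq:average} and \eqref{eq:matrix}, with the only subtlety being that $\mu(\partial B_s(o))$ might be positive for some exceptional $s \in (0,\eta r]$; this is handled by density of a conull set of $s$ together with monotonicity or continuity in $s$ of the two integrals at play, applied to a countable dense set of $s$ and a countable dense set of matrices $A$.

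The main obstacle I anticipate lies in the strong-in-energy convergence of $\tilde h_\alpha$ to $h$: while uniform and weak-in-energy convergence are routine given the framework of \cite[Appendix E]{CMT}, producing the energy equality forces one to carefully control the behavior of boundary traces of $f_\alpha$ and the solvability of the Dirichlet problem on $B_{\eta' r}(o_\alpha)$ along the pmGH sequence, balancing the minimality of the harmonic replacement against potential loss of strong convergence of the differentials near $\partial B_{\eta' r}(o_\alpha)$.
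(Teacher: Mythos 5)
Your overall plan---construct harmonic approximations $h_\alpha$ to $h$ on the manifolds, upgrade to strong $L^2$ convergence of $|dh_\alpha|$ to $|dh|$, polarize, and pass to ball averages---is the same broad strategy as the paper's, which invokes \cite[Proposition E.11]{CMT} for the construction and \cite[Proposition E.7]{CMT} for the strong $L^2$ convergence of the gradient norms. The place where your argument diverges, and where it has a genuine gap, is the final step: passing from strong $L^p$-convergence of the Gram matrices to convergence of the averages $\fint_{B_s}$ for \emph{every} $s \in (0,\eta r]$.

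You flag that $\mu(\partial B_s(o))$ might be positive for exceptional $s$ and propose to dispatch this via convergence on a conull dense set of radii plus ``monotonicity or continuity in $s$ of the two integrals at play''. This does not close. The map $s \mapsto \int_{B_s(o_\alpha)}\|G_{h_\alpha}-A\|\di\mu_\alpha$ and the map $s \mapsto \mu_\alpha(B_s(o_\alpha))$ are each nondecreasing, but their quotient $\fint_{B_s(o_\alpha)}\|G_{h_\alpha}-A\|\di\mu_\alpha$ is not monotone, so there is no Dini-type mechanism for upgrading convergence from a dense set to all of $(0,\eta r]$; and continuity in $s$ is exactly what fails at the exceptional radii (where $\mu(\partial B_s(o))>0$, i.e. where $t\mapsto\mu(B_t(o))$ has a jump). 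For item (1) the situation is worse, since $\fint_{B_s} G_{h_\alpha}$ is a signed matrix-valued quantity and monotonicity is unavailable from the start. The paper sidesteps all of this with a dedicated technical device, Lemma~\ref{lem:prep_to_meta}: it replaces $|dh_\alpha|$ by its ball averages $u_{\alpha,\gamma}$, controls $\fint_{B_s}|u_\alpha^2-u_{\alpha,\gamma}^2|\di\mu_\alpha$ uniformly in $\alpha$ by the $L^2$ pseudo-Poincar\'e inequality, exploits the equicontinuity of the averaged functions to pass to the pmGH limit for each fixed $\gamma$, and then lets $\gamma\to 0$. This yields convergence for \emph{every} $s$, not merely a.e.\ $s$, and it is the piece your outline needs but does not supply. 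It is also where an additional input enters that your proposal omits: Lemma~\ref{lem:prep_to_meta} requires a uniform bound on $\sup_\alpha\int\di\Gamma(|dh_\alpha|)\di\mu_\alpha$, i.e.\ a uniform Hessian estimate on $h_\alpha$, which the paper obtains from \cite[Proposition 3.5]{CMT} after establishing the local Lipschitz bound. Without that energy bound, the pseudo-Poincar\'e error term is not controlled uniformly in $\alpha$ and the averaging trick collapses.

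One smaller remark: you state that \cite[Proposition E.7]{CMT} gives weak $L^2$ convergence of $|dh_\alpha|$ and that the energy equality then promotes it to strong convergence, which is essentially correct, but you should be explicit that the $L^\infty$ bound on $|dh_\alpha|$ (from the improved Lipschitz estimate, which you do derive) is the hypothesis that makes the proposition applicable, and that the energy equality you invoke must hold on the ball $B_{\eta r}$, not just globally on the larger ball where $h_\alpha$ is defined---this is precisely the local convergence $\fint_{B_s}|dh_\alpha|^2\to\fint_{B_s}|dh|^2$ that \cite[Proposition E.11]{CMT} is designed to deliver.
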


Before proving it, we need a preliminary lemma.

\begin{lemma}\label{lem:prep_to_meta}
Let $\{(X_\alpha, \dist_\alpha, \mu_\alpha, o_\alpha)\}_{\alpha\in \setN \cup \{\infty\}} \subset \katolimits$ be such that $$(X_\alpha, \dist_\alpha, \mu_\alpha, o_\alpha) \to (X_\infty,\dist_\infty,\mu_\infty,o_\infty)$$ in the pointed measured Gromov-Hausdorff topology.  Consider $r \in (0,\sqrt{T})$.  For any $\alpha$, let $u_\alpha, v_\alpha \in H^{1,2}(B_r(o_\alpha),\dist_\alpha,\mu_\alpha)$ be such that
\begin{enumerate}
\item $u_\alpha \stackrel{L^2(B_r)}{\to} u_\infty$ and $v_\alpha \stackrel{L^2(B_r)}{\to} v_\infty$,
\item\label{assum} $\sup_{\alpha\in \setN}\left(  \int_{B_r(o_\alpha)} \di \Gamma(u_\alpha) \di \mu_{\alpha} \, , \, \int_{B_r(o_\alpha)} \di \Gamma(v_\alpha) \di \mu_{\alpha} \right) < +\infty$.
\end{enumerate}
Then for any $s \in (0,r]$,
\begin{equation}\label{eq:onr}
\fint_{B_s(o_\alpha)} |u_\alpha^2 - v_\alpha^2|\di \mu_\alpha \to \fint_{B_s(o_\infty)} |u_\infty^2 - v_\infty^2|\di  \mu.
\end{equation}
\end{lemma}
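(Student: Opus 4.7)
The key factorization is $|u_\alpha^2 - v_\alpha^2| = |u_\alpha - v_\alpha|\cdot|u_\alpha + v_\alpha|$. By linearity of strong $L^2$ convergence and its preservation under the absolute value (via $||a|-|b||\le|a-b|$), assumption (1) gives $|u_\alpha \pm v_\alpha|\stackrel{L^2(B_r)}{\to}|u_\infty \pm v_\infty|$; norm convergence moreover ensures both families are uniformly bounded in $L^2(B_r)$. The statement thus reduces to a bilinear continuity property of strong $L^2$ convergence on varying spaces.

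My plan is to prove the following claim: if $f_\alpha \stackrel{L^2}{\to}f_\infty$ and $g_\alpha \stackrel{L^2}{\to}g_\infty$ in the varying-space sense, with uniformly bounded $L^2$-norms, then for any $\varphi_\alpha \stackrel{\cC_c}{\to}\varphi$ one has
\[
\int_{X_\alpha}\varphi_\alpha f_\alpha g_\alpha \di\mu_\alpha \longrightarrow \int_{X_\infty}\varphi f_\infty g_\infty \di\mu_\infty.
\]
I would prove this by a diagonal approximation. Since $f_\infty, g_\infty \in L^2(X_\infty)$ and $X_\infty$ is proper and doubling, I may pick uniformly bounded Lipschitz compactly supported $\tilde f^k, \tilde g^k \to f_\infty, g_\infty$ in $L^2(X_\infty)$; via the GH maps these lift to functions $\tilde f^k_\alpha \stackrel{\cC_c}{\to}\tilde f^k$ and $\tilde g^k_\alpha \stackrel{\cC_c}{\to}\tilde g^k$ that are uniformly bounded in $\alpha$ and $k$. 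I then split
\begin{align*}
\int \varphi_\alpha f_\alpha g_\alpha \di\mu_\alpha - \int \varphi \tilde f^k \tilde g^k \di\mu_\infty ={}& \left(\int \varphi_\alpha \tilde f^k_\alpha \tilde g^k_\alpha \di\mu_\alpha - \int \varphi \tilde f^k \tilde g^k \di\mu_\infty\right) \\
& + \int \varphi_\alpha (f_\alpha - \tilde f^k_\alpha) g_\alpha \di\mu_\alpha \\
& + \int \varphi_\alpha \tilde f^k_\alpha (g_\alpha - \tilde g^k_\alpha)\di\mu_\alpha.
\end{align*}
The first bracket tends to $0$ as $\alpha \to \infty$ from $\varphi_\alpha \tilde f^k_\alpha \tilde g^k_\alpha \stackrel{\cC_c}{\to}\varphi\tilde f^k\tilde g^k$. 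The two remaining error terms are controlled by Cauchy--Schwarz, using that strong $L^2$ convergence gives $\limsup_\alpha \|f_\alpha - \tilde f^k_\alpha\|_{L^2(X_\alpha)} = \|f_\infty - \tilde f^k\|_{L^2(X_\infty)} \to 0$ as $k \to \infty$ (and similarly for $g$), together with the uniform $L^2$ bound on $\{g_\alpha\}$ and $\{\tilde f^k_\alpha\}$. Combined with the classical $\int \varphi \tilde f^k \tilde g^k \di\mu_\infty \to \int \varphi f_\infty g_\infty \di\mu_\infty$ on the fixed space $X_\infty$, a double limit in $\alpha$ and $k$ closes the argument.

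Applying the bilinear claim to the pair $(|u_\alpha - v_\alpha|,|u_\alpha + v_\alpha|)$ yields convergence of $\int \varphi_\alpha |u_\alpha^2 - v_\alpha^2|\di\mu_\alpha$ to $\int \varphi |u_\infty^2 - v_\infty^2|\di\mu_\infty$ for any $\varphi_\alpha \stackrel{\cC_c}{\to}\varphi$. To specialize to $\varphi_\alpha = \un_{B_s(o_\alpha)}$, I would approximate this indicator from inside and outside by Lipschitz cutoffs, using that the doubling condition \eqref{eq:doubling} ensures $\mu_\infty(\partial B_s(o_\infty)) = 0$ for all but countably many $s$; the exceptional radii are handled by monotone approximation from the inside and outside in $s$. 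Dividing by $\mu_\alpha(B_s(o_\alpha)) \to \mu_\infty(B_s(o_\infty))$ then gives \eqref{eq:onr}. \textbf{The main obstacle} is the careful bookkeeping in the diagonal step, where $f_\infty, g_\infty$ live on $X_\infty$ while the other objects live on $X_\alpha$: the Lipschitz approximations lifted through the GH maps provide the common language, and all error terms can be estimated using only $L^2$-norms and pmGH convergence of test functions. The energy bound (assumption (2)), while not strictly necessary for the bilinear continuity itself, supports the standard analytic framework on Kato limits (density of Lipschitz compactly supported functions, doubling, Poincaré) required in the approximation step.
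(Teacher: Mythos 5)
Your approach is genuinely different from the paper's, and at the level of strategy it is sound. The paper mollifies $u_\alpha, v_\alpha$ by averaging over balls of radius $\gamma$, controls the mollification error \emph{uniformly in $\alpha$} through the $L^2$ pseudo-Poincaré inequality — this is precisely where the energy bound, assumption~(2), is used — and then exploits equicontinuity of the mollified functions for fixed $\gamma$ before sending $\gamma \to 0$. You instead factorize $|u_\alpha^2-v_\alpha^2| = |u_\alpha-v_\alpha|\,|u_\alpha+v_\alpha|$ and reduce the lemma to the bilinear continuity of strong $L^2$ convergence on varying spaces, which you prove by diagonal approximation with Lipschitz cutoffs. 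That bilinear claim and your proof of it are correct (it is the standard fact, in the Gigli--Mondino--Savaré / Ambrosio--Honda framework, that strong $L^2$ convergence is stable under products). As you observe, your route makes no use of the energy bound at all; the paper's proof, by contrast, is more self-contained and gives an explicit $O(\gamma)$ quantitative error at the price of using~(2).

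Three points need tightening. (i) You invoke "linearity of strong $L^2$ convergence and its preservation under absolute value" before proving the bilinear claim, but on varying spaces strong convergence of $u_\alpha+v_\alpha$ already requires convergence of the cross-term $\int u_\alpha v_\alpha\di\mu_\alpha$, which is part of the bilinear claim. There is no actual circularity — the bilinear claim is proved independently via the characterization of strong convergence by diagonal Lipschitz approximants — but the logical order must be: bilinear claim first, then linearity and absolute-value stability, then factor. (ii) Your passage to $\varphi_\alpha=\un_{B_s(o_\alpha)}$ needs $\mu_\infty(\partial B_s(o_\infty))=0$. You propose to handle "exceptional radii" by monotone approximation in $s$, but this does \emph{not} close the gap if $\mu_\infty(\partial B_s)>0$: the inner and outer limits would then genuinely disagree with the target. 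What saves you is that in geodesic doubling metric measure spaces — which Kato limits are, by the paper's standing assumptions — every sphere has $\mu$-measure zero (by a Lebesgue density argument at each boundary point); there are no exceptional radii, and you should invoke that fact directly. (iii) The endpoint $s=r$ is not covered by the outer cutoff, which would leave $B_r$ where $u_\alpha, v_\alpha$ are undefined; the paper explicitly reduces to $s\in(0,r)$ and you must do the same, e.g.\ by noting that $\|u_\alpha\pm v_\alpha\|_{L^2(B_r\setminus B_{r-\eps})}^2$ converges (from the $s<r$ case plus norm convergence on $B_r$), so the $L^1$-mass of $|u_\alpha^2-v_\alpha^2|$ on the thin annulus is small uniformly in $\alpha$.
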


\begin{proof} For any $\gamma \in (0,1)$ and $\alpha \in \setN \cup\{\infty\}$, set
\[
u_{\alpha,\gamma}(\cdot) := \fint_{B_\gamma(\cdot)} u_\alpha \di \mu_\alpha \, , \qquad v_{\alpha,\gamma}(\cdot) := \fint_{B_\gamma(\cdot)} v_\alpha \di \mu_\alpha.
\]
Acting as in the proof of \cite[Proposition E.1]{CMT}, it is enough to consider the case $s \in (0,r)$ only. 

We first claim that there exists $C_0>0$ such that for any $\gamma \in (0,r-s)$,
\begin{equation}\label{eq:C1}
\sup_{\alpha \in \setN \cup\{\infty\}} \left| \fint_{B_{s}(o_\alpha)} |u_\alpha^2 - v_\alpha^2| - |u_{\alpha,\gamma}^2 - v_{\alpha,\gamma}^2| \di \mu_\alpha \right|  \le C_0 \gamma.
\end{equation}
Indeed, 
\begin{align*}
\left| \fint_{B_{s}(o_\alpha)} |u_\alpha^2 - v_\alpha^2| - |u_{\alpha,\gamma}^2 - v_{\alpha,\gamma}^2| \di \mu_\alpha \right| & \le \fint_{B_{s}(o_\alpha)} |u_\alpha^2 - u_{\alpha,\gamma}^2| \di \mu_\alpha\\
& + \fint_{B_{s}(o_\alpha)}  |v_\alpha^2 - v_{\alpha,\gamma}^2| \di \mu_\alpha.
\end{align*}
Boundedness in $L^2$ of the averaging operator on doubling spaces (see e.g.~\cite[Theorem 3.5]{Aldaz}) yields the existence of $C_1>0$ such that 
\[
\|u_{\alpha,\gamma}\|_{L^2(B_s(o_\alpha))} \le C_1 \|u_\alpha\|_{L^2(B_r(o_\alpha))}.
\]
Moreover,  the $L^2$ strong convergence of $\{u_\alpha\}$ to $u_\infty$ gives $C_2>0$ such that
\[
 \sup_{\alpha\in \setN \cup \{\infty\}}  \|u_\alpha\|_{L^2(B_r(o_\alpha))} \le C_2.
\]
Finally, the $L^2$ pseudo-Poincaré inequality \cite{CoulhonSC1993} and assumption \eqref{assum} yield the existence of $C_3>0$ such that
\[
\left( \fint_{B_{s}(o_\alpha)} |u_\alpha - u_{\alpha,\gamma}|^2 \di \mu_\alpha\right)^{1/2} \le C_3 \gamma.
\]
Then
\begin{align*}
\fint_{B_s(o_\alpha)} |u_\alpha^2 - u_{\alpha,\gamma}^2| \di \mu_\alpha & \le  \left( \fint_{B_s(o_\alpha)} |u_\alpha - u_{\alpha,\gamma}|^2 \di \mu_\alpha\right)^{1/2}\left( \fint_{B_s(o_\alpha)} |u_\alpha + u_{\alpha,\gamma}|^2 \di \mu_\alpha\right)^{1/2}\\
& \le (1+C_1)C_2 C_3 \gamma.
\end{align*}
This and the symmetry between $u$ and $v$ eventually leads to \eqref{eq:C1}.

We now claim that for any given $\eps>0$ and $\gamma \in (0,1)$, we can choose $\alpha\in \setN$ large enough to ensure
\begin{equation}\label{eq:C2}
 \left|  \fint_{B_s(o_\alpha)}  |u_{\alpha,\gamma}^2 - v_{\alpha,\gamma}^2| \di \mu_\alpha -  \fint_{B_s(o_\infty)}  |u_{\infty,\gamma}^2 - v_{\infty,\gamma}^2| \di \mu_\infty  \right|  \le \frac{\eps}{3}\, \cdot
\end{equation}
The Hölder inequality and a consequence of the doubling condition (see e.g.~\cite[Proposition 1.2, (v)]{CMT}) imply that $\{u_{\alpha,\gamma}\}$ and $\{v_{\alpha,\gamma}\}$ are equicontinuous on balls of radius $B_{(s+r)/2}(o_\alpha)$ for any fixed $\gamma \in (0,1)$. Then $u_{\alpha,\gamma} \to u_\gamma$ and $v_{\alpha,\gamma} \to v_\gamma$ uniformly on $B_s$. This yields \eqref{eq:C2}.

To conclude, take $\eps>0$, choose $\gamma= \eps /(3C_0)$ and then choose $\alpha$ such that \eqref{eq:C2} holds. Then the triangle inequality, \eqref{eq:C1} and \eqref{eq:C2} yield \eqref{eq:onr}.
\end{proof}

\begin{rem}
\label{rem:prep_to_meta}
The previous proof may be easily adapted to show that for any $a \in \R$,
\[
\fint_{B_{r}(o_\alpha)} |u_\alpha^2 - v_\alpha^2-a|\di \mu_\alpha \to \fint_{B_{r}(o)} |u_\infty^2 - v_\infty^2-a|\di  \mu.
\]
\end{rem}

We are now in a position to prove Theorem \ref{preMetaThm} and conclude.

\begin{proof}
We start by treating the case $k=1$.  Consider $\eta'=\eta^{1/2}$ and $\eta''=\eta^{1/3}$ so that $\eta <\eta' < \eta''< 1$.  Then \cite[Proposition E.11]{CMT} ensures the existence of harmonic functions $h_\alpha : B_{\eta'' r}(o_\alpha) \to \R$ uniformly converging to $h|_{B_{\eta''r}(o)}$ on $B_{\eta''r}(o)$ and such that for all $s \in (0, \eta''r]$
\begin{equation}
\label{eq:conv_carré}
\fint_{B_s(o_\alpha)}|dh_\alpha|^2\di\mu_\alpha \to \fint_{B_s(o)}|dh|^2\di\mu.
\end{equation}
By replacing $h_\alpha$ by $h_\alpha - h_\alpha(o_\alpha)$ we can assume that $h_\alpha(o_\alpha)=0$ for all $\alpha$. Moreover, the convergence of $|dh_\alpha|$ given by \eqref{eq:conv_carré} and the fact that  $\|dh\|_{L^\infty(B_r(o))}\le L$ imply that for any large enough $\alpha$
$$\fint_{B_s(o_\alpha)}|dh_\alpha|^2\di\mu_\alpha \leq 2L.$$
We can then apply \cite[Lemma 3.6]{CMT} to get existence of $C(n,\eta)\ge 1$ such that $ \|dh_\alpha\|_{L^\infty(B_{\eta' r}(o_\alpha))}\le L C(n,\eta)$. Now consider $s \in (0,\eta r]$. The previous local Lipschitz bound and the Hessian estimate of \cite[Proposition 3.5]{CMT} yield the uniform Hessian bound 
\begin{equation}
\label{eq:HessBd}
\sup_\alpha \fint_{B_{\eta r}(o_\alpha)} |\nabla dh_\alpha|^2\di \mu_\alpha \leq \frac{C(n,\eta,L)}{r^2} \, \cdot 
\end{equation}
We are then in a position to apply \cite[Proposition E.7]{CMT} and get $L^2(B_{\eta r})$ strong convergence of $\{|dh_\alpha|\}$  to $|dh|$. Then  $\{u_\alpha=|dh_\alpha|\}$  and $\{v_\alpha=0\}$ satisfy the assumptions of Lemma \ref{lem:prep_to_meta}.  We apply it and use Remark \ref{rem:prep_to_meta} to obtain that for all $a\in \R$ and $s\in (0,\eta r]$
$$\fint_{B_s(o_\alpha)}||dh_\alpha|^2-a|\di\mu_\alpha\to \fint_{B_s(o)}||dh|^2-a|\di\mu.$$

We consider now the case $k>1$. Observe that for all $i,j=1, \ldots, k$ we have 
$$(G_{h_\alpha})_{i,j}=\langle (dh_\alpha)_i,(dh_\alpha)_j\rangle = \frac{1}{4}(|d((h_\alpha)_i+(h_\alpha)_j)|^2- |d((h_\alpha)_i-(h_\alpha)_j)|^2).$$
Set
\begin{align*}
f_\alpha =  \frac 12 |d((h_\alpha)_i+(h_\alpha)_j)| , &\quad  \ g_\alpha= \frac 12 |d((h_\alpha)_i-(h_\alpha)_j)|,\\  f =  \frac 12|d(h_i+h_j)|, & \quad g=\frac 12 |d(h_i-h_j)|.
\end{align*}
The sequences $\{f_\alpha\}$ and $\{g_\alpha\}$ satisfy the assumptions of Lemma \ref{lem:prep_to_meta}. This immediately yields \eqref{eq:average}. Moreover, if we consider $A \in \mathcal{M}_k(\R)$ with components $a_{i,j}$, by arguing as above we get for all $i,j=1, \ldots, k$
$$\fint_{B_s(o_\alpha)} |f_\alpha^2-g_\alpha^2-a_{i,j}| \di \mu_\alpha \to \fint_{B_s(o)} |f^2-g^2-a_{i,j}|\di\mu, $$
which is equivalent to 
$$\fint_{B_s(o_\alpha)} | (G_{h_\alpha})_{i,j} - a_{i,j}|\di \mu_\alpha \to \fint_{B_s(o)}|(G_h)_{i,j}-a_{i,j}|\di\mu.$$
This shows \eqref{eq:matrix}. 

\end{proof}

\bibliographystyle{alpha} 
\bibliography{KatoLimits2.bib}

\begin{thebibliography}{DPMR17}

\bibitem[ABS19]{AntoBrueSemola}
Gioacchino Antonelli, Elia Bru{\'e}, and Daniele Semola.
\newblock Volume bounds for the quantitative singular strata of non collapsed
  {R}{C}{D} metric measure spaces.
\newblock {\em Analysis and Geometry in Metric Spaces}, 7(1):158--178, 2019.

\bibitem[Ald19]{Aldaz}
Jes{\'u}s~M Aldaz.
\newblock Boundedness of averaging operators on geometrically doubling metric
  spaces.
\newblock {\em Annales Academiae Scientiarum Fennicae Mathematica},
  44(1):497--503, 2019.

\bibitem[Bam20]{Bamler}
Richard Bamler.
\newblock Structure theory of non-collapsed limits of {R}icci flows.
\newblock ArXiv Preprint: 2009.03243, 2020.

\bibitem[BPS21]{BPS}
Elia Bru{\'e}, Enrico Pasqualetto, and Daniele Semola.
\newblock Rectifiability of {R}{C}{D}({K}, {N}) spaces via splitting maps.
\newblock {\em Annales Fennici Mathematici}, 46(1):465--482, 2021.

\bibitem[BS20]{BrueSemola}
Elia Bru\'{e} and Daniele Semola.
\newblock Constancy of the dimension for {${\rm RCD}(K,N)$} spaces via
  regularity of {L}agrangian flows.
\newblock {\em Comm. Pure Appl. Math.}, 73(6):1141--1204, 2020.

\bibitem[Car19]{C16}
Gilles Carron.
\newblock Geometric inequalities for manifolds with {R}icci curvature in the
  {K}ato class.
\newblock {\em Ann. Inst. Fourier (Grenoble)}, 69(7):3095--3167, 2019.

\bibitem[CC96]{ChCo96}
Jeff Cheeger and Tobias~H Colding.
\newblock Lower bounds on ricci curvature and the almost rigidity of warped
  products.
\newblock {\em Annals of mathematics}, pages 189--237, 1996.

\bibitem[CC97]{ChCo97}
Jeff Cheeger and Tobias~H. Colding.
\newblock On the structure of spaces with {R}icci curvature bounded below. {I}.
\newblock {\em J. Differential Geom.}, 46(3):406--480, 1997.

\bibitem[CC00a]{CheegerColdingII}
Jeff Cheeger and Tobias~H. Colding.
\newblock On the structure of spaces with {R}icci curvature bounded below.
  {II}.
\newblock {\em J. Differential Geom.}, 54(1):13--35, 2000.

\bibitem[CC00b]{CheegerColdingIII}
Jeff Cheeger and Tobias~H. Colding.
\newblock On the structure of spaces with {R}icci curvature bounded below.
  {III}.
\newblock {\em J. Differential Geom.}, 54(1):37--74, 2000.

\bibitem[Che99]{Cheeger}
Jeff Cheeger.
\newblock Differentiability of {L}ipschitz functions on metric measure spaces.
\newblock {\em Geometric \& Functional Analysis GAFA}, 9(3):428--517, 1999.

\bibitem[Che01]{CheegerPisa}
Jeff Cheeger.
\newblock Degeneration of {R}iemannian metrics under {R}icci curvature bounds.
\newblock In {\em Lezione Fermiane}. Accademia Nazionale dei Lincei, Scoula
  Normale Superiore, Pisa, 2001.

\bibitem[CJN21]{CJN}
Jeff Cheeger, Wenshuai Jiang, and Aaron Naber.
\newblock Rectifiability of singular sets of noncollapsed limit spaces with
  {R}icci curvature bounded below.
\newblock {\em Ann. of Math. (2)}, 193(2):407--538, 2021.

\bibitem[CMT21]{CMT}
Gilles Carron, Ilaria Mondello, and David Tewodrose.
\newblock Limits of manifolds with a {K}ato bound on the {R}icci curvature.
\newblock ArXiv Preprint: 2102.05940, 2021.

\bibitem[CN12]{ColdingNaber}
Tobias~Holck Colding and Aaron Naber.
\newblock Sharp h{\"o}lder continuity of tangent cones for spaces with a lower
  ricci curvature bound and applications.
\newblock {\em Annals of mathematics}, pages 1173--1229, 2012.

\bibitem[CN15]{CheegerNaber}
Jeff Cheeger and Aaron Naber.
\newblock Regularity of {E}instein manifolds and the codimension 4 conjecture.
\newblock {\em Ann. of Math. (2)}, 182(3):1093--1165, 2015.

\bibitem[Col97]{C97}
Tobias~H Colding.
\newblock Ricci curvature and volume convergence.
\newblock {\em Annals of mathematics}, 145(3):477--501, 1997.

\bibitem[CSC93]{CoulhonSC1993}
Thierry Coulhon and Laurent Saloff-Coste.
\newblock Isop\'{e}rim\'{e}trie pour les groupes et les vari\'{e}t\'{e}s.
\newblock {\em Rev. Mat. Iberoamericana}, 9(2):293--314, 1993.

\bibitem[CT22]{CT}
Gilles Carron and David Tewodrose.
\newblock A rigidity result for metric measure spaces with {E}uclidean heat
  kernel.
\newblock {\em Journal de l'{\'E}cole Polytechnique --- Math{\'e}matiques},
  Tome 9:101--154, 2022.

\bibitem[Don11]{LeDonne}
Enrico~Le Donne.
\newblock Metric spaces with unique tangents.
\newblock In {\em Annales Academiae Scientiarum Fennicae. Mathematica},
  volume~36, 2011.

\bibitem[DPG16]{DPG16}
Guido De~Philippis and Nicola Gigli.
\newblock From volume cone to metric cone in the nonsmooth setting.
\newblock {\em Geom. Funct. Anal.}, 26(6):1526--1587, 2016.

\bibitem[DPMR17]{DePhilippisMarcheseRindler}
Guido De~Philippis, Andrea Marchese, and Filip Rindler.
\newblock On a conjecture of {C}heeger.
\newblock In {\em Measure theory in non-smooth spaces}, Partial Differ. Equ.
  Meas. Theory, pages 145--155. De Gruyter Open, Warsaw, 2017.

\bibitem[DPR16]{DePhiRind}
Guido De~Philippis and Filip Rindler.
\newblock On the structure of {A}-free measures and applications.
\newblock {\em Annals of Mathematics}, pages 1017--1039, 2016.

\bibitem[Fed14]{Federer}
Herbert Federer.
\newblock {\em Geometric measure theory}.
\newblock Springer, 2014.

\bibitem[Gig13]{GigliSplitting}
Nicola Gigli.
\newblock The splitting theorem in non-smooth context.
\newblock {\em arXiv:1302.5555}, 2013.

\bibitem[Gig15]{GigliMAMS}
Nicola Gigli.
\newblock {\em On the differential structure of metric measure spaces and
  applications}.
\newblock American Mathematical Soc., 2015.

\bibitem[Gig18a]{GigliLectureNotes}
Nicola Gigli.
\newblock {L}ecture {N}otes on {D}ifferential {C}alculus on {R}{C}{D} {S}paces.
\newblock {\em Publications of the Research Institute for Mathematical
  Sciences}, 54(4):855--918, 2018.

\bibitem[Gig18b]{Gigli}
Nicola Gigli.
\newblock {\em Nonsmooth differential geometry--an approach tailored for spaces
  with Ricci curvature bounded from below}, volume 251.
\newblock American Mathematical Society, 2018.

\bibitem[GMR15]{GigliMondinoRajala}
Nicola Gigli, Andrea Mondino, and Tapio Rajala.
\newblock Euclidean spaces as weak tangents of infinitesimally {H}ilbertian
  metric measure spaces with {R}icci curvature bounded below.
\newblock {\em Journal f{\"u}r die reine und angewandte Mathematik (Crelles
  Journal)}, 2015(705):233--244, 2015.

\bibitem[GP21]{GigliPas}
Nicola Gigli and Enrico Pasqualetto.
\newblock Behaviour of the reference measure on {R}{C}{D} spaces under charts.
\newblock {\em Communications in Analysis and Geometry}, 29(6):1391--1414,
  2021.

\bibitem[Gri95]{GriderHeat}
Alexander Grigoryan.
\newblock Upper bounds of derivatives of the heat kernel on an arbitrary
  complete manifold.
\newblock {\em Journal of Functional Analysis}, 127(2):363--389, 1995.

\bibitem[Hei12]{Heinonen}
Juha Heinonen.
\newblock {\em Lectures on analysis on metric spaces}.
\newblock Springer Science \& Business Media, 2012.

\bibitem[HKST15]{HKST}
Juha Heinonen, Pekka Koskela, Nageswari Shanmugalingam, and Jeremy~T Tyson.
\newblock {\em Sobolev spaces on metric measure spaces}.
\newblock Number~27. Cambridge University Press, 2015.

\bibitem[KM18]{KellMondino}
Martin Kell and Andrea Mondino.
\newblock On the volume measure of non-smooth spaces with {R}icci curvature
  bounded below.
\newblock {\em Ann. Sc. Norm. Super. Pisa Cl. Sci. (5)}, 18(2):593--610, 2018.

\bibitem[MN19]{MondinoNaber}
Andrea Mondino and Aaron Naber.
\newblock Structure theory of metric measure spaces with lower {R}icci
  curvature bounds.
\newblock {\em Journal of the European Mathematical Society}, 21(6):1809--1854,
  2019.

\bibitem[RS17]{RoseStollmann}
Christian Rose and Peter Stollmann.
\newblock The {K}ato class on compact manifolds with integral bounds on the
  negative part of {R}icci curvature.
\newblock {\em Proc. Amer. Math. Soc.}, 145(5):2199--2210, 2017.

\bibitem[Sim14]{Simon}
Leon Simon.
\newblock Introduction to geometric measure theory.
\newblock {\em Tsinghua Lectures}, 2014.

\bibitem[Vil09]{Villani}
C\'{e}dric Villani.
\newblock {\em Optimal transport: Old and New}, volume 338 of {\em Grundlehren
  der Mathematischen Wissenschaften [Fundamental Principles of Mathematical
  Sciences]}.
\newblock Springer-Verlag, Berlin, 2009.
\newblock Old and new.

\end{thebibliography}

\end{document}